\documentclass[12pt]{amsart}
\usepackage{amsmath, amsfonts, amsbsy, amsthm, amscd, amssymb}
\usepackage{latexsym, ulem}
\usepackage[dvipdfmx]{hyperref}
\usepackage{graphics,graphicx,color}

%
\setlength{\textheight}{9in}
\addtolength{\textwidth}{.815in}
\addtolength{\oddsidemargin}{-.5in}
\addtolength{\topmargin}{-.5in}
\addtolength{\evensidemargin}{-.5in}
\addtolength{\footskip}{.5in}

\pagestyle{plain}

%
\numberwithin{equation}{section}

\theoremstyle{plain}
\newtheorem{Theorem}{Theorem}[section]
\newtheorem{Proposition}[Theorem]{Proposition}
\newtheorem{Lemma}[Theorem]{Lemma}
\newtheorem{Corollary}[Theorem]{Corollary}

\newtheorem{Main}{Theorem}

\newtheorem*{Cor}{Corollary}
\newtheorem*{Setting}{Setting}

\theoremstyle{definition}
\newtheorem{Definition}[Theorem]{Definition}
\newtheorem{Remark}[Theorem]{Remark}
\newtheorem{Example}[Theorem]{Example}
\theoremstyle{remark}

%

\newcommand{\R}{{\mathbb R}}

\newcommand{\Z}{{\mathbb Z}}
\newcommand{\N}{{\mathbb N}}

\newcommand{\nrprj}{\mathrm{proj}}
\newcommand{\tcprj}{\log}
\newcommand{\ene}[1]{E_{#1}}
\newcommand{\inner}[2]{\left\langle #1,\, #2 \right\rangle}
\newcommand{\diam}[1]{\mathrm{diam}(#1)}
\newcommand{\rad}[2]{\mathrm{rad}_{#1}({#2})}
\newcommand{\isom}[1]{\mathrm{Isom}({#1})}
\newcommand{\cat}[1]{\mathrm{CAT}(#1)}
\newcommand{\uulim}[1][]{\lambda{\text{-}}\!{\lim}^{(#1)}}
\newcommand{\ulim}{\lambda{\text{-}}\!\lim}

\newcommand{\p}[1]{p^{(#1)}}
\newcommand{\q}[1]{q^{(#1)}}
\newcommand{\xxi}[1]{\xi^{(#1)}}
\newcommand{\ray}[1]{[#1)}
\newcommand{\cc}[2]{c_{#1}^{#2}}
\newcommand{\tri}[3]{\triangle(#1,#2,#3)}
\newcommand{\bary}[1]{\mathrm{bar}(#1)}
\newcommand{\trans}[2]{\tau_{#1}^{#2}}
\newcommand{\ch}[1]{\mathrm{CH}(#1)}
\newcommand{\sect}[3][]{\mathrm{Sect}_{#1}(#2,#3)}
\newcommand{\cone}[2][]{C_{#1}^{(#2)}}
\newcommand{\zure}{\theta_0}
\newcommand{\supp}{\mathrm{supp}\,}



\makeatletter
\newsavebox\myboxA
\newsavebox\myboxB
\newlength\mylenA

\newcommand*\widebar[2][0.75]{%
    \sbox{\myboxA}{$\m@th#2$}%
    \setbox\myboxB\null
    \ht\myboxB=\ht\myboxA%
    \dp\myboxB=\dp\myboxA%
    \wd\myboxB=#1\wd\myboxA
    \sbox\myboxB{$\m@th\overline{\copy\myboxB}$}
    \setlength\mylenA{\the\wd\myboxA}
    \addtolength\mylenA{-\the\wd\myboxB}%
    \ifdim\wd\myboxB<\wd\myboxA%
       \rlap{\hskip 0.5\mylenA\usebox\myboxB}{\usebox\myboxA}%
    \else
        \hskip -0.5\mylenA\rlap{\usebox\myboxA}{\hskip 0.5\mylenA\usebox\myboxB}%
    \fi}
\makeatother

\begin{document}

\title{Isometric group actions with vanishing rate of escape on
$\cat{0}$ spaces}
\author{Hiroyasu Izeki}
\address{Department of Mathematics, Keio Univeristy,
Kohoku-ku, Yokohama 223-8522, Japan}
\email{izeki@math.keio.ac.jp}
\keywords{$\cat{0}$ space, harmonic map, Poisson boundary.}
\subjclass[2010]{53C23, 20F65}
\thanks{This work was supported by JSPS Grants-in-Aid for Scientific
Research Grant Number JP20H01802.}
\date{\today}

\begin{abstract}
 Let $Y=(Y,d)$ be a $\cat{0}$ space which is either proper or of finite
 telescopic dimension, and $\Gamma$ a countable group equipped
 with a symmetric and nondegenerate probability measure $\mu$. 
 Suppose that $\Gamma$ acts on $Y$ via a homomorphism 
 $\rho\colon \Gamma \rightarrow \isom{Y}$, where $\isom{Y}$ denotes the
 isometry group of $Y$, 
 and that the action given by $\rho$ has finite second moment
 with respect to $\mu$.
 We show that if $\rho(\Gamma)$ does not fix a point in the boundary at
 infinity $\partial Y$ of $Y$ and the rate of escape 
 $l_{\rho}(\Gamma)=l_{\rho}(\Gamma,\mu)$ associated to an action given
 by $\rho$ vanishes, then there exists a flat subspace in $Y$ that is
 left invariant under the action of $\rho(\Gamma)$. 
 Note that if the rate of escape does not vanish, then we know that
 there exists an equivariant map from the Poisson boundary of
 $(\Gamma,\mu)$ into the boundary at infinity of $Y$ by a result of
 A.~Karlsson and G.~Margulis. 
 The key ingredient of the proof is $\mu$-harmonic functions on $\Gamma$
 and $\mu$-harmonic maps from $\Gamma$ into $Y$. 
 We prove a result similar to the above for an isometric action of
 $\Gamma$ on a locally finite-dimensional $\cat{0}$ space. 
\end{abstract}

\maketitle

\section{Introduction}
\label{sec:introduction}

Let $\Gamma$ be a countable group equipped with a probability
measure $\mu$. 
We regard $\mu$ as the law of a random walk; for each $\gamma\in\Gamma$,
we regard $\mu(\gamma)$ as the transition probability of a move from the
identity element $e$ to $\gamma$. 
Then the $n$-fold convolution $\mu^n$ of $\mu$ tells us the distribution
of the position of the random walk, generated by $\mu$ and starting from
$e$, at time $n$. 

Suppose that $\Gamma$ acts on a metric space $Y=(Y,d_Y)$ via a
homomorphism $\rho\colon \Gamma \rightarrow \isom{Y}$, where $\isom{Y}$
denotes the isometry group of $Y$. 
Take any point $p\in Y$ and consider an orbit map 
$f_p\colon \gamma \mapsto \rho(\gamma)p$. 
Then we can transplant the random walk generated by $\mu$ to 
the orbit $f_p(\Gamma)=\rho(\Gamma)p\subset Y$. 
The behavior of this transplanted random walk depends on how
$\Gamma$ acts on $Y$.  When $Y$ is a nonpositively curved
metric space, one can ask the following convergence question: 
for which action do almost all sample paths of the transplanted random
walk converge to the ``boundary at infinity'' $\partial Y$ of $Y$?
If almost all sample paths converge to the boundary at infinity,
then this means that any orbit map induces a canonical equivariant map from
the Poisson boundary $\partial (\Gamma,\mu)$, a probability space that
describes the distribution of the position of a random walk on $\Gamma$
generated by $\mu$ at ``time infinity'', into $\partial Y$.  
Such a boundary map is known to be very useful tool in rigidity theory 
as we can see, for example, in a pioneering work due to H.~Furstenberg
discussed in \cite{furstenberg}, and a superrigidity result due to
G.~Margulis \cite{margulis1}, \cite{margulis2}. 

There have been many works related to the convergence question above.
Lie groups and their subgroups acting on associated symmetric spaces are
classical and well-understood ones (see \cite{benoist-quint}). 
V.~Kaimanovich and H.~Masur \cite{kaimanovich-masur} treated mapping class
groups acting on Teichm\"uller spaces and showed that, under certain
conditions, the Poisson boundary of a mapping class group can be
realized on the Thurston boundary of corresponding Teichm\"uller space. 
Also a theorem due to Kaimanovich \cite{kaimanovich} says that it is the
case for hyperbolic-type groups acting on themselves, and that the
Poisson boundary of these groups
can be realized on the boundary at infinity of themselves.
Then, for example, F.~Gautero and F.~Math\'eus \cite{gautero-matheus}
treated certain groups acting on $\R$-trees, and J.~Maher and G.~Tiozzo
\cite{maher-tiozzo} treated groups acting on separable Gromov hyperbolic
spaces. Also groups acting on $\cat{0}$ cubical complexes were
considered by T.~Fern\'os, J.~L\'ecureux and F.~Math\'eus
\cite{fernos-lecureux-matheus}. 
Recently Chois \cite{chois} treated groups containing elements with
bounded geodesic image property acting on various nonpositively curved
spaces. The reader can find more works on the question in the references
of papers cited above.

On the other hand, there has been a known criterion for the
convergence to the boundary for groups acting on general nonpositively
curved metric spaces due to A.~Karlsson and G.~Margulis
\cite{karlsson-margulis}.
Consider the {\it rate of escape}
$l_{\rho}(\Gamma)=l_{\rho}(\Gamma,\mu)$ of an action given by $\rho$
which turns out to be equal to 
\begin{equation*}
 l_{\rho}(\Gamma) = \lim_{n\to \infty} 
  \frac{\int_{\Gamma} d_Y(p,\rho(\gamma)p) d\mu^n(\gamma)}{n}
\end{equation*}
(see \S~\ref{sec:rate_of_escape_and_hmap}). 
This limit is known to exist, and since the numerator of the right-hand
side is the average of the distance between the starting point and the
position at time $n$ of the transplanted random walk, it measures the
speed of the transplanted random walk's escape to the boundary at
infinity $\partial Y$ of $Y$.
Indeed, for a nonpositively curved metric space $Y$, 
A.~Karlsson and G.~Margulis proved in \cite{karlsson-margulis} that if
$l_{\rho}(\Gamma)>0$, then 
almost all sample paths of the transplanted random walk
leave every bounded subset fast enough and go almost straight toward
$\partial Y$; 
almost all sample paths converge to points in $\partial Y$.
However, it is usually difficult to check the positivity of
$l_{\rho}(\Gamma)$. For example, in
\cite{fernos-lecureux-matheus} and \cite{maher-tiozzo}, the positivity
of $l_{\rho}(\Gamma)$ was obtained as a consequence of the convergence
to the boundary.

If $l_{\rho}(\Gamma)=0$, then the transplanted random
walk is rather wandering around in $Y$.  
Thus one might think that information obtained from such a random
walk should contain lots of noise, and that it is hard to
understand.  
Indeed, it seems that there has been no known efficient method to handle 
this case in full generality.
However, under mild assumptions on $Y$ and $\rho$, our approach using 
harmonic functions and harmonic maps associated to $\mu$ and $\rho$
enables us to specify such an action as stated in
Theorem~\ref{thm:main-1} below; an action given by $\rho$ with
$l_{\rho}(\Gamma)=0$ turns out to be the simplest one in the sense that
it is reduced to that on a Euclidean space. 

\begin{Main}
\label{thm:main-1}
 Let $Y$ be a complete $\cat{0}$ space which is either proper or
 of finite telescopic dimension, 
 and $\Gamma$ a countable group equipped with a symmetric and 
 nondegenerate probablity measure $\mu$.
 Let $\Gamma$ act on $Y$ via a homomorphism 
 $\rho\colon \Gamma \rightarrow \isom{Y}$, and 
 suppose that $\rho(\Gamma)$ does not fix a point in 
 $\partial Y$ and that the action given by $\rho$ has finite
 second moment with respect to $\mu$.  
 If  $l_{\rho}(\Gamma,\mu)=0$, then there exists a flat subspace $F$ in
 $Y$, which is a convex subset of $Y$ isometric to $\R^n$ $(n\geq 0)$,
 left invariant under the action of $\rho(\Gamma)$. 
\end{Main}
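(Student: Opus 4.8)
The plan is to pass from the random walk to a single $\rho$-equivariant $\mu$-harmonic map $f\colon\Gamma\to Y$ and then read off the invariant flat from the geometry forced by $l_\rho(\Gamma)=0$. Recall that an equivariant map is determined by a base point $o=f(e)$, that its energy density is $E(f)=\tfrac12\sum_{s}\mu(s)\,d_Y(o,\rho(s)o)^2$, and that $f$ is $\mu$-harmonic exactly when $o=\bary{\sum_s\mu(s)\,\delta_{\rho(s)o}}$. I would organize the argument into three movements: (i) produce such an $f$; (ii) analyze the $\cat{0}$-valued martingale $M_n=f(Z_n)=\rho(Z_n)o$ attached to the $\mu$-random walk $Z_n$ and show that $l_\rho(\Gamma)=0$ forces its increments to behave Euclideanly; (iii) upgrade this ``infinitesimal flatness'' to an honest $\rho(\Gamma)$-invariant flat subspace.

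For (i), I would minimize the convex functional $o\mapsto E(f_o)$, equivalently run the energy-nonincreasing discrete heat flow $o_{t+1}=\bary{\sum_s\mu(s)\,\delta_{\rho(s)o_t}}$. Both standing hypotheses on $Y$ supply the convex-geometric compactness needed to control a minimizing sequence (local compactness when $Y$ is proper; circumcenters of bounded sets and nonempty intersections of filtering families of bounded closed convex sets, after Caprace--Lytchak, when $Y$ has finite telescopic dimension). The standard dichotomy for equivariant energy minimization then applies: either the infimum is attained, giving the desired $\mu$-harmonic $f$, or a minimizing sequence escapes to infinity and produces a $\rho(\Gamma)$-fixed point in $\partial Y$. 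The latter is excluded by hypothesis, so $f$ exists.

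The heart is (ii). Using the barycentric variance inequality valid in any $\cat{0}$ space, namely $\int d_Y(x,y)^2\,d\nu(y)\ge d_Y(x,\bary{\nu})^2+\int d_Y(\bary{\nu},y)^2\,d\nu(y)$, applied conditionally along $M_n$, one obtains an identity of the form $\mathbb{E}\,d_Y(o,M_n)^2=2n\,E(f)+D_n$, where $D_n\ge0$ accumulates the pointwise slacks from equality and hence measures the failure of the configurations $\{o,M_k,M_{k+1}\}$ to be Euclidean. The finite-second-moment hypothesis makes every term finite, while the reformulation $l_\rho(\Gamma)=\lim_n \tfrac1n\int d_Y(o,\rho(\gamma)o)\,d\mu^n(\gamma)$ together with $l_\rho(\Gamma)=0$ pins the first moment $\mathbb{E}\,d_Y(o,M_n)$ at sublinear growth. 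The key step, and the one I expect to be the main obstacle, is to convert this vanishing linear drift into $D_n=o(n)$ and then, since the defects are generated by a stationary mechanism along the walk, into the pointwise vanishing of every defect; as equality in the $\cat{0}$ variance inequality is exactly Euclidean behavior, this says the orbit $\rho(\Gamma)o$ lies in a flat. The model $\Gamma=\Z$ acting on $\R$ by a symmetric zero-drift translation walk, where $\mathbb{E}\,d_Y(o,M_n)^2=n=2n\,E(f)$ and $D_n\equiv0$, shows this is precisely the expected picture and that $E(f)$ need not vanish.

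Finally, for (iii) I would promote the flatness of the orbit to an invariant flat. The closed convex hull $\overbar{\ch{\rho(\Gamma)o}}$ is $\rho(\Gamma)$-invariant and, by (ii), carries the $d_Y^2$-geometry of a Euclidean space, hence is itself a flat $F\cong\R^n$; when only the orbit is shown flat, the $\cat{0}$ splitting theorem (proper case) or the Caprace--Lytchak structure theory (finite-telescopic-dimension case) extracts from it a canonical $\rho(\Gamma)$-invariant maximal flat, on which $\rho(\Gamma)$ acts by Clifford translations whose associated increment walk is symmetric and centered, consistent with $l_\rho(\Gamma)=0$. This $F$ is the required invariant flat, with $n=0$ recovering the case of a global fixed point.
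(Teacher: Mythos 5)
Your outline shares the paper's skeleton (produce an equivariant $\mu$-harmonic map, use $l_{\rho}(\Gamma)=0$ to force equality in a convexity inequality, then split off a flat), but the central analytic step (ii) has a genuine gap and, as stated, a wrong conclusion. First, $l_{\rho}(\Gamma)=0$ only says the \emph{first} moment $\mathbb{E}\,d_Y(o,M_n)$ is $o(n)$; this gives no control whatsoever on the second moment $\mathbb{E}\,d_Y(o,M_n)^2$ without uniform integrability (e.g.\ $d_Y(o,M_n)=n^2$ with probability $n^{-2}$ has first moment $O(1)$ and second moment $n^2$), so your slack $D_n$ cannot be shown to be $o(n)$ from the drift hypothesis. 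The paper avoids this entirely by working at the level of first moments: it pulls back the distance function itself, $\varphi(\gamma)=d(f(e),f(\gamma))$, which is $\mu$-subharmonic because $f$ is harmonic and $d(f(e),\cdot)$ is convex, and uses the telescoping identity $L^{n+1}(f)=L^{n}(f)+\alpha_n$ with $\alpha_n=\int(-\Delta\varphi)\,d\mu^n\ge 0$ to conclude $\liminf_n\alpha_n=0$ (Lemma~\ref{lem:alpha_n}). Second, even an averaged bound of this kind yields only a \emph{subsequence} of defects tending to zero at the random positions $\gamma_{n_j}(\omega)$; the defects are indexed by $Z_n\sim\mu^n$, whose law changes with $n$, so there is no stationarity from which to deduce pointwise vanishing of every defect. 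The paper must work to upgrade this: it replaces $\mu$ by a convex combination $\sum a_n\mu^n$ with full support (checking in \S\ref{sec:appendix} that this preserves $l_\rho=0$), renormalizes the distance functions from $f(\gamma_{n_j}(\omega)^{-1})$, and passes to a limit which is a distance function or a Busemann function whose pull-back is harmonic at $e$, then uses a two-step Laplacian argument (Proposition~\ref{prop:k-step_harmonicity}) to get harmonicity on all of $\Gamma$.

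Third, even granting all of that, the output is not ``the orbit lies in a flat.'' What one obtains is a single $\mu$-harmonic function of the form $f^*b_{\xi}$ for some $\xi$ at infinity, and the equality case of Proposition~\ref{prop:convexity_of_horosphere} then produces half-infinite parallelograms over every pair of orbit points asymptotic to $\xi$; this splits off exactly \emph{one} Euclidean factor, $\ch{f(\Gamma)}=H\times Z$, via the Clifford-translation argument of Lemma~\ref{lem:core}~$(4)$. One must then iterate the whole construction on $Z$ (where $l_{\rho_2}=0$ persists and the dichotomy ``distance function vs.\ Busemann function'' reappears, the former case finally yielding a fixed point) to arrive at the invariant flat $H\times\{p\}$. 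Your step (iii) collapses this induction into a single appeal to the splitting theorem with an input (global flatness of the orbit's convex hull) that step (ii) does not deliver. Finally, in the non-proper finite-telescopic-dimension case the limits $\xi$, $\xi^{\pm}$ live a priori only in an ultralimit $Y_{\infty}$, and a substantive compactness theorem (Theorem~\ref{thm:compactness}, proved via the nondegenerate Euclidean cones of \S\ref{sec:key-lemma-proof} and the dimension bound) is needed to bring them back to $\partial Y$; this issue is absent from your sketch.
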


The case $n=0$ in the theorem above should be understood as that
$\rho(\Gamma)$ fixes a point in $Y$. 
Let us briefly explain the assumptions in Theorem~\ref{thm:main-1}. 
(See \S~\ref{sec:harmonic_maps} for precise definitions and more
details.)
To obtain sufficient information of actions of $\Gamma$ from the
random walk generated by $\mu$, we assume that 
$\mu$ is symmetric and nondegenerate.
Namely, $\mu$ satisfies $\mu(\gamma^{-1})=\mu(\gamma)$, and the support
$\supp{\mu}$ of $\mu$ generates $\Gamma$. 
We also assume that the action given by $\rho$ has finite second
moment with respect to $\mu$:
\begin{equation*}
 \int_{\Gamma} d_S(p,\rho(\gamma)p)^2 d\mu(\gamma)< \infty
\end{equation*}
holds for $p \in Y$; 
it is easy to see that, by the triangle inequality
and the Schwarz inequality, this does not depend on the choice of 
$p \in Y$.  If $\Gamma$ is finitely generated and $\mu$ has finite
second moment with respect to some (hence any) word metric on $\Gamma$,
then any $\rho \colon \Gamma \rightarrow \isom{Y}$ gives the action with
finite second moment (see \S~\ref{sec:harmonic_maps}). 
A metric space under consideration is a nonpositively curved metric
space called $\cat{0}$ space. 
An important feature of $\cat{0}$ spaces on which our approach relies
is the convexity of distance functions. 
We also assume that a $\cat{0}$ space $Y$ is either proper or of finite
telescopic dimension in order to provide $Y$ with a certain compactness
property. 
Here a $\cat{0}$ space $Y$ is called {\it proper} if every closed metric
ball $B(p,r)$, $p\in Y$, $r>0$, is compact, while a $\cat{0}$ space $Y$
is said to have {\it telescopic dimension} at most $n$
if every asymptotic cone of $Y$ has geometric dimension at most $n$; 
the notion of geometric dimension and telescopic dimension was 
introduced by B.~Kleiner in \cite{kleiner} and by
P.-E.~Caprace and A.~Lytchak in \cite{caprace1}, respectively. 
(See \S~\ref{sec:finite_dim_properties} for more details.)
The assumption that $\rho(\Gamma)$ does not fix a point in $\partial Y$
guarantees the existance of a $\rho$-equiavariant $\mu$-harmonic map 
$f \colon \Gamma \rightarrow Y$. 
Here, a $\rho$-equivariant map $f\colon \Gamma \rightarrow Y$ is called 
$\mu$-{\it harmonic} if $f$ minimizes the energy $\ene{\mu}$ defined by
\begin{equation*}
 \ene{\mu}(f)=
 \frac{1}{2} \int_{\Gamma} d_Y(f(e),f(\gamma))^2 d\mu(\gamma)
\end{equation*}
among all $\rho$-equivariant maps. 
As equivariant maps from the Poisson boundary, equivariant harmonic
maps are also known to be useful tools in rigidity theory. 
For example, K.~Corlette \cite{corlette}, 
N.~Mok, Y.-T.~Siu and S.-K.~Yeung \cite{mok-siu-yeung}, 
J.~Jost and S.-T.~Yau \cite{jost-yau1}, \cite{jost-yau2}, and 
M.~Gromov and R.~Schoen \cite{gromov-schoen} used the existence
of nonconstant equivariant harmonic maps from Riemannian symmetric
spaces in order to prove superrigidity results for lattices in Lie
groups, where the local geometric structure of the domains of harmonic
maps has key information to determine the images of these harmonic
maps and deduce superrigidity results. 
Also discrete equivariant harmonic maps from the vertex set of a
simplicial complex or a finitely generated group itself were 
treated by,
for example, M.~Gromov \cite{gromov}, M.-T.~Wang \cite{wang}, and 
H.~Kondo, S.~Nayatani and the author \cite{izeki-nayatani},
\cite{izeki-kondo-nayatani1}, \cite{izeki-kondo-nayatani2},
\cite{izeki}, where various fixed-point theorems for isometric actions
of finitely generated groups on $\cat{0}$ spaces were obtained from the
existence of constant equivariant harmonic maps. 
While in the present paper, we prove Theorem~\ref{thm:main-1} by using
the existence of a nonconstant equivariant harmonic map defined on a
countable group. 
Typically, the image of our harmonic map is nowhere dense in the
target space or even in the convex hull of the image; the space we need
to specify its structure is far from being covered by the image. 
In order to prove our theorem, however, we need to fill this huge
complement. 
We achieve this by using the convexity of distance
functions of the target space effectively. 

 The outline of the proof of Theorem~\ref{thm:main-1} is as follows.  
 Since $Y$ is either proper or of finite telescopic dimension, under the
 assumption that $\rho(\Gamma)$ does not fix a point in $\partial Y$,
 one sees that there exists a $\rho$-equivariant $\mu$-harmonic map 
 $f\colon\Gamma \rightarrow Y$ (see \S~\ref{sec:harmonic_maps} and
 Proposition~\ref{thm:existence_finite_teledim}). 
 A $\mu$-harmonic map has a property that the pull-back of a convex
 function $u$ on the target space $Y$ becomes a $\mu$-subharmomnic
 function on $\Gamma$, that is, $-\Delta f^*u(\gamma)\geq 0$ holds for
 any $\gamma \in \Gamma$.
 Here, one may regard the value 
 $|-\Delta f^*u(\gamma)|$ as a quantity that measures the strength of
 the convexity of a function $u$ around $f(\gamma)$. 
 We note here that a distance function $p\mapsto d(p,p_0)$ is a convex
 function on $Y$, since $Y$ is a $\cat{0}$ space, 
 and that the strength of the convexity of a distance function should be
 related to the strength of the nonpositivity (negativity) of the
 ``curvature'' of $Y$. 
 Now, under the assumption that $l_{\rho}(\Gamma)=0$, we can 
 show that there exists a sequence $\{\gamma_j\}\subset \Gamma$ such that 
 the pull-back of $u_j\colon p \mapsto d(p, f(\gamma_j))$ 
 approaches a $\mu$-harmonic function on $\Gamma$. 
 By our assumption that $Y$ is either proper or of finite telescopic
 dimension, after a suitable normalization, we obtain a limit function
 $u$ of $\{u_j\}$ which is either a distance function from a point in
 $Y$ or a Busemann function associated to a point in $\partial Y$
 (see \S~\ref{sec:rate_of_escape_and_hmap} and
 \S~\ref{sec:global_harmonicity}). 
 Then the pull-back $f^*u$ of the limit function $u$ is $\mu$-harmonic,
 that is, $-\Delta f^*u(\gamma)=0$ holds for any $\gamma \in \Gamma$,
 and hence $u$ has the weakest convexity on the convex hull
 $\ch{f(\Gamma)}$ of $f(\Gamma)$. 
 This suggests that the nonpositivity of the curvature of
 $\ch{f(\Gamma)}$ is the weakest one, namely, $\ch{f(\Gamma)}$ must be
 a flat subspace in $Y$. 

 Combining Theorem~\ref{thm:main-1} with a theorem due to A.~Karlsson
 and G.~Margulis mentioned above, we immediately obtain the following
 theorem, which is a refinement of a part of a theorem due to U.~Bader,
 B.~Duchesne, and J.~L\'ecureux 
 \cite[Theorem 1.1]{bader-duchesne-lecureux}. 

\begin{Main}
\label{thm:main-2}
 Let $Y$ be a complete $\cat{0}$ space which is either proper or
 of finite telescopic dimension, and $\Gamma$ a countable group
 with a symmetric and nondegenerate probability measure $\mu$.
 Let $\rho\colon \Gamma \rightarrow \isom{Y}$ be a homomorphism and 
 suppose that $\rho(\Gamma)$ does not fix a point in 
 $\partial Y$ and that the action given by $\rho$ has finite
 second moment with respect to $\mu$.
 Then either of the following is true. 

  $(1)$ An orbit map induces a canonical $\rho$-equivariant map from the
 Poisson boundary $\partial (\Gamma,\mu)$ of $\Gamma$ into the boundary
 $\partial Y$ of $Y$. 

 $(2)$ There exists a flat subsapce of $Y$ left invariant under the
 action of $\rho(\Gamma)$. 
\end{Main}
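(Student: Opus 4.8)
The plan is to prove Theorem~\ref{thm:main-2} as an immediate consequence of Theorem~\ref{thm:main-1} together with the Karlsson--Margulis convergence criterion, by a dichotomy on the rate of escape. Since the action has finite second moment and $\mu$ is a probability measure, the Schwarz inequality shows that the action also has finite first moment, so the rate of escape $l_{\rho}(\Gamma,\mu)$ is well defined as the limit recalled in the introduction. I would then distinguish the two cases $l_{\rho}(\Gamma,\mu)=0$ and $l_{\rho}(\Gamma,\mu)>0$.

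Suppose first that $l_{\rho}(\Gamma,\mu)=0$. All the hypotheses of Theorem~\ref{thm:main-1} are then in force: $Y$ is a complete $\cat{0}$ space which is proper or of finite telescopic dimension, $\mu$ is symmetric and nondegenerate, $\rho(\Gamma)$ fixes no point in $\partial Y$, and the action has finite second moment. Applying Theorem~\ref{thm:main-1} directly yields a flat subspace $F\subset Y$ left invariant under $\rho(\Gamma)$, which is precisely conclusion~$(2)$.

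Suppose instead that $l_{\rho}(\Gamma,\mu)>0$. Here I would invoke the theorem of A.~Karlsson and G.~Margulis \cite{karlsson-margulis}. Fixing a base point $p\in Y$ and transplanting the random walk to the orbit $\rho(\Gamma)p$ via the orbit map $f_p$, positivity of the rate of escape together with finite first moment guarantees that, for almost every sample path $\omega=(\gamma_1,\gamma_2,\dots)$, the sequence $\rho(\gamma_1\gamma_2\cdots\gamma_n)p$ converges to a well-defined point $\mathrm{bnd}(\omega)\in\partial Y$, independent of the choice of $p$. This defines a measurable, $\rho$-equivariant boundary map from the path space into $\partial Y$. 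As recalled in the introduction, the independence of the limit under the shift makes this map measurable with respect to the invariant $\sigma$-algebra underlying the Poisson boundary, so that it factors through $\partial(\Gamma,\mu)$ and gives the canonical $\rho$-equivariant map of conclusion~$(1)$.

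The proof of Theorem~\ref{thm:main-2} is thus formally short: the two cases are exhaustive and mutually exclusive, and each supplies one of the two conclusions. The substantive content lies entirely in Theorem~\ref{thm:main-1}; the only point requiring (standard) care in the case $l_{\rho}(\Gamma,\mu)>0$ is that the pointwise boundary convergence of Karlsson--Margulis descends from individual sample paths to a map defined on the Poisson boundary, which is exactly the identification already recalled in the introduction. Consequently I do not expect any genuinely new obstacle in assembling Theorem~\ref{thm:main-2} from the ingredients established earlier.
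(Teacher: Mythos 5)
Your proposal is correct and follows exactly the route the paper takes: the paper derives Theorem~\ref{thm:main-2} immediately by combining Theorem~\ref{thm:main-1} (for the case $l_{\rho}(\Gamma,\mu)=0$) with the Karlsson--Margulis convergence theorem (for the case $l_{\rho}(\Gamma,\mu)>0$), precisely the dichotomy you describe. The observation that finite second moment implies the finite first moment needed for Karlsson--Margulis is the only auxiliary point, and you handle it correctly.
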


 For example, if $\Gamma$, equipped with the discrete topology,
 has Kazhdan's property $(T)$, then 
 $\Gamma$ is finitely generated and any
 isometric action of $\Gamma$ on a Hilbert space (a Euclidean space) has
 a fixed point (see for example \cite{bekka-delaharpe-valette}). 
 Therefore we obtain the following. 

\begin{Cor} Let $\Gamma$ be a discrete group with Kazhdan's
 property $(T)$, and $\mu$ a symmetric and nondegenerate probability
 measure $\mu$ on $\Gamma$ with finite second moment. 
 Let $Y$ be a complete $\cat{0}$ space which is either proper or
 of finite telescopic dimension, and 
 $\rho\colon \Gamma \rightarrow \isom{Y}$ a homomorphism. 
 Suppose that $\rho(\Gamma)$ does not fix a point in $Y\cup \partial Y$.
 Then any orbit map induces a canonical $\rho$-equivariant map from the
 Poisson boundary $\partial (\Gamma,\mu)$ of $\Gamma$ into the boundary
 $\partial Y$ of $Y$. 
\end{Cor}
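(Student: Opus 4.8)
The plan is to deduce this corollary directly from Theorem~\ref{thm:main-2} together with the defining fixed-point property of Kazhdan groups, so that essentially all substantive content is inherited from the earlier results. First I would observe that, since $\Gamma$ has property $(T)$, it is finitely generated; combined with the hypothesis that $\mu$ has finite second moment (with respect to a word metric on $\Gamma$), the remark in \S~\ref{sec:harmonic_maps} shows that the action given by any homomorphism $\rho$ automatically has finite second moment with respect to $\mu$. Thus every hypothesis of Theorem~\ref{thm:main-2} is in force: $Y$ is a complete $\cat{0}$ space that is proper or of finite telescopic dimension, $\mu$ is symmetric and nondegenerate, and the standing assumption that $\rho(\Gamma)$ fixes no point of $Y\cup\partial Y$ in particular guarantees that $\rho(\Gamma)$ fixes no point of $\partial Y$.

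Next I would invoke Theorem~\ref{thm:main-2}, which asserts that one of two alternatives holds. In alternative $(1)$ the orbit map induces the desired canonical $\rho$-equivariant map $\partial(\Gamma,\mu)\to\partial Y$, which is exactly the conclusion sought, so nothing further is required. The whole argument therefore reduces to excluding alternative $(2)$.

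To rule out alternative $(2)$, suppose there were a flat subspace $F\subset Y$, a convex subset isometric to $\R^n$ for some $n\geq 0$, left invariant under $\rho(\Gamma)$. Then $\rho$ restricts to an isometric action of $\Gamma$ on $F\cong\R^n$, that is, an action by affine isometries on a finite-dimensional Euclidean space. Since $\Gamma$ has property $(T)$, every isometric action of $\Gamma$ on a Euclidean (indeed Hilbert) space has a fixed point (as noted in the text preceding the corollary, cf.\ \cite{bekka-delaharpe-valette}). Applying this to $F\cong\R^n$ produces a point $p_0\in F\subset Y$ fixed by $\rho(\Gamma)$; when $n=0$ the flat is already a single fixed point. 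This contradicts the hypothesis that $\rho(\Gamma)$ fixes no point of $Y\cup\partial Y$. Hence alternative $(2)$ cannot occur, and alternative $(1)$ must hold.

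The substantive mathematical content lies entirely in Theorems~\ref{thm:main-1} and~\ref{thm:main-2}; the only points demanding care here are the verification that finite second moment of the action is inherited from $\mu$ via finite generation, and the translation of property $(T)$ into the Euclidean fixed-point property that collapses the flat case. I do not expect any genuine obstacle beyond bookkeeping these two reductions, so the proof should be short and formal.
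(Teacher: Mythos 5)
Your proposal is correct and follows essentially the same route as the paper: the text immediately preceding the corollary indicates exactly this argument, namely that property $(T)$ gives finite generation (hence finite second moment of the action via Lemma~\ref{lem:integral}) and a fixed point for any isometric action on a Euclidean space, which rules out alternative $(2)$ of Theorem~\ref{thm:main-2} under the hypothesis that $\rho(\Gamma)$ fixes no point of $Y\cup\partial Y$. No gaps.
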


Since a group $\Gamma$ with Kazhdan's property $(T)$ is known to
have an ``entropy gap'', the assertion of the corollary above says that 
there is certain restrictions on isometric actions of $\Gamma$ on
$\cat{0}$ spaces as it has been suggested in many works from various
viewpoints.

If $Y$ is locally finite-dimensional $\cat{0}$ space, we can show a
slightly weaker result. 
We assume that $\rho(\Gamma)$ is 
reductive in the sense of Jost
(see \S~\ref{sec:harmonic_maps} for the definition), which is stronger
than the assumption that there exists a point in $\partial Y$ fixed by
$\rho(\Gamma)$; we need this assumption to deduce the existence of a
$\rho$-equivariant $\mu$-harmonic map from $\Gamma$ into $Y$. 

\begin{Main}
\label{thm:main-3}
 Let $Y$ be a complete, locally finite-dimensional $\cat{0}$ space,  
 and $\Gamma$ a countable group equipped with a symmetric and 
 nondegenerate probablity measure $\mu$. 
 Let $\Gamma$ act on $Y$ via a homomorphism 
 $\rho\colon \Gamma \rightarrow \isom{Y}$, and 
 suppose that $\rho(\Gamma)$ is 
 reductive in the sense of Jost and that the action given by
 $\rho$ has finite second moment with respect to $\mu$.
 If  $l_{\rho}(\Gamma,\mu)=0$,  then there exists a flat subspace
 of $Y$ left invariant under the action of $\rho(\Gamma)$. 
\end{Main}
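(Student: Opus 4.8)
The plan is to follow the same three-step strategy used to prove Theorem~\ref{thm:main-1}, modifying only the two places where the global compactness of $Y$ was exploited. First I would establish the existence of a $\rho$-equivariant $\mu$-harmonic map $f\colon\Gamma\to Y$. This is precisely the role played by the reductivity assumption: whereas in Theorem~\ref{thm:main-1} the combination of ``$\rho(\Gamma)$ fixes no point in $\partial Y$'' with the compactness coming from properness or finite telescopic dimension yielded a minimizer of $\ene{\mu}$, here the hypothesis that $\rho(\Gamma)$ is reductive in the sense of Jost is designed exactly to guarantee that the energy functional attains its infimum among $\rho$-equivariant maps (see the existence results in \S\ref{sec:harmonic_maps}). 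So I would simply invoke that existence statement to produce $f$.

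Second, the mechanism converting the dynamical hypothesis $l_{\rho}(\Gamma)=0$ into harmonicity of a pulled-back distance function carries over verbatim, since it uses only the harmonicity of $f$ and the convexity of distance functions, not the ambient dimension. Concretely, I would recall that for any convex function $u$ on $Y$ the pull-back $f^*u$ is $\mu$-subharmonic, i.e.\ $-\Delta f^*u\geq 0$, and then reproduce the argument of \S\ref{sec:rate_of_escape_and_hmap} showing that $l_{\rho}(\Gamma)=0$ forces a sequence $\{\gamma_j\}\subset\Gamma$ for which the pull-backs of the distance functions $u_j\colon p\mapsto d(p,f(\gamma_j))$ become asymptotically $\mu$-harmonic.

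The only genuinely new point, and the step I expect to be the \emph{main obstacle}, is the extraction of a limit function $u$ from the normalized $u_j$ together with the verification that $f^*u$ is exactly $\mu$-harmonic; this is where the proof of Theorem~\ref{thm:main-1} used the compactness furnished by properness or finite telescopic dimension, which is no longer available. To compensate I would restrict attention to the convex hull $\ch{f(\Gamma)}$, a separable $\cat{0}$ subspace carrying all the relevant geometry, and use local finite-dimensionality of $Y$ there to pass to a limit that is either a distance function from a point of $Y$ or a Busemann function associated to a point of $\partial Y$, arguing as in \S\ref{sec:global_harmonicity}. The delicate issue is that local finite-dimensionality does not by itself supply the global compactness of bounded sets or of asymptotic cones used before, so one has to ensure that the directions picked out by the escaping sequence $f(\gamma_j)$ remain within a controlled, finite-dimensional part of the space and that the normalization yields a genuine, non-degenerate distance or Busemann limit rather than a trivial one. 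This is the technical heart of the locally finite-dimensional case and the reason the hypotheses are strengthened to reductivity.

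Finally, once $f^*u$ is $\mu$-harmonic for such a $u$, the convexity of $u$ is used up only at the boundary, so $u$ has the weakest convexity along $\ch{f(\Gamma)}$; as in the proof of Theorem~\ref{thm:main-1} this forces $\ch{f(\Gamma)}$ to be flat. Since $f$ is $\rho$-equivariant, $f(\Gamma)$ and hence $\ch{f(\Gamma)}$ is left invariant by $\rho(\Gamma)$, and a flat subspace of a locally finite-dimensional $\cat{0}$ space is necessarily finite-dimensional, that is, isometric to some $\R^n$. This produces the desired $\rho(\Gamma)$-invariant flat subspace and completes the plan.
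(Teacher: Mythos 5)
Your skeleton matches the paper's: reductivity in the sense of Jost is indeed used exactly once, to produce a $\rho$-equivariant $\mu$-harmonic map (Corollary~\ref{cor:existence}), and from there the paper declares the argument ``completely the same as the proof of Theorem~\ref{thm:main-1}.'' Steps one and two of your plan are therefore fine. The problem is that the step you flag as the ``main obstacle'' is left unresolved, and the substitute you sketch for it would not work. Restricting to $\ch{f(\Gamma)}$ and invoking separability plus local finite-dimensionality does not recover any compactness: a locally finite-dimensional (even one-dimensional, separable) $\cat{0}$ space can have bounded sets with no convergent subsequences, and the normalized distance functions $u_j$ then have no limit that is a distance function from a point of $Y$ or a Busemann function of a point of $\partial Y$. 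Indeed the paper's Example~\ref{example:larger_ultralmit} exhibits exactly this phenomenon: the natural limit object lives only in an ultralimit $Y_{\infty}=\ulim(Y,o)$, which is strictly larger than $Y$.

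The missing content is twofold. First, Proposition~\ref{prop:global-mu-harmonicity}: after replacing $\mu$ by a convex combination $\sum_n a_n\mu^n$ with $\supp\mu\supset\Gamma\setminus\{e\}$ (which requires the appendix estimate to see that the rate of escape stays zero), one gets a point $\xi\in Y_{\infty}\cup\partial Y_{\infty}$ whose associated distance/Busemann function pulls back to a function that is $\mu$-harmonic on \emph{all} of $\Gamma$, via the two-step Laplacian trick of Proposition~\ref{prop:k-step_harmonicity}. Second, and this is the real technical heart, Theorem~\ref{thm:compactness}: if $\gamma\mapsto b_{\xi}(f(\gamma),f(e))$ is $\mu$-harmonic for $\xi=\ulim_j f(\gamma_j)$, then in fact $\xi\in\partial Y$ and a subsequence of $\{f(\gamma_j)\}$ converges to $\xi$ in the cone topology. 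Its proof is by contradiction: if $\zure=\ulim_j\angle_{f(e)}(f(\gamma_j),\xi)>0$, then iterated ultralimits $Y^{(1)},Y^{(2)},\dots$ produce boundary points $\xxi{1},\dots,\xxi{m}$ pairwise at angle $\zure$ whose cone $\cone[x]{m}$ contains a regular Euclidean $(m-1)$-simplex for every $m$; since ultralimits inherit the geometric/telescopic dimension bounds (Proposition~\ref{prop:dimension_of_ulim}), this contradicts local finite-dimensionality. The same device is needed to make $\{f(\gamma_j^{-1})\}$ and $\{\rho(\gamma_j^{-1})\xi\}$ converge, which your plan also presupposes. Finally, a smaller point: the conclusion is not that $\ch{f(\Gamma)}$ is flat, but that it splits as $H\times Z$ with $H$ a (here necessarily finite-dimensional) Hilbert space, after which the whole argument is iterated on $Z$ until one finds a $\rho_2(\Gamma)$-fixed point $p\in Z$, yielding the invariant flat $H\times\{p\}$.
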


 It is clear that an analogue of Theorem~\ref{thm:main-2} holds for a 
 locally finite-dimensional $\cat{0}$ space $Y$. 

\begin{Main}
\label{thm:main-4}
 Let $Y$ be a complete, locally finite-dimensional $\cat{0}$ space,
 and $\Gamma$ a countable group
 with a symmetric and nondegenerate probability measure $\mu$.
 Let $\rho\colon \Gamma \rightarrow \isom{Y}$ be a homomorphism and 
 suppose that $\rho(\Gamma)$ is 
 reductive in the sense of Jost and that the action given by
 $\rho$ has finite second moment with respect to $\mu$.
 Then either of the following is true. 

  $(1)$ An orbit map induces a canonical $\rho$-equivariant map from the
 Poisson boundary $\partial (\Gamma,\mu)$ of $\Gamma$ into the boundary
 $\partial Y$ of $Y$. 

 $(2)$ There exists a flat subsapce of $Y$ left invariant under the
 action of $\rho(\Gamma)$. 
\end{Main}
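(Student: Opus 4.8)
The plan is to deduce Theorem~\ref{thm:main-4} from Theorem~\ref{thm:main-3} in precisely the way Theorem~\ref{thm:main-2} is deduced from Theorem~\ref{thm:main-1}: by a dichotomy on the rate of escape, handling the positive-drift case with the Karlsson--Margulis criterion \cite{karlsson-margulis} and the zero-drift case with Theorem~\ref{thm:main-3}. First I would record that the finite second moment hypothesis forces a finite first moment, since $\int_{\Gamma} d_Y(p,\rho(\gamma)p)\, d\mu(\gamma)\leq \bigl(\int_{\Gamma} d_Y(p,\rho(\gamma)p)^2\, d\mu(\gamma)\bigr)^{1/2}$ by the Schwarz inequality. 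Hence the subadditive limit defining $l_{\rho}(\Gamma,\mu)$ exists and is a finite nonnegative number, and exactly one of $l_{\rho}(\Gamma,\mu)>0$ and $l_{\rho}(\Gamma,\mu)=0$ occurs.

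Suppose first that $l_{\rho}(\Gamma,\mu)>0$. Since $Y$ is a complete $\cat{0}$ space and the transplanted random walk has finite first moment and positive rate of escape, the theorem of Karlsson and Margulis applies: almost every sample path leaves every bounded set and tracks a geodesic ray almost straight, so almost every sample path converges to a point of $\partial Y$. Assigning to each sample path its limit point and passing to the Poisson boundary $\partial(\Gamma,\mu)$ yields a measurable, $\rho$-equivariant boundary map $\partial(\Gamma,\mu)\to\partial Y$ induced by the orbit map, which is alternative $(1)$. I note that this half of the argument needs neither local finite-dimensionality of $Y$ nor reductivity of $\rho(\Gamma)$.

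Suppose instead that $l_{\rho}(\Gamma,\mu)=0$. Then all hypotheses of Theorem~\ref{thm:main-3} hold: $Y$ is a complete, locally finite-dimensional $\cat{0}$ space, $\mu$ is symmetric and nondegenerate, $\rho(\Gamma)$ is reductive in the sense of Jost, and the action has finite second moment. Theorem~\ref{thm:main-3} then provides a flat subspace of $Y$ invariant under $\rho(\Gamma)$, which is alternative $(2)$. Combining the two cases gives the asserted dichotomy.

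The genuine content lies entirely in Theorem~\ref{thm:main-3}, which I am entitled to assume; the present statement is only its packaging together with Karlsson--Margulis. The one point I would therefore verify with care is the applicability of \cite{karlsson-margulis} in the locally finite-dimensional setting, namely that the almost-sure limit of the sample paths lands in the visual boundary $\partial Y$ rather than in some larger horofunction boundary, and that the induced map is measurable and genuinely $\rho$-equivariant. Because the ray produced by the Karlsson--Margulis construction has a well-defined endpoint in $\partial Y$ in any complete $\cat{0}$ space, I expect this to be a routine verification rather than a real obstacle, exactly parallel to the proof of Theorem~\ref{thm:main-2}.
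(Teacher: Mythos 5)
Your proposal matches the paper's intended argument exactly: the paper derives Theorem~\ref{thm:main-4} from Theorem~\ref{thm:main-3} by the same dichotomy on $l_{\rho}(\Gamma,\mu)$ used to obtain Theorem~\ref{thm:main-2} from Theorem~\ref{thm:main-1}, with the positive-drift case handled by the Karlsson--Margulis multiplicative ergodic theorem and the zero-drift case by the invariant flat from Theorem~\ref{thm:main-3}. Your additional remarks (finite first moment via the Schwarz inequality, and the routine check that the Karlsson--Margulis limit ray has an endpoint in the visual boundary $\partial Y$) are correct and consistent with the paper's treatment.
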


 We also point out that, as a byproduct of our result, we obtain
 a certain gap theorem on the energy growth of equivariant harmonic
 maps. 
 For a $\rho$-equivariant map $f\colon \Gamma \rightarrow Y$, 
 $n$-step energy $\ene{\mu^n}(f)$ is defined by
\begin{equation*}
 \ene{\mu^n}(f) = 
 \frac{1}{2} \int_{\Gamma} d_Y(f(e),f(\gamma))^2 d\mu^n(\gamma). 
\end{equation*}
 It is shown in \cite{izeki-kondo-nayatani1} that, 
 for any $\rho$-equivariant map into a $\cat{0}$ space $Y$,  
 $\ene{\mu^n}(f)\leq Cn^2\ene{\mu}(f)$ holds for some $C>0$, and 
 that if $f$ is a $\rho$-equivariant $\mu$-harmonic map into a Hilbert
 space (a Euclidean space), then 
\begin{equation*}
 \ene{\mu^n}(f) = n \ene{\mu}(f)
\end{equation*}
 holds. 
 On the other, if $l_{\rho}(\Gamma)>0$, then the Schwarz inequality
 tells us that 
\begin{equation*}
 \liminf_{n\to \infty} \frac{\ene{\mu^n}(f)}{n^2} > 0.
\end{equation*}
 Therefore we obtain: 

\begin{Main}
\label{thm:main-5}
 Let $Y$ be a $\cat{0}$ space which is either proper, of
 finite telescopic dimension, or of locally finite dimension.
 And let $\Gamma$ be a countable group with a symmetric and
 nondegenerate probability measure $\mu$.
 Suppose that the action given by a homomorphism 
 $\rho\colon \Gamma \rightarrow \isom{Y}$ has finite second moment with
 respect to $\mu$ and that
 there exists a nonconstant $\rho$-equivariant
 $\mu$-harmonic map $f$. 
 Then either $\ene{\mu^n}(f)=n\ene{\mu}(f)$ or 
 $\ene{\mu^n}(f)\asymp n^2$ holds. 
\end{Main}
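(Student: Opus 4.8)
The plan is to dichotomize according to whether the rate of escape $l_{\rho}(\Gamma)$ vanishes, recognizing that the two alternatives in the statement correspond exactly to the two cases $l_{\rho}(\Gamma)>0$ and $l_{\rho}(\Gamma)=0$. The two facts recalled in the paragraph preceding the statement, together with the flatness conclusion of Theorems~\ref{thm:main-1} and \ref{thm:main-3}, will supply essentially all of the input.

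First I would dispose of the case $l_{\rho}(\Gamma)>0$. Taking base point $p=f(e)$ and using $\rho$-equivariance, the numerator defining the rate of escape equals $\int_{\Gamma} d_Y(f(e),f(\gamma))\,d\mu^n(\gamma)$. The Schwarz inequality bounds the square of this by $\int_{\Gamma} d_Y(f(e),f(\gamma))^2\,d\mu^n(\gamma)=2\ene{\mu^n}(f)$, so that $l_{\rho}(\Gamma)^2\leq \liminf_{n\to\infty} 2\ene{\mu^n}(f)/n^2$; since $l_{\rho}(\Gamma)>0$, this is a positive lower bound of order $n^2$. Combining it with the upper bound $\ene{\mu^n}(f)\leq Cn^2\ene{\mu}(f)$ from \cite{izeki-kondo-nayatani1} yields $\ene{\mu^n}(f)\asymp n^2$.

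Next I would treat $l_{\rho}(\Gamma)=0$. The key observation is that the existence of a nonconstant $\rho$-equivariant $\mu$-harmonic map $f$ is precisely the input that powers the proofs of Theorems~\ref{thm:main-1} and \ref{thm:main-3}: the hypotheses there that $\rho(\Gamma)$ does not fix a point in $\partial Y$, or is reductive in the sense of Jost, are used only to \emph{produce} such a map, which we are now handed directly. Running the core of those arguments (the pull-back of the limiting distance or Busemann function is $\mu$-harmonic, which forces the weakest possible convexity) shows that the convex hull $\ch{f(\Gamma)}$ is a flat subspace, isometric to some $\R^k$. Since $f(\Gamma)\subset \ch{f(\Gamma)}$ and this flat is convex, $f$ is a $\rho$-equivariant $\mu$-harmonic map into a Euclidean space, and the identity for harmonic maps into Hilbert spaces from \cite{izeki-kondo-nayatani1} gives $\ene{\mu^n}(f)=n\ene{\mu}(f)$.

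The main obstacle is the bookkeeping in this second case: one must verify that the flatness conclusion of Theorems~\ref{thm:main-1} and \ref{thm:main-3} can be extracted in the sharper form asserting flatness of $\ch{f(\Gamma)}$ itself, rather than merely of some invariant flat, and that the restriction of $f$ to this flat still minimizes energy among equivariant maps, so that the Euclidean identity applies verbatim (this is immediate, since $f$ already takes values in the flat and minimizes over equivariant maps into the larger space $Y$). Everything else is elementary; in particular the two cases are mutually exclusive for nonconstant $f$, because $\ene{\mu}(f)>0$ makes the linear growth $n\ene{\mu}(f)$ incompatible with growth of order $n^2$.
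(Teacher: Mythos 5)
Your overall strategy---dichotomizing on $l_{\rho}(\Gamma)$---is exactly the one the paper intends (the paper records only the three ingredients and writes ``Therefore we obtain''), and your treatment of the case $l_{\rho}(\Gamma)>0$ is complete and correct: the Schwarz inequality gives $\bigl(L^n(f)\bigr)^2\leq 2\ene{\mu^n}(f)$ with $L^n(f)/n\to l_{\rho}(\Gamma)>0$, and the quadratic upper bound from \cite{izeki-kondo-nayatani1} closes that case. Your observation that flatness of $\ch{f(\Gamma)}$ itself (rather than of some invariant flat) is what is needed, and that $f$ then minimizes energy among equivariant maps into that flat, is also the right way to finish.

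The gap is in the case $l_{\rho}(\Gamma)=0$, in your assertion that the hypotheses of Theorems~\ref{thm:main-1} and \ref{thm:main-3} (no fixed point in $\partial Y$, resp.\ reductivity in the sense of Jost) ``are used only to produce'' the harmonic map. They are used twice. The proofs of those theorems first replace $\mu$ by a convex combination $\mu'=\sum_n a_n\mu^n$ with $\supp\mu'\supset\Gamma\setminus\{e\}$---a normalization that Lemma~\ref{lem:core} and Proposition~\ref{prop:k-step_harmonicity} explicitly require in order to upgrade an identity holding $\mu$-a.e.\ to one holding for \emph{every} $\gamma\in\Gamma\setminus\{e\}$---and then invoke the dropped hypothesis a second time to obtain a $\mu'$-harmonic map. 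A $\mu$-harmonic map into a $\cat{0}$ space is not known to be $\mu'$-harmonic (for Hilbert targets this is automatic by linearity, but for general $\cat{0}$ targets the comparison of $\ene{\mu^n}(f)$ with $n\ene{\mu}(f)$ is exactly the nontrivial point at issue), so the map $f$ you are ``handed directly'' cannot simply be fed into Lemma~\ref{lem:core} with the measure $\mu'$. Nor would it suffice to produce some other $\mu'$-harmonic map $f'$ with $\ch{f'(\Gamma)}$ flat, or merely an invariant flat $F$: projecting $f$ onto $F$ is energy-nonincreasing and only yields $\ene{\mu^n}(f)\geq n\ene{\mu}(f)$, whereas the conclusion requires the original $f$ itself to take values in a flat convex subset. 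To close the argument you must either rerun the core argument for the original $\mu$ (e.g.\ showing via the operators $\Delta_k$ that $\varphi_{\omega}$ is $\mu$-harmonic at every point of $\Gamma$, extracting the equality case of Propositions~\ref{prop:convexity_of_distance_sphere} and \ref{prop:convexity_of_horosphere} for all $\gamma\in\supp\mu$ at every basepoint $f(\gamma_0)$, and propagating flatness along products of generators), or otherwise justify the existence of a $\mu'$-harmonic map and relate its flat back to $f$. As written, this step does not go through.
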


We note here that it was pointed out and proved in \cite{gromov} and
\cite{izeki-kondo-nayatani2} that if there exist $\varepsilon>0$ and a
natural number $n$ such that every $\rho$-equivariant map $f$ satisfies
$\ene{\mu^n}(f)\leq (n-\varepsilon)\ene{\mu}(f)$, then there exists a
constant $\rho$-equivariant harmonic map, and hence $\rho(\Gamma)$ fixes
a point in $Y$, which is used in \cite{izeki-kondo-nayatani2} and
\cite{izeki} to prove a fixed-point property for random groups. 

This paper is organized as follows. 
In \S~\ref{sec:harmonic_maps}, we recall fundamental definitions and
facts on $\cat{0}$ spaces, and some facts on equivariant harmonic maps. 
We first finish the proof of Theorem~\ref{thm:main-1} for proper
$\cat{0}$ spaces in \S~\ref{sec:rate_of_escape_and_hmap},
\S~\ref{sec:convexity_of_distance_and_Busemann_fct}, and
\S~\ref{sec:proper_case}. 
In \S~\ref{sec:rate_of_escape_and_hmap}, under the assumption
$l_{\rho}(\Gamma)=0$ together with the existence of an equivariant
$\mu$-harmonic map $f$, we show that the pull-back of a distance
function or a Busemannn function by $f$ becomes a (locally)
$\mu$-harmonic function; they are obtained as a limit of distance
functions from a sequence of points. 
The properness of a $\cat{0}$ space guarantees that the limit
function can be regarded as either a distance function or a Busemann
function. 
Then, in \S~\ref{sec:convexity_of_distance_and_Busemann_fct}, we prove two
inequalities reflecting the convexity of metric balls and horoballs
respectively; the equality case plays a key role in the recognition of a
flat subspace in Theorem~\ref{thm:main-1}. 
In \S~\ref{sec:proper_case}, using these results, we prove
Theorem~\ref{thm:main-1} for proper $\cat{0}$ spaces. 

The proofs of Theorem~\ref{thm:main-1} for nonproper $\cat{0}$ spaces
and that of Theorem~\ref{thm:main-3} are given in
\S~\ref{sec:when-y-not} and \S~\ref{sec:key-lemma-proof}. 
In order to apply our arguments to nonproper $\cat{0}$ spaces, we need a
sort of compactness theorem (Theorem~\ref{thm:compactness}) for
$\cat{0}$ spaces with certain dimension bounds. 
In \S~\ref{sec:when-y-not}, we present a quick review of the notion of
ultralimit and consider ultralimits of $\cat{0}$ spaces with certain
dimension bounds.  
Assuming the compactness theorem mentioned above, 
we finish the proof of Theorem~\ref{thm:main-1}. 
The proof of Theorem~\ref{thm:main-3} is a straightforward variant of
that of Theorem~\ref{thm:main-1} 
(see \S~\ref{sec:proof_of_thm_main-3}). 
Then \S~\ref{sec:key-lemma-proof} is devoted to the proof of
Theorem~\ref{thm:compactness}. 
In the course of the proof of Theorem~\ref{thm:main-1}, we need to
replace a probability measure $\mu$ on $\Gamma$ with one whose support
covers all non-trivial elements of $\Gamma$. 
It is achieved by taking a convex combination of the $n$-fold
convolution $\mu^n$'s of $\mu$. 
In \S~\ref{sec:appendix}, we show that this replacement does not affect
our assumption $l_{\rho}(\Gamma)=0$ in Theorem~\ref{thm:main-1}. 

The author would like to thank Uri Bader, Atsushi Katsuda, Shin
Nayatani, Takefumi Kondo, Ryokichi Tanaka, Masato Mimura, and Tomohiro
Fukaya for their comments and interests in this work. 

\section{Equivariant harmonic maps into $\cat{0}$ spaces}
\label{sec:harmonic_maps}

%
%
In this section, we give some fundamental definitions and
facts on $\cat{0}$ spaces and equivariant harmonic maps from 
countable groups into $\cat{0}$ spaces. 

Let $Y$ be a metric space with metric $d_Y$, and $I\subset \R$ an
interval. 
A map $c\colon I \rightarrow Y$ is called a {\it geodesic} if $c$ is
an isometric embedding, namely, $c$ satisfies $d_Y(c(t),c(t'))=|t-t'|$
for any $t,t' \in I$. 
If $I$ is a bounded closed interval, we often call $c$ a 
{\it geodesic segment}, while if $I=\ray{0,\infty}$, we call $c$ a 
{\it geodesic ray}.  
We call a geodesic defined on whole $\R$ a {\it geodesic line}. 
If every pair of points in $Y$ is joined by a geodesic segment, then $Y$
is called a {\it geodesic space}. 
In what follows, all metric spaces under consideration are assumed to be
complete as metric spaces. 

A geodesic space $Y$ is called $\cat{0}$ space if any geodesic triangle
is thinner than that in the Euclidean plane in the following sense: 
Let $p_1,p_2,p_3 \in Y$, and denote the image of geodesic joining $p_i$
and $p_j$ by $[p_i,p_j]$. 
We call $[p_1,p_2]\cup [p_2,p_3]\cup [p_3,p_1]$ a {\it geodesic
triangle}, and denote by $\tri{p_1}{p_2}{p_3}$. 
Note that we can take 
a triangle $\tri{\overline{p_1}}{\overline{p_2}}{\overline{p_3}}$ in
$\R^2$ so that $d_Y(p_i, p_j)=d_{\R^2}(\overline{p_i},\overline{p_j})$,
since the side lengths of $\tri{p_1}{p_2}{p_3}$ satisfy the triangle
inequality. 
We call $\tri{\overline{p_1}}{\overline{p_2}}{\overline{p_3}}$ a 
{\it comparison triangle} for $\tri{p_1}{p_2}{p_3}$.  
And we call a point 
$\overline{q}\in [\overline{p_i},\overline{p_j}]$
a {\it comparison point} for $q\in [p_i,p_j]$ if 
$d_Y(p_i,q) = d_{\R^2}(\overline{p_i}, \overline{q})$.
Then we say that $\tri{p_1}{p_2}{p_3}$ satisfies the 
{\it $\cat{0}$ inequality}
if $d_Y(q_1,q_2) \leq d_{\R^2}(\overline{q_1}, \overline{q_2})$ holds
for any pair of points $q_1, q_2$ on the sides of $\tri{p_1}{p_2}{p_3}$
and their comparison points $\overline{q_1}, \overline{q_2}$. 
If every geodesic triangle in $Y$ satisfies the $\cat{0}$ inequality, 
then $Y$ is called a {\it $\cat{0}$ space}. 
A complete, simply
connected Riemannian manifold of nonpositive sectional curvature is a
typical example of $\cat{0}$ space.  Also trees and Hilbert spaces are
$\cat{0}$ spaces. 
If one replaces the Euclidean plane, the present location of the
comparison triangle, with the $2$-dimensional space form of curvature
$\kappa$, we obtain the notion of $\cat{\kappa}$ space, which can be
regarded as a metric space whose curvature is bounded from above by
$\kappa$. 
For a detailed exposition on $\cat{\kappa}$ spaces, we refer the reader
to \cite{bridson-haefliger}. 

%
%
If $Y$ is a $\cat{0}$ space, then it is clear that any pair of distinct
points is joined by a unique geodesic, and in particular, a $\cat{0}$
space turns out to be contractible. 
Furthermore, it is not hard to see that the distance function on a
$\cat{0}$ space becomes a convex function, an important feature of a
$\cat{0}$ space: 
For any geodesic $c_i\colon [0,1]\rightarrow Y$, $i=1,2$, parametrized
proportional to arc length (possibly a constant map), a function 
$t \mapsto d(c_1(t),c_2(t))$ is a convex function, that is, for any 
$t \in [0,1]$, 
 \begin{equation*}
  d_Y(c_1(t), c_2((t)) \leq 
   (1-t)d_Y(c_1(0),c_2(0))  + td_Y(c_1(1),c_2(1))
 \end{equation*}
holds (see \cite[p.~176, 2.2 Proposition]{bridson-haefliger}). 
For a $\cat{0}$ space $Y$ the following is a part of the list of its
properties more or less directly derived from the convexity
above and the definition of $\cat{0}$ spaces. 
We will frequently use them in what follows.  
\begin{itemize}
 \item Recall that $C \subset Y$ is called a {\it convex} subset if any
       pair of points in $C$ can be joined by a geodesic lying in $C$.  
       For a closed convex subset $C$ of $Y$ and $p \in Y$, there exists
       a unique point $q \in C$ such that $d_Y(p,C)=d_Y(p,q)$ holds. 
       Thus we can define the {\it nearest point projection} $\nrprj_C$
       that sends $p \in Y$ to this $q \in C$; $\nrprj_C$ turns out to
       be a distance nonincreasing map. 
       (See (\cite[p.~176, 2.4 Proposition]{bridson-haefliger}) for the
       detail.)
 \item Let $C$ be a closed convex subset of $Y$. 
        If $\nrprj_C$ restricted to $C'$ becomes an isometry onto its
       image, then the convex hull of $\nrprj_C(C')\cup C'$ is isometric
       to a product $C' \times [0,l]$. 
       (See \cite[p.~182, 2.12 Exercise (3)]{bridson-haefliger}.)
 \item If two geodesic lines $c_i \colon \R \rightarrow Y$, $i=1,2$,
       are parallel to each other, that is, $c_1(\R)$ stays in a bounded
       distance from $c_2(\R)$ and vice versa, then the convex hull of
       $c_1(\R)\cup c_2(\R)$ is isometric to $\R \times [0,l]$. 
       (See \cite[p.~182, 2.13 The Flat Strip Theorem]{bridson-haefliger}.)
 \item Let $c\colon \R \rightarrow Y$ be a geodesic line. 
       The union of the images of geodesic lines parallel to $c$ forms a
       closed convex subset $Y_c$ of $Y$, and $Y_c$ is isometric to a
       product metric space $\R \times Y_0$ for some convex subset 
       $Y_0 \subset Y_c$. 
       (See \cite[p.~183, 2.14 A Product Decomposition
       Theorem]{bridson-haefliger}. )
\end{itemize}
From the viewpoint of the convexity of the distance function, 
the Euclidean space $\R^n$, whose nonpositivity of the curvature is the
least among the $\cat{0}$ spaces, should be characterized as a $\cat{0}$
space with the ``weakest'' convexity. 
In order to prove the existence of a flat subspace in
Theorem~\ref{thm:main-1}, we need to develope a quantitative
approach to the recognition of the subset with the weakest convexity. 
It is plausible that, for example, the existence of solutions to a sort
of ``equation'' leads us to the goal;
indeed, it is achieved by considering harmonic maps and harmonic
functions as we will see. 
In order to realize this, a tool like Riemannian metric provides us with
a great help; it enables us to produce various quantitative expressions
of the convexity (see, for example, 
Propositions~\ref{prop:convexity_of_distance_sphere} and
\ref{prop:convexity_of_horosphere}, and remarks following them).  
An ``inner product'' defined below on the tangent cone 
plays such a role in this paper. 

%
%
\begin{Definition}
\label{tangent cone}
Let $Y$ be a $\cat{0}$ space.  

$(1)$  Let $c$ and $c'$ be two non-trivial geodesics in $Y$ starting 
from $p \in Y$. The {\it angle} $\angle_p(c,c')$ between $c$ and $c'$ 
is defined by 
\begin{equation*}
 \angle_p(c,c')= \lim_{t,t' \rightarrow 0}
 \angle_{\overline{p}}(\overline{c(t)},\overline{c'(t')}),
\end{equation*}
 where
 $\angle_{\overline{p}}(\overline{c(t)},\overline{c'(t')})$ 
 denotes the angle between the sides
 $[\overline{p},\overline{c(t)}]$ and $[\overline{p},\overline{c'(t')}]$
 of the comparison triangle
 $\tri{\overline{p}}{\overline{c(t)}}{\overline{c'(t')}}\subset\R^2$.
 For $p,q \in Y$, denote by $\cc{p}{q}$ a geodesic starting from $p$ and
 teminating at $q$. 
 We often use an expression $\angle_p(q,q')$ for the angle 
 $\angle_p(\cc{p}{q},\cc{p}{q'})$ between $\cc{p}{q}$ and $\cc{p}{q'}$. 

$(2)$  Let $p \in Y$.
 For a pair of non-trivial geodesics $c$, $c'$ starting from $p$, we 
 define a relation $\sim$ by $c\sim c'$ if and only if 
 $\angle_p(c,c')=0$. 
 Then $\sim$ becomes an equivalence relation on the set of
 all non-trivial geodesics starting from $p$ denoted by 
 $\widetilde{(S_pY)^{\circ}}$. 
 Then the angle $\angle_p$ induces a metric on the quotient 
 $(S_pY)^{\circ} = \widetilde{(S_pY)^{\circ}}/\sim$, 
 which we denote by the same symbol $\angle_p$.  The completion 
 $(S_pY,\angle_p)$ of the metric space $((S_pY)^{\circ}, \angle_p)$ is
 called the {\it space of directions}  at $p$.

$(3)$ Let $TC_pY$ be the cone over $S_pY$, namely,
\begin{equation*}
 TC_pY = (S_pY \times \R_{\geq 0}) / (S_pY \times \{0\}). 
\end{equation*}
 Let $W,W' \in TC_pY$. We may write $W=(V,t)$ and $W'=(V',t')$, where
 $V,V' \in S_pY$ and $t,t' \in \R_{\geq 0}$.   Then
\begin{equation*}
 d_{TC_pY}(W, W')= t^2 + {t'}^2 - 2tt'\cos \angle_p(V,V')
\end{equation*}
 defines a metric on $TC_pY$. The metric space $(TC_pY, d_{TC_pY})$ is
 known to be a $\cat{0}$ space and is called the {\it tangent cone} at
 $p$.
 We define an ``inner product''  on  $TC_pY$ by
\begin{equation*}
 \inner{W}{W'} = tt'\cos \angle_p(V,V').
\end{equation*}
 We often denote the length $t$ of $W$ by $|W|$; thus we have
 $|W|=\sqrt{\langle W,W \rangle}=d_{TC_pY}(O_p,W)$, where $O_p$ denotes
 the origin of $TC_pY$.   
 In what follows, we identify $S_pY$ with the set of elements
 with the length equals one in $TC_pY$: 
 $S_pY = \{W \in TC_pY \mid |W|=1\}$. 

$(4)$ Define a map $\tcprj_p \colon Y \longrightarrow TC_pY$ by
 $\tcprj_p(q)=([c], d_Y(p,q))$, where $c$ is a geodesic starting from
 $p$ and terminating at $q$, and $[c]\in S_pY$ is the equivalence class
 of $c$. Then $\tcprj_p$ is distance nonincreasing.  
\end{Definition}

\begin{Remark}
 For a $\cat{0}$ space $Y$, the space of direction $S_pY$ is known to be
 a $\cat{1}$ space (\cite[p.~191, 3.19 Theorem]{bridson-haefliger}). 
 Note that, by definition, any isometry $\Phi\colon Y \rightarrow Y$
 induces an isometry $\Phi_* \colon S_pY \rightarrow S_{\Phi(p)}Y$ and 
 $\Phi_*\colon TC_pY \rightarrow TC_{\Phi(p)}Y$. 
\end{Remark}

%
%
While the tangent cone describes a local structure of a $\cat{0}$
space $Y$, the {\it boundary at infinity} defined below tells us an
asymptotic nature of $Y$; for example, 
the boundary $\partial Y$ of $\cat{0}$ space $Y$
equipped with the angular metric (see below for the definition)
describes the distribution of the flat subspaces in $Y$. 

\begin{Definition}
\label{defn:boundary}
 Let $Y$ be a $\cat{0}$ space.

$(1)$ Two geodesic rays $c_i\colon \ray{0,\infty}\rightarrow Y$,
 $i=1,2$, are {\it asymptotic} to each other if there exists
 $M\in\ray{0,\infty}$ such that $d(c_1(t),c_2(t))\leq M$ for all 
 $t \in \ray{0,\infty}$. 
 Being asymptotic is easily seen to be an equivalence relation on the set
 of geodesic rays. The {\it boundary} $\partial Y$ of $Y$ is defined to
 be the set of equivalence classes (asymptotic classes) of geodesic
 rays. 
 The topology on $Y \cup \partial Y$ naturally induced from 
 the uniform convergence of geodesics and geodesic rays on each bounded
 interval is called the {\it cone topology} of $Y \cup \partial Y$. 

$(2)$ Let $\xi_i \in \partial Y$.  Denote by $\cc{p}{\xi_i}$ a geodesic
 ray starting from $p$ and representing $\xi_i$ (we will say
 ``terminating at $\xi_i$'' and express as
 $\cc{p}{\xi_i}(\infty)=\xi_i$).  
 Let us denote the angle between
 $\cc{p}{\xi_1}$ and $\cc{p}{\xi_2}$ by $\angle_p(\xi_1,\xi_2)$. 
 Then
\begin{equation*}
 d_{\angle}(\xi_1,\xi_2)=\sup_{p \in Y} \angle_p(\xi_1,\xi_2)
\end{equation*}
 gives a metric on $\partial Y$ called the {\it angular metric}. 

$(3)$  Let $c\colon \ray{0,\infty}\rightarrow Y$ be a geodesic ray. 
 The {\it Busemann function} $b_c$ associated to $c$ is defined by
\begin{equation*}
 b_c (x) =\lim_{t \to \infty} d(c(t),x)-t
\end{equation*}
 The level set $S_c(r)=\{p \in Y \mid b_{c}(p)=r\}$ is called a
 {\it horosphere} centered at $\xi=c(\infty)$ (see
 Remark~\ref{rem:busemann_fct} below). 
 The sublevel set $B_c(r)=\{p \in Y \mid b_{c}(p)\leq r\}$ is
 called a {\it horoball} centered at $\xi=c(\infty)$. 
\end{Definition}

\begin{Remark}
 Since any isometry preserves the relation asymptotic, the isometry
 group $\isom{Y}$ of $Y$ acts on $\partial Y$.  
 The action is by homeomorphisms with respect to the cone topology. 
 On the other hand, it is clear, by definition, the action is by
 isometry with respect to the angular metric on $\partial Y$. 
\end{Remark}

\begin{Remark}
 \label{rem:proper_implies_cpt}
 A metric space $Y=(Y,d_Y)$ is called {\it proper} if every
 closed metric ball is compact. 
 If a $\cat{0}$ space $Y$ is proper, then one sees that, by using 
 Arzel\`a-Ascoli theorem, 
 $Y\cup \partial Y$ equipped with the cone topology is compact. 
\end{Remark}

\begin{Remark}
 For a $\cat{0}$ space $Y$, the boundary $(\partial Y, d_{\angle})$ with
 the angular metric is a $\cat{1}$ spaces
 (see \cite[p.~285, 9.13 Theorem]{bridson-haefliger}). 
 The topology on $\partial Y$ given by the angular metric is stronger
 than the cone topology restricted to $\partial Y$ in general. 
 For example, for a hyperbolic $n$-space $\mathbb{H}^n$,  
 $\partial \mathbb{H}^n$ with the cone topology is homeomorphic to
 $S^{n-1}$,  while $(\partial \mathbb{H}^n,d_{\angle})$ is easily seen
 to be a discrete metric space, since every pair of points in
 $\partial\mathbb{H}^n$ can be joined by a geodesic line. 
\end{Remark}

\begin{Remark}
 \label{rem:sector}
 Let $\xi, \xi' \in \partial Y$ with $d_{\angle}(\xi,\xi')<\pi$. 
 It is known that if there exists $p_0$ such that 
 $\angle_{p_0}(\xi,\xi')=d_{\angle}(\xi,\xi')$, then the convex hull of 
 $\cc{p_0}{\xi}(\ray{0,\infty})\cup \cc{p_0}{\xi'}(\ray{0,\infty})$ is
 isometric to a flat sector, which is a sector in the Euclidean plane
 bounded by a pair of half lines that meet at their endpoints with an
 angle $\angle(\xi,\xi')$ 
 (\cite[p.~283, 9.9 Corollary]{bridson-haefliger}).
 On the other hand, 
 by \cite[p.~281, 9.8 Proposition (2)]{bridson-haefliger}, one sees that
 if the convex hull of $\ray{p,\xi}\cup \ray{p,\xi'}$ is isometric to a
 flat sector, then $d_{\angle}(\xi,\xi')=\angle_p(\xi,\xi')$ holds. 
\end{Remark}

\begin{Remark}
\label{rem:busemann_fct}
 Let $\xi \in \partial Y$ and $\{p_n\}\subset Y$ any sequence that
 converges to $\xi$ in the cone topology. Fix $p_0 \in Y$. 
 Consider a function $p\mapsto b_{\xi}(p,p_0)$ defined by
\begin{equation}
\label{eq:def_of_busemann}
  b_{\xi}(p,p_0)= \lim_n d(p_n,p)-d(p_n,p_0). 
\end{equation}
 The limit in the right-hand side exists and does not depend on the
 choice of a sequence $\{p_n\}$ converging to $\xi$ 
 (see \cite[p.269, 8.19 Corollary]{bridson-haefliger}, see also
 Lemma~\ref{lem:ultralimit_of_buseman_fct}). 
 Therefore we see that $b_{\xi}(p)=b_c(p)$ for a geodesic ray $c$ with 
 $c(0)=p_0$ and $c(\infty)=\xi$;
 we treat (\ref{eq:def_of_busemann}) as the definition of the Busemann
 function, and mainly use the symbol $b_{\xi}(\cdot,p_0)$ in what
 follows. 
 Note that, by definition and the description above,  
 $\lim_{t\to \infty}b_{\xi}(c(t),p_0)\to -\infty$ for any geodesic ray
 terminating at $\xi$. 
 Note also that the definition (\ref{eq:def_of_busemann}) above shows that
 the Busemann function has an additivity:
 \begin{equation}
  \label{eq:additivity}
  b_{\xi}(p,p_0)=b_{\xi}(p,p_0')+ b_{\xi}(p_0',p_0)
 \end{equation}
 for any $p,p_0,p_0' \in Y$. 
 In particular, the change of the basepoint $p_0$ leaves the Busemann
 function invariant up to a constant. 
 Thus we often denote the horosphere and the horoball centered at $\xi$
 by $H_{\xi}(r)$ and $B_{\xi}(r)$ without specifying the basepoint. 
 Since a Busemann function $b_{\xi}(\cdot,p_0)$ is the limit of a
 sequence of convex functions,  $b_{\xi}(\cdot,p_0)$ is also a convex
 function. 
 As a consequnece, a horoball $B_{\xi}(r)$, as well as a metric ball,
 becomes a convex subset of $Y$. 
\end{Remark}

%
%
Let $\Gamma$ be a countable group, and $\mu$ a probability
measure on $\Gamma$.
We assume that the support of $\mu$ generates $\Gamma$, 
and that $\mu$ is symmetric, that is, 
$\mu$ satisfies $\mu(\gamma)=\mu(\gamma^{-1})$ for any 
$\gamma \in \Gamma$.
Suppose that $\Gamma$ acts on a metric space $Y$ isometrically via a
homomorphism $\rho\colon \Gamma \rightarrow \isom{Y}$, where $\isom{Y}$
is the isometry group of $Y$. 
We assume that the action given by $\rho$ has 
{\it finite second moment} with respect to $\mu$, namely, for $p\in Y$, 
\begin{equation*}
 \int_{\Gamma} d_Y(p, \rho(\gamma)p)^2 d\mu(\gamma) < \infty
\end{equation*}
holds. 
Since the action given by $\rho$ is by isometry, using the triangle
inequality, we see that
\begin{equation*}
\begin{split}
  d_Y(p', \rho(\gamma)p')^2 
 & \leq \left(d_Y(p',p) + d_Y(p,\rho(\gamma)p) + d_Y(\rho(\gamma)p,
       \rho(\gamma)p')\right)^2 \\
 & = \left(2d_Y(p',p) + d_Y(p,\rho(\gamma)p) \right)^2 \\
 & = 4d_Y(p',p)^2 + 4d_Y(p',p)d_Y(p,\rho(\gamma)p) + 
      d_Y(p,\rho(\gamma)p)^2. 
\end{split}
\end{equation*}
Applying the Schwarz inequality to the second term in the last
expression, we see that it is also integrable with respect to $\mu$; the
integrability does not depend on the choice of $p \in Y$. 
A map $f \colon \Gamma \rightarrow Y$ is said to be 
$\rho$-{\it equivariant} if $f(\gamma \gamma')=\rho(\gamma)f(\gamma')$
holds for any $\gamma, \gamma' \in \Gamma$. 
The $\mu$-energy $\ene{\mu}(f)$ of a $\rho$-equivariant map $f$ defined
as
\begin{equation*}
 \ene{\mu}(f) = \frac{1}{2} \int_{\Gamma} d_Y(f(e), f(\gamma))^2
d\mu(\gamma)
\end{equation*}
is well-defined by the assumption that the action given by $\rho$ has
finite second moment with respect to $\mu$.

Now suppose that $\Gamma$ is finitely generated.
Then we have a finite generating set $S$ of $\Gamma$.  
We assume that $S$ is symmetric; $s \in S$ implies that $s^{-1}\in S$. 
The {\it word length} $\mathrm{length}_S(\gamma)$ of $\gamma \in \Gamma$
is defined as the length of the shortest expression of $\gamma$ by
generators belonging to $S$:
\begin{equation*}
 \mathrm{length}_S(\gamma)= \min \{ k \in \N \mid \gamma=s_1 \dots s_k,\
  s_i \in S\}. 
\end{equation*}
Then $d_S(\gamma, \gamma')=\mathrm{length}_S(\gamma^{-1}\gamma')$
becomes a metric on $\Gamma$, and called the {\it word metric} on
$\Gamma$ with respect to a finite generating set $S$. 
Note that, by definition, $d_S(\cdot,\cdot)$ is a left-invariant metric: 
$d_S(\gamma \gamma', \gamma \gamma'')=d_S(\gamma',\gamma'')$.  
For finite generating sets $S$ and $S'$, although $d_S$ and $d_{S'}$ may
be different, it is easy to see that they are Lipschitz equivalent. 
We say that $\mu$ has finite second moment, if $\mu$ satisfies 
\begin{equation*}
 \int_{\Gamma} d_S(e,\gamma)^2 d\mu(\gamma) < \infty, 
\end{equation*}
where $e \in \Gamma$ denotes the identity element in $\Gamma$, and 
$d_S$ is a word metric on $\Gamma$ defined above. 
Note that the property of having finite second monent does not depend on
the choice of a finite generating set $S$ ($S$ may not be contained in
$\supp\mu$).  
Taking $u(\cdot)$ to be $d(f(e),\cdot)$ in the following lemma,  
we see that if $\Gamma$ is finitely generated and 
$\mu$ has finite second moment, then any isometric action of $\Gamma$ on
a metric space has finite second moment.

\begin{Lemma}
 \label{lem:integral}
 Let $\Gamma$ be a finitely generated group with a probability measure
 $\mu$ with finite second moment. Suppose $\Gamma$ acts on a metric
 space $Y$ via a homomorphism $\rho\colon \Gamma\rightarrow \isom{Y}$. 
 Suppose further that a function $u\colon Y \rightarrow \R$ satisfies 
 $|u(p)| \leq C_1 d(p_0,p)^2 + C_2$ for some $C_1,C_2>0$ and 
 $p_0 \in Y$. Then, for any $\rho$-equivariant map 
 $f\colon \Gamma \rightarrow Y$, the pull-back $f^*u$ of $u$ by $f$
 belongs to $L^1(\Gamma,\mu)$.  
 In particular, $f^*u$ is a $\mu$-integrable function on $\Gamma$. 
\end{Lemma}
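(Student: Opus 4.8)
The plan is to reduce the claim to a statement about the growth of the orbit of $f(e)$ and then to invoke the finite second moment of $\mu$. Since $\mu$ is a probability measure, the constant $C_2$ contributes a finite amount to $\int_{\Gamma} |f^*u|\,d\mu$, so by the hypothesis $|u(p)| \le C_1 d(p_0,p)^2 + C_2$ it suffices to show that the function $\gamma \mapsto d(p_0, f(\gamma))^2$ is $\mu$-integrable. Using $\rho$-equivariance I would write $f(\gamma) = f(\gamma e) = \rho(\gamma) f(e)$, so that the problem becomes one of bounding $d(p_0, \rho(\gamma) f(e))^2$ in terms of the word length of $\gamma$.

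The key step is the standard observation that the orbit map is Lipschitz with respect to the word metric. Fix the finite symmetric generating set $S$, set $q = f(e)$, and let $M = \max_{s \in S} d(q, \rho(s)q)$, which is finite because $S$ is finite. Writing a shortest expression $\gamma = s_1 \cdots s_k$ with $k = \mathrm{length}_S(\gamma) = d_S(e,\gamma)$ and inserting the intermediate points $\rho(s_1 \cdots s_i) q$, the triangle inequality together with the fact that each $\rho(s_1\cdots s_{i-1})$ is an isometry gives
\begin{equation*}
 d(q, \rho(\gamma)q) \le \sum_{i=1}^{k} d\bigl(\rho(s_1\cdots s_{i-1})q,\, \rho(s_1 \cdots s_i)q\bigr) = \sum_{i=1}^{k} d(q, \rho(s_i)q) \le M\, d_S(e,\gamma).
\end{equation*}
Combining this with the triangle inequality $d(p_0, \rho(\gamma)q) \le d(p_0, q) + d(q, \rho(\gamma)q)$ yields $d(p_0, f(\gamma)) \le d(p_0,q) + M\, d_S(e,\gamma)$.

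Finally I would square this bound, apply the elementary inequality $(a+b)^2 \le 2a^2 + 2b^2$, and integrate against $\mu$:
\begin{equation*}
 \int_{\Gamma} d(p_0, f(\gamma))^2 \, d\mu(\gamma) \le 2\, d(p_0,q)^2 + 2M^2 \int_{\Gamma} d_S(e,\gamma)^2\, d\mu(\gamma),
\end{equation*}
where the right-hand side is finite precisely because $\mu$ has finite second moment. Hence $\int_{\Gamma} |f^*u|\, d\mu \le C_1 \int_{\Gamma} d(p_0,f(\gamma))^2\,d\mu + C_2 < \infty$, which is the assertion.

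There is no serious obstacle in this argument; it is essentially bookkeeping. The only place where finite generation is used is the orbit-map Lipschitz estimate of the second paragraph (which fails to make sense without a finite $S$ and a finite $M$), and the only place where the hypothesis on $\mu$ enters is the last integration. I would only take mild care that the $\mu$-integrability of the constant term and of $d_S(e,\cdot)^2$ are exactly what the finiteness of $\mu(\Gamma)=1$ and the finite second moment provide, so that every term on the right is genuinely finite.
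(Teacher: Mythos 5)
Your proposal is correct and follows essentially the same route as the paper: both hinge on the Lipschitz estimate $d(f(e),f(\gamma))\leq C\, d_S(e,\gamma)$ obtained from the triangle inequality, equivariance, and the finiteness of $S$, and then integrate the resulting quadratic bound using the finite second moment of $\mu$. The only cosmetic difference is that you absorb the cross-term via $(a+b)^2\leq 2a^2+2b^2$ where the paper expands the square and applies the Schwarz inequality; this changes nothing of substance.
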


\begin{proof}
Fix a finite generating set $S$ of $\Gamma$. 
Then, by setting $C=\max\{d_Y(f(e), f(s))\mid s \in S \}$, 
for $\gamma = s_1 \dots s_k$ with $s_i \in S$ 
($i=1, \dots, k$), we have
\begin{equation*}
\begin{split}
  & d_Y(f(e),f(s_1 \dots s_k)) \\
 \leq & d_Y(f(e), f(s_1)) + d_Y(f(s_1), f(s_1 s_2))
 + \dots + 
   d_Y(f(s_1 \dots s_{k-1}), f(s_1 \dots s_k)) \\
 = & d_Y(f(e), f(s_1)) + d_Y(f(e), f(s_2))
 + \dots + d_Y(f(e), f(s_k)) 
 \leq  Ck, 
\end{split}
\end{equation*}
because the action given by $\rho$ is isometric and $f$ is
$\rho$-equivariant. 
This implies that $d_Y(f(e),f(\gamma)) \leq C d_S(e,\gamma)$.
Therefore $f^*u$ satisfies
\begin{equation*}
 \begin{split}
  & |f^*u(\gamma)| = |u(f(\gamma))| \\
 \leq & C_1 \left(d(p_0, f(e)) + d(f(e),f(\gamma)) \right)^2 + C_2 \\
 = & C_1 \left(d(f(e),f(\gamma))^2 + 
   2 d(p_0,f(e))d(f(e),f(\gamma)) + d(p_0,f(e))^2\right) + C_2 \\
 \leq &  C_1'd_S(e,\gamma)^2 + C_1''d_S(e,\gamma) + C_2''
\end{split}
\end{equation*}
 for some $C_1', C_1'', C_2''>0$. 
 Thus, by applying the Schwarz inequality to the second term in
 the last expression and by our assumption of $\mu$ having
 finite second moment, we see that 
\begin{equation*}
 \int_{\Gamma} |f^*u(\gamma)| d\mu(\gamma) < \infty. 
\end{equation*}
 This completes the proof. 
\end{proof}

%
%
A $\rho$-equivariant map $f$ is called a $\mu$-{\it harmonic map} if $f$
minimizes the $\mu$-energy $\ene{\mu}(f)$: 
$\ene{\mu}(g)\geq \ene{\mu}(f)$ holds for any $\rho$-equivariant map 
$g \colon \Gamma \rightarrow Y$. 

Note that a $\rho$-equivariant map is completely determined by the image
of $e$; the set of $\rho$-equivariant map can be identified with $Y$. 
We denote by $f_p$ a $\rho$-equivariant map defined by 
$f_p(\gamma)=\rho(\gamma)p$. 
Recall that a function $u \colon Y \rightarrow \R$ is said to be
{\it convex} if, for any geodesic $c\colon [0,l]\rightarrow Y$, 
$t \mapsto u(c(t))$ is a convex function on $[0,l]$. 
Take any geodesic $c \colon [0,1]\rightarrow Y$ parametrized
proportional to arc length. 
Since the distance funtion of a $\cat{0}$ space $Y$ is nonnegative and
convex, its square $t \mapsto d_Y(c(t),\rho(\gamma)c(t))^2$ is also
convex, and hence
\begin{equation*}
\begin{split}
   \ene{\mu}(f_{c(t)}) 
 & =\frac{1}{2} \int_{\Gamma} d_Y(c(t),\rho(\gamma)c(t))^2 d\mu(\gamma) \\
 & \leq \frac{1}{2}\int_{\Gamma} (1-t)d_Y(c(0),\rho(\gamma)c(0))^2
  + t d_Y(c(1),\rho(\gamma)c(1))^2 d\mu(\gamma) \\
 & = (1-t)\ene{\mu}(f_{c(0)}) + t \ene{\mu}(f_{c(1)}), 
\end{split}
\end{equation*}
which means that $p\mapsto \ene{\mu}(f_p)$ is a convex function on
$Y$.  Then we have the following. 

\begin{Proposition}
 \label{prop:first_variation}
 Let $Y$ be a $\cat{0}$ space and 
 $\rho\colon \Gamma\rightarrow \isom{Y}$ a homomorphism. 
 Let $f\colon \Gamma \rightarrow Y$ be a $\rho$-equivariant map. 
 Then $f$ is $\mu$-harmonic if and only if $f$ satisfies the following
 inequality$\,:$ For any $V \in S_{f(e)}Y$, 
\begin{equation}
 \label{eq:first_variation}
 \int_{\Gamma} \inner{V}{\tcprj_{f(e)}(f(\gamma))} d\mu(\gamma)\leq 0. 
\end{equation}
\end{Proposition}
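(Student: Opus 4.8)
The plan is to view the energy as a convex function on $Y$ and to extract harmonicity from its first variation. Every $\rho$-equivariant map is of the form $f_p$ with $p=f(e)$, and the function $E(p):=\ene{\mu}(f_p)$ has already been shown to be convex on $Y$; hence $f=f_p$ is $\mu$-harmonic exactly when $p$ minimizes $E$. On a $\cat{0}$ space a convex function $E$ attains its minimum at $p$ if and only if, along every geodesic $\sigma$ issuing from $p$, the one-sided derivative $\frac{d}{dt}\big|_{t=0^+}E(\sigma(t))$ is nonnegative: each restriction $t\mapsto E(\sigma(t))$ is convex, so a nonnegative right derivative forces $E(\sigma(t))\geq E(p)$, and every point of $Y$ lies on such a geodesic. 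Thus the whole content of the proposition is the first-variation identity
\begin{equation*}
 \frac{d}{dt}\Big|_{t=0^+}E(\sigma(t)) = -2\int_{\Gamma}\inner{V}{\tcprj_{p}(f(\gamma))}\,d\mu(\gamma)
\end{equation*}
for the initial direction $V$ of $\sigma$, after which the equivalence is immediate; since $V\mapsto\int_\Gamma\inner{V}{\tcprj_p(f(\gamma))}\,d\mu$ is continuous on $TC_pY$ (dominated by the integrable function $\gamma\mapsto d_Y(p,f(\gamma))$), the inequality over directions represented by genuine geodesics extends by density to all of $S_{f(e)}Y$, matching the statement.

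To establish the identity I would first differentiate under the integral sign. Writing $h_\gamma(t)=d_Y(\sigma(t),\rho(\gamma)\sigma(t))^2$, each $h_\gamma$ is convex in $t$ (this is precisely the convexity used above to see that $E$ is convex), so the difference quotients $t^{-1}(h_\gamma(t)-h_\gamma(0))$ decrease to $h_\gamma'(0^+)$ as $t\downarrow0$; since the quotient at a fixed $t_0$ is $\mu$-integrable by the finite second moment of the action, monotone convergence lets me pull the derivative inside and reduces everything to computing $h_\gamma'(0^+)$. For this I use the single-point first-variation formula for the squared distance: for fixed $q$,
\begin{equation*}
 \frac{d}{dt}\Big|_{t=0^+}\tfrac12 d_Y(\sigma(t),q)^2 = -\inner{V}{\tcprj_p(q)},
\end{equation*}
which follows from the Euclidean law of cosines in the comparison triangle $\tri{\overline{p}}{\overline{\sigma(t)}}{\overline{q}}$ together with the fact that the comparison angle $\overline{\angle}_p(\sigma(t),q)$ tends to the Alexandrov angle $\angle_p(V,\tcprj_p(q))$ as $t\downarrow0$.

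The subtlety is that in $h_\gamma$ both endpoints move. I would handle this by inserting $d_Y(\sigma(t),\rho(\gamma)p)^2$ and splitting $h_\gamma(t)-h_\gamma(0)$ into a term in which only the first argument moves, contributing $-2\inner{V}{\tcprj_p(\rho(\gamma)p)}$ by the formula above, and a term in which only the second argument moves while the base point is the moving point $\sigma(t)$. The point to check is that replacing this base point by $p$ alters the second term only by $o(t)$: both the displacement of the base point and that of the target are $O(t)$, and their first-order coefficients coincide as $t\downarrow0$, so the cross-term is $O(t^2)$. The second term therefore contributes $-2\inner{\rho(\gamma)_*V}{\tcprj_{\rho(\gamma)p}(p)}$, where $\rho(\gamma)_*V$ is the image of $V$ under the isometry of tangent cones induced by $\rho(\gamma)$, giving
\begin{equation*}
 \frac{d}{dt}\Big|_{t=0^+}E(\sigma(t)) = -\int_\Gamma\Big(\inner{V}{\tcprj_p(\rho(\gamma)p)} + \inner{\rho(\gamma)_*V}{\tcprj_{\rho(\gamma)p}(p)}\Big)\,d\mu(\gamma).
\end{equation*}

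Finally I symmetrize the second summand. The isometry $\rho(\gamma^{-1})=\rho(\gamma)^{-1}$ sends $\rho(\gamma)p$ to $p$ and induces an inner-product-preserving isomorphism of tangent cones with $\rho(\gamma^{-1})_*\rho(\gamma)_*V=V$ and $\rho(\gamma^{-1})_*\tcprj_{\rho(\gamma)p}(p)=\tcprj_p(\rho(\gamma^{-1})p)$, whence $\inner{\rho(\gamma)_*V}{\tcprj_{\rho(\gamma)p}(p)}=\inner{V}{\tcprj_p(\rho(\gamma^{-1})p)}$. Integrating and using the symmetry $\mu(\gamma)=\mu(\gamma^{-1})$ to replace $\gamma^{-1}$ by $\gamma$, the second summand integrates to the same value as the first, and since $\rho(\gamma)p=f(\gamma)$ this yields the claimed identity with factor $-2$. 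The main obstacle is the non-smooth first variation: proving the single-point formula with equality (that the comparison angle converges to the Alexandrov angle) and controlling the cross-term when both endpoints move; the symmetrization step is where the symmetry of $\mu$ and the naturality of the tangent-cone inner product under isometries enter essentially.
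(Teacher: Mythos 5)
Your proposal follows essentially the same route as the paper's proof: both directions reduce to the first-variation formula for $t\mapsto\ene{\mu}(f_{\sigma(t)})$, the two-endpoint variation is split into two one-endpoint variations, the second is converted into the first via the isometry $\rho(\gamma^{-1})$ together with $\mu(\gamma)=\mu(\gamma^{-1})$, and the converse direction uses convexity of $p\mapsto\ene{\mu}(f_p)$ along the geodesic to a competitor. The one place where your argument is genuinely thinner than the paper's is the cross-term. The claim that replacing the moving base point $\sigma(t)$ by $p$ changes $d_Y(\sigma(t),\rho(\gamma)\sigma(t))^2-d_Y(\sigma(t),\rho(\gamma)p)^2$ only by $o(t)$ is, after dividing out the factor $a+b\approx 2d_Y(p,\rho(\gamma)p)$, exactly the statement that $d_Y(p,\rho(\gamma)\sigma(t))-d_Y(\sigma(t),\rho(\gamma)\sigma(t))$ and $d_Y(p,\rho(\gamma)p)-d_Y(\sigma(t),\rho(\gamma)p)$ have the same first-order coefficient at $t=0$; this is precisely the content of the paper's Lemma~\ref{lem:angle2} (quoted from Izeki--Nayatani, Lemma~1.4), and the $O(t)$ bookkeeping you give does not by itself deliver it: the naive Lipschitz bounds show only that the mixed second difference is $O(t)$, and the one-point first-variation formula carries an error term $o(t)$ that depends on the target point, whereas here the target $\rho(\gamma)\sigma(t)$ itself moves with $t$, so a uniform version is required. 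This coincidence of first-order coefficients is the analytic heart of the proposition and has to be proved, not asserted. On the other hand, two of your refinements improve on the paper's write-up: using monotonicity of the convex difference quotients in place of domination is a clean alternative for exchanging limit and integral, and your density argument extending the inequality from directions represented by genuine geodesics to all of $S_{f(e)}Y$ (via continuity of $V\mapsto\inner{V}{\tcprj_{f(e)}(f(\gamma))}$ and integrable domination by $d_Y(f(e),f(\gamma))$) addresses a point the paper's proof passes over silently.
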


We use the following lemmas in the proof of 
Proposition~\ref{prop:first_variation}.

\begin{Lemma}{{\rm (\cite[p.~185, 3.6 Corollary]{bridson-haefliger})}}
\label{lem:angle}
 Let $Y$ be a $\cat{0}$ space and $p \in Y$. Take two non-trivial
 geodesics $c$, $c'$ starting
 from $p$, and fix a point $q$ on $c'$.  Then we have
\begin{equation*}
  \cos \angle_p (c,c') = \lim_{t \rightarrow 0} 
 \frac{d_Y(p,q) - d_Y(c(t),q)}{t}.
\end{equation*}
\end{Lemma}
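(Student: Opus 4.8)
The plan is to reduce everything, via the Euclidean law of cosines in comparison triangles, to the assertion that the comparison angle at $p$ of the triangle $\tri{p}{c(t)}{q}$ converges to $\angle_p(c,c')$ as $t\to 0$. Write $q=c'(s_0)$ with $s_0=d_Y(p,q)>0$, and for small $t>0$ let $\gamma_t$ denote the angle at $\overline{p}$ in a Euclidean comparison triangle for $\tri{p}{c(t)}{q}$. The law of cosines gives
\begin{equation*}
 d_Y(c(t),q)^2 = t^2+s_0^2-2ts_0\cos\gamma_t .
\end{equation*}
Factoring $s_0^2-d_Y(c(t),q)^2=(d_Y(p,q)-d_Y(c(t),q))(s_0+d_Y(c(t),q))$ and dividing by $t$, I obtain
\begin{equation*}
 \frac{d_Y(p,q)-d_Y(c(t),q)}{t}=\frac{2s_0\cos\gamma_t-t}{s_0+d_Y(c(t),q)} .
\end{equation*}
Since $c(t)\to p$ forces $d_Y(c(t),q)\to s_0$ by continuity of the distance function, the denominator tends to $2s_0$, and an elementary estimate shows that the difference quotient and $\cos\gamma_t$ differ by a quantity tending to $0$ as $t\to 0$, irrespective of whether $\gamma_t$ itself converges. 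Thus it suffices to prove $\cos\gamma_t\to\cos\angle_p(c,c')$, i.e. $\gamma_t\to\angle_p(c,c')$.

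For one inequality, $\gamma_t\ge\angle_p(c,c')$ for every small $t$, I would apply the $\cat 0$ inequality inside $\tri{p}{c(t)}{q}$: for $0<t'\le t$ and $0<s'\le s_0$ the points $c(t')\in[p,c(t)]$ and $c'(s')\in[p,q]$ have comparison points at distances $t'$ and $s'$ from $\overline p$ along the two sides, so $d_Y(c(t'),c'(s'))$ is bounded by the corresponding Euclidean distance $\sqrt{t'^2+s'^2-2t's'\cos\gamma_t}$. Comparing this with the law of cosines applied to $\tri{p}{c(t')}{c'(s')}$ yields $\angle_{\overline p}(\overline{c(t')},\overline{c'(s')})\le\gamma_t$; letting $t',s'\to 0$ and invoking the definition of $\angle_p(c,c')$ as the limit of comparison angles gives $\angle_p(c,c')\le\gamma_t$. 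In particular $\cos\gamma_t\le\cos\angle_p(c,c')$, so the $\limsup$ of the difference quotient is at most $\cos\angle_p(c,c')$.

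The reverse inequality is the main obstacle, since it must transport information about the direction of $c'$ — which is encoded only by segments issuing from $p$ — out to the possibly distant point $q=c'(s_0)$; the earlier comparison argument only ever produces the bound in the wrong direction. Here I would exploit that $c'$ is a geodesic. Given $\varepsilon>0$, the definition of $\angle_p(c,c')$ furnishes $\tau\in(0,s_0)$ with $\angle_{\overline p}(\overline{c(t')},\overline{c'(s')})<\angle_p(c,c')+\varepsilon$ whenever $0<t',s'\le\tau$; writing $\psi=\angle_p(c,c')+\varepsilon$ and using the law of cosines, $d_Y(c(t),c'(\tau))\le\sqrt{t^2+\tau^2-2t\tau\cos\psi}$ for $t\le\tau$. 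The triangle inequality along the geodesic $c'$ then gives the detour estimate
\begin{equation*}
 d_Y(c(t),q)\le d_Y(c(t),c'(\tau))+(s_0-\tau)\le\sqrt{t^2+\tau^2-2t\tau\cos\psi}+(s_0-\tau),
\end{equation*}
and a first-order expansion of the square root in $t$ (with $\tau$ fixed) yields $d_Y(c(t),q)\le s_0-t\cos\psi+O(t^2)$. Hence the difference quotient is at least $\cos\psi-O(t)$; letting $t\to 0$ and then $\varepsilon\to 0$ shows its $\liminf$ is at least $\cos\angle_p(c,c')$. Combining the two inequalities proves $\lim_{t\to0}(d_Y(p,q)-d_Y(c(t),q))/t=\cos\angle_p(c,c')$. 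The degenerate case $\angle_p(c,c')=\pi$ is immediate from $d_Y(c(t),q)\le t+s_0$, so one may freely assume $\angle_p(c,c')<\pi$ when invoking $\cos\psi$ above.
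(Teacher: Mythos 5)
Your proof is correct, and there is in fact no argument in the paper to measure it against: the lemma is stated there as a quotation of \cite[p.~185, 3.6 Corollary]{bridson-haefliger}, with no proof given. What you have written is a self-contained proof of that cited result, and it follows the standard line of argument for the first variation of distance in a $\cat{0}$ space. The algebraic reduction of the difference quotient to $\cos\gamma_t$ is sound (the discrepancy is $O(t)$ because $|d_Y(p,q)-d_Y(c(t),q)|\le t$); your first inequality is exactly the monotonicity of comparison angles, re-derived from the $\cat{0}$ inequality, and it is legitimate to treat $c'(s')$ as a point of $[p,q]$ because geodesics in a $\cat{0}$ space are unique, so $[p,q]=c'([0,s_0])$; and your second inequality correctly isolates the only place where it matters that $q$ lies on a geodesic issuing from $p$, namely the detour bound $d_Y(c(t),q)\le d_Y(c(t),c'(\tau))+(s_0-\tau)$ together with the expansion $\sqrt{t^2+\tau^2-2t\tau\cos\psi}\le\tau-t\cos\psi+O_{\tau}(t^2)$, whose constant depends only on the fixed $\tau$. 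Three details you handle correctly but that should stay explicit in a final write-up: the lemma tacitly assumes $q\ne p$ (your $s_0>0$), since for $q=p$ the right-hand side equals $-1$ regardless of the angle; the existence of $\tau$ with comparison angles uniformly below $\angle_p(c,c')+\varepsilon$ uses that the double limit defining the angle genuinely exists, which in a $\cat{0}$ space again follows from that same monotonicity and is built into the paper's definition of $\angle_p$; and $\varepsilon$ must be taken small enough that $\psi=\angle_p(c,c')+\varepsilon<\pi$ for the cosine comparison to be valid, which is precisely what your separate disposal of the degenerate case $\angle_p(c,c')=\pi$ permits.
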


\begin{Lemma}{{\rm (\cite[Lemma 1.4]{izeki-nayatani})}}
\label{lem:angle2}
 Let $Y$ be a $\cat{0}$ space and $p,q \in Y$.  Let $c(t)$
 $($resp.~$c'(t)$\,$)$ be a non-trivial geodesic starting from $p$
 $($resp.~$q$\,$)$. Then we have
 \begin{equation*}
  \cos \angle_p(c,\cc{p}{q}) = \lim_{t\rightarrow 0} 
       \frac{ d_Y(p,c'(t))-d_Y(c(t),c'(t))}{t}, 
 \end{equation*}
 where $\cc{p}{q}$ denotes the geodesic starting from $p$ and
 terminating at $q$. 
\end{Lemma}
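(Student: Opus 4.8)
The plan is to reduce the claimed identity to a purely geometric statement about the comparison angle at $p$, and then to evaluate that comparison angle in the limit by a monotonicity argument. Assume $p\neq q$ (otherwise there is nothing to prove) and set $d=d_Y(p,q)>0$. For small $t>0$ write $b(t)=d_Y(p,c'(t))$ and $m(t)=d_Y(c(t),c'(t))$; since $c(t)\to p$ and $c'(t)\to q$ we have $b(t),m(t)\to d$. Let $\bar\angle_p(c(t),c'(t))$ denote the angle at the vertex corresponding to $p$ in the Euclidean comparison triangle for $\tri{p}{c(t)}{c'(t)}$. The Euclidean law of cosines reads $m(t)^2=t^2+b(t)^2-2tb(t)\cos\bar\angle_p(c(t),c'(t))$, which rearranges to
\begin{equation*}
 \frac{d_Y(p,c'(t))-d_Y(c(t),c'(t))}{t}
 =\frac{2b(t)\cos\bar\angle_p(c(t),c'(t))-t}{b(t)+m(t)}.
\end{equation*}
Because $b(t)+m(t)\to 2d\neq 0$, it suffices to prove $\lim_{t\to 0}\bar\angle_p(c(t),c'(t))=\angle_p(c,\cc{p}{q})$; the right-hand side above then tends to $\cos\angle_p(c,\cc{p}{q})$, which is exactly the assertion.

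Next I would record a continuity property of directions and use it to get the easy (lower) bound on the comparison angle. Since $\tcprj_p\colon Y\to TC_pY$ is distance nonincreasing (Definition~\ref{tangent cone}(4)) and $c'(t)\to q$, we have $\tcprj_p(c'(t))\to\tcprj_p(q)$ in $TC_pY$; as the radial parts $d_Y(p,c'(t))\to d>0$ stay bounded away from $0$, the explicit cone metric of Definition~\ref{tangent cone}(3) forces the directions to converge, that is $\angle_p(\cc{p}{c'(t)},\cc{p}{q})\to 0$, and hence by the triangle inequality for $\angle_p$ on $S_pY$ we get $\angle_p(c,\cc{p}{c'(t)})\to\angle_p(c,\cc{p}{q})$. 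Since $c(t)$ lies on $c$ and the comparison angle dominates the Alexandrov angle, $\bar\angle_p(c(t),c'(t))\ge\angle_p(c(t),c'(t))=\angle_p(c,\cc{p}{c'(t)})$, whence $\liminf_{t\to 0}\bar\angle_p(c(t),c'(t))\ge\angle_p(c,\cc{p}{q})$.

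The hardest part is the matching upper bound $\limsup_{t\to 0}\bar\angle_p(c(t),c'(t))\le\angle_p(c,\cc{p}{q})$, and this is where I expect the real obstacle: the far point $c'(t)$ is moving, so I cannot invoke convergence of comparison angles for a fixed endpoint, nor a single first-variation limit, without uniformity. I would circumvent this using the monotonicity of comparison angles in a \cat{0} space, namely that $\bar\angle_p(c(s),c'(t))$ is nondecreasing in $s$. Fixing $L>0$ and taking $0<t\le L$ gives $\bar\angle_p(c(t),c'(t))\le\bar\angle_p(c(L),c'(t))$; letting $t\to 0$ with $L$ fixed, the comparison angle $\bar\angle_p(c(L),c'(t))$ depends continuously on its three side lengths whose relevant legs ($L$ and $d_Y(p,c'(t))\to d$) are bounded below, so $\bar\angle_p(c(L),c'(t))\to\bar\angle_p(c(L),q)$, giving $\limsup_{t\to 0}\bar\angle_p(c(t),c'(t))\le\bar\angle_p(c(L),q)$ for every $L>0$. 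Finally, as $L\to 0$ the quantities $\bar\angle_p(c(L),q)$ decrease to $\angle_p(c,\cc{p}{q})$, which is the monotone limit defining the Alexandrov angle underlying Lemma~\ref{lem:angle}. Combining the two bounds yields $\lim_{t\to 0}\bar\angle_p(c(t),c'(t))=\angle_p(c,\cc{p}{q})$, and the displayed identity then completes the proof.
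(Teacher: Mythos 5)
Your proposal is correct, but there is nothing in this paper to compare it against: the paper does not prove Lemma~\ref{lem:angle2} at all, it simply imports it from \cite[Lemma 1.4]{izeki-nayatani}. So what you have written is, in effect, a self-contained replacement for that citation, and it holds up. The law-of-cosines identity
\begin{equation*}
 \frac{d_Y(p,c'(t))-d_Y(c(t),c'(t))}{t}
 =\frac{2\,d_Y(p,c'(t))\cos\bar\angle_p(c(t),c'(t))-t}{d_Y(p,c'(t))+d_Y(c(t),c'(t))}
\end{equation*}
is algebraically exact, so the lemma does reduce to showing $\bar\angle_p(c(t),c'(t))\to\angle_p(c,\cc{p}{q})$, and both halves of that are sound. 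The lower bound correctly combines three facts all available here: $\tcprj_p$ is distance nonincreasing (Definition~\ref{tangent cone}\,(4)), convergence in the cone metric with radial parts bounded away from zero forces convergence of directions, and the comparison angle dominates the Alexandrov angle. The upper bound correctly identifies the real difficulty (the far endpoint $c'(t)$ moves, so no single fixed-endpoint first-variation statement applies uniformly) and resolves it with the monotonicity of comparison angles in a $\cat{0}$ space together with continuity of the comparison angle in its side lengths when both legs stay bounded below. The only spot where your wording is looser than your argument is the very last step: $\lim_{L\to 0}\bar\angle_p(c(L),q)=\angle_p(c,\cc{p}{q})$ is not literally ``the monotone limit defining the Alexandrov angle'' (the definition sends the parameters on \emph{both} geodesics to zero, whereas here the second endpoint is frozen at $q$); but this equality is exactly Lemma~\ref{lem:angle} after the same law-of-cosines rewriting you used at the outset (with $d_Y(p,c'(t))$ replaced by the constant $d_Y(p,q)$), and since you invoke Lemma~\ref{lem:angle} explicitly, the step is rigorous as stated.
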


%
%
\noindent
{\it Proof of Proposition \ref{prop:first_variation}}.
 Although the proof goes along the same line as that of 
 \cite[Proposition 2.5]{izeki-nayatani}, where the weights
 corresoponding to our $\mu$ has finite support, we give a proof here
 for the sake of completeness. 
 
 Suppose that $f$ is $\mu$-harmonic, that is, $f$ minimizes $E_{\mu}$. 
 Let $c\colon [0,\varepsilon]\rightarrow Y$ be a geodesic representing
 $V \in S_{f(e)}Y$, and $f_t\colon \Gamma \rightarrow Y$ a
 $\rho$-equivarinat map defined by 
 $f_t(\gamma)= \rho(\gamma) c(t)$. 
 By Lemmas~\ref{lem:angle} and \ref{lem:angle2} above, we see that 
\begin{equation*}
\begin{split}
  & \lim_{t\to +0} \frac{1}{t} 
 \left( d_Y(f_t(e),f_t(\gamma))- d_Y(f(e),f(\gamma)) \right) \\
 = & \lim_{t\to +0} \frac{1}{t}\left[\big(d_Y(c(t),\rho(\gamma)c(t))
       -d_Y(c(0),\rho(\gamma)c(t))\big) \right. \\
  & \phantom{++}
    + \left. \big(d_Y(c(0),\rho(\gamma)c(t)) 
      -d_Y(c(0),\rho(\gamma)c(0)) \big)\right] \\
 =&  -\cos \angle_{f(e)}(V, \tcprj_{f(e)}(f(\gamma)))
   -\cos \angle_{f(\gamma)}(\tcprj_{f(\gamma)}(f(e)),\rho(\gamma)V)
\end{split}
\end{equation*}
holds. Noting that 
$\angle_{f(\gamma)}(\tcprj_{f(\gamma)}(f(e)),\rho(\gamma)V)=
 \angle_{f(e)}(\tcprj_{f(e)}(f(\gamma^{-1})),V)$, 
we get
\begin{equation*}
\begin{split}
 & \lim_{t\to +0} \frac{1}{t} \left(
 d_Y(f_t(e),f_t(\gamma))^2-d_Y(f(e),f(\gamma))^2 \right) \\
 = & -2 d_Y(f(e),f(\gamma)) \left(
   \cos \angle_{f(e)}(V, \tcprj_{f(e)}(f(\gamma)))
   + \cos\angle_{f(e)}(\tcprj_{f(e)}(f(\gamma^{-1})),V)
  \right) \\
 = & -2\left(\inner{V}{\tcprj_{f(e)}(f(\gamma))} 
    + \inner{V}{\tcprj_{f(e)}(f(\gamma^{-1})}\right). 
\end{split}
\end{equation*}
 Since
\begin{equation*}
\begin{split}
   |d_Y(f_t(e),f_t(\gamma)) - d_Y(f(e),f(\gamma))|
 & \leq  d_Y(f(e),f_t(e)) + d_Y(f(\gamma),f_t(\gamma)) \\
 & =  2d_Y(f(e),f_t(e))=2t, 
\end{split}
\end{equation*}
 and$\gamma\mapsto d_Y(f(e),f(\gamma))$ is $\mu$-integrable by the
 Schwarz inequality, we see that
\begin{equation*}
\begin{split}
  & \int_{\Gamma}\frac{|d_Y(f_t(e),f_t(\gamma))^2
         -d_Y(f(e),f(\gamma))^2|}{t}   d\mu(\gamma) \\
 \leq & \int_{\Gamma} \frac{1}{t }2d_Y(f(e),f_t(e))
 \left(2d_Y(f(e),f(\gamma)) + d_Y(f(e),f_t(e)) 
       + d_Y(f(\gamma),f_t(\gamma))\right) d\mu(\gamma) \\
 = & 4 \int_{\Gamma} \left(d_Y(f(e),f(\gamma)) + t\right)
 d\mu(\gamma) < \infty. 
\end{split}
\end{equation*}
 Since $f$ is $\mu$-harmonic, by Lebesgue's dominated convergence
 theorem and $\mu(\gamma)=\mu(\gamma^{-1})$, we obtain
\begin{equation}
 \label{eq:differential}
\begin{split}
  0 & \leq \lim_{t\to +0} \frac{\ene{\mu}(f_t)-\ene{\mu}(f)}{t} \\
    &= \frac{1}{2}\int_{\Gamma} \lim_{t\to +0}
   \frac{d_Y(f_t(e),f_t(\gamma))^2-d_Y(f(e),f(\gamma))^2}{t} d\mu(\gamma) \\
    & = \int_{\Gamma} -2\inner{V}{\tcprj_{f(e)}(f(\gamma))} d\mu(\gamma).
\end{split}
\end{equation}

 Now suppose $f$ satisfies (\ref{eq:first_variation}).  
 Suppose further that there exists a $\rho$-equivariant map $g$ with
 $\ene{\mu}(g)<\ene{\mu}(f)$. 
 Let $d=d_Y(f(e),g(e))$ and take a geodesic 
 $c\colon [0,d]\rightarrow Y$ joining $f(e)$ to $g(e)$. 
 Then, since $t \mapsto E_{\mu}(f_{c(t)})$ is a convex function, 
 together with the above computation, we see that 
\begin{equation*}
\begin{split}
  \int_{\Gamma} 
   -2\inner{V_c}{\tcprj_{f(e)}(f(\gamma))} d\mu(\gamma)
 & = \lim_{t\to +0}\frac{\ene{\mu}(f_{c(dt)})-\ene{\mu}(f)}{dt} \\
 & \leq \lim_{t\to +0} \frac{\left((1-t)\ene{\mu}(f)+t
 \ene{\mu}(g)\right)-\ene{\mu}(f)}{dt}\\ 
 & =\frac{\ene{\mu}(g)-\ene{\mu}(f)}{d} < 0,
\end{split}
\end{equation*}
 where $V_c \in S_{f(e)}Y$ is the equivalence class of $c$. 
 A contradiction. 
 This completes the proof. 
 \qed 

\begin{Remark}
\label{rem:if_eudlidean}
 If $Y$ is isometric to a Riemannian manifold on a neighborhood of
 $f(e)$, then the tangent cone at $f(e)$ is isometric to $\R^n$, 
 and $f_t$ can be extended uniquely and smoothly to $t<0$ with small 
 $|t|$. 
 Then one sees that if $f$ is $\mu$-harmonic, 
 \begin{equation*}
  \lim_{t\to 0} \frac{\ene{\mu}(f_t)-\ene{\mu}(f)}{t}=0
 \end{equation*}
 must holds, and we have an equality in (\ref{eq:differential}). 
 However we cannot replace the inequality with equality if there is a
 singularity in $TC_{f(e)}Y$. 
\end{Remark}

Proposition~\ref{prop:barycenter} below asserts that a
$\mu$-harmonic map $f$ is characterized as a map that locates $f(e)$ at
the {\it barycenter} of $f_* \mu$ defined as follows, where $f_*\mu$ is
the push-forward of a measure $\mu$ by $f$. 

\begin{Definition}
 \label{defn:barycenter}
 Let $Y$ be a $\cat{0}$ space and $\nu$ a probability measure on $Y$
 with finite second moment, that is, 
\begin{equation*}
 \int_Y d_Y(p_0, p)^2 d\nu(p) < \infty
\end{equation*}
 for some (hence any) $p_0 \in Y$.  The {\it barycenter} $\bary{\mu}$ of
 $\nu$ is defined to be a unique point that minimizes
\begin{equation*}
 q \mapsto \int_Y d_Y(q, p)^2 d\nu(p). 
\end{equation*}
\end{Definition}

 The existence of the barycenter is a consequence of the convexity of a
 $\cat{0}$ space
 (see, for example,  \cite[Lemma 2.5.1]{korevaar-schoen1} for the
 proof). 
 Since $t \mapsto d_Y(c(t),p)$ is a convex function (regard $p$ as the
 image of a geodesic with zero velocity),  we see that
 $q \mapsto \int_Y d_Y(p,q)^2d\nu(p)$ is a convex function as in the
 case of $p \mapsto \ene{\mu}(f_p)$. 
 A slight simplification of the proof of
 Proposition~\ref{prop:first_variation} leads us to the following
 proposition;  since the support of $\nu$ is fixed, in contrast to
 $f_t(\Gamma)$ in Proposition~\ref{prop:first_variation}, we only need 
 Lemma~\ref{lem:angle}. 

\begin{Proposition}
 \label{prop:barycenter}
 Let $Y$ be a $\cat{0}$ space and $\nu$ a probability measrure with
 finite second moment. Then $q$ is the barycenter of $\nu$ 
 if and only if $q$ satisfies
\begin{equation*}
 \int_{Y}\inner{V}{\tcprj_q(p)} d\nu(p) \leq 0
\end{equation*}
 for any $V \in S_qY$. 
\end{Proposition}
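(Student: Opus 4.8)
The plan is to transcribe the first-variation argument of Proposition~\ref{prop:first_variation}, taking advantage of the fact that the functional
\[
F(q) = \int_Y d_Y(q,p)^2\, d\nu(p),
\]
whose minimizer defines $\bary{\nu}$, is convex, as already observed in the excerpt. The one genuine simplification over Proposition~\ref{prop:first_variation} is that the support of $\nu$ is fixed: when we vary the base point $q$ along a geodesic, only the ``near'' endpoint of each distance $d_Y(\,\cdot\,,p)$ moves, while $p$ stays put. Consequently I expect to invoke only Lemma~\ref{lem:angle}, and never Lemma~\ref{lem:angle2}, the latter having been needed in the equivariant setting precisely to control the simultaneous motion of both endpoints.

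For the forward direction, assume $q=\bary{\nu}$, fix $V\in S_qY$, and choose a geodesic $c\colon[0,\varepsilon]\rightarrow Y$ with $c(0)=q$ representing $V$. Applying Lemma~\ref{lem:angle} with base point $q$, the geodesic $c$, and the geodesic $\cc{q}{p}$ carrying the fixed point $p$, I would obtain
\[
\lim_{t\to +0}\frac{d_Y(c(t),p)-d_Y(q,p)}{t} = -\cos\angle_q\bigl(V,\tcprj_q(p)\bigr),
\]
and hence, after multiplying by $2d_Y(q,p)$ and using $|\tcprj_q(p)|=d_Y(q,p)$,
\[
\lim_{t\to +0}\frac{d_Y(c(t),p)^2-d_Y(q,p)^2}{t} = -2\inner{V}{\tcprj_q(p)}.
\]
To pass the limit under the integral, the estimate $|d_Y(c(t),p)-d_Y(q,p)|\le d_Y(c(t),q)=t$ yields the bound $t^{-1}|d_Y(c(t),p)^2-d_Y(q,p)^2|\le 2d_Y(q,p)+t$, whose right-hand side is $\nu$-integrable since finite second moment forces finite first moment via the Schwarz inequality. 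Lebesgue's dominated convergence theorem then gives
\[
0 \le \lim_{t\to +0}\frac{F(c(t))-F(q)}{t} = \int_Y -2\inner{V}{\tcprj_q(p)}\, d\nu(p),
\]
the inequality being the minimality of $q$; this is the asserted inequality.

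For the converse I would argue by contradiction in the style of Proposition~\ref{prop:first_variation}: if $q$ satisfies the inequality for every $V$ but fails to minimize, pick $q'$ with $F(q')<F(q)$, set $d=d_Y(q,q')$, and let $c$ be the geodesic from $q$ to $q'$ with direction $V_c\in S_qY$. Convexity of $t\mapsto F(c(t))$ together with the derivative computation above gives
\[
\int_Y -2\inner{V_c}{\tcprj_q(p)}\, d\nu(p) = \lim_{t\to +0}\frac{F(c(dt))-F(q)}{dt} \le \frac{F(q')-F(q)}{d} < 0,
\]
contradicting the hypothesis applied to $V_c$; hence $q=\bary{\nu}$. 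The only delicate point is the domination securing the limit--integral interchange, and I expect it to be routine here exactly because the relevant integrand is dominated by the first moment of $\nu$, which the finite-second-moment hypothesis supplies; the rest is a direct, slightly lighter copy of the first-variation argument.
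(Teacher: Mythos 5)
Your proposal is correct and matches the paper's intended argument: the paper explicitly describes the proof as a simplification of Proposition~\ref{prop:first_variation} in which only Lemma~\ref{lem:angle} is needed because the support of $\nu$ is fixed, which is precisely what you carry out. The domination by $2d_Y(q,p)+t$ and the convexity-based converse are the same devices used in the first-variation proof, so nothing further is required.
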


\begin{Remark}
\label{rem:barycenter}
 Let $f$ be a $\rho$-equivariant $\mu$-harmonic map. 
 Then, since $f$ is $\rho$-equivariant, we see that $f(\gamma)$
 is also the barycenter of $(f\circ \gamma)_* \mu$. 
\end{Remark}

%
%
Let $\varphi \colon \Gamma \rightarrow \R$.  
Suppose that 
$\int_{\Gamma} \varphi(\gamma \gamma') d\mu(\gamma')<\infty$ 
holds for any $\gamma \in \Gamma$. 
The {\it $\mu$-Laplacian} $\Delta=\Delta_{\mu}$ with respect to
$\mu$ is an operator defined by 
\begin{equation*}
 \Delta \varphi(\gamma) 
 = \varphi(\gamma) - \int_{\Gamma}
    \varphi(\gamma \gamma') d\mu(\gamma'). 
\end{equation*}
A function $\varphi \colon \Gamma \rightarrow \R$ is called a 
$\mu$-{\it subharmonic function} if $\Delta \varphi(\gamma)\leq 0$ holds
for any $\gamma \in \Gamma$.  
If $\varphi$ satisfies $\Delta \varphi(\gamma)=0$ for any
$\gamma\in\Gamma$, then $\varphi$ is called a $\mu$-{\it harmonic
function}. 

The following fact plays an important role in the proof of 
Theorem~\ref{thm:main-1}.

\begin{Proposition}
 \label{prop:pullback-is-subharmonic}
 Let $Y$ be a $\cat{0}$ space and 
 $\rho\colon \Gamma\rightarrow \isom{Y}$ a homomorphism. 
 Let $f\colon \Gamma\rightarrow Y$ be a $\rho$-equivariant
 $\mu$-harmonic map, and $u \colon Y \rightarrow \R$ a lower
 semicontinuous convex function on $Y$ satisfying 
 $|u(p)| \leq C_1 d_Y(p_0,p)^2+C_2$ for some $p_0 \in Y$
 and $C_1,C_2>0$. 
 Then the pull-back $f^*u$ of $u$ defined by $f^*u(\gamma)=u(f(\gamma))$
 is a $\mu$-subharmonic function on $\Gamma$. 
\end{Proposition}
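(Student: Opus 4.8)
The plan is to reduce the pointwise subharmonicity $-\Delta f^*u(\gamma)\ge 0$ to a single Jensen-type inequality for a barycenter, and then to read that inequality off from the first-variation characterization in Proposition~\ref{prop:barycenter}. Unwinding the definition of $\Delta$, what must be shown is $u(f(\gamma))\le \int_\Gamma u(f(\gamma\gamma'))\,d\mu(\gamma')$ for every $\gamma$. First I would remove the dependence on $\gamma$: since $f$ is $\rho$-equivariant, $f(\gamma\gamma')=\rho(\gamma)f(\gamma')$ and $f(\gamma)=\rho(\gamma)f(e)$, so replacing $u$ by $\tilde u=u\circ\rho(\gamma)$ — again lower semicontinuous, convex, and of the same quadratic growth since $\rho(\gamma)$ is an isometry — turns the claim into $\tilde u(f(e))\le \int_\Gamma \tilde u(f(\gamma'))\,d\mu(\gamma')$. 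Hence it suffices to prove, for every convex lower semicontinuous $v$ with $|v(p)|\le C_1 d_Y(p_0,p)^2+C_2$, that
\[
 v(f(e))\le \int_\Gamma v(f(\gamma))\,d\mu(\gamma).
\]
Integrability of the right-hand side follows directly from the quadratic growth and the finite second moment of the action: bounding $d_Y(p_0,f(\gamma))\le d_Y(p_0,f(e))+d_Y(f(e),f(\gamma))$, squaring, and treating the cross term by the Schwarz inequality reduces everything to $\int_\Gamma d_Y(f(e),f(\gamma))^2\,d\mu(\gamma)=2\ene{\mu}(f)<\infty$. By Remark~\ref{rem:barycenter}, $f(e)=\bary{f_*\mu}$, so the displayed inequality is exactly Jensen's inequality for $v$ with respect to the barycenter of $\nu:=f_*\mu$.

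Next I would set up the mechanism. Write $b=f(e)$. For each $p\in\supp\nu$ let $c=\cc{b}{p}\colon[0,1]\to Y$ be the constant-speed geodesic. Convexity of $v$ makes $t\mapsto v(c(t))$ convex on $[0,1]$, so its forward difference quotient is nondecreasing and yields the subgradient bound
\[
 v(p)-v(b)\ \ge\ \lim_{t\to+0}\frac{v(c(t))-v(b)}{t}\ =:\ Dv_b(\tcprj_b(p)),
\]
where $Dv_b$ is the one-sided directional derivative of $v$ at $b$, a positively homogeneous function on $TC_bY$ (the factor $d_Y(b,p)$ coming from the speed of $c$ being absorbed into $\tcprj_b(p)$). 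Integrating against $\nu$ gives $\int_Y v\,d\nu\ge v(b)+\int_Y Dv_b(\tcprj_b(p))\,d\nu(p)$, so the whole matter is reduced to
\[
 \int_Y Dv_b(\tcprj_b(p))\,d\nu(p)\ \ge\ 0.
\]

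The hard part will be this last inequality. For the two functions that actually occur later — a distance function $v=d_Y(\cdot,p_0)$ and a Busemann function $v=b_\xi(\cdot,p_0)$ — Lemmas~\ref{lem:angle} and \ref{lem:angle2} identify the directional derivative as an honest inner product, $Dv_b(W)=-\inner{U}{W}$, with $U\in S_bY$ the unit direction toward $p_0$ (respectively, the asymptotic limit toward $\xi$); the required inequality then reads $\int_Y \inner{U}{\tcprj_b(p)}\,d\nu(p)\le 0$, which is precisely Proposition~\ref{prop:barycenter} applied with $V=U$, and here the integrand is dominated by $d_Y(b,p)$, so integrability is immediate. For a general convex lower semicontinuous $v$, however, $Dv_b$ is only sublinear (convex and positively homogeneous) on the tangent cone $TC_bY$, which carries no linear structure, so the step cannot be obtained from a single application of Proposition~\ref{prop:barycenter}: it is exactly the content of Jensen's inequality for barycenters on $\cat{0}$ spaces. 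I would establish it either by invoking that known inequality, or self-containedly by first proving it for finitely supported $\nu$ — inducting on the number of atoms and fusing two atoms at a time along the geodesic joining them, where ordinary geodesic convexity applies — and then passing to the general $\nu$ by approximation, using lower semicontinuity to pass to the limit and the quadratic growth bound to control the tails of $f_*\mu$. I would also verify the measurability and integrability of $\gamma\mapsto Dv_b(\tcprj_b(f(\gamma)))$ in this general step, which is the remaining technical point beyond the two concrete cases above.
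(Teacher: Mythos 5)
Your primary route --- use equivariance to reduce to $\gamma=e$, identify $f(e)$ as the barycenter of $f_*\mu$ via the first-variation characterization, and then invoke Jensen's inequality for barycenters on $\cat{0}$ spaces --- is exactly the paper's proof: the paper cites \cite[Proposition 12.3]{eells-fuglede} for Jensen's inequality, uses Propositions~\ref{prop:first_variation} and \ref{prop:barycenter} to identify $f(\gamma)$ as $\bary{(\rho(\gamma)\circ f)_*\mu}$, and checks integrability by the same triangle/Schwarz estimate you give. Note, though, that once you take that option the entire directional-derivative apparatus is superfluous: Jensen applied directly to $u$ and $\nu=f_*(\gamma_*\mu)$ already yields $u(f(\gamma))\leq \int_{\Gamma}f^*u(\gamma\gamma'')\,d\mu(\gamma'')$ with no need for $Dv_b$. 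Also, your assertion that $\int_Y Dv_b(\tcprj_b(p))\,d\nu(p)\geq 0$ ``is precisely'' Jensen's inequality is loose --- via the subgradient bound it \emph{implies} Jensen, but it is a priori a stronger statement, so you cannot obtain it by citing Jensen.

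The self-contained fallback you sketch has a genuine gap. Fusing two atoms $p_1,p_2$ with weights $w_1,w_2$ into a single atom at the corresponding point of $[p_1,p_2]$ does decrease $\int v\,d\nu$ by geodesic convexity, but in a $\cat{0}$ space it does \emph{not} preserve the barycenter, so the induction does not terminate at $\bary{\nu}$. Concretely, on a tripod with endpoints $a,b,c$ at distance $1$ from the branch point $o$, the measure $\tfrac13(\delta_a+\delta_b+\delta_c)$ has barycenter $o$; fusing $a$ and $b$ gives $\tfrac23\delta_o+\tfrac13\delta_c$, whose barycenter lies at distance $1/3$ from $o$ along $[o,c]$. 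This failure of associativity of the barycenter is exactly why Jensen's inequality on $\cat{0}$ spaces requires a genuine proof (Eells--Fuglede, or Sturm's martingale approach) rather than the Euclidean atom-fusing induction. So either cite the known inequality, as the paper does, or replace the fallback with a correct argument; as written it would fail.
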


\begin{proof}
 By Jensen's inequality for $\cat{0}$ spaces 
 (\cite[p.~242, Proposition 12.3]{eells-fuglede}), for a lower
 semicontinuous convex function $u$
 and a probability measure $\nu$ on $Y$, we have
\begin{equation*}
 u(\bary{\nu}) \leq \int_Y u(p) d\nu(p). 
\end{equation*}
 Note that, since $\rho(\gamma)$ is an isometry, 
 $\bary{\rho(\gamma)_* \nu}=\rho(\gamma)\bary{\nu}$ holds by the
 definition of the barycenter. 
 Therefore, by Propositions~\ref{prop:first_variation} and
 \ref{prop:barycenter},  we see that a $\mu$-harmonic map $f$ satisfies 
 $f(\gamma)=\bary{(\rho(\gamma)\circ f)_*\mu}$ for any 
 $\gamma \in \Gamma$. 
 Thus, substituting $(\rho(\gamma)\circ f)_*\mu=f_*(\gamma_* \mu)$ to
 $\nu$, we get 
\begin{equation*}
 u(f(\gamma)) \leq 
 \int_{\Gamma} u(p) d (f_*(\gamma_*\mu))(p)=
 \int_{\Gamma} f^*u(\gamma') d\gamma_*\mu(\gamma') 
 = \int_{\Gamma} f^*u(\gamma\gamma'') d\mu(\gamma'')
\end{equation*}
 for any $\gamma\in \Gamma$. 
 A computation similar to that in the proof of Lemma~\ref{lem:integral},
 using the triangle inequality and the Schwarz inequality, 
 we see that $\gamma_*\mu$ also has finite second moment. 
 Thus, by our assumption on $u$ and Lemma~\ref{lem:integral}, 
 integrals in the expression above make sense. 
 Thus we conclude that $f^*u$ is a $\mu$-subharmonic function. 
\end{proof}

\begin{Remark}
 According to Ishihara \cite[Theorem 3.4]{ishihara}, a smooth map
 between Riemannian manifolds is harmonic if and only if the pull-back
 of any (local) convex function on the target manifold is a (local)
 subharmonic function. 
 A similar characterization is proved for a harmonic map from 
 a $1$-dimensional Riemannian polyhedron into a nonpositively curved
 metric space \cite[Proposition 12.4]{eells-fuglede}. 
 Although our situation is much simpler than that in
 \cite{eells-fuglede} as the proof above shows, we do not know if the
 converse of Proposition~\ref{prop:pullback-is-subharmonic} is also true
 or not. 
\end{Remark} 

\begin{Remark}
 It should be easy to imagine that, for a convex function $u$, 
 the quantity $|u(\bary{\nu})-\int_Y u(p) d\nu(p)|$ measures the
 strength of the convexity of $u$. 
 Therefore, if we have a $\mu$-harmonic map 
 $f\colon \Gamma \rightarrow Y$ and a convex function 
 $u\colon Y\rightarrow \R$ that is $\mu$-harmonic, then
 we may expect that $u$ has the ``weakest possible convexity''
 on the convex hull of $f(\Gamma)$. 
\end{Remark}

\begin{Remark}
 In terms of probablity theory, 
 $\mu$-subharmonic functions correspond to submartingales associated to
 a random walk generated by $\mu$.  See \cite{sturm}
 for more information on martingale theory and harmonic maps. 
\end{Remark}

We list some known existence results of $\mu$-harmonic maps. 
(See \cite{gromov}, \cite{korevaar-schoen2}, 
\cite{jost}, and \cite{labourie} for related results.)
A homomorphism $\rho\colon \Gamma\rightarrow \isom{Y}$ is called 
{\it reductive in the sense of Jost \cite{jost}} if there exists
a closed, convex $\rho(\Gamma)$-invariant subset $C$ of $Y$ satsifying
either of the following:
\begin{itemize}
 \item $C$ is isometric to a finite-dimensional Euclidean space, or
 \item for any unbounded sequence $\{p_n\}\subset C$, there exists
       $\gamma \in \Gamma$ such that $\{d(p_n, \rho(\gamma)p_n)\}$ is
       unbounded. 
\end{itemize}
\noindent
It is well-known that if $\rho(\Gamma)$ is 
reductive in the sense of Jost, then there exists a
$\rho$-equivariant $\mu$-harmonic map even when $Y$ is not proper (see
\cite{jost}, \cite{labourie}, and Corollary~\ref{cor:existence}). 
Note that if $\rho(\Gamma)$ is not
reductive in the sense of Jost, then there exists an unbounded
sequence $\{p_n\} \subset Y$ such that, for each $\gamma \in \Gamma$, 
$d(p_n,\rho(\gamma)p_n)$ is bounded. 
If $Y$ is proper, by passing to a subsequence of $\{p_n\}$, we can find
a limit point $\xi \in \partial Y$, and see that $\xi$ is fixed by
$\rho(\Gamma)$; thus if $Y$ is proper and $\rho(\Gamma)$ does not fix a
point in $\partial Y$, then there exists a $\rho$-equivariant
$\mu$-harmonic map. 
We will show that this is also true for nonproper $\cat{0}$ spaces with
certain dimension bound
(Proposition~\ref{thm:existence_finite_teledim}).

\section{The rate of escape and equivariant harmonic maps}
\label{sec:rate_of_escape_and_hmap}

%
%
Let $\Gamma$ be a countable group with a symmetric probability
measure $\mu$ whose support generates $\Gamma$. 
This measure $\mu$ can be regarded as a transition probability of a move
starting from the identity element $e$ in $\Gamma$, and hence generates
a random walk on $\Gamma$. 
We define $(\Omega, \mathbb{P})$ to be 
the set of increments of this random walk:
\begin{equation*}
 (\Omega, \mathbb{P})=(\Gamma,\mu)^{\N}=
 (\Gamma, \mu) \times (\Gamma, \mu) \times \dots. 
\end{equation*}
Then, for each $n \in \N$ and a walk $\omega \in \Omega$, 
the position of $\omega$, starting from $e$, at time $n$ is given by a
map 
\begin{equation*}
 \gamma_n \colon \Omega \rightarrow \Gamma; \ 
 \omega \mapsto \omega_1 \dots \omega_n, 
\end{equation*}
where $\omega=(\omega_1, \omega_2, \dots, \omega_n, \dots) \in \Omega$.
Let $\mu^n$ be a probability measure on $\Gamma$ obtained as the
push-forward of $\mathbb{P}$ by a map $\gamma_n$, that is, 
$\mu^n ={\gamma_n}_* \mathbb{P}$. 
Then we see that $\mu^n$ describes the distribution of the position of a
random walk generated by $\mu$ at time $n$,
and, for $\mu^n$-integrable function
$\varphi\colon \Gamma \rightarrow \R$, we have
\begin{equation*}
\begin{split}
  \int_{\Gamma} \varphi(\gamma) d\mu^n(\gamma)
 & = \int_{\Omega} \varphi(\gamma_n(\omega)) d\mathbb{P}(\omega) \\
 & = \int_{\Gamma}\dots \left(\int_{\Gamma}\left(
  \int_{\Gamma} \varphi (\omega_1 \dots \omega_{n-1}\omega_n) 
  d\mu(\omega_n)\right) 
d\mu(\omega_{n-1})\right) \dots d\mu(\omega_1);
\end{split}
\end{equation*}
$\mu^n$ is the $n$-fold convolution of $\mu$. 
Suppose that $\Gamma$ acts on a metric space $Y=(Y,d)$ via a
homomorphism $\rho\colon \Gamma \rightarrow \isom{Y}$ and that the
action given by $\rho$ has finite second moment with respect to $\mu$. 
Then the action also has finite second moment with respect to
$\mu^n$, according to the following computation and induction:
for $p \in Y$, 
\begin{equation*}
\begin{split}
   &\int_{\Gamma} d(p,\rho(\gamma)p)^2 d\mu^n(\gamma)  \\
 = & \int_{\Gamma}\left(
   \int_{\Gamma} d(p,\rho(\gamma'\omega_n)p)^2 d\mu(\omega_n)\right)
   d\mu^{n-1}(\gamma') \\
 \leq & \int_{\Gamma}\left( \int_{\Gamma} 
    \left(d(p,\rho(\gamma')p)+
     d_S(\rho(\gamma')p,\rho(\gamma'\omega_n)p)\right)^2
 d\mu(\omega_n)\right)
   d\mu^{n-1}(\gamma') \\
  = & \int_{\Gamma}\left( \int_{\Gamma} 
     d(p,\rho(\gamma')p)^2+2d(p,\rho(\gamma')p)d(p,\rho(\omega_n)p)
      + d(p,\rho(\omega_n)p)^2
  d\mu(\omega_n)\right)
    d\mu^{n-1}(\gamma') \\
 \leq &  \left(
  1+ 2 \left(\int_{\Gamma} 1+d(p,\rho(\omega_n)p)^2\
 d\mu(\omega_n)\right)\right) 
 \int_{\Gamma}1+ d(p,\rho(\gamma')p)^2\ d\mu^{n-1}(\gamma') \\
 & \phantom{aaa} + \int_{\Gamma} d(p,\rho(\omega_n)p)^2 d\mu(\omega_n), 
\end{split}
\end{equation*}
 where we use the invariance of $d$ under $\rho(\gamma')$ to deduce the
 second equality, and 
 $d(p,q)\leq 1+d(p,q)^2$ to deduce the last inequality.

Now we introduce the quantity called the rate of escape and see
what happens when it vanishes.
It is well-known that, for almost all $\omega \in \Omega$, 
\begin{equation*}
 \lim_n \frac{d(p, \rho(\gamma_n(\omega))p)}{n}
\end{equation*}
exists and the function 
\begin{equation*}
 \omega \mapsto \lim_n \frac{d(p, \rho(\gamma_n(\omega))p)}{n}
\end{equation*}
converges to a constant function in $L^1(\Omega, \mathbb{P})$
(see, for example, \cite[\S 8, A]{woess}). 
The value of this constant function is called the {\it rate of escape}
(or the {\it drift}) of $\rho$ and denoted by 
$l_{\rho}(\Gamma)=l_{\rho}(\Gamma,\mu)$.  
It is easy to check, by using the triangle inequality,
$l_{\rho}(\Gamma)$ does not depend on the choice of $p \in Y$. 

%
%
\begin{Proposition}
\label{prop:mu-harmonicity}
 Let $Y=(Y,d)$ be a proper $\cat{0}$ space, and $\Gamma$ a 
 countable group equipped with a symmetric probability measure $\mu$
 whose support generates $\Gamma$.
 Suppose that $\Gamma$ acts on $Y$ via a homomorphism 
 $\rho\colon \Gamma \rightarrow \isom{Y}$ and that
 there exists a $\rho$-equivariant $\mu$-harmonic map
 $f\colon \Gamma \rightarrow Y$. 
 $($Note that the existence of a $\rho$-equivariant $\mu$-harmonic
 map means that the action given by $\rho$ has finite second moment.$)$
 If $l_{\rho}(\Gamma)=0$, then, for almost all $\omega \in \Omega$,
 there exists 
 $\xi(\omega) \in Y\cup \partial Y$ such that a function 
 $\varphi_{\omega} \colon \Gamma \rightarrow \R$ defined by 
 \begin{equation*}
  \varphi_{\omega}(\gamma) =
   \begin{cases}
    d(\xi(\omega), f(\gamma))-d(\xi(\omega), f(e)) 
    & \text{if } \xi(\omega) \in    Y \\
    b_{\xi(\omega)}(f(\gamma), f(e)) 
    & \text{if } \xi(\omega) \in \partial Y 
   \end{cases}
 \end{equation*}
 satisfies $\Delta \varphi_{\omega}(e)=0$. 
\end{Proposition}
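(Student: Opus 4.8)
The plan is to realize $\xi(\omega)$ as a limit point of the orbit sequence $f(\gamma_n(\omega))$ and to extract it so that the associated distance-difference function is harmonic at $e$. For a fixed $g\in\Gamma$ write $\Phi_g(\gamma)=d(f(g),f(\gamma))-d(f(g),f(e))$; this is the pull-back by $f$ of the convex function $p\mapsto d(f(g),p)-d(f(g),f(e))$, which obeys the required quadratic growth bound, so by Proposition~\ref{prop:pullback-is-subharmonic} each $\Phi_g$ is $\mu$-subharmonic. Since $\Phi_g(e)=0$, the defect $g_m(\omega):=-\Delta\Phi_{\gamma_m(\omega)}(e)=\int_\Gamma \Phi_{\gamma_m(\omega)}(\gamma')\,d\mu(\gamma')$ is nonnegative. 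Because $f(\gamma_m(\omega))=\rho(\gamma_m(\omega))f(e)$, the function attached to any cone-topology limit point $\xi(\omega)$ of $\{f(\gamma_m(\omega))\}$ is precisely the $\varphi_\omega$ of the statement: for $\xi(\omega)\in Y$ this is immediate from continuity of $d$, and for $\xi(\omega)\in\partial Y$ it is exactly the definition (\ref{eq:def_of_busemann}) of the Busemann function. Thus the entire task reduces to producing, for almost every $\omega$, a subsequence along which $g_m(\omega)\to 0$.

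The link to the hypothesis $l_\rho(\Gamma)=0$ comes from the expected one-step increment of the displacement. Put $h=f^{*}(d(f(e),\cdot))$ and $D_n=h(\gamma_n(\omega))=d(f(e),f(\gamma_n(\omega)))$; integrability of everything below is guaranteed by the finite second moment with respect to each $\mu^n$ established in \S\ref{sec:rate_of_escape_and_hmap}. Since $h$ is $\mu$-subharmonic, $h(\gamma_m)$ is a submartingale, and telescoping the conditional increments $\mathbb{E}[h(\gamma_{m+1})-h(\gamma_m)\mid\mathcal{F}_m]=-\Delta h(\gamma_m)$ gives $\mathbb{E}[D_n]=\sum_{m=0}^{n-1}\mathbb{E}[-\Delta h(\gamma_m)]$; by the very definition of the rate of escape, $\mathbb{E}[D_n]/n\to l_\rho(\Gamma)=0$. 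Next I would use equivariance to move $\rho(\gamma_m)^{-1}$ across the isometry $d$ inside the integral, obtaining for each fixed $m$ the identity $-\Delta h(\gamma_m)=\int_\Gamma\bigl(d(f(\gamma_m^{-1}),f(\gamma'))-d(f(\gamma_m^{-1}),f(e))\bigr)\,d\mu(\gamma')=-\Delta\Phi_{\gamma_m^{-1}}(e)$. Because $\mu$ is symmetric, $\gamma_m^{-1}$ is equidistributed with $\gamma_m$, whence $\mathbb{E}[-\Delta h(\gamma_m)]=\mathbb{E}[g_m]$. Combining, $\tfrac1n\sum_{m=0}^{n-1}\mathbb{E}[g_m]=\mathbb{E}[D_n]/n\to 0$.

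Setting $A_n=\tfrac1n\sum_{m<n}g_m\ge 0$, the previous line reads $\mathbb{E}[A_n]\to 0$, so Fatou's lemma forces $\liminf_n A_n=0$ almost surely; since the $g_m$ are nonnegative, this yields $\liminf_m g_m(\omega)=0$ for almost every $\omega$. Fix such an $\omega$ and choose $m_k$ with $g_{m_k}(\omega)\to 0$; invoking properness, i.e.\ the compactness of $Y\cup\partial Y$ in the cone topology (Remark~\ref{rem:proper_implies_cpt}), I pass to a further subsequence so that $f(\gamma_{m_k}(\omega))\to\xi(\omega)\in Y\cup\partial Y$. Then $\Phi_{\gamma_{m_k}(\omega)}\to\varphi_\omega$ pointwise on $\Gamma$, while the reverse triangle inequality provides the uniform bound $|\Phi_g(\gamma')|\le d(f(e),f(\gamma'))$, whose right-hand side lies in $L^1(\Gamma,\mu)$ by finiteness of the second moment. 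Dominated convergence then gives $\int_\Gamma\varphi_\omega(\gamma')\,d\mu(\gamma')=\lim_k g_{m_k}(\omega)=0$, and since $\varphi_\omega(e)=0$ we conclude $\Delta\varphi_\omega(e)=0$.

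The step I expect to be the main obstacle is the transfer carried out in the second paragraph: the identity $-\Delta h(\gamma_m)=-\Delta\Phi_{\gamma_m^{-1}}(e)$ together with the equidistribution of $\gamma_m$ and $\gamma_m^{-1}$. This is what converts information about the displacement seen from the \emph{fixed} basepoint $f(e)$ into information about the convexity defect $g_m$ measured from the \emph{moving} point $f(\gamma_m)$, and it is here that symmetry of $\mu$ is indispensable. A secondary point requiring care is the passage from Cesàro convergence of $\mathbb{E}[g_m]$ to the almost-sure existence of a subsequence with $g_{m_k}\to 0$, which succeeds only because each $g_m$ is nonnegative.
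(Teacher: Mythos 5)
Your proof is correct and follows essentially the same route as the paper: the telescoping identity $\mathbb{E}[D_n]=\sum_{m}\mathbb{E}[-\Delta h(\gamma_m)]$ is the paper's relation $L^{n+1}(f)=L^n(f)+\alpha_n$, the nonnegativity of the defect plus $l_\rho(\Gamma)=0$ yields the vanishing subsequence in both arguments, and your transfer $-\Delta h(\gamma_m)=-\Delta\Phi_{\gamma_m^{-1}}(e)$ is exactly the identity $\Delta\varphi_{\omega,n_j}(e)=\Delta\varphi(\gamma_{n_j}(\omega))$ used there. The only cosmetic difference is that the paper keeps the inverse and obtains $\xi(\omega)$ as a cone-topology limit of $f(\gamma_{n_j}(\omega)^{-1})$ (so no equidistribution step is needed at that point), whereas you undo the inversion in expectation using the symmetry of $\mu^m$ and take limits of $f(\gamma_{m_k}(\omega))$ directly; both versions are sound.
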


\begin{Remark}
 If one assumes that $\mu(\gamma)\not= 0$ for any 
 $\gamma \in \Gamma\setminus\{e\}$, then one can show that there exists
 $\omega \in \Omega$ such that $\varphi_{\omega}$ in the proposition 
 is $\mu$-harmonic, that is, $\Delta \varphi_{\omega}(\gamma)=0$ for any 
 $\gamma \in \Gamma$. 
 This is true even if $Y$ is not proper, although $\xi(\omega)$ may not
 be a point in $Y\cup \partial Y$ 
 (Proposition~\ref{prop:global-mu-harmonicity}). 
 We will prove this in \S~\ref{sec:global_harmonicity}. 
\end{Remark}

%
%
In order to prove Proposition~\ref{prop:mu-harmonicity},  
let us consider a function $L^n(f)$ of a $\rho$-equivariant map 
$f \colon \Gamma \rightarrow Y$ defined as
\begin{equation*}
 L^n(f) = \int_{\Gamma} d(f(e), f(\gamma)) d\mu^n(\gamma)
  = \int_{\Gamma} d(f(e),\rho(\gamma_n(\omega))f(e))
  d\mathbb{P}(\omega). 
\end{equation*}
Take $\varphi$ to be a function
$\gamma \mapsto d(f(e), f(\gamma))$ and set
\begin{equation*}
 \alpha_n = \int_{\Gamma} -\Delta \varphi(\gamma) d\mu^n(\gamma)
 = \int_{\Omega} -\Delta \varphi(\gamma_n(\omega)) d\mathbb{P}(\omega). 
\end{equation*} 
Then there is the following relation between this $\alpha_n$ and 
the rate of escape $l_{\rho}(\Gamma)$: 

\begin{Lemma}
\label{lem:alpha_n}
If $\liminf_n \alpha_n >0$, then $l_{\rho}(\Gamma)>0$. 
\end{Lemma}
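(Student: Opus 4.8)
The plan is to recognize $\alpha_n$ as the forward difference of the sequence $L^n(f)$ and then run a telescoping (Ces\`aro) argument.

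First I would rewrite $\alpha_n$ in terms of $L^n(f)$. Expanding the $\mu$-Laplacian,
\begin{equation*}
 -\Delta\varphi(\gamma) = \int_{\Gamma}\varphi(\gamma\gamma')\,d\mu(\gamma') - \varphi(\gamma),
\end{equation*}
and integrating against $\mu^n$, the second term contributes exactly $-L^n(f)$. For the first term, Fubini together with the identity $\mu^{n+1}=\mu^n * \mu$ (visible from the description $\int_{\Gamma}\psi\,d\mu^n = \int\cdots\int \psi(\omega_1\cdots\omega_n)\,d\mu(\omega_n)\cdots d\mu(\omega_1)$) gives
\begin{equation*}
 \int_{\Gamma}\int_{\Gamma}\varphi(\gamma\gamma')\,d\mu(\gamma')\,d\mu^n(\gamma) = \int_{\Gamma}\varphi(\sigma)\,d\mu^{n+1}(\sigma) = L^{n+1}(f).
\end{equation*}
Hence $\alpha_n = L^{n+1}(f) - L^n(f)$; that is, $\alpha_n$ is precisely the increment of the sequence $L^n(f)$.

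Next I would observe that since $f$ is $\rho$-equivariant we have $f(\gamma)=\rho(\gamma)f(e)$, so with $p=f(e)$,
\begin{equation*}
 L^n(f) = \int_{\Gamma} d(p,\rho(\gamma)p)\,d\mu^n(\gamma),
\end{equation*}
and therefore $l_{\rho}(\Gamma) = \lim_n L^n(f)/n$ by the formula for the rate of escape recalled in the introduction. Now suppose $\liminf_n \alpha_n = c > 0$. Then there is an $N$ with $L^{k+1}(f)-L^k(f)=\alpha_k \geq c/2$ for all $k \geq N$, and telescoping yields $L^n(f) \geq L^N(f) + (n-N)\,c/2$ for $n \geq N$. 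Dividing by $n$ and letting $n\to\infty$ gives $l_{\rho}(\Gamma)\geq c/2 > 0$, as desired.

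The argument is essentially a discrete Ces\`aro estimate, so the only points needing care are bookkeeping ones: that $L^n(f)$ and $\Delta\varphi$ are well-defined and finite, which follows from the action having finite second moment with respect to $\mu$ and hence, as computed earlier in this section, with respect to every $\mu^n$, so that $\varphi\in L^1(\Gamma,\mu^n)$; and the application of Fubini to justify interchanging the two integrations, again legitimate by this integrability. The one genuinely substantive step---and the only place where the structure of the problem enters---is the identification $\alpha_n = L^{n+1}(f)-L^n(f)$ via the convolution identity $\mu^{n+1}=\mu^n*\mu$; once this is in hand, the conclusion is immediate.
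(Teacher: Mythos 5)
Your proof is correct and follows essentially the same route as the paper's: both establish the identity $L^{n+1}(f)=L^n(f)+\alpha_n$ by expanding the Laplacian and using $\mu^{n+1}=\mu^n*\mu$, then telescope to get linear growth of $L^n(f)$ and conclude via $l_{\rho}(\Gamma)=\lim_n L^n(f)/n$. Your remarks on integrability and Fubini are the same bookkeeping the paper relies on implicitly, so there is nothing to add.
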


\begin{proof}
Notice that 
\begin{equation*}
\begin{split}
 & L^n(f)+ \alpha_n = 
 L^n(f) - \int_{\Gamma} \Delta \varphi(\gamma)
 d\mu^n(\gamma) \\
 = & \int_{\Gamma} d(f(e), f(\gamma)) d\mu^n(\gamma)
 - \int_{\Gamma} \left(d(f(e),f(\gamma)) - 
   \int_{\Gamma} d(f(e), f(\gamma \gamma')) d\mu(\gamma') \right) 
    d\mu^n(\gamma)\\
 = & \int_{\Gamma} \left(\int_{\Gamma} d(f(e), f(\gamma \gamma'))
 d\mu(\gamma')\right) d\mu^n(\gamma) \\
 = & L^{n+1}(f). 
\end{split}
\end{equation*}
Now suppose that $\liminf_n \alpha_n = c>0$. 
Then there exists $n_0 \in \N$ such that
\begin{equation*}
 \inf_{n \geq n_0} \alpha_n \geq c/2. 
\end{equation*}
Therefore, by taking $c'=\min\{ L^{n_0}(f)/n_0, c/2 \}$, we see that
\begin{equation*}
 L^n (f) \geq L^{n_0}(f) + \frac{c(n-n_0)}{2} \geq c'n
\end{equation*}
for any $n\geq n_0$. 
Since 
\begin{equation*}
 \omega \mapsto \frac{d(f(e), f(\gamma_n(\omega)))}{n}
\end{equation*}
converges to $l_{\rho}(\Gamma)$ in $L^1(\Omega, \mathbb{P})$, 
we have 
\begin{equation*}
 l_{\rho}(\Gamma) 
 = \lim_{n\to \infty} \int_{\Omega}\frac{d(f(e),f(\gamma_n(\omega)))}{n}
 \ d\mathbb{P}(\omega)
 = \lim_{n\to \infty} \frac{L^n(f)}{n} \geq c'>0. 
\end{equation*}
This completes the proof. 
\end{proof}

\medskip

\noindent
{\it Proof of Proposition~\ref{prop:mu-harmonicity}}. 
Suppose that there exists a
$\rho$-equivariant $\mu$-harmonic map $f \colon \Gamma \rightarrow Y$. 
Since $f$ is harmonic and $y \mapsto d(f(e),y)$ is a continuous convex
function, by Proposition~\ref{prop:pullback-is-subharmonic}, we see that 
$\varphi(\gamma)=d(f(e),f(\gamma))$ becomes a 
$\mu$-subharmonic function, that is, 
\begin{equation*}
 \Delta \varphi(\gamma) 
 = \varphi(\gamma) - \int_{\Gamma}
 \varphi(\gamma \gamma') d\mu(\gamma') \leq 0
\end{equation*}
holds for any $\gamma \in \Gamma$.  

Since we are assuming $l_{\rho}(\Gamma)=0$, we see that 
$\liminf_n \alpha_n\leq 0$ by Lemma~\ref{lem:alpha_n}.  
On the other hand, since $\varphi$ is subharmonic, we have
$-\Delta \varphi(\gamma)\geq 0$ for any $\gamma \in \Gamma$, 
and hence we conclude that
\begin{equation*}
 \liminf_{n} \int_{\Omega} -\Delta \varphi(\gamma_n(\omega))
 d\mathbb{P}(\omega) = 0. 
\end{equation*}
Therefore, by taking a subsequence $\{n_j\}\subset \N$, we have 
\begin{equation*}
 \lim_{j} \int_{\Omega} -\Delta \varphi(\gamma_{n_j}(\omega))
 d\mathbb{P}(\omega) = 0.
\end{equation*}
Since $-\Delta \varphi(\gamma) \geq 0$ for any $\gamma \in \Gamma$, this means
that 
$\{-\Delta \varphi(\gamma_{n_j}(\cdot))\colon \Omega \rightarrow \R\}_{j\in \N}$
converges to $0$ in $L^1(\Omega,\mathbb{P})$. 
Thus, by taking a subsequence if necessary, we may assume that 
$-\Delta \varphi(\gamma_{n_j}(\cdot)) \to 0$ almost everywhere on 
$(\Omega,\mathbb{P})$. 
Take $\omega \in \Omega$ so that 
$-\Delta\varphi(\gamma_{n_j}(\omega))\to 0$. 
Letting
\begin{equation*}
 \varphi_{\omega, n_j}(\gamma)
 = d(f(\gamma_{n_j}(\omega)^{-1}), f(\gamma)) - 
 d(f(\gamma_{n_j}(\omega)^{-1}), f(e)), 
\end{equation*}
we have
\begin{equation*}
 \Delta \varphi_{\omega, n_j} (\gamma)
 = d(f(\gamma_{n_j}(\omega)^{-1}), f(\gamma))
 - \int_{\Gamma}
 d(f(\gamma_{n_j}(\omega)^{-1}), f(\gamma \gamma')) d\mu(\gamma'). 
\end{equation*}
In particular, we see that 
\begin{equation*}
\begin{split}
 \Delta \varphi_{\omega, n_j} (e)
 & = d(f(\gamma_{n_j}(\omega)^{{-1}}), f(e))
 - \int_{\Gamma}
 d(f(\gamma_{n_j}(\omega)^{-1}), f(\gamma')) d\mu(\gamma') \\
 & = d(f(e), f(\gamma_{n_j}(\omega))) - 
 \int_{\Gamma}d(f(e), f(\gamma_{n_j}(\omega))\gamma')d \mu(\gamma') \\
 & = \Delta \varphi(\gamma_{n_j}(\omega)). 
\end{split}
\end{equation*}
Passing to a subsequence if necessary, since $Y$ is proper, we may
assume that the sequence $\{\gamma_{n_j}(\omega)^{-1}\}$ converges to a
point $\xi(\omega) \in Y\cup \partial Y$.
If  $\xi(\omega) \in Y$, we have
 \begin{equation*}
 \varphi_{\omega}(\gamma) := \lim_j \varphi_{\omega, n_j}(\gamma) 
 = d(\xi(\omega), f(\gamma)) - d(\xi(\omega), f(e)), 
 \end{equation*}
while in the case $\xi(\omega) \in \partial Y$ we get
 \begin{equation*}
 \varphi_{\omega}(\gamma):= \lim_j \varphi_{\omega, n_j}(\gamma) 
 = b_{\xi(\omega)}(f(\gamma), f(e)), 
 \end{equation*}
where $b_{\xi(\omega)}(\cdot, \cdot)$ denotes the 
Busemann function associated to $\xi(\omega)$ 
(see Remark~\ref{rem:busemann_fct}).  
Note that
\begin{equation*}
 |\varphi_{\omega,n_j}(\gamma)| = 
 |d(f(\gamma_{n_j}(\omega)^{-1}),f(\gamma)) - 
  d(f(\gamma_{n_j}(\omega)^{-1}),f(e))| \leq d(f(e),f(\gamma))
\end{equation*}
and that 
the action given by $\rho$ has finite second moment with respect
to $\mu$. 
Then we see that, by Lebesgue's dominated convergence theorem, 
\begin{equation*}
 \lim_j \int_{\Gamma} \varphi_{\omega,n_j}(\gamma) d\mu(\gamma)
 = \int_{\Gamma} \varphi_{\omega}(\gamma) d\mu(\gamma), 
\end{equation*}
and hence we get $\Delta \varphi_{\omega} (e)=0$.
This completes the proof of Proposition~\ref{prop:mu-harmonicity}. 
\qed

\section{Some consequences of the convexity of distance functions}
\label{sec:convexity_of_distance_and_Busemann_fct}

In this section, we show two propositions which reflect the
convexity of distance functions on $\cat{0}$ spaces. 
These propositions, together with Proposition~\ref{prop:mu-harmonicity}
above,  play a key role in the proof of Theorem~\ref{thm:main-1}. 

%
%
\begin{Proposition}
\label{prop:convexity_of_distance_sphere}
 Let $Y$ be a $\cat{0}$ space,  and $p, q \in Y$. 
 Let $c\colon [0,\infty)\longrightarrow Y$ be a geodesic ray starting
 from $p$. 
 Take $r_0>0$ and let $p_0=c(r_0)$. 
 Denote by $V_c$ the element in $S_pY$ corresponding to the geodesic
 segment $t \mapsto c(t)$.  
 Then we have
\begin{equation*}
   \inner{V_c}{\tcprj_p(q)}  \geq d(p_0,p)-d(p_0,q).
\end{equation*}
\end{Proposition}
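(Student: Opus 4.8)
The plan is to reduce the desired inequality to the ordinary triangle inequality, using the $\cat{0}$ comparison of angles together with the Euclidean law of cosines.

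First, since $p_0=c(r_0)$ lies on the geodesic ray $c$, the direction $V_c\in S_pY$ coincides with the direction of $\cc{p}{p_0}$, so that $\angle_p(c,\cc{p}{q})=\angle_p(\cc{p}{p_0},\cc{p}{q})$. As $|V_c|=1$ and $|\tcprj_p(q)|=d(p,q)$, the definition of the inner product gives
\[
\inner{V_c}{\tcprj_p(q)}=d(p,q)\cos\angle_p(\cc{p}{p_0},\cc{p}{q}).
\]
Next I would apply the $\cat{0}$ inequality to the geodesic triangle $\tri{p}{p_0}{q}$. The Alexandrov angle at $p$ is dominated by the corresponding angle of a Euclidean comparison triangle, $\angle_p(\cc{p}{p_0},\cc{p}{q})\le \angle_{\overline{p}}(\overline{p_0},\overline{q})$, whence $\cos\angle_p(\cc{p}{p_0},\cc{p}{q})\ge \cos\angle_{\overline{p}}(\overline{p_0},\overline{q})$. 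Computing the latter by the law of cosines with side lengths $d(p,p_0)=r_0$, $d(p,q)$ and $d(p_0,q)$ yields
\[
\inner{V_c}{\tcprj_p(q)}\ \ge\ \frac{r_0^2+d(p,q)^2-d(p_0,q)^2}{2r_0}.
\]

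Finally, it remains to verify that the right-hand side is at least $d(p_0,p)-d(p_0,q)=r_0-d(p_0,q)$. Clearing the positive denominator $2r_0$ and rearranging, this is equivalent to $d(p,q)^2\ge\big(d(p_0,q)-r_0\big)^2$, i.e.\ to $d(p,q)\ge|d(p_0,q)-d(p,p_0)|$, which is exactly the triangle inequality for $p,p_0,q$. The degenerate case $q=p$ is immediate, since both sides vanish (and the factor $d(p,q)$ may be cancelled only when it is positive).

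The main obstacle is that the naive approach --- differentiating the convex function $t\mapsto d(c(t),q)$ along the ray and invoking Lemma~\ref{lem:angle} --- only produces $r_0\cos\angle_p(\cc{p}{p_0},\cc{p}{q})\ge d(p,q)-d(p_0,q)$, which is too weak once one accounts for the factor $d(p,q)$ built into $\tcprj_p(q)$. One must instead exploit the full quadratic strength of the $\cat{0}$ condition (equivalently, the law of cosines in the comparison triangle); the remaining algebraic step then collapses neatly onto the triangle inequality.
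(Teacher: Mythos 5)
Your proof is correct, and it takes a genuinely different and considerably shorter route than the paper. You combine three standard ingredients: the identity $\inner{V_c}{\tcprj_p(q)}=d(p,q)\cos\angle_p(p_0,q)$, the upper angle comparison $\angle_p(p_0,q)\leq \angle_{\overline{p}}(\overline{p_0},\overline{q})$ for the comparison triangle of $\tri{p}{p_0}{q}$ (Bridson--Haefliger II.1.7(4)), and the Euclidean law of cosines; the resulting bound $\bigl(r_0^2+d(p,q)^2-d(p_0,q)^2\bigr)/2r_0$ dominates $r_0-d(p_0,q)$ precisely because $d(p,q)\geq |d(p_0,q)-r_0|$, i.e.\ by the triangle inequality. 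All steps check out, including the degenerate case $q=p$, and your closing remark correctly identifies why the naive first-variation/convexity argument ($r_0\cos\angle_p\geq d(p,q)-d(p_0,q)$) is the wrong inequality. The paper instead glues an extra arc onto $S_pY$ to manufacture an antipode $-V_c$, forms the enlarged Euclidean cone $T_p$ containing a genuine line $\tilde c(\R)$ through $O_p$, and analyzes the nearest-point projection $\bar q$ of $\tcprj_p(q)$ onto that line, with a case split on the foot point $t_0$. That heavier construction is not needed for the inequality itself --- your argument shows as much --- but it is what powers Remarks~4.2 and 4.3 (the identification $\inner{V_c}{\tcprj_p(q)}=\inner{V_c}{\bar q}$ and the characterization of the equality case as $\tcprj_p(q)\in\tilde c(\R)$), which the paper uses crucially in Lemma~5.1 and in the equality analysis of Proposition~4.4. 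If you wanted your route to serve those later arguments, you would have to extract the equality case separately: equality forces both $\angle_p=\angle_{\overline{p}}$ (hence a flat, here degenerate, comparison configuration) and $d(p,q)=|d(p_0,q)-r_0|$, which again places $\tcprj_p(q)$ on the line through $O_p$ and $\tcprj_p(p_0)$ --- doable, but not something your write-up currently provides.
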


\begin{proof}
 Suppose that $\angle (\tcprj_p(p_0),\tcprj_p(q))=0$. 
 Then $\tcprj_p(q)$ lies in a geodesic ray $\{tV_c\mid t\geq 0\}$ in
 $TC_pY$ and 
 \begin{equation*}
   \inner{V_c}{\tcprj_p(q)} = d_{T_p}(O_p,\tcprj_p(q))
   = d(p,q) \geq d(p_0,p)-d(p_0,q)
 \end{equation*}
 by the triangle inequality.  

 Suppose $\angle (\tcprj_p(p_0),\tcprj_p(q))\not=0$ and 
 $\angle (\tcprj_p(p_0),\tcprj_p(q))< \pi$. 
 Set  $W_q = \tcprj_p(q)/\Vert \tcprj_p(q) \Vert \in S_pY$. 
 Since $0< \angle (V_c, W_q) =\angle (\tcprj_p(p_0),\tcprj_p(q)) < \pi$, 
 there is a non-trivial geodesic segment $\bar c$ in $S_pY$ starting
 from $V_c$ and terminating at $W_q$.
 By attaching a segment of length $\pi-\angle (V_c, W_q)$ to the
 endpoint $W_q$, we obtain a $\cat{1}$ space $S_p$, containing
 $S_pY$ as a convex subset in which $\bar c$ can be extended up to
 length $\pi$
 (\cite[p.~347, 11.1 Basic Glueing Theorem]{bridson-haefliger}). 
  Denote by $-V_c \in S_p$ the endpoint of this extended
 geodesic $\bar c$.  Then we have
 \begin{equation*}
  \angle(V_c, W_q) + \angle(W_q,-V_c) = \pi, 
 \end{equation*}
 in particular, 
 \begin{equation}
 \label{eq:minus_vector}
  -\inner{-V_c}{\tcprj_p(q)} 
  = \inner{V_c}{\tcprj_p(q)}. 
 \end{equation}
 Let $T_p$ be the Euclidean cone over $S_p$.  
 Then $T_p$ contains $TC_pY$ as a convex subset. 
 Note that $\{tV_c\mid t\geq 0\} \cup \{t(-V_c) \mid t\geq 0\}$
 forms a geodesic line in $T_p$ since $\angle(V_c,-V_c)=\pi$. 
 Let $\tilde c$ be a geodesic defined by
\begin{equation*}
 \tilde c (t)=
 \begin{cases}
  tV_c & t\geq 0, \\
  \vert t \vert (-V_c) & t \leq 0. 
 \end{cases}
\end{equation*}
 Then $\tilde c$ satisfies
 $\tilde c(\R) = \{tV_c\mid t\geq 0\} \cup \{t(-V_c) \mid t\geq 0\}$, 
 $\tilde c(0)=O_p$ and $\inner{V_c}{\tilde c(t)} = t$ for
 $t \in \R$. 
 Set $d_o=d_{T_p}(\tcprj_p(p_0),O_p)$ and 
 $d_q=d_{T_p}(\tcprj_p(p_0),\tcprj_p(q))$, and let
 $q'=\tilde c(d_o-d_q)$. 
 Then $q'$ is a point on $\tilde c(\R)$ satisfying
 \begin{equation*}
  \inner{V_c}{q'} = d_o-d_q \geq d(p_0,p)-d(p_0,q), 
 \end{equation*}
\begin{minipage}[b]{8cm}
 where the last inequality holds because
 $d_q \leq d(p_0,q)$ while $d_o=d(p_0,p)$. 
 Let $\bar q$ be the image of $\tcprj_p(q)$ by the nearest point
 projection onto the image $\tilde c(\R)$ of $\tilde c$  in
 $T_p$. Since the nearest point projection does not increase the
 distance,
 $d_{T_p}(\tcprj_p(p_0),\bar q) \leq d_q$,
 and hence
 \begin{equation*}
  \bar q \in \tilde 
 c\left(\left[d_o-d_q,\  d_o+d_q\right]\right). 
 \end{equation*}
  Therefore
 \begin{equation*}
  \inner{V_c}{\bar q} \geq d_o-d_q
  = \inner{V_c}{q'}\geq d(p_0,p)-d(p_0,q). 
 \end{equation*}
\end{minipage} 
\begin{minipage}[b]{6cm}
 \hspace{2.5cm} 
 \includegraphics[scale=0.9]{fig.1}
\end{minipage}
 
\vskip0.3cm
\noindent
 By the definition of the distance of a Euclidean cone, we have
 \begin{equation}
 \label{eq:distance_from_tilde_c}
\begin{split}
   & d_{T_p}(\tilde c(t), \tcprj_p(q))^2 
   \\ & =
   \begin{cases}
	t^2 + |\tcprj_p(q)|^2 -2t |\tcprj_p(q)| \cos \angle(V_c,\tcprj_p(q))
	& t \geq 0, \\
	t^2 + |\tcprj_p(q)|^2 -2|t| |\tcprj_p(q)| \cos \angle(-V_c,\tcprj_p(q))
	& t \leq 0. 
\end{cases}     
\end{split}
\end{equation}
 Take $t_0$ to be $t_0V_c=\tilde c(t_0)=\bar q$, and note that the
 left-hand side of (\ref{eq:distance_from_tilde_c}) is minimized at
 $t=t_0$. 
 If $t_0\not= 0$, then the differentiation of 
 $d_{T_p}(\tilde c(t), \tcprj_p(q))^2$ at $t=t_0$ exists and it 
 must be equal to $0$, that is, 
 \begin{equation*}
 \begin{cases}
  t_0 - |\tcprj_p(q)| \cos \angle(V_c,\tcprj_p(q)) = 0 & t_0 > 0, \\
  t_0 + |\tcprj_p(q)| \cos \angle(-V_c, \tcprj_p(q)) = 0 & t_0 < 0. \\
 \end{cases}\end{equation*}
Therefore 
 \begin{equation*}
  \begin{cases}
   \inner{V_c}{\tcprj_p(q)} =
   |\tcprj_p(q)|\cos \angle (V_c,\tcprj_p(q)) =t_0 = \inner{V_c}{\bar q}. 
    & t_0 >0, \\
   \inner{-V_c}{\tcprj_p(q)} =
   |\tcprj_p(q)|\cos \angle (-V_c,\tcprj_p(q)) =-t_0 = 
    -\inner{V_c}{\bar q}. & t_0 <0. \\
  \end{cases}     
 \end{equation*}
 Since $q'$ lies in $\tilde c(\R)$, we have 
 $\inner{-V_c}{q'} = -\inner{V_c}{q'}$. 
 Together with (\ref{eq:minus_vector}), we get 
 \begin{equation*}
  \begin{cases}
   \inner{V_c}{\tcprj_p(q)} = \inner{V_c}{\bar q}
    \geq d(p_0,p)-d(p_0,q) & t_0 >0 \\
   \inner{V_c}{\tcprj_p(q)} = -\inner{-V_c}{\tcprj_p(q)}
   = \inner{V_c}{\bar q}
    \geq d(p_0,p)-d(p_0,q)
   & t_0 <0. 
  \end{cases}
 \end{equation*}
 Thus we obtain the desired inequality in the case $t_0 \not= 0$. 

 If  $t_0=0$, then, from (\ref{eq:distance_from_tilde_c}),  
 the right (resp.~the left) differentiation of 
 $d_{T_p}(\tilde c(t), \tcprj_p(q))^2$ at $t=0$ exists and it is 
 nonnegative (resp.~nonpositive).  Noting (\ref{eq:minus_vector}) again,
 we have
 \begin{equation*}
 \begin{cases}
  - |\tcprj_p(q)| \cos \angle(V_c,\tcprj_p(q))
  = -\inner{V_c}{\tcprj_p(q)} \geq 0  \\
  - |\tcprj_p(q)| \cos \angle(-V_c, \tcprj_p(q))
  = -\inner{-V_c}{\tcprj_p(q)}
  = \inner{V_c}{\tcprj_p(q)} \geq 0,  \\
 \end{cases}\end{equation*}
 which implies $\inner{V_c}{\tcprj_p(q)} = 0$. 
 On the other hand, since the nearest point projection does not increase
 the distance, $t_0=0$ means that $d_q \geq d_o$;
 therefore 
 \begin{equation*}
  \inner{V_c}{\tcprj_p(q)}
  \geq d_o - d_q 
  \geq d(p_0,p)-d(p_0,q)
 \end{equation*}
 again. 

 Now suppose $\angle(V_c, \tcprj_p(q)) = \pi$.  In this case, 
 $V_c$ and $\tcprj_p(q)$ can be joined by a geodesic segment in $T_p$
 passing through $O_p$, hence
 \begin{equation*}
\begin{split}
 \inner{V_c}{\tcprj_p(q)}
  & = d_{T_p}(O_p, \tcprj_p(q)) \cos \angle (V_c, \tcprj_p(q)) \\
  & = -d_{T_p}(O_p, \tcprj_p(q))\\ 
  & = d_o - d_q. 
\end{split} 
\end{equation*}
 Since $d_o=d(p_0,p)$ and $d_q \leq d(p_0,q)$, we get 
\begin{equation*}
 \inner{V_c}{\tcprj_p(q)} \geq d(p_0,p) -d(p_0,q). 
\end{equation*}
This completes the proof. 
\end{proof}

\begin{Remark}
\label{rem:pi_p(q)_and_bar_q}
 Note that, according to the proof above, we have
 \begin{equation*}
  \inner{V_c}{\tcprj_p(q)} = \inner{V_c}{\bar q}
  \geq \inner{V_c}{q'}
 \end{equation*}
 in any case. 
 This can be regarded as an expression of 
 the convexity of a metric ball centered at $\tcprj_p(p_0)$ (or $p_0$). 
 And if the equality in the inequality above holds, 
 then it should be understood that the points in the statement are
 arranged in a convex set with the ``weakest'' convexity, which is 
 an interval. 
 See the next remark.
\end{Remark}

\begin{Remark}
\label{rem:equality_for_distance_ball}
 In the proof above, when $\angle (V_c, \tcprj_p(q))$ is equal to either
 $0$ or $\pi$, the geodesic
 segment joining $V_c$ and $\tcprj_p(q)$ extends or passes through the
 origin $O_p$, and hence $\tcprj_p(q)=\bar q$. 
 If $\angle (V_c, \tcprj_p(q))\not=0, \pi$, then the equality in the
 proposition holds if and only if $\bar q=q'$. 
 In this case, since 
 $d(\tcprj_p(p_0), \tcprj_p(q))=d(\tcprj_p(p_0),q')=d(\tcprj_p(p_0),\bar q)$, 
 the nearest point projection from $[\tcprj_p(p_0),\tcprj_p(q)]$ to 
 $\tilde c(\R)$ turns out to be an isometry. 
 By \cite[p.~182, 2.12(3)]{bridson-haefliger}, 
 the distance between $\tilde c(\R)$ and each point of 
 $[\tcprj_p(p_0),\tcprj_p(q)]$ must be constant. 
 Both sets share the same point $\tcprj_p(p_0)$, the distance between these
 set must be $0$.  Therefore $\tcprj_p(q)=\bar q$. 
\end{Remark}

%
%
Letting $p_0\to \xi=c(\infty)$ in 
Proposition~\ref{prop:convexity_of_distance_sphere}, we obtain the
following proposition, which can be viewed as a sort of
quantitative expression of the convexity of the horoball.

\begin{Proposition}
\label{prop:convexity_of_horosphere} 
 Let $Y$ be a $\cat{0}$ space, and $p, q \in Y$. 
 Let $c\colon [0,\infty)\longrightarrow Y$ be a geodesic ray starting
 from $p$ and terminating at $\xi \in \partial Y$. 
 Denoting the element corresponding to $t \mapsto c(t)$ in $S_pY$ by 
 $V_c$, we have
\begin{equation*}
   \inner{V_c}{\tcprj_p(q)}
   \geq b_{\xi}(p,q). 
\end{equation*}
 If the equality holds in the inequality above, then the convex hull of 
 $[p, \xi)\cup [q, \xi)\cup [p, q]$ is 
 isometric to a half-infinite parallelogram in the Euclidean plane.
\end{Proposition}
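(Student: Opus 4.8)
The plan is to derive the inequality from Proposition~\ref{prop:convexity_of_distance_sphere} by taking $p_0=c(r_0)$ and sending $r_0\to\infty$, exactly as suggested. Since $p=c(0)$ we have $d(p_0,p)=r_0$, so that proposition reads $\inner{V_c}{\tcprj_p(q)}\ge r_0-d(c(r_0),q)$; as $c(r_0)\to\xi$ in the cone topology, Remark~\ref{rem:busemann_fct} identifies $\lim_{r_0\to\infty}\bigl(r_0-d(c(r_0),q)\bigr)=\lim_{r_0\to\infty}\bigl(d(c(r_0),p)-d(c(r_0),q)\bigr)$ with $b_{\xi}(p,q)$, and the inequality follows.

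To treat the equality case I would pass everything through the Alexandrov angle $\theta:=\angle_p(\xi,q)$ between $c$ and $\cc p q$. On one side, $\inner{V_c}{\tcprj_p(q)}=d(p,q)\cos\theta$ by the definition of the inner product on $TC_pY$. On the other, let $\bar\angle_t$ denote the Euclidean comparison angle $\angle_{\overline p}(\overline{c(t)},\overline q)$ at $p$ in the triangle $\tri p {c(t)} q$; the law of cosines gives $d(c(t),q)=\sqrt{t^2-2t\,d(p,q)\cos\bar\angle_t+d(p,q)^2}$, and a short expansion as $t\to\infty$ yields $b_{\xi}(p,q)=d(p,q)\cos\bar\angle_\infty$, where $\bar\angle_\infty=\lim_t\bar\angle_t$ exists since $t\mapsto\bar\angle_t$ is non-decreasing in a $\cat{0}$ space. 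This recovers the inequality (because $\bar\angle_\infty\ge\theta$) and shows that equality forces $\bar\angle_\infty=\theta$.

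The decisive observation is that $\theta=\lim_{t\to0}\bar\angle_t=\inf_t\bar\angle_t$ whereas $\bar\angle_\infty=\sup_t\bar\angle_t$, so $\bar\angle_\infty=\theta$ squeezes $\bar\angle_t\equiv\theta$ for every $t>0$. Thus for each fixed $t$ the comparison angle of $\tri p {c(t)} q$ at $p$ equals its Alexandrov angle; since $\bar\angle_p(c(t'),q)=\theta$ for every $t'\le t$, each comparison inequality $d(c(t'),q)\le d(\overline{c(t')},\overline q)$ is forced to be an equality, and hence $\ch{\{p,c(t),q\}}$ is isometric to its Euclidean comparison triangle by the flat triangle lemma (\cite{bridson-haefliger}). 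These flat triangles are nested and increasing in $t$, all sharing the side $[p,q]$ and the same apex angle $\theta$ at $p$; the constancy of the apex angle makes the comparison isometries compatible, so their union, and hence $\ch{[p,\xi)\cup\{q\}}$, is isometric to a Euclidean sector whose closure is the asserted half-infinite parallelogram.

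It remains to see that this closed region is $\ch{[p,\xi)\cup[q,\xi)\cup[p,q]}$ and that its edge opposite $[p,\xi)$ is genuinely $[q,\xi)$. That edge is a geodesic ray from $q$ parallel to $c$ inside the flat region; parallel rays in a flat region are asymptotic and hence represent the same boundary point $\xi$, and since a $\cat{0}$ space admits a unique ray from $q$ representing $\xi$, the edge coincides with $[q,\xi)$. I expect the main obstacle to be precisely this final assembly step: verifying that the increasing union of the finite flat triangles glues into a single flat half-parallelogram, and that its limiting boundary ray from $q$ is the true ray $[q,\xi)$ rather than merely an asymptotic competitor.
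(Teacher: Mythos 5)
Your proof is correct, and the inequality part (letting $p_0=c(r_0)\to\xi$ in Proposition~\ref{prop:convexity_of_distance_sphere}) is exactly the paper's argument. For the equality case you take a genuinely more direct route. The paper passes through the auxiliary cone $T_p$: it sandwiches $b_{c(\infty)}(p,q)\leq b_{\tilde c(\infty)}(O_p,\tcprj_p(q))\leq \inner{V_c}{\tcprj_p(q)}$, deduces from equality that $d(c(t),q)-d_{TC_pY}(\tilde c(t),\tcprj_p(q))\to 0$, and then invokes the separate Lemma~\ref{angle_nondecreasing} to force $d(c(t),q)=d_{TC_pY}(\tilde c(t),\tcprj_p(q))$ for all $t$ before applying the flat triangle criterion. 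You instead express both sides of the claimed equality through the Euclidean comparison angle $\bar\theta(t)$ at $p$ in $\tri{p}{c(t)}{q}$: the asymptotic expansion of the law of cosines gives $b_{\xi}(p,q)=d(p,q)\cos\bigl(\lim_{t\to\infty}\bar\theta(t)\bigr)$, while $\inner{V_c}{\tcprj_p(q)}=d(p,q)\cos\bigl(\lim_{t\to 0}\bar\theta(t)\bigr)$ (the latter identification of the Alexandrov angle with $\lim_{t\to 0}\bar\theta(t)$ is exactly Lemma~\ref{lem:angle}, which you should cite explicitly), so monotonicity of $\bar\theta$ squeezes $\bar\theta\equiv\angle_p(\xi,q)$ at once. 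This collapses the paper's two-step Busemann comparison in $T_p$ and its Lemma~\ref{angle_nondecreasing} into a single monotonicity argument; what you lose is only the explicit tangent-cone picture that the paper reuses elsewhere (e.g.\ in Remark~\ref{rem:pi_p(q)_and_bar_q}). The endgame is identical in both proofs: constant comparison angle gives flatness of each $\tri{p}{c(t)}{q}$ via \cite[p.~180, 2.9 Proposition]{bridson-haefliger}, and the nested union of these flat triangles closes up to the half-infinite parallelogram, with the far edge identified as $\ray{q,\xi}$ by uniqueness of the asymptotic ray from $q$ --- a point you rightly flag and resolve, and which the paper treats equally briefly.
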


\begin{proof}
 Let $p_0 = c(t_0)$ and recall 
 Proposition~\ref{prop:convexity_of_distance_sphere}. Then we have
 \begin{equation*}
  \inner{V_c}{\tcprj_p(q)} \geq d(c(t_0),p)-d(c(t_0),q)
 \end{equation*}
 for any $t_0 \in \R$. Letting $t_0 \to \infty$ yields the desired
 inequality. 

 Take $S_p$ and $T_p$ as in the proof of
 Proposition~\ref{prop:convexity_of_distance_sphere}. 
 Note that 
 $O_p$ and $q'$ in $\tilde c(\R)$ are located on the same side of 
 $\tilde c(\R)$ with respect to $\tcprj_p(p_0) = \tilde c(t_0)$ for
 sufficiently large $t_0$.  Recalling the definition of $q'$ and
 Remark~\ref{rem:pi_p(q)_and_bar_q}, although $q'$ may depend on $t_0$, we have
 \begin{equation*}
  \begin{split}
   b_{\tilde c(\infty)}(O_p,\tcprj_p(q))
   & = \lim_{t_0\to \infty}d_{T_p}(\tilde c(t_0),O_p)
   - d_{T_p}(\tilde c(t_0), \tcprj_p(q)) \\
   & = \lim_{t_0\to \infty}d_{T_p}(\tilde c(t_0),O_p)
   - d_{T_p}(\tilde c(t_0), q') \\
   & = \lim_{t_0\to \infty}\inner{V_c}{q'} \\
   & \leq \inner{V_c}{\tcprj_p(q)}.
  \end{split} 
 \end{equation*}
 On the other hand, since $\tcprj_p$ does not increase the distance while
 $d(c(t_0),p)=d_{TC_pY}(\tilde c(t_0),O_p)$ and 
 $\tcprj_p(c(t_0))=\tilde c(t_0)$ for $t_0 \geq 0$, we have
 \begin{equation*}
\begin{split}
   b_{c(\infty)}(p,q) & = \lim_{t_0 \to \infty}
   d(c(t_0),p) -d(c(t_0),q) \\
   & \leq \lim_{t_0 \to \infty}
    d_{TC_pY}(\tilde c(t_0), O_p) - 
    d_{TC_pY}(\tilde c(t_0), \tcprj_p(q)) \\
   & = b_{\tilde c(\infty)}(O_p, \tcprj_p(q)).
\end{split} 
\end{equation*}
 Therefore the equality 
 $\inner{V_c}{\tcprj_p(q)} =b_{c(\infty)}(p,q)$ means that
 \begin{equation*}
  b_{\tilde c(\infty)}(O_p,\tcprj_p(q))=b_{c(\infty)}(p,q)
 \end{equation*}
 which implies that
 \begin{equation*}
  \lim_{r_0 \to \infty}
   \left(d(c(t_0),q) -d_{TC_pY}(\tilde c(t_0), \tcprj_p(q)) \right)
   = b_{\tilde c(\infty)}(O_p,\tcprj_p(q))-b_{c(\infty)}(p,q)
   = 0. 
 \end{equation*}
 Then, by Lemma~\ref{angle_nondecreasing} below, we see that 
 $d(c(t_0),q)=d_{TC_pY}(\tilde c(t_0), \tcprj_p(q))$ for any 
 $t_0 \in [0,\infty)$.  Note that, by the definition of the distance on
 $TC_pY$,  any triangle in $TC_pY$ having a vertex $O_p$ is isometric to
 a triangle in $\R^2$. Thus
 $\tri{\tilde c(t_0)}{O_p}{\tcprj_p(q)}$ can be regarded as the
 comparison triangle for $\tri{c(t_0)}{p}{q}$ $t_0 \in [0,\infty)$. 
 Since $\angle c(t_0)pq = \angle c(t_0) O_p \tcprj_p(q)$ by definition, 
 \cite[p.~180, 2.9 Proposition]{bridson-haefliger} tells us that  
 $\tri{c(t_0)}{p}{q}$ is a flat triangle for any $t_0 \in [0,\infty)$. 
 Therefore the convex hull of $[p, \xi)\cup [q, \xi) \cup [p, q]$ is
 isometric to a region obtained as the limit of a increasing sequence of
 flat triangles isometric to $\tri{c(t_0)}{p}{q}$, $t_0 \in [0,\infty)$, 
 in $\R^2$, which gives us a half-infinite parallelogram. 
 This completes the proof. 
\end{proof}

\begin{Lemma}
\label{angle_nondecreasing}
 Let $c\colon [0,\infty) \rightarrow Y$ be a geodesic ray and $q \in Y$. 
 Set $\tilde c (t):=\tcprj_{c(0)}(c(t))$. If 
\begin{equation*}
\lim_{t\to \infty}
d(c(t),q)-d_{TC_{c(0)}Y}(\tilde c(t),\tcprj_{c(0)}(q))=0
\end{equation*}
 holds, then, for any $t \in [0,\infty)$,
 \begin{equation*}
 d(c(t),q)=d_{TC_{c(0)}Y}(\tilde c(t),\tcprj_{c(0)}(q))
 \end{equation*}
 holds. 
\end{Lemma}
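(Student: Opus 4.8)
The plan is to reduce the whole statement to the monotonicity of comparison angles in a $\cat{0}$ space. Write $p=c(0)$, $a=d(p,q)$, and let $V_c\in S_pY$ be the direction of $c$, so that $\tilde c(t)=\tcprj_p(c(t))$ is the point at distance $t$ along the ray $\{tV_c\}$ in $TC_pY$. Let $\theta=\angle_p(c,\cc{p}{q})=\angle_p(V_c,\tcprj_p(q))$ denote the Alexandrov angle, and for $t>0$ let $\beta(t)$ denote the comparison angle at $\overline{p}$ in the Euclidean comparison triangle for $\tri{p}{c(t)}{q}$. By the definition of the Euclidean cone metric on $TC_pY$ together with the law of cosines, I record the two identities
\begin{equation*}
 d(c(t),q)^2=t^2+a^2-2ta\cos\beta(t),\qquad
 d_{TC_pY}(\tilde c(t),\tcprj_p(q))^2=t^2+a^2-2ta\cos\theta .
\end{equation*}

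First I would invoke the standard fact (\cite{bridson-haefliger}) that in a $\cat{0}$ space the comparison angle $\beta(t)$ is a non-decreasing function of $t$, with $\lim_{t\to 0^+}\beta(t)=\theta$ and hence $\beta(t)\geq\theta$ for all $t>0$; in particular $\cos\theta-\cos\beta(t)\geq 0$, which recovers $d(c(t),q)\geq d_{TC_pY}(\tilde c(t),\tcprj_p(q))$, consistent with $\tcprj_p$ being distance nonincreasing. Set $\beta_\infty=\lim_{t\to\infty}\beta(t)=\sup_{t>0}\beta(t)\in[\theta,\pi]$.

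The heart of the argument is an asymptotic computation. Abbreviating $g(t)=d(c(t),q)$ and $h(t)=d_{TC_pY}(\tilde c(t),\tcprj_p(q))$, I would factor
\begin{equation*}
 g(t)-h(t)=\frac{g(t)^2-h(t)^2}{g(t)+h(t)}
 =\frac{2ta\bigl(\cos\theta-\cos\beta(t)\bigr)}{g(t)+h(t)} .
\end{equation*}
From the two identities above, $g(t)=t(1+o(1))$ and $h(t)=t(1+o(1))$, so $g(t)+h(t)=2t\,(1+o(1))$; since $\beta(t)\to\beta_\infty$, this yields $g(t)-h(t)\to a\bigl(\cos\theta-\cos\beta_\infty\bigr)$. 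The hypothesis of the lemma forces this limit to be $0$, so in the nontrivial case $a>0$ we get $\cos\beta_\infty=\cos\theta$, and as both angles lie in $[0,\pi]$ we conclude $\beta_\infty=\theta$.

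Finally, monotonicity closes the argument: from $\theta\leq\beta(t)\leq\beta_\infty=\theta$ we obtain $\beta(t)=\theta$ for every $t>0$, whence $\cos\beta(t)=\cos\theta$ and the two displayed identities coincide, i.e. $d(c(t),q)=d_{TC_pY}(\tilde c(t),\tcprj_p(q))$ for all $t\in[0,\infty)$ (the case $t=0$ reducing to $d(p,q)=a=|\tcprj_p(q)|$, and the case $a=0$ being trivial). I expect the only point needing care to be the interchange of the $t\to\infty$ limit with the asymptotics of $g(t)+h(t)$, but since $\beta(t)$ stays in the bounded interval $[0,\pi]$ the error terms are uniformly controlled, so I anticipate no real obstacle; the entire proof rests on having the correct monotonicity statement for comparison angles at hand.
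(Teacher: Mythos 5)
Your proof is correct and follows essentially the same route as the paper's: both rest on the monotonicity of the comparison angle $\beta(t)$ together with $\lim_{t\to 0^+}\beta(t)=\theta$, the law-of-cosines identities for $d(c(t),q)$ and the cone distance, and the factorization $g(t)-h(t)=\bigl(g(t)^2-h(t)^2\bigr)/\bigl(g(t)+h(t)\bigr)$ with $g(t)+h(t)\sim 2t$. The only cosmetic difference is that you compute the exact limit $a\bigl(\cos\theta-\cos\beta_\infty\bigr)$, while the paper argues by contradiction and extracts a uniform positive lower bound on $d(t)-\tilde d(t)$ when $\beta(t_0)>\theta$; the underlying estimate is identical.
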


\begin{proof}
 Let $\tri{\overline{c(0)}}{\overline{c(t)}}{\overline{q}}$ be a
 comparison triangle of $\tri{c(0)}{c(t)}{q}$, and denote the
 angle $\angle_{\overline{c(0)}} (\overline{c(t)},\overline{q})$
 by $\theta(t)$.  Let
 $\tilde \theta = \angle_{O_{c(0)}}(\tilde c(t),\tcprj_{c(0)}(q))$. 
 Note that $\tilde \theta$ is a constant independent of $t$, and 
 $\theta(t)\to \tilde \theta$ as $t \to 0$ by the definition of the
 angle, and that 
 $\theta(t)$ is nondecreasing as $t$ getting larger
 by the definition of $\cat{0}$ space 
 (or see, for example, \cite[p.~161, 1.7(3)]{bridson-haefliger}).  
 Set 
\begin{equation*}
 d(t) = d(c(t),q), \quad 
 \tilde d(t) =d_{TC_{c(0)}Y}(\tilde c(t),\tcprj_{c(0)}(q)), \quad 
 P= d(c(0),q), 
\end{equation*}
 then by the cosine rule, 
 we see that 
\begin{equation*}
 \cos \theta(t) = \frac{t^2 + P^2 - d(t)^2}{2tP}, \quad
 \cos \tilde \theta = \frac{t^2 + P^2 - \tilde d(t)^2}{2tP}. 
\end{equation*}
 Since  $\theta(t)$ is nondecreasing, if $\theta(t_0) > \tilde \theta$
 for some $t_0$, then by putting $C=\theta(t_0)-\tilde \theta$ we have
 $\theta(t)-\tilde \theta\geq C$ for any $t\geq t_0$. 
 Therefore, there exists $C'>0$ such that 
 \begin{equation*}
   \cos \tilde \theta - \cos \theta(t)
  = \frac{d(t)^2-\tilde d(t)^2}{2tP} \geq C'  
 \end{equation*}
 for any $t \geq t_0$. 
 Since
 \begin{equation*}
  d(t)=\sqrt{t^2 + P^2 -2tP\cos \theta(t)}, \quad
   \tilde d(t)= \sqrt{t^2+P^2 - 2tP\cos \tilde \theta}, 
 \end{equation*}
 we get  
 \begin{equation*}
  \frac{d(t)^2-\tilde d(t)^2}{2tP}
   = \frac{d(t)-\tilde d(t)}{2P}
   \left(\sqrt{1 + \frac{P^2}{t^2} -2\frac{P\cos \theta(t)}{t}}
    + \sqrt{1 + \frac{P^2}{t^2} -2\frac{P\cos \tilde \theta}{t}}
   \right). 
 \end{equation*}
 On the other hand, for $t\geq t_0$, we have
 \begin{equation*}
  \sqrt{1 + \frac{P^2}{t^2} -2\frac{P\cos \theta(t)}{t}},
   \sqrt{1 + \frac{P^2}{t^2} -2\frac{P\cos \tilde \theta}{t}}
   \leq \sqrt{\left(1+\frac{P}{t} \right)^2}
   = 1 + \frac{P}{t} \leq 1 + \frac{P}{t_0}, 
 \end{equation*}
 hence 
 \begin{equation*}
  C' \leq \frac{d(t)^2-\tilde d(t)^2}{2tP}
   \leq 
  2\left(1 + \frac{P}{t_0}\right) 
  \frac{d(t)-\tilde d(t)}{2P}. 
 \end{equation*}
 Thus, for $t\geq t_0$, 
 \begin{equation*}
   \frac{t_0C'P}{(t_0 + P)} \leq
  d(t)-\tilde d(t) . 
 \end{equation*}
 Thus $\lim_{t\to \infty} d(t)-\tilde d(t)=0$ means that $C'=0$, that
 is, $\theta(t)=\tilde \theta$ for any $t \geq 0$.   
 Therefore we get $d(t)=\tilde d(t)$ for any $t \geq 0$. 
\end{proof}

\section{Proof of Theorem \ref{thm:main-1}: when $Y$ is proper}
\label{sec:proper_case}

In this section, we give the proof of Theorem~\ref{thm:main-1} in the
case that a $\cat{0}$ space $Y$ is proper. 
Suppose that a countable group $\Gamma$ with a random walk
$\mu$ acts on $Y$ via $\rho \colon \Gamma \rightarrow \isom{Y}$ 
with finite second moment, 
and we are assuming that $\rho(\Gamma)$ does not fix a point in
$\partial Y$. 
Then as we mentioned in \S~\ref{sec:harmonic_maps}, there exists
$\rho$-equivariant $\mu$-harmonic map $f$.  
If $l_{\rho}(\Gamma)=0$, then by Proposition~\ref{prop:mu-harmonicity}, 
for almost all $\omega \in \Omega$, there exists
$\xi(\omega) \in Y \cup \partial Y$ such that 
\begin{equation*}
\varphi_{\omega}(\gamma) =
\begin{cases}
d(\xi(\omega), f(\gamma))-d(\xi(\omega), f(e)) & \xi(\omega) \in
Y \\
b_{\xi(\omega)}(f(\gamma), f(e)) & \xi(\omega) \in \partial Y 
\end{cases}
\end{equation*}
satisfies $\Delta \varphi_{\omega}(e)=0$. 

%
%
First we note that we may assume, by replacing $\mu$ with a convex
combination of $\mu^n$, that the support $\supp\mu$ of
a probability measure $\mu$ contains $\Gamma \setminus \{e\}$. 
Indeed, since we are assuming that the support of our original $\mu$
generates $\Gamma$, we see that, for any $\gamma$, there exists $n$ such
that $\mu^n(\gamma)\not=0$.  
Thus we may assume that $\sum_{n=1}^{\infty }a_n \mu^n(\gamma)\not=0$
for any $\gamma \in \Gamma \setminus \{e\}$, where $a_n\geq 0$ and
$\sum_{n=1}^{\infty}a_n=1$. 
It is easy to see that, by taking $a_n$ suitably,
the action given by $\rho$ has finite second moment with respect to 
$\sum_{n=1}^{\infty}a_n \mu^n$, since it is true for each
$\mu^n$ as we have seen in \S~\ref{sec:rate_of_escape_and_hmap}.
Furthermore, according to \cite[Theorem 4.5]{forghani}, 
$l_{\rho}(\Gamma,\sum_{n=1}^{\infty}a_n\mu^n)=
 \left(\sum_{n=1}^{\infty}n a_n \right) l_{\rho}(\Gamma,\mu)$
holds, and hence we may assume 
$l_{\rho}(\Gamma,\sum_{i=n}^{\infty}a_n\mu^n)=0$, which is our
assumption on $\mu$ in Theorems \ref{thm:main-1} and \ref{thm:main-3}. 
(Although the outline of the proof of \cite[Theorem 4.5]{forghani} is
suggested there, for the sake of completeness, we will give a proof of a
weaker result in \S~\ref{sec:appendix}, which is sufficient for our
purpose.)
Note also that, since we are assuming that $\rho(\Gamma)$
fixes no point in $\partial Y$ and $Y$ is proper, 
the existence of $\mu$-harmonic map is not 
affected by this change of a probability measure $\mu$.

The following lemma summarizes the core argument used in the
proof of Theorem~\ref{thm:main-1} and Theorem~\ref{thm:main-3}.

\begin{Lemma}
 \label{lem:core}
 Let $Y$ be a $\cat{0}$ space $($we do not assume $Y$ to be any one of  proper,
 of finite telescopic dimension, or of locally finite dimension here$)$,  
 and $\Gamma$ a countable group with
 symmetric and nondegenerate probability measure $\mu$. 
 Further assume that $\supp \mu \supset \Gamma \setminus \{e\}$. 
 Suppose that $\Gamma$ acts on $Y$ via a homomorphism 
 $\rho\colon \Gamma \rightarrow \isom{Y}$ and that there exists a
 $\rho$-equivariant $\mu$-harmonic map $f\colon \Gamma \rightarrow Y$.

 $(1)$ Let $\xi \in Y$ and  
 $\varphi\colon \gamma \mapsto d(\xi,f(\gamma))-d(\xi,f(e))$. 
 If $\Delta \varphi(e)=0$ holds, then there exists a point in $Y$ fixed
 by $\rho(\Gamma)$.

 $(2)$ Let $\xi \in \partial Y$ and 
 $\varphi\colon \gamma \mapsto b_{\xi}(f(\gamma),f(e))$. 
 If $\Delta \varphi(e)=0$ holds, then, for any
 $\gamma \in \Gamma\setminus \{e\}$, the convex hull of 
 $\ray{f(e),\xi}\cup [f(e),f(\gamma)]\cup \ray{f(\gamma),\xi}$
 is isometric to a half-infinite parallelogram in $\R^2$. 

 $(3)$ Suppose that the assumption of $(2)$ is satisfied and that there
 exists a sequence $\{\gamma_j\}\subset \Gamma$ such that 
 $\{f(\gamma_j)\}$ converges to $\xi$ in $(2)$ in the cone topology. 
 If further $\{\rho(\gamma_j^{-1})\xi\}$ and $\{f(\gamma_j^{-1})\}$
 converge to $\xi^+, \xi^{-}\in \partial Y$ in the cone topology
 respectively, 
 then, for each $\gamma \in \Gamma$, there exists a geodesic 
 $c_{\gamma}\colon \R \rightarrow Y$ such that $c_{\gamma}(0)=f(\gamma)$
 and $c_{\gamma}(\pm \infty)=\xi^{\pm}$.

 $(4)$ Suppose that 
 $l_{\rho}(\Gamma)=0$. 
 Under the assumptions of $(2)$ and $(3)$, 
 the convex hull $\ch{f(\Gamma)}$ splits as $H\times Z$, where
 $H$ is a complete Hilbert space and $Z$ does not admit any splitting
 $\R \times Z'$.  
 Furthermore, $\rho$ splits along this splitting
 as $\rho=\rho_1 \times \rho_2$, where 
 $\rho_1 \colon \Gamma \rightarrow \isom{H}$ and 
 $\rho_2 \colon \Gamma \rightarrow \isom{Z}$. 
 There also exists $\rho_2$-equivariant $\mu$-harmonic map 
 $f_2\colon \Gamma \rightarrow Z$, and $l_{\rho_2}(\Gamma)=0$
 holds.
\end{Lemma}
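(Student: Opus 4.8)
The four assertions all rest on the same mechanism: harmonicity of $f$ forces the convexity estimates of \S\ref{sec:convexity_of_distance_and_Busemann_fct} to hold with equality, and the equality cases recorded there are rigid. For $(1)$ I would first dispose of the case $\xi=f(e)$, where $\Delta\varphi(e)=0$ reads $\int_\Gamma d(f(e),f(\gamma))\,d\mu=0$ and forces $f$ to be constant, so $\rho(\Gamma)$ fixes $f(e)$. If $\xi\neq f(e)$, let $V_c\in S_{f(e)}Y$ be the direction of the ray toward $\xi$. Since $\varphi(e)=0$, the hypothesis gives $\int_\Gamma\varphi\,d\mu=0$; Proposition~\ref{prop:convexity_of_distance_sphere} with $p_0=\xi$ gives $\inner{V_c}{\tcprj_{f(e)}(f(\gamma))}\geq -\varphi(\gamma)$, so integrating and comparing with the harmonicity inequality~\eqref{eq:first_variation} for $V_c$ yields $0\leq\int_\Gamma\inner{V_c}{\tcprj_{f(e)}(f(\gamma))}\,d\mu\leq 0$. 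Hence equality holds in Proposition~\ref{prop:convexity_of_distance_sphere} for $\mu$-a.e.\ $\gamma$, and since $\supp\mu\supset\Gamma\setminus\{e\}$, for every $\gamma$. By Remark~\ref{rem:equality_for_distance_ball} this pins each $f(\gamma)$ onto the maximal geodesic $\ell$ through $f(e)$ in direction $\pm V_c$, so $f(\Gamma)\subset\ell$ and $\rho(\Gamma)$ preserves the interval $\ch{f(\Gamma)}\subset\ell$. A bounded interval has a fixed midpoint; the only alternative, that $\rho(\Gamma)$ translates $\ell$ nontrivially, I would exclude using symmetry of $\mu$: writing the displacement as a homomorphism $\tau\colon\Gamma\to\R$, harmonicity at $e$ becomes $|s|=\int_\Gamma|s+\tau(\gamma)|\,d\mu$, and since $\int_\Gamma\tau\,d\mu=0$, equality in Jensen's inequality would force $s+\tau(\gamma)$ to keep a constant sign on $\supp\mu$, which is impossible for a nonzero $\tau$. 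Thus $\rho(\Gamma)$ has a bounded orbit on $\ell$ and fixes a point.

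Assertion $(2)$ is the horoball analogue and follows from the same scheme verbatim: with $\varphi(\gamma)=b_\xi(f(\gamma),f(e))=-b_\xi(f(e),f(\gamma))$, Proposition~\ref{prop:convexity_of_horosphere} gives $\inner{V_c}{\tcprj_{f(e)}(f(\gamma))}\geq b_\xi(f(e),f(\gamma))=-\varphi(\gamma)$, and $\Delta\varphi(e)=0$ together with~\eqref{eq:first_variation} forces equality for every $\gamma\neq e$. The equality clause of Proposition~\ref{prop:convexity_of_horosphere} is precisely the assertion that $\ch{\ray{f(e),\xi}\cup[f(e),f(\gamma)]\cup\ray{f(\gamma),\xi}}$ is a half-infinite parallelogram.

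For $(3)$ I would first build $c_e$. By $(2)$ the rays $\ray{f(e),\xi}$ and $\ray{f(\gamma_j),\xi}$ are the parallel sides of flat parallelograms $P_j$. The isometry $\rho(\gamma_j^{-1})$, which sends $f(\gamma_j)\mapsto f(e)$, $f(e)\mapsto f(\gamma_j^{-1})$ and $\xi\mapsto\rho(\gamma_j^{-1})\xi$, carries $P_j$ to a congruent flat parallelogram $P_j'$ with finite vertices $f(e),f(\gamma_j^{-1})$ and apex $\rho(\gamma_j^{-1})\xi$. Working in the Euclidean model of $P_j'$, I take $p_j^{+}$ and $p_j^{-}$ at arclength $s$ from $f(e)$ along $\ray{f(e),\rho(\gamma_j^{-1})\xi}$ and along $[f(e),f(\gamma_j^{-1})]$ respectively; since $f(\gamma_j)\to\xi$ forces the slant of $P_j$ to straighten, flatness gives $d(p_j^+,p_j^-)\to 2s$. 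Passing to the cone-topology limits $\rho(\gamma_j^{-1})\xi\to\xi^{+}$ and $f(\gamma_j^{-1})\to\xi^{-}$ gives $p_j^\pm\to p^\pm$ with $d(p^+,p^-)=2s=d(f(e),p^+)+d(f(e),p^-)$ for every $s$, so $\ray{f(e),\xi^+}\cup\ray{f(e),\xi^-}$ is a geodesic line $c_e$ with $c_e(\pm\infty)=\xi^\pm$. For a general $\gamma$ I would then transport $c_e$: rays to a common boundary point are asymptotic, so $\ray{f(\gamma),\xi^\pm}$ is parallel to $\ray{f(e),\xi^\pm}$, and by the Product Decomposition Theorem the parallel set $Y_{c_e}\cong\R\times Y_0$ contains a line with ends $\xi^\pm$ through each of its points; it therefore suffices to place $f(\gamma)$ in $Y_{c_e}$, which I would do by repeating the parallelogram construction based at $f(\gamma)$ (the flat parallelograms of $(2)$ joining $f(\gamma)$ to $f(\gamma_j)$ being available through equivariance), or equivalently by showing $f^*(b_{\xi^+}+b_{\xi^-})$ is $\mu$-harmonic so that it is constant on $f(\Gamma)$, whose level set through $f(e)$ is exactly $Y_{c_e}$. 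This yields the line $c_\gamma$ through $f(\gamma)$ with ends $\xi^\pm$.

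Finally, for $(4)$, assertion $(3)$ places every $f(\gamma)$ on a line with the fixed pair of endpoints $\xi^\pm$, so $f(\Gamma)$, hence $\ch{f(\Gamma)}$, lies in the parallel set of $c_e$ and splits off an $\R$-factor. Peeling off the maximal Euclidean (de Rham) factor gives $\ch{f(\Gamma)}=H\times Z$ with $H$ a complete Hilbert space and $Z$ admitting no splitting $\R\times Z'$. This decomposition is canonical, hence invariant under the isometries $\rho(\gamma)$ (which preserve $\ch{f(\Gamma)}=\overline{\rho(\gamma)f(\Gamma)}$), so $\rho=\rho_1\times\rho_2$ with $\rho_1\colon\Gamma\to\isom{H}$, $\rho_2\colon\Gamma\to\isom{Z}$. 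Writing $f=(f_1,f_2)$, the energy splits additively and barycenters split coordinatewise, so $f_2=\nrprj_Z\circ f$ is a $\rho_2$-equivariant $\mu$-harmonic map into $Z$; and since $d^2=d_H^2+d_Z^2$, the rates of escape satisfy $l_{\rho}(\Gamma)^2=l_{\rho_1}(\Gamma)^2+l_{\rho_2}(\Gamma)^2$, whence $l_{\rho}(\Gamma)=0$ forces $l_{\rho_2}(\Gamma)=0$. The main obstacles I anticipate are in $(3)$ and $(4)$: in $(3)$, upgrading the limiting rays to a genuine bi-infinite geodesic and propagating the line from $f(e)$ to every $f(\gamma)$, both of which rest on the flatness extracted in $(2)$; and in $(4)$, producing and proving the $\rho$-invariance of the maximal Hilbert factor of $\ch{f(\Gamma)}$ in the possibly infinite-dimensional, non-proper setting.
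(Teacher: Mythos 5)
Your parts (2)--(4) essentially follow the paper's own route: (2) is the equality-case analysis of Proposition~\ref{prop:convexity_of_horosphere} combined with Proposition~\ref{prop:first_variation}, exactly as in the paper, and (3)--(4) use the same angle-straightening under $\rho(\gamma_j^{-1})$, the parallel set of $c_e$, and the Clifford-translation decomposition of $\ch{f(\Gamma)}$. What you leave implicit in (4) --- that $\ch{f(\Gamma)}$ itself, and not merely the ambient parallel set, splits as $\R\times S_0$, which is what makes the Hilbert factor $H$ nontrivial --- is proved in the paper by showing that the rays $\ray{q,\xi^{\pm}}$ lie in $\ch{f(\Gamma)}$ for every $q\in\ch{f(\Gamma)}$; this is fillable along the lines you indicate.

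The genuine gap is in (1), at the step ``this pins each $f(\gamma)$ onto the maximal geodesic $\ell$ through $f(e)$ in direction $\pm V_c$.'' The equality case of Proposition~\ref{prop:convexity_of_distance_sphere} only places $\tcprj_{f(e)}(f(\gamma))$ on the line $\tilde c(\R)$ in the tangent cone, i.e.\ each $f(\gamma)$ lies on \emph{some} geodesic through $f(e)$ and $\xi$. In a branching $\cat{0}$ space (already in a tripod) there are many directions at angle $\pi$ from $V_c$ which are also at angle $\pi$ from one another, so distinct $f(\gamma)$'s may sit on different prolongations of $[\xi,f(e)]$ beyond $f(e)$; the paper notes explicitly that these geodesics need only share the segment $[\xi,f(e)]$. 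Consequently $f(\Gamma)$ need not lie on a single line, $\rho(\Gamma)$ need not act on any line, and your displacement homomorphism $\tau\colon\Gamma\to\R$ together with the Jensen argument is unavailable (even on a genuine line you would additionally have to exclude orientation-reversing elements before the displacement is a homomorphism). The paper closes (1) differently: it first shows no $f(\gamma)$ lies beyond $\xi$, then takes the point $\xi'$ of $\overline{f(\Gamma)}$ nearest to $\xi$, rules out $\xi'=f(\gamma_0)$ with $\gamma_0\neq e$ by testing the barycenter condition at $f(\gamma_0)$ against the direction toward $f(e)$, and finally uses points $f(\gamma_j)$ approaching $\xi'$ together with the first-variation identity at $f(\gamma_j)$ to get $\int_\Gamma d(f(e),f(\gamma))\,d\mu<2\varepsilon$ for every $\varepsilon>0$, hence $f$ is constant. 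Some substitute for this analysis of the ``spider'' configuration is needed before your (1) is complete.
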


\begin{proof}
$(1)$ We show that $f$ is a constant map under the assumption.
Then we see that $f(\Gamma)=\rho(\Gamma)f(e)$ is a point, and this point
 is fixed by $\rho(\Gamma)$.

If $\\xi=f(e)$, then
$\varphi(\gamma)=d(f(\gamma),f(e))$, 
and $\Delta \varphi(e)=0$ means that 
\begin{equation*}
 \Delta \varphi(e)= 
 \varphi(e)- \int_{\Gamma}\varphi(\gamma)d\mu(\gamma)
 = - \int d(f(\gamma), f(e)) d\mu(\gamma)= 0, 
\end{equation*}
 namely $f(\Gamma)=f(e)$; $f$ is a constant map.

Now assume $\xi\not= f(e)$. 
Take any $\gamma \in \Gamma\setminus \{e\}$, and note that
$\mu(\gamma)\not=0$ by our assumption. 
Let $p=f(e)$, $q=f(\gamma)$ and $p_0=\xi$ in 
Proposition~\ref{prop:convexity_of_distance_sphere}, 
and denote by $V_c \in S_{f(e)}Y$ the element corresponding to the
 geodesic $\cc{f(e)}{\xi}$ joining $f(e)$ and $\xi$. 
Then we get
\begin{equation*}
 \inner{V_c}{\tcprj_{f(e)}(f(\gamma))}
\geq d(\xi, f(e)) - d(\xi, f(\gamma))= -\varphi(\gamma). 
\end{equation*}
By Proposition~\ref{prop:first_variation} and 
$\varphi(e)=0$, we see that 
\begin{equation*}
\begin{split}
0 & \geq \int_{\Gamma} \inner{V_c}{\tcprj_{f(e)}(f(\gamma))}
d\mu (\gamma) 
\geq \int_{\Gamma} -\varphi(\gamma) d\mu (\gamma) \\
& = \left(
   \varphi(e) - \int_{\Gamma} \varphi(\gamma) d\mu(\gamma)
  \right) \\
& = \Delta (\varphi(e))=0.
\end{split}
\end{equation*}
Therefore we get
\begin{equation*}
\int_{\Gamma} \inner{V_c}{\tcprj_{f(e)}(f(\gamma))}
d\mu(\gamma)=0. 
\end{equation*}
Moreover the equality in
Propostion~\ref{prop:convexity_of_distance_sphere} must hold 
for each $\gamma \in \Gamma\setminus \{e\}$:
 \begin{equation}
 \label{eq:on_the_same_side}
  \inner{V_c}{\tcprj_{f(e)}(f(\gamma))} = d(\xi,f(e))-d(\xi,f(\gamma)). 
 \end{equation}
By Remark~\ref{rem:equality_for_distance_ball} this implies that
$\tcprj_{f(e)}(f(\gamma)) \in \tilde c(\R)$ for every 
$\gamma \in \Gamma\setminus \{e\}$, where $\tilde c$ is a geodesic in
$T_{f(e)}$ passing through $O_{f(e)}$ and $\tcprj_{f(e)}(\xi)$.
Hence we have
\begin{equation*}
\begin{split}
  d(\xi,f(\gamma)) &=
  d(\xi,f(e))-\inner{V_c}{\tcprj_{f(e)}(f(\gamma))} \\
 & = d_{T_{f(e)}}(\tcprj_{f(e)}(\xi), \tcprj_{f(e)}(f(\gamma))). 
\end{split}
\end{equation*}
Therefore 
$\tri{O_{f(e)}}{\tcprj_{f(e)}(f(\gamma))}{\tcprj_{f(e)}(\xi)}$, 
which becomes a comparison triangle of $\tri{f(e)}{f(\gamma)}{\xi}$, 
degenerates to a segment and contained in $\tilde c(\R)$. 
This means that $f(\gamma)$ is contained in a geodesic segment passing through 
$f(e)$ and $\xi$ for every $\gamma \in \Gamma\setminus \{e\}$. 
Note that, although such a geodesic on $Y$ may not be unique, all such
geodesics share a geodesic segment $[\xi, f(e)]$ in common. 

Set 
\begin{equation*}
 a(\gamma):=  \inner{V_c}{\tcprj_{f(e)}(f(\gamma))}
  = d(\xi,f(e))-d(\xi,f(\gamma)), 
\end{equation*}
and $b:=d(\xi,f(e))$.
Then we have
\begin{equation*}
a(\gamma)=
\begin{cases}
d(f(e),f(\gamma))  &
 \inner{V_c}{\tcprj_{f(e)}(f(\gamma))} \geq 0 \\
-d(f(e),f(\gamma))  &
 \inner{V_c}{\tcprj_{f(e)}(f(\gamma))} \leq 0, 
\end{cases}
\end{equation*}
and, by definition, we get $b \geq a(\gamma)$ 
for all $\gamma \in \Gamma \setminus \{e\}$.
Suppose that, for $\gamma \in \Gamma$, $f(\gamma)$ lies on a
geodesic segment starting from $f(e)$ and passing through $\xi$. 
Then we have $\inner{V_c}{\tcprj_{f(e)}(f(\gamma))}\geq 0$, 
and $b\geq a(\gamma)$ means that 
$d(\xi,f(e))\geq d(f(\gamma),f(e))$, in other words,
$f(\gamma) \in [\xi,f(e)]$. 
Therefore, for any $\gamma \in \Gamma$, 
$f(\gamma)$ must lie on a geodesic segment starting from $\xi$
and passing through $f(e)$; it cannot lie somewhere beyond $\xi$. 

Let $\xi'$ be the point in the closure of $f(\Gamma)$ closest to $\xi$.
Thus, for any $\varepsilon >0$, there exists a $\gamma_j \in \Gamma$
with $d(f(\gamma_j), \xi')< \varepsilon$. 
Suppose that there is $\gamma_0 \in \Gamma$ such
that  $f(\gamma_0)=\xi'$. 
If $\gamma_0=e$, then we have $d(f(\gamma),\xi)\geq d(f(e),\xi)$ for
 any $\gamma \in \Gamma$.
Hence $\Delta \varphi(e)=0$ implies that
$f(\Gamma)$ must become the point $f(e)$, that is, $f$ is a constant
 map. 
Suppose that $\gamma_0\not= e$. 
For every $\gamma \in \Gamma$,
$f(\gamma)$ lies in a geodesic segment starting from
$\xi'=f(\gamma_0)$ and passing through $f(e)\not= \xi'$. 
Then there is $V_0 \in S_{f(\gamma_0)}Y$ representing these geodesics,
and we get
\begin{equation*}
\begin{split}
   \int_{\Gamma} \inner{V_0}{\tcprj_{f(\gamma_0)} (f(\gamma))}
  \ d{\gamma_0}_*\mu(\gamma)
  = & \int_{\Gamma} d(f(\gamma_0), f(\gamma_0(\gamma_0^{-1}\gamma))) \ 
    d\mu(\gamma_0^{-1}\gamma) \\
  \geq & d(f(\gamma_0),f(e)) \mu(\gamma_0^{-1}) >0. 
\end{split}
\end{equation*}
Recalling Remark~\ref{rem:barycenter}, we see that this
contradicts the harmonicity of $f$.
Therefore $f(\gamma)\not=\xi'$ for any $\gamma \in \Gamma$. 
Note that, for sufficiently small $\varepsilon$, $f(\gamma_j)$ must lie
in the interior of a geodesic segment $[\xi', f(e)]$. 
Set 
$a_j(\gamma)=\inner{V_{c,f(\gamma_j)}}{\tcprj_{f(\gamma_j)}(f(\gamma))}$, 
where $V_{c,f(\gamma_j)}$ denotes the element in $S_{f(\gamma_j)}Y$
corresponding to the geodesic $\cc{f(\gamma_j)}{\xi'}$.

Note that, since $f(\Gamma)$ lies in a union of geodesic 
having $[\xi',f(e)]$ in common, we may regard that $f(\Gamma)$
lies in a $\cat{0}$ space locally isometric to $\R$ around 
$f(\gamma_j) \in (\xi',f(e))$, and hence
$\tcprj_{f(\gamma_j)}(f(\Gamma))$ lies in a geodesic line in
$TC_{f(\gamma_j)}Y$. 
Then, by Remark~\ref{rem:if_eudlidean} and Remark~\ref{rem:barycenter}, 
we see that $f$ satisfies
\begin{equation}
\label{eq:mu-harmonic}
\int_{\Gamma}  a_j(\gamma) d{\gamma_j}_*\mu(\gamma) = 0. 
\end{equation} 
Since each $f(\gamma)$ lies in a geodesic segment starting from
$\xi'$ and passing through $f(\gamma_j)$, we have
\begin{equation*}
 a_j(\gamma) = 
\begin{cases}
 d(f(\gamma_j), f(\gamma)) & a_j(\gamma)\geq 0 \\
 -d(f(\gamma_j), f(\gamma)) & a_j(\gamma) < 0, 
\end{cases}
\end{equation*}
and $a_j(\gamma) \leq d(f(\gamma_j),\xi') < \varepsilon$.  
Thus we have
\begin{equation*}
 \int_{\Gamma} \max\{a_j(\gamma), 0 \} 
 d{\gamma_j}_*\mu (\gamma) <  \varepsilon. 
\end{equation*}
Therefore, by (\ref{eq:mu-harmonic}), 
\begin{equation*}
\begin{split}
  \int_{\Gamma} d(f(e),f(\gamma')) d\mu(\gamma')
 & = \int_{\Gamma} d(f(\gamma_j), f(\gamma_j \gamma')) 
 d{\gamma_j}_* \mu(\gamma_j \gamma') \\
 & = \int_{\Gamma} |a_j(\gamma)| d{\gamma_j}_* \mu(\gamma)
< 2\varepsilon. 
\end{split}
\end{equation*}
Since $\varepsilon$ is arbitrary, this means that $f$ is a constant
map. 

$(2)$ 
Take any $\gamma \in \Gamma \setminus \{e\}$.
Let $p=f(e)$, $q=f(\gamma)$ in Proposition~\ref{prop:convexity_of_horosphere}. 
Then we get
\begin{equation*}
 \inner{V_c}{\tcprj_{f(e)}(f(\gamma))}
  \geq b_{\xi}(f(e),f(\gamma))
  = -\varphi(\gamma). 
\end{equation*}
As in the proof of $(1)$ above, by
 Proposition~\ref{prop:first_variation} and $\varphi(e)=0$, we see that 
\begin{equation*}
\begin{split}
0 & \geq \int_{\Gamma} \inner{V_c}{\tcprj_{f(e)}(f(\gamma))}
d\mu (\gamma) 
\geq \int_{\Gamma} -\varphi(\gamma) d\mu (\gamma) \\
& = \left(
   \varphi(e) -
    \int_{\Gamma} \varphi(\gamma) d\mu(\gamma)
  \right) \\
& = \Delta (\varphi(e))=0. 
\end{split}
\end{equation*}
Therefore we get
\begin{equation}
\label{eq:harmonic_at_e2}
\int_{\Gamma} \inner{V_c}{\tcprj_{f(e)}(f(\gamma))}
d\mu(\gamma)=0. 
\end{equation}
Moreover the equality in
Propostion~\ref{prop:convexity_of_horosphere} must hold 
for all $\gamma \in \Gamma \setminus \{e\}$: 
 \begin{equation*}
  \inner{V_c}{\tcprj_{f(e)}(f(\gamma))}
  = b_{\xi}(f(e),f(\gamma)). 
 \end{equation*}
 By Proposition~\ref{prop:convexity_of_horosphere}, for all 
 $\gamma \in \Gamma$, the convex hull of
 $[f(e),\xi(\omega)) \cup [f(e),f(\gamma)] \cup [f(\gamma),\xi(\omega))$
 is isometric to a half-infinite parallelogram in $\R^2$.   

$(3)$
Denote by $F_{\gamma}$ a half-infinite parallelogram bounded by
$[f(e),\xi) \cup [f(e),f(\gamma)] \cup [f(\gamma),\xi)$ 
obtained in $(2)$. 
Take any $\gamma \in \Gamma$ and fix it for a while.  Note that 
$\{f(\gamma_j\gamma)\}=\{\rho(\gamma_j)f(\gamma)\}$ also converges to
$\xi$ by our assumption. 
Since $F_{\gamma_j\gamma}$'s are half-infinite parallelograms,
$[f(e),\xi)$ and $[f(\gamma_j\gamma),\xi)$ are 
parallel to each other. 
Thus by taking $q_j \in [f(e), \xi)$ so that 
$d([f(e), \xi), f(\gamma_j\gamma))=d(q_j, f(\gamma_j\gamma))$, for
sufficiently large $j$, we may assume
\begin{equation*}
 d(q_j, f(\gamma_j\gamma))
 =d([f(e), \xi), [f(\gamma_j\gamma), \xi)), 
\end{equation*}
and 
\begin{equation}
\label{eq:flat_angle_sum}
\begin{split}
  & \angle_{f(\gamma_j\gamma)}(\xi, f(e))
  = \angle_{f(\gamma_j\gamma)}(\xi, q_j) + 
    \angle_{f(\gamma_j\gamma)}(q_j, f(e)), \\
 & \angle_{f(\gamma_j\gamma)}(\xi, q_j) =
 \angle_{q_j} (\xi, f(\gamma_j\gamma))=\frac{\pi}{2} . 
\end{split}
\end{equation}
Put 
$\theta_{\gamma_j\gamma}= \angle_{f(\gamma_j\gamma)}(q_j, f(e))$.  
Again noting that $F_{\gamma_j\gamma}$ is flat, we get 
\begin{equation*}
\begin{split}
  \cos \theta_{\gamma_j\gamma}& =
 \frac{d(q_j, f(\gamma_j\gamma))}{d(f(e),f(\gamma_j\gamma))}= 
 \frac{d([f(e), \xi),
 f(\gamma_j\gamma))}{d(f(e),f(\gamma_j\gamma))} \\
  & = \sin \angle_{f(e)}(\xi, f(\gamma_j\gamma))\to 0, 
\end{split}
\end{equation*}
since $f(\gamma_j\gamma) \to \xi$.
Therefore $\theta_{\gamma_j\gamma}\to \pi/2$.
By (\ref{eq:flat_angle_sum}), we see that the angle 
$\angle_{f(\gamma_j\gamma)}(\xi(\omega), f(e))$ tends to $\pi$.

Consider the union of a geodesic segment and a geodesic ray
\begin{equation*}
 \rho(\gamma_j^{-1})
  \left( [f(e), f(\gamma_j\gamma)] \cup [f(\gamma_j\gamma), \xi)
  \right)
 = [f(\gamma_j^{-1}), f(\gamma)]\cup [f(\gamma),
 \rho(\gamma_j^{-1})\xi), 
\end{equation*}
for each $j$. 
Recall that 
$\{\rho(\gamma_j^{-1})\xi\}$ and $\{f(\gamma_j^{-1})\}$ converge
to points $\xi^+, \xi^-\in \partial Y$ respectively by our
assumption. 
Therefore we may assume that a sequence of geodesic rays 
$\{\cc{f(\gamma)}{\rho(\gamma_j^{-1})\xi}\}$ converges to
$\cc{f(\gamma)}{\xi^+}$ unifomly on each bounded interval, 
where $\cc{f(\gamma)}{\rho(\gamma_j^{-1})\xi}$ denotes the
geodesic starting from $f(\gamma)$ and terminating at 
$\rho(\gamma_j^{-1})\xi$ as before. 
On the other hand, setting 
$\tilde c_j \colon \ray{0,\infty}\rightarrow Y$ to be
\begin{equation*}
 \tilde c_j(t)=
 \begin{cases}
  \cc{f(\gamma)}{f(\gamma_j^{-1})} (t) & t \leq
  d(f(\gamma),f(\gamma_j^{-1})) \\
  f(\gamma_j^{-1}) & t \geq d(f(\gamma),f(\gamma_j^{-1})), 
 \end{cases}
\end{equation*}
we see that $\{\tilde c_j\}$ converges to $\cc{f(\gamma)}{\xi^-}$
 uniformly on each bounded interval. 
Recalling that letting $j\to \infty$ gives
\begin{equation*}
 \angle_{f(\gamma)}(\rho(\gamma_j^{-1})\xi,f(\gamma_j^{-1}))
  = \angle_{f(\gamma_j\gamma)} (\xi, f(e)) \to \pi, 
\end{equation*}
we see that the images of $\cc{f(\gamma)}{\xi^+}$ and
$\cc{f(\gamma)}{\xi^-}$ form a straight geodesic line passing through
$f(\gamma)$ whose endpoints in $\partial Y$ are $\xi^+$ and $\xi^-$.

%
%
$(4)$ Now let $\tilde S$ be the union of the images of all geodesics
parallel to $c(\R)$, where $c$ is a geodesic satisfying $c(0)=f(e)$,
$c(-\infty)=\xi^-$, $c(\infty)=\xi^+$. 
Then, by the discussion above, there exists a geodesic
joining $\xi^+$ and $\xi^-$ passing through $f(\gamma)$ for each 
$\gamma \in \Gamma$, and thus
$f(\Gamma)\subset \tilde S$.  
By \cite[p.~183 2.14]{bridson-haefliger}, we see that 
$\tilde S$ is convex, and isometric to a product metric
space $\R \times S$. 
We may take $S$ to be a 
convex subset of the form $\nrprj_{c}^{-1}(c(0))$,
where $\nrprj_{c} \colon Y \rightarrow c(\R)$ is the nearest point
projection onto $c(\R)$ and $c$ is a geodesic satisfying $c(0)=f(e)$,
$c(-\infty)=\xi^-$, and $c(\infty)=\xi^+$ as above.  
Let $\{q_j\} \subset S$ and $q_j \to q$ in $Y$. Then
there is a geodesic $c_j$ satisfying $c_j(0)=q_j$, $c_j(-\infty)=\xi^-$,
and $c_j(\infty)=\xi^+$ for each $j$.  Since $c_j$'s are parallel to each
other, $d(c_j(t), c_{j'}(t))=d(c_j(0),c_{j'}(0))$ for any $t \in \R$ and
$j, j' \in\N$, and hence $\{c_j(t)\}$ is a Cauchy sequence for each $t$. 
Since $Y$ is complete, $\{c_j(t)\}$ converges and forms a geodesic
joining $\xi^-$ and $\xi^+$ and passing through $q$.  Therefore $S$ is
closed, and $\tilde S=\R \times S$ is also closed. 

Now we show that the convex hull $\ch{f(\Gamma)}$ of $f(\Gamma)$
splits as $\R \times S_0$. 
By definition, $\ch{f(\Gamma)}$ is the closure of the minimal convex
subset of $Y$ containing $f(\Gamma)$.  
Since $\tilde S$ is a closed convex subset and 
$f(\Gamma) \subset \tilde S$, $\ch{f(\Gamma)}$ is contained in 
$\tilde S$.
Note that $f(\Gamma)$ is invariant under the action of $\rho(\Gamma)$,
hence so is $\ch{f(\Gamma)}$. 
Recall that $\xi(\omega)=\lim f(\gamma_j)$. 
Then for any $q \in \ch{f(\Gamma)}$, we have
$[q,f(\gamma_j)]\subset \ch{f(\Gamma)}$ and, as a set
$[q,f(\gamma_j)]$ converges to $[q,\xi(\omega))$.  Since
$\ch{f(\Gamma)}$ is closed, we see that 
$[q,\xi(\omega)) \subset \ch{f(\Gamma)}$.  The same is true for
$\xi^- =\lim f(\gamma_j^{-1})$.  Since $\ch{f(\Gamma)}$ is
$\rho(\Gamma)$-invariant, 
$[q, \rho(\gamma_j^{-1})\xi(\omega)) \subset \ch{f(\Gamma)}$ for
any $q \in \ch{f(\Gamma)}$ and $j \in \N$. Since
$[q, \rho(\gamma_j^{-1})\xi(\omega))$ converges to $[q, \xi^+)$ by the 
definition of $\xi^+$, we see that $[q, \xi^+)$ is also contained in
$\ch{f(\Gamma)}$. 

Let $S_0 = \ch{f(\Gamma)} \cap S$.  Then $S_0$ is a closed convex
subset of $Y$. 
As we have seen above, for each $\gamma \in \Gamma$, 
$[f(\gamma),\xi^+)$ (resp.~$[f(\gamma),\xi^-)$) is contained in
$\ch{f(\Gamma)}$.
Since there is a geodesic $c_{\gamma}$ joining $\xi^+$ and $\xi^-$ passing
through $f(\gamma)$, $[f(\gamma),\xi^+)\cup [f(\gamma),\xi^-)$ must
coincide with the image $c_{\gamma}(\R)$ of $c_{\gamma}$. 
Therefore $c_{\gamma}(\R) \subset \ch{f(\Gamma)}$ and, 
by reparametrizing $c_{\gamma}$ so that $c_{\gamma}(0) \in S$, we see
that $c_{\gamma}(0) \in S \cap \ch{f(\Gamma)}=S_0$. 
Hence $f(\gamma) \in \R \times S_0 \subset \tilde S$ for every
$\gamma \in \Gamma$.
Since $S_0$ is closed and convex, so is $\R \times S_0$, and we see that 
$\ch{f(\Gamma)} \subset \R \times S_0$. 
On the other hand, for any $s \in S_0$, since 
$s \in \ch{f(\Gamma)}$, 
$[s,\xi^+)$ and $[s,\xi^-)$ are contained in $\ch{f(\Gamma)}$ as we
have seen above. 
Since there is a geodesic line joining $\xi^+$ and $\xi^-$ passing through
$s$, $[s,\xi^+) \cup [s,\xi^-)$ must coincide with the image
$\R \times \{s\}$ of this geodesic line. 
Therefore $\R \times S_0 \subset \ch{f(\Gamma)}$. 
We conclude that $\ch{f(\Gamma)}=\R \times S_0$.

Note that there is an element $g_a \in \isom{\ch{f(\Gamma)}}$ given by
\begin{equation*}
 g_a \colon (r,s)\mapsto (r+a,s), 
 \ (r,s) \in \ch{f(\Gamma)}=\R \times S_0, \ a \in \R, 
\end{equation*}
and recall that an isometry satisfying
\begin{equation*}
 d(g p, p)=d(g q,q)
\end{equation*}
 for any $p,q \in Y$ is called a {\it Clifford tranlation}; $g_a$ is a
 Clifford translation. 
 Let $H \subset \isom{\ch{f(\Gamma)}}$ be the set of all Clifford
 translation on $\ch{f(\Gamma)}$. 
 Then, by 
 \cite[p.~235 6.15 Theorem (3), (4), (5), (6)]{bridson-haefliger}, 
 $H$ is an abelian group and it can be identified with a Hilbert space,
 and $\ch{f(\Gamma)}$ splits as $\ch{f(\Gamma)}=H\times Z$; 
 each fiber isometric to $H$ is given as the $H$-orbit $Hx$ of 
 $x \in \ch{f(\Gamma)}$. 
 Since we have $g_a$, $H$ is a non-trivial Hilbert space.  
 Note that there is no Clifford translation in $\isom{Z}$, and hence $Z$
 does not admit any splitting $Z=\R \times Z'$.

 According to the proof of 
 \cite[p.~235, 6.15 Theorem (6)]{bridson-haefliger}, 
 for any $x \in \ch{(f(\Gamma))}$ and $g \in \isom{\ch{f(\Gamma)}}$,
 $g$ maps $Hx$ to $Hgx$. 
 Setting $x=(h,y) \in H \times Z$ and $y'=\nrprj_Z(gx)$,  we see that
 $g (H\times \{y\}) = H\times \{y'\}$. 
 Thus, by \cite[p.~56, 5.3 Proposition (4)]{bridson-haefliger},
 we see that $g$ splits as $g=g_1\times g_2$, where
 $g_1 \in \isom{H}$ and $g_2 \in \isom{Z}$. 
 Then it is easy to see that, along this splitting, 
 $\rho \colon \Gamma \rightarrow \isom{f(\Gamma)}$ splits
 as $\rho=\rho_1 \times \rho_2$, where
 $\rho_1 \colon \Gamma \rightarrow \isom{H}$ and 
 $\rho_2 \colon \Gamma \rightarrow \isom{Z}$. 

Note that, along the splitting $\rho=\rho_1\times \rho_2$, a
$\rho$-equivariant $\mu$-harmonic map $f$ also splits as
$f=f_1\times f_2$; setting $f(e)=(f_1(e), f_2(e))$ and 
$f_i(\gamma)=\rho_i(\gamma)f_i(e)$, $i=1,2$, we see that
$f(\gamma)=(f_1(\gamma),f_2(\gamma))$. 
In particular, each $f_i$
is a $\rho_i$-equivariant map for $i=1,2$. 
If there is a $\rho_i$-equivariant map $f'_i$ with strictly smaller
$\mu$-energy than $f_i$, then replacing $f_i$ by $f_i'$ and taking 
$f'=(f_1', f_2')$ decreases the energy of $f$ strictly.  
This contradicts the fact that $f$ is $\mu$-harmonic.
Hence $f_i$ is a $\rho_i$-equivariant $\mu$-harmonic map. 
Since the projection onto $Z$ does not increase the distance, 
$l_{\rho_2}(\Gamma)=0$ follows from the assumption
 $l_{\rho}(\Gamma)=0$.
\end{proof}

\medskip
Now we finish the proof of Theorem~\ref{thm:main-1} for proper $Y$.

\medskip

\noindent
{\it Proof of Theorem~\ref{thm:main-1}.}
Under our assumption, by Proposition~\ref{prop:mu-harmonicity}, 
there exists $\omega \in \Omega$ such that 
a subsequence $\{f(\gamma_j)\}$ of $\{f(\gamma_n(\omega)^{-1})\}$
converges to $\xi(\omega) \in \partial Y$, and 
\begin{equation*}
 \varphi_{\omega}(\gamma)= 
 \begin{cases}
  d(\xi(\omega),f(\gamma)) - d(\xi(\omega),f(e)) & \text{if }\xi(\omega)
  \in Y \\
  b_{\xi(\omega)}(f(\gamma), f(e))  & \text{if }\xi(\omega) \in \partial Y
 \end{cases}
\end{equation*}
satisfies $\Delta \varphi_{\omega}(e)=0$. 
If $\xi(\omega) \in Y$, then Lemma~\ref{lem:core} $(1)$ tells us that
$\rho(\Gamma)$ fixes a point in $Y$. 

Suppose $\xi(\omega) \in \partial Y$. 
Since $Y$ is proper, taking further subsequence if necessary, we may
assume that $\rho(\gamma_j^{-1})\xi(\omega)$ and $\{f(\gamma_j^{-1})\}$
converge to points $\xi^+, \xi^- \in Y\cup \partial Y$
respectively. 
Since $d(f(e),f(\gamma^{-1}))=d(f(\gamma),f(e))\to \infty$, 
$\xi^- \in \partial Y$ as well as $\xi^+$. 
Then by Lemma~\ref{lem:core} $(4)$, we see that we have
a splitting $\ch{f(\Gamma)}=H \times Z$, 
$\rho_2$-equivariant $\mu$-harmonic map 
$f_2 \colon \Gamma \rightarrow Z$ and $l_{\rho_2}(\Gamma)=0$. 
Note that $H=\R^k$ for some $k \in \N$ and that $Z$ is proper, since
$Y$, and hence $\ch{f(\Gamma)}$, is proper. 
Therefore, by Proposition~\ref{prop:mu-harmonicity}, we have 
$\xi \in Z \cup \partial Z$ such that
\begin{equation*}
 \varphi(\gamma) =
 \begin{cases}
  d(\xi,f(\gamma))-d(\xi,f(e)) & \text{if } \xi \in Z \\
  b_{\xi}(f(\gamma),f(e)) & \text{if } \xi \in \partial Z
 \end{cases}
\end{equation*}
satisfies $\Delta \varphi(e)=0$ again. 
If $\xi \in \partial Z$, then Lemma~\ref{lem:core} $(4)$ tells us that
$Z$ must admit further splitting $Z = \R^{k'}\times Z'$.
However, by our choice of $Z$ in Lemma~\ref{lem:core} $(4)$, this is
impossible. 
Thus $\xi$ must lie in $Z$, and Lemma~\ref{lem:core} $(1)$ implies that
there exists a point $p \in Z$ fixed by $\rho_2(\Gamma)$. 
Then we see that $\R^k \times \{p\}$ is left invariant under the action
of $\rho(\Gamma)=\rho_1(\Gamma)\times \rho_2(\Gamma)$, which means the
existence of $\rho(\Gamma)$-invariant flat subspace. 
This completes the proof of Theorem~\ref{thm:main-1} for proper $Y$.
\qed

\section{Proofs of 
Theorem \ref{thm:main-1} for nonproper $\cat{0}$
spaces and Thorem~\ref{thm:main-3}}
\label{sec:when-y-not}

In what follows, we consider the case that $Y$ is not proper. 
Before proceed, recall that we have used the properness of $Y$ to 
deduce:
\begin{itemize}
 \item $\mu$-harmonic map exists when $\rho(\Gamma)$ does not fixes a
       point in $\partial Y$, 
 \item $\{f(\gamma_j(\omega)^{-1})\}$ has a convergent subsequence 
       $\{f(\gamma_j)\}$, 
 \item $\{f(\gamma_j^{-1})\}$ has a convergent subsequence
       $\{f(\gamma_j')\}$, 
 \item $\{\rho(\gamma_j')\xi(\omega)\}$ has a convergent subsequence. 
\end{itemize}
We have to make a detour or find a substitute for these arguments. 
It is certainly possible when $Y$ is of finite telescopic dimension, and
almost possible when $Y$ is locally finite-dimensional as we will see in
\S \ref{sec:when-y-not} and \S \ref{sec:key-lemma-proof}.  

\subsection{A quick review of ultralimits}
\label{sec:quick-revi-ultr}
In what follows, we will frequently use the ultralimit of a sequence of
metric spaces. We start with a quick review of ultralimits.

%
%
Let $\lambda$ be a nonprincipal ultrafilter on $\N$, that is, a finitely
additive probablity measure on $\N$ satisfying
\begin{itemize}
 \item every $S \subset \N$ is $\lambda$-measurable, 
 \item for any $S \subset \N$ we have $\lambda (S)\in \{0,1\}$, 
 \item if $\# S <\infty$, then $\lambda (S)=0$. 
\end{itemize}
 See, for example, \cite[5.48 Exercise]{bridson-haefliger} for the
 existence of a nonpricipal ultrafilter. 
 For any bounded sequence $\{a_j\}$ of real numbers, there exists 
 $l \in \R$ such that, for any $\varepsilon>0$, 
 $\lambda (\{j \in \N \mid |a_j-l|<\varepsilon\})=1$, which we express
 as $l=\ulim_j a_j$. 

 Let $Y_j$ be a metric space, and $p_j \in  Y_j$, $j\in \N$. 
 Let $Y^{\infty}$ be a set of sequences $\{x_j\}$, $j \in \N$, such that 
 $d_{Y_j}(p_j,x_j)$ is bounded independent of $j \in \N$.
 We set $\{x_j\}\sim \{y_j\}$ if $\{x_j\}$ and $\{y_j\}$ satisfy 
 $\ulim_j d(x_j,y_j)=0$.  Then $\sim$ becomes an equivalence relation on
 $Y^{\infty}$, and we denote the set of equivalence classes by 
 $Y_{\infty}=\ulim_j (Y_j,p_j)$.  
 We also denote $x=\{x_j\}\in Y_{\infty}$ by $x=\ulim_j x_j$. 
 It is obvious that $d_{\lambda}(\{x_j\},\{y_j\})=\ulim_j d(x_j,y_j)$
 becomes a metric on $Y_{\infty}$. 
 Let $q_j$ be a point in $Y_j$ whose distance from $p_j$ is bounded
 independently of $j \in \N$, and $c_j\colon [0,d_j]\rightarrow Y_j$ a
 geodesic with $c_j(0)=q_j$.  
 Then, for each $t \in [0,\ulim_j d_j]$, $d(c_j(t),p_j)$ is
 bounded independently of $j\in \N$, and hence we get
 $c_{\infty}(t)=\ulim_j c_j(t)$. 
 Since $d(c_j(t),c_j(t'))=|t-t'|$ for any $j \in N$, we see that 
 $c_{\infty} \colon [0,\ulim_j d_j]\rightarrow Y_{\infty}$ turns out to
 be a geodesic.  
 Then it is easy to see that if each $Y_j$ is a complete $\cat{0}$
 space, then $Y_{\infty}=\ulim_j (Y_j,p_j)$ becomes a complete $\cat{0}$
 space.
 Note that, even when $d_j\nearrow\infty$,  we can define a geodesic ray 
 $c_{\infty}\colon \ray{0,\infty}\rightarrow Y_{\infty}$ as an
 ultralimit of a sequence of  geodesics 
 $\{c_j\colon [0,d_j]\rightarrow Y_j\}$. 
 In this case, we have a point 
 $\xi=c_{\infty}(\infty) \in \partial Y_{\infty}$. 
 We often express this $\xi$ as $\xi=\ulim_j c_j(d_j)$. 

 In what follows, we mainly consider an ultralimit of the form 
 $Y_{\infty}=\ulim (Y,o)$ for fixed $Y=(Y,d)$ and $o \in Y$, and 
 we use the same symbol $d$ for the metric on $Y_{\infty}$ instead of
 $d_{\lambda}$ for the sake of simplicity. 
 It can be shown that if $Y=(Y,d)$ is proper, then 
  $Y_{\infty}=(Y_{\infty},d)$ is isometric to $Y=(Y,d)$. 
 However, this is no longer true for nonproper metric space $Y$. 
 In that case, $Y$ becomes a convex closed proper subset of
 $Y_{\infty}$. 
 
 \begin{Example}
 \label{example:larger_ultralmit}
  Let $\lambda$ be a nonprincipal ultrafilter. 
  Consider a metric space $Y$ obtained by attaching a half line $L_n$ to
  $1/n \in \ray{0,\infty}$ for $n=1,2,\dots$. 
  Then $Y$ becomes a $\cat{0}$ space. 
  Let $t>0$ and take a point $p_n^t \in L_n$ so that
  $d(p_n^t,\ray{0,\infty})=t$. 
  Choose a basepoint 
  $o$ to be the endpoint $0$ of $\ray{0,\infty}$ to which we 
  attach $L_n$'s, and consider an ultralimit 
  $Y_{\infty}=\ulim_n (Y,o)$. 
  Then there must be a point $p^t=\ulim_n p_n^t$ in $Y_{\infty}$. 
  However, since, for any $q \in Y$,  $d(p_n^t,q)\geq t$ holds unless 
  $q \in L_n$, we see that 
  $\lambda( \{n \in \N \mid d(p_n^t,q)\geq t\})=1$, and hence $p^t$
  cannot be a point in $Y$. 
  It is clear that, for each $t>0$, $d(p^t,o)$ must be equal to
  $t$,  and, for $t\not=t'$, $d(p^t,p^{t'})=|t-t'|$ must hold. 
  Therefore $\{p^t\mid t \in \ray{0,\infty}\}$ is a half line $L$
  attached to $o$, the endpoint of $\ray{0,\infty}$, and $L$
  does not exist in $Y$.  
  Thus we see $Y \not=Y_{\infty}$. 
  Take an ultralimit one more time and consider  
  $(Y_{\infty})_{\infty}=\ulim_n (Y_{\infty},o)$. 
  Looking at $\{p_n^t\}$ again, we see that, since 
  $d(p_n^t,p^t)\geq 2t$ for any $n$, an ultralimit of this sequence
  cannot be $p^t$. 
  Indeed, we see that $\ulim_n p_n^t$ cannot be any point in $Y_{\infty}$
  as before, and there must be another half line $L'$ containing 
  $\ulim_n p_n^t$ and attached to $o$, which does not
  exist in $Y_{\infty}$.  
  Therefore we get $(Y_{\infty})_{\infty}\not= Y_{\infty}$ again. 
  Repeating this, we get larger and larger metric spaces. 
  This observation leads us to an argument often used in the proof of
  our theorems in what follows. 
 \end{Example}

%
%
\begin{Remark}
 Although $Y_{\infty}$ can be larger than $Y$ as the example above
 shows,  $Y_{\infty}$ inherits a certain dimension bound as we will see
 in Proposition~\ref{prop:dimension_of_ulim}, 
 which plays an important role in the proof of Theorem
 \ref{thm:main-1}. 
\end{Remark}

%
%
 Let $\rho\colon \Gamma \rightarrow \isom{Y}$ be a homomorphism. 
 For $p=\ulim_j p_j \in Y_{\infty}=\ulim (Y,o)$, we see that 
\begin{equation*}
 d(\rho(\gamma)p_j,o) \leq d(\rho(\gamma)p_j, \rho(\gamma)o) + 
 d(\rho(\gamma)o,o) = d(p_j,o)+d(\rho(\gamma)o,o)
\end{equation*}
 is bounded independent of $j\in \N$,  and hence we have
 $\ulim_j \rho(\gamma)p_j \in Y_{\infty}$. 
 Therefore,  the action of $\Gamma$ on $Y$ given by 
 $\rho$ can be extended to that on $Y_{\infty}$ by setting 
 $\rho(\gamma)(\ulim_j p_j)=\ulim_j \rho(\gamma)p_j$. 
 It is clear that this gives a homomorphism from $\Gamma$ into
 $\isom{Y_{\infty}}$, which we denote by the same symbol $\rho$. 

%
%
Also, as one easily imagine,  $\mu$-energy $\ene{\mu}$ behaves well under
taking ultralimit.
Although the proof is standard for experts, we give a proof of the
following proposition for the sake of completeness. 

\begin{Proposition}
 \label{prop:energy_limit}
 Let $Y$ be a $\cat{0}$ space, $o\in Y$, and 
 $\lambda$ a nonprincipal ultrafilter. 
 Suppose that a countable group $\Gamma$ acts on $Y$ via a
 homomorphism $\rho\colon \Gamma \rightarrow \isom{Y}$, and that the
 action given by $\rho$ has finite second moment with respect to $\mu$.
 Let $f_q \colon \Gamma \rightarrow Y$ be a $\rho$-equivariant map
 defined by $f_q(\gamma)=\rho(\gamma)q$, and 
 $Y_{\infty}=\ulim (Y,o)$. 
 Suppose that $p= \ulim_j p_j$ for a bounded sequence
 $\{p_j\}\subset Y$.  Then we have
 \begin{equation*}
  \ene{\mu} (f_p) = \ulim_j \ene{\mu}(f_{p_j}). 
 \end{equation*} 
\end{Proposition}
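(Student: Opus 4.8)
The plan is to unwind both sides into sums over $\Gamma$ and then to justify interchanging the ultralimit with the summation by a dominated-convergence argument tailored to ultralimits. Since $\Gamma$ is countable and $\mu$ is a probability measure on it, the energies are honest sums: writing $g_j(\gamma)=d(p_j,\rho(\gamma)p_j)^2$ and $g_\infty(\gamma)=d(p,\rho(\gamma)p)^2$, we have $\ene{\mu}(f_{p_j})=\tfrac12\sum_{\gamma\in\Gamma}g_j(\gamma)\mu(\gamma)$ and $\ene{\mu}(f_p)=\tfrac12\sum_{\gamma\in\Gamma}g_\infty(\gamma)\mu(\gamma)$, and every function on $\Gamma$ is automatically $\mu$-measurable. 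Moreover, once a uniform integrable bound is in hand, each $\ene{\mu}(f_{p_j})$ is finite and bounded independently of $j$, so $\ulim_j\ene{\mu}(f_{p_j})$ exists.

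First I would record a uniform dominating function. Since $\{p_j\}$ is bounded, set $M=\sup_j d(p_j,o)<\infty$; the isometry property of $\rho(\gamma)$ and the triangle inequality give $d(p_j,\rho(\gamma)p_j)\le 2d(p_j,o)+d(o,\rho(\gamma)o)\le 2M+d(o,\rho(\gamma)o)$, hence $g_j(\gamma)\le G(\gamma):=(2M+d(o,\rho(\gamma)o))^2$ for all $j$. Expanding the square and using $d\le 1+d^2$ together with the finite second moment of the action shows $G\in L^1(\Gamma,\mu)$. Next, for each fixed $\gamma$ the definition of the extended action gives $\rho(\gamma)p=\ulim_j\rho(\gamma)p_j$, and the definition of the metric on $Y_\infty$ gives $d(p,\rho(\gamma)p)=\ulim_j d(p_j,\rho(\gamma)p_j)$; since for fixed $\gamma$ this is a bounded sequence and $x\mapsto x^2$ is continuous, $\ulim_j g_j(\gamma)=g_\infty(\gamma)$. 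Note also $g_\infty\le G$, so the sum defining $\ene{\mu}(f_p)$ is finite.

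The main point, and the only real obstacle, is the interchange $\ulim_j\sum_{\gamma}g_j(\gamma)\mu(\gamma)=\sum_{\gamma}g_\infty(\gamma)\mu(\gamma)$, which is a dominated-convergence statement for ultralimits rather than for ordinary limits. I would prove it by a tail truncation. Given $\varepsilon>0$, use $G\in L^1(\Gamma,\mu)$ to choose a finite set $\Gamma_0\subset\Gamma$ with $\sum_{\gamma\notin\Gamma_0}G(\gamma)\mu(\gamma)<\varepsilon$; then, for every $j$ as well as for $g_\infty$, the full sum differs from the finite sum over $\Gamma_0$ by less than $\varepsilon$, uniformly, because of the bound $g_j,g_\infty\le G$. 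Over the finite set $\Gamma_0$ the ultralimit commutes with the (finite) sum by linearity, so $\ulim_j\sum_{\gamma\in\Gamma_0}g_j(\gamma)\mu(\gamma)=\sum_{\gamma\in\Gamma_0}g_\infty(\gamma)\mu(\gamma)$. Since $\ulim$ is linear, monotone, and stable under uniform $\varepsilon$-perturbations (if $|a_j-b_j|\le\varepsilon$ for all $j$ then $|\ulim_j a_j-\ulim_j b_j|\le\varepsilon$), combining these three estimates bounds $\bigl|\ulim_j\sum_{\gamma}g_j(\gamma)\mu(\gamma)-\sum_{\gamma}g_\infty(\gamma)\mu(\gamma)\bigr|$ by $2\varepsilon$; letting $\varepsilon\to0$ yields the interchange, and dividing by $2$ gives $\ene{\mu}(f_p)=\ulim_j\ene{\mu}(f_{p_j})$.
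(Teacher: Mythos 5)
Your proof is correct and follows essentially the same route as the paper: a uniform $L^1$ dominating function obtained from the boundedness of $\{p_j\}$ and the finite second moment, a truncation to a finite subset of $\Gamma$ controlling the tails uniformly in $j$, and commutation of the ultralimit with the resulting finite sum (the paper phrases this last step via the finite intersection of the $\lambda$-measure-one sets $A_\gamma$, which is exactly your finite additivity of $\ulim$). No gaps.
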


\begin{proof}
 Since
 \begin{equation*}
 \begin{split}
   d(p_j, \rho(\gamma)p_j) & \leq d(o, p_j) + d(o, \rho(\gamma)o)
    + d(\rho(\gamma)o, \rho(\gamma)p_j) \\
 & = 2d(o, p_j) + d(o, \rho(\gamma)o), 
 \end{split}
 \end{equation*}
 and $\{p_j\}$ is bounded, we have
 \begin{equation*}
 d(p_j, \rho(\gamma)p_j)^2 \leq C_1 d(o, \rho(\gamma)o)^2 + C_2 
 \end{equation*}
 for some positive constant $C_1$ and $C_2$. 
 Since $\Gamma$ is countable, we can take an increasing sequence 
 $D_1 \subset D_2 \subset \dots \subset D_n \subset \dots$ of finite
 subsets of $\Gamma$ so that $\bigcup_{n=1}^{\infty}D_n=\supp \mu$. 
 Then, by the inequality above, we have
 \begin{equation*}
 \int_{\Gamma \setminus D_n} d(p_j, \rho(\gamma)p_j)^2 d\mu(\gamma)
  \leq \int_{\Gamma \setminus D_n} C_1 d(o, \rho(\gamma)o)^2 + C_2 
   \ d\mu(\gamma)
 \end{equation*}
 for any $j \in \N$, and also
 \begin{equation*}
 \int_{\Gamma \setminus D_n} d(p, \rho(\gamma)p)^2 d\mu(\gamma)
 \leq \int_{\Gamma \setminus D_n} C_1 d(o, \rho(\gamma)o)^2+ C_2  
   \ d\mu(\gamma). 
 \end{equation*}
 Note that the right-hand side tends to $0$ as
 $n\to \infty$ since the action given by $\rho$ has finite second
 moment with respect to $\mu$.
 Therefore, for any $\varepsilon>0$,
 there exists $n$ such that
 \begin{equation*}
 \frac{1}{2} \int_{\Gamma \setminus D_n} d(p_j, \rho(\gamma)p_j)^2 
  d\mu(\gamma) < \frac{\varepsilon}{4} 
 \end{equation*}
 for any $j \in \N$, and 
 \begin{equation*}
 \frac{1}{2} \int_{\Gamma \setminus D_n} d(p, \rho(\gamma)p)^2 
  d\mu(\gamma) < \frac{\varepsilon}{4} 
 \end{equation*}
 hold. 
 On the other hand, since 
 $\ulim_j d(p_j, \rho(\gamma)p_j)^2=d(p,\rho(\gamma)p)^2$ and $\lambda$ is
 a nonprincipal ultrafilter, letting 
 \begin{equation*}
 A_{\gamma}=\left\{k \in \N \mid 
 |d(p_k,\rho(\gamma)p_k)^2-d(p,\rho(\gamma)p)^2|<\varepsilon 
 \right\}, 
 \end{equation*}
 we have $\lambda(A_{\gamma})=1$ for each $\gamma \in \Gamma$.  
 Since $D_n$ is a finite set, we obtain 
 $\lambda (\bigcap_{\gamma \in D_n} A_{\gamma}) =1$. 
 Note that, for any $j \in \bigcap_{\gamma \in D_n} A_{\gamma}$, 
 \begin{equation*}
 \left|\frac{1}{2}\int_{D_n} d(p_j,\rho(\gamma)p_j)^2 d\mu(\gamma)
     - \frac{1}{2}\int_{D_n} d(p,\rho(\gamma)p)^2 d\mu(\gamma) \right|
 < \frac{\varepsilon}{2}
 \end{equation*}
 holds.  Therefore, for any $\varepsilon>0$, by taking $n \in \N$ as
 above, we see that
 \begin{equation*}
 \begin{split}
  \left|\ene{\mu}(f_{p_j}) - \ene{\mu}(f_p)\right|
 & = \frac{1}{2} \left| 
    \int_{\Gamma \setminus D_n} d(p_j, \rho(\gamma)p_j)^2 d\mu(\gamma) 
    + \int_{D_n} d(p_j, \rho(\gamma)p_j)^2 d\mu(\gamma) \right. \\
 & \phantom{==}   \left.
    - \int_{\Gamma \setminus D_n} d(p, \rho(\gamma)p)^2 d\mu(\gamma) 
    - \int_{D_n}d(p, \rho(\gamma)p)^2 d\mu(\gamma) 
    \right| \\
 & \leq  \frac{1}{2} \int_{\Gamma \setminus D_n} 
        d(p_j, \rho(\gamma)p_j)^2 d\mu(\gamma)
    + \frac{1}{2}\int_{\Gamma \setminus D_n} 
       d(p, \rho(\gamma)p)^2 d\mu(\gamma)  \\
 & \phantom{==}
    + \frac{1}{2}
       \left|\int_{D_n} d(p_j, \rho(\gamma)p_j)^2 d\mu(\gamma) 
       - \int_{D_n}d(p, \rho(\gamma)p)^2 d\mu(\gamma) 
      \right|\\
 & < \frac{\varepsilon}{4} + \frac{\varepsilon}{4} +
  \frac{\varepsilon}{2}= \varepsilon
 \end{split}
 \end{equation*}
 holds for any $j \in \bigcap_{\gamma \in D_n}A_{\gamma}$. 
 This completes the proof. 
\end{proof}

%
%
Recall that $\rho(\Gamma) \subset \isom{Y}$ is said to be 
{\it reductive in the sense of Jost}
if there exists a closed, convex $\rho(\Gamma)$-invariant subset $C$ of
$Y$ satisfying either
\begin{itemize}
 \item $C$ is isometric to a finite-dimensional Euclidean space, or
 \item for any unbounded sequence $\{p_n\} \subset C$ , there exists
       $\gamma \in \Gamma$ such that $\{d(p_n, \rho(\gamma)p_n)\}$ is
       unbounded. 
\end{itemize}
If $C$ is isometric to a finite-dimensional Euclidean space, then we can
regard $C$ as $Y$. 
The existence of $\rho$-equivariant harmonic map in this case follows
from, for example, an argument similar to that in \cite{labourie}. 
Suppose that we have the second case in the definition of reductiveness
in the sense of Jost. 
Take any sequence $\{f_{p_j}\}$ that minimizes $\mu$-energy. 
Suppose that $\{p_j\}$ is unbounded. 
Then there exists $\gamma \in \Gamma$ such that 
$\{d(p_j,\rho(\gamma)p_j)\}$ is unbounded. 
Since $\supp \mu$ generates $\Gamma$, there exists 
$s_1, \dots, s_k \in \supp \mu$ such that $\gamma=s_1\dots s_k$. 
Since
\begin{equation*}
 d(p_j, \rho(\gamma)p_j) \leq
 d(p_j, \rho(s_1)p_j) + \dots + d(p_j,\rho(s_k)p_j)
\end{equation*}
as we have seen in the proof of Lemma~\ref{lem:integral}, 
$\{d(p,\rho(s_j)p)\}$ is unbounded for some $j$, and we conclude that
$\ene{\mu}(f_{p_j})$ is unbounded.  
A contradiction. 
Now since $\{p_j\}$ is bounded, by the proposition above,  we see that
$f_p$ for $p=\ulim p_j$ minimizes $\mu$-energy, and hence it is
a $\mu$-harmonic map into $Y_{\infty}$.
Note that $Y$ is a convex closed subset in $Y_{\infty}$. 
Then the nearest point projection $\nrprj_Y$ onto $Y$ is
$\rho$-equivariant, and does not increase
the distance, and hence the energy.  
Therefore $\nrprj\circ f_{p}$ is also
$\mu$-harmonic; we may assume $f_{p}$ is a map into $Y$. 
Summarizing this gives the following. 

\begin{Corollary}
\label{cor:existence}
 If $\rho(\Gamma)$ is 
 reductive in the sense of Jost, then there is a $\rho$-equivariant
 harmonic map $f \colon \Gamma \rightarrow Y$. 
\end{Corollary}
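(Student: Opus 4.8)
The plan is to realize the harmonic map as a minimizer of the energy $\ene{\mu}$ among all $\rho$-equivariant maps. Since a $\rho$-equivariant map is completely determined by its value at $e$, this amounts to minimizing the convex function $p\mapsto\ene{\mu}(f_p)$ on $Y$. The only real difficulty is that $Y$ need not be proper, so a minimizing sequence need not converge in $Y$; the role of reductiveness in the sense of Jost is precisely to supply, via boundedness of a minimizing sequence, the compactness that properness would otherwise give.

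I would first treat the easy case in which the invariant set $C$ is isometric to a finite-dimensional Euclidean space. Here the nearest point projection $\nrprj_C$ is $\rho$-equivariant and distance-nonincreasing, hence energy-nonincreasing, so it carries a minimizing sequence into $C$ without increasing the energy; thus we may replace $Y$ by $C$. On a finite-dimensional Euclidean space $p\mapsto\ene{\mu}(f_p)$ is a nonnegative convex quadratic, so its infimum is attained, yielding a harmonic map (compare \cite{labourie}).

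For the second case I would take a minimizing sequence $\{f_{p_j}\}$ and, applying $\nrprj_C$ as above, assume $\{p_j\}\subset C$. The key step is to show that $\{p_j\}$ is bounded. If it were unbounded, reductiveness would furnish $\gamma\in\Gamma$ with $\{d(p_j,\rho(\gamma)p_j)\}$ unbounded; writing $\gamma=s_1\cdots s_k$ with $s_i\in\supp\mu$ and invoking the triangle-inequality estimate from the proof of Lemma~\ref{lem:integral}, some single generator $s$ would satisfy $\{d(p_j,\rho(s)p_j)\}$ unbounded, forcing $\ene{\mu}(f_{p_j})\to\infty$ and contradicting that the sequence is minimizing. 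Hence $\{p_j\}$ is bounded.

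With boundedness in hand, I would pass to the ultralimit $Y_\infty=\ulim(Y,o)$, set $p=\ulim_j p_j\in Y_\infty$, and invoke Proposition~\ref{prop:energy_limit} to obtain $\ene{\mu}(f_p)=\ulim_j\ene{\mu}(f_{p_j})$, which equals the infimal energy (note that projecting onto $Y$ shows the infimum over $Y_\infty$ agrees with that over $Y$). Thus $f_p$ is $\mu$-harmonic as a map into $Y_\infty$. Finally, since $Y$ is a closed convex $\rho$-invariant subset of $Y_\infty$, the projection $\nrprj_Y$ is $\rho$-equivariant and energy-nonincreasing, so $\nrprj_Y\circ f_p$ is a $\rho$-equivariant $\mu$-harmonic map into $Y$. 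The main obstacle throughout is the non-properness of $Y$: the ultralimit is what allows a bounded minimizing sequence to actually attain the infimum, while all the required structure---the extended isometric action on $Y_\infty$, the convexity of $p\mapsto\ene{\mu}(f_p)$, and the continuity of $\ene{\mu}$ under ultralimits---is already available from the preceding development.
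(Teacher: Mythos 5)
Your proposal is correct and follows essentially the same route as the paper: treat the Euclidean case via \cite{labourie}, show a minimizing sequence is bounded by combining reductiveness with the generator decomposition $\gamma=s_1\cdots s_k$ from the proof of Lemma~\ref{lem:integral}, realize the minimizer in the ultralimit $Y_\infty$ using Proposition~\ref{prop:energy_limit}, and project back to $Y$ by the energy-nonincreasing nearest point projection. The only (harmless) additions are your explicit use of $\nrprj_C$ to place the minimizing sequence inside $C$ and the remark that the infima over $Y$ and $Y_\infty$ coincide, both of which are implicit in the paper's argument.
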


\begin{Remark}
 \label{rem:korevaar-schoen}
 The corollary above also follows from 
 \cite[Proposition 1.2]{korevaar-schoen2} without using ultralimits. 
\end{Remark}

%
%
The following lemma tells us that Busemann functions also behave
well under taking ultralimits. 

\begin{Lemma}
 \label{lem:ultralimit_of_buseman_fct}
 Take $o \in Y$. 
 Suppose that a sequence $\{x_j\} \subset Y$ satisfies
 $d(o,x_j)\to\infty$. 
 Then we have $\ulim_j x_j = \xi \in \partial Y_{\infty}$, where 
 $Y_{\infty}=\ulim (Y,o)$. 
 Furthermore,  we have, for any $z \in Y$, 
\begin{equation*}
 b_{\xi}(z,o)=\ulim \left(d(x_j,z)-d(x_j,o)\right). 
\end{equation*}
\end{Lemma}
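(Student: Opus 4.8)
The plan is to realize both $\xi$ and the Busemann function $b_{\xi}$ through the ultralimit of the geodesics pointing at the $x_j$, and then to pin down the value $b_{\xi}(z,o)$ by a two-sided estimate whose upper bound comes from the $\cat{0}$ comparison inequality. First I would set $d_j=d(o,x_j)$ and let $c_j\colon [0,d_j]\rightarrow Y$ be the geodesic with $c_j(0)=o$ and $c_j(d_j)=x_j$. Since $d(o,x_j)\to\infty$ we have $\ulim_j d_j=\infty$, so as recalled in \S\ref{sec:quick-revi-ultr} the ultralimit $c_{\infty}(t)=\ulim_j c_j(t)$ is a geodesic ray in $Y_{\infty}=\ulim(Y,o)$ emanating from $o$, and $\xi:=c_{\infty}(\infty)=\ulim_j x_j\in\partial Y_{\infty}$, which is the first assertion. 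Fix $z\in Y$, regarded as the point $\ulim_j z\in Y_{\infty}$, put $c_0=d(o,z)$, and set
\[
 g_j(t)=d(c_j(t),z)-t,\qquad h(t)=\ulim_j g_j(t)=d(c_{\infty}(t),z)-t .
\]
By Remark~\ref{rem:busemann_fct}, applied to the sequence $c_{\infty}(n)\to\xi$ with $d(c_{\infty}(n),o)=n$, we have $b_{\xi}(z,o)=\lim_{n\to\infty}\bigl(d(c_{\infty}(n),z)-n\bigr)=\lim_{t\to\infty}h(t)$, the last equality because the reverse triangle inequality makes each $g_j$, and hence $h$, non-increasing. So it remains to prove $\lim_{t\to\infty}h(t)=A$, where $A:=\ulim_j\bigl(d(x_j,z)-d_j\bigr)=\ulim_j g_j(d_j)$.

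For the lower bound, the triangle inequality gives $d(x_j,z)\le (d_j-t)+d(c_j(t),z)$, i.e. $g_j(t)\ge g_j(d_j)$ for every $t\le d_j$; taking $\ulim_j$ (note $d_j\ge t$ for $\lambda$-almost all $j$) yields $h(t)\ge A$ for all $t$, hence $\lim_{t\to\infty}h(t)\ge A$.

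The upper bound is the crux, and it is where the $\cat{0}$ hypothesis — rather than mere convexity of the $g_j$ — is essential. I would compare the triangle $\tri{o}{x_j}{z}$ with its Euclidean comparison triangle $\tri{\bar o}{\bar x_j}{\bar z}$, placing $\bar o=(0,0)$, $\bar x_j=(d_j,0)$ and $\bar z=(a_j,b_j)$ with $a_j^2+b_j^2=c_0^2$. The $\cat{0}$ inequality applied to $c_j(t)\in[o,x_j]$ and the vertex $z$ gives
\[
 g_j(t)=d(c_j(t),z)-t\le \sqrt{(t-a_j)^2+b_j^2}-t=\sqrt{t^2-2a_jt+c_0^2}-t .
\]
Subtracting the two defining relations of the comparison triangle gives $a_j=\dfrac{d_j^2+c_0^2-d(x_j,z)^2}{2d_j}=-g_j(d_j)+\dfrac{c_0^2-g_j(d_j)^2}{2d_j}$, and since $|g_j(d_j)|\le c_0$ and $\ulim_j d_j=\infty$ we obtain $\ulim_j a_j=-A$. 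As $t$ is fixed and the $a_j$ are bounded, taking $\ulim_j$ of the displayed inequality gives $h(t)\le \sqrt{t^2+2At+c_0^2}-t$, whence
\[
 \lim_{t\to\infty}h(t)\le \lim_{t\to\infty}\Bigl(\sqrt{t^2+2At+c_0^2}-t\Bigr)=A .
\]
Together with the lower bound this proves $b_{\xi}(z,o)=\lim_{t\to\infty}h(t)=A$, as claimed. The point that requires care is precisely this upper estimate: monotonicity and convexity of the $g_j$ alone do not force $h(t)\to A$ (one can write down abstract non-increasing convex $g_j$ with common value $g_j(0)=c_0$ for which $h$ stays bounded away from $A$), and it is the comparison inequality that confines the defect $g_j(t)-g_j(d_j)$ enough to survive the double passage to the limit, first in $j$ and then in $t$.
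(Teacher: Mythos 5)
Your proof is correct, and it gets to the conclusion by a genuinely different technical route than the paper's. Both arguments share the same skeleton: take the geodesics $c_j$ from $o$ to $x_j$, pass to the ultralimit ray $c_\infty$ with $c_\infty(\infty)=\xi$, and reduce everything to interchanging the limit in $j$ with the limit $t\to\infty$ of $g_j(t)=d(c_j(t),z)-t$. The paper justifies this interchange by citing \cite[p.~269, 8.21 Lemma]{bridson-haefliger}, which supplies a radius $r$, depending only on $\varepsilon$ and $d(o,z)$ and uniform over all geodesics issuing from $o$, at which $d(c(r),z)-r$ is already within $\varepsilon/3$ of its limiting value; an $\varepsilon/3$-argument over suitable $\lambda$-full subsets of $\N$ then closes the proof. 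You instead derive the required uniformity from scratch: the comparison triangle for $\tri{o}{x_j}{z}$ yields the explicit bound $g_j(t)\le\sqrt{t^2-2a_jt+c_0^2}-t$ with $|a_j|\le c_0$ and $\ulim_j a_j=-A$, which together with the elementary lower bound $g_j(t)\ge g_j(d_j)$ squeezes $\lim_{t\to\infty}\ulim_j g_j(t)$ onto $A$. The two proofs rest on the same underlying geometry (the cited lemma is itself proved by Euclidean comparison), but yours is self-contained modulo the definition of a $\cat{0}$ space, at the cost of an explicit computation, whereas the paper's outsources the quantitative step to a standard reference. Your closing remark is also well taken: taking $g_j$ linear from $c_0$ down to $A$ on $[0,d_j]$ shows that monotonicity and convexity alone would leave $\ulim_j g_j(t)\equiv c_0$, so the comparison inequality (equivalently, the uniformity in BH 8.21) is exactly the indispensable ingredient.
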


\begin{proof}
 Take $z \in Y$.   
 Consider a geodesic ray $c$ with $c(0)=o$. 
 Applying \cite[p.~269, 8.21 Lemma]{bridson-haefliger} by setting
 $\rho=d(o,z)$, 
 we see that, for any $\varepsilon/3>0$, there exists $r>0$ such that
 \begin{equation*}
  d(c(r),z) -d(c(t),z) + (t-r) < \varepsilon/3
 \end{equation*}
 holds for any geodesic $c$ starting from $o$ and $t\geq r$.  
 This inequality can be rewritten as
 \begin{equation*}
  \left(d(c(r),z)-r\right) -
   \left(d(c(t),z)-t\right) < \varepsilon/3.
 \end{equation*}
 Since $t\geq r$ is arbitrary and $d(c(t),z)-t$ is easily seen to be
 nonincreasing by the triangle inequality 
 (\cite[p.~268, 8.18 Lemma (1)]{bridson-haefliger}),
 we see that
 \begin{equation*}
  |\left(d(c(r),z)-r\right) - b_{c(\infty)}(z, o)|
   \leq \varepsilon/3
 \end{equation*}
 for any geodesic ray $c$ starting from $o$. 
 Let $c_j$ be a geodesic joining $o$ and $x_j$ with $c_j(d_j)=x_j$, and
 set 
\begin{equation*}
  D(r)= \{j \in \N \mid d_j \geq r\}. 
\end{equation*}
 By our assumption, $d_j \to \infty$, and hence $\lambda(D(r))=1$. 
 Note that, as we have seen before, we have an ultralimit 
 $c_{\infty}\colon \ray{0,\infty}\rightarrow Y_{\infty}$ of $\{c_j\}$,
 and $\xi=c_{\infty}(\infty)=\ulim_j x_j \in \partial Y_{\infty}$. 
 Set
\begin{equation*}
  C(r,\varepsilon)  = \{j \in \N \mid 
   |d(c_j(r),z)-d(c_{\infty}(r),z)|
  < \varepsilon/3\}. 
\end{equation*}
 Then $\lambda(C(r,\varepsilon))=1$ for any $r$ and $\varepsilon$. 
 Thus $\lambda(D(r) \cap C(r,\varepsilon))=1$,
and for any $j \in D(r) \cap C(r,\varepsilon)$,
\begin{equation*}
\begin{split}
  & |d(c_j(d_j), z)-d_j -
 b_{\xi}(z,o)| \\
  \leq &
  |\left(d(c_j(d_j), z)-d_j\right)
 - \left(d(c_j(r), z)-r\right)| \\
 & +|\left(d(c_j(r),z)-r\right)-
  \left(d(c_{\infty}(r), z)-r\right)| \\
 & + |\left(d(c_{\infty}(r), z)-r\right) -
 b_{\xi}(z,o)| \\
 < &  \varepsilon/3  + 
 |d(c_j(r), z)-d(c_{\infty}(r), z)| +
 \varepsilon/3 < \varepsilon
\end{split}
\end{equation*}
 holds.  Since $\varepsilon>0$ is arbitrary, we get
\begin{equation*}
 \ulim  \left(d(c_j(d_j), z) - d_j\right)
  = b_{\xi}(z,o), 
\end{equation*}
 and this completes the proof. 
\end{proof}

\subsection{Existence of a $\mu$-harmonic function}
\label{sec:global_harmonicity}

 We have seen that there is a function $\varphi_{\omega}$ which
 satisfies $\Delta \varphi_{\omega}(e)=0$ in
 \S~\ref{sec:rate_of_escape_and_hmap} when $Y$ is proper. 
 In this section, we show that, even when $Y$ is not proper,  
 there does exist a function which is $\mu$-harmonic on whole $\Gamma$
 under the assumption that $\mu(\gamma)\not= 0$ for any 
 $\gamma \in \Gamma \setminus \{e\}$. 
 We will use this fact in \S~\ref{sec:key-lemma-proof}. 

%
%
 Suppose that $\Gamma$ acts on $\cat{0}$ space $Y$ via a homomorphism
 $\rho\colon \Gamma \rightarrow \isom{Y}$ and that the action given by
 $\rho$ has finite second moment. 
 Fix $o \in Y$. 
 Let $\varphi\colon \Gamma \rightarrow \R$ be a function satisfying 
 $\varphi(\gamma) \leq C_1 d(o,\rho(\gamma)o)^2+C_2$ for some
 $C_1,C_2>0$. 
 Since the action give by $\rho$ has finite second moment with respect
 to $\mu$, it is also true for $\mu^2$ as we have seen
 in \S~\ref{sec:rate_of_escape_and_hmap}. 
 Noting that $d(p,q) \leq 1+d(p,q)^2$ holds for any $p,q \in Y$,  we see
 that
\begin{equation*}
\begin{split}
  |\varphi(\gamma \gamma')| &\leq C_1 d(o,\rho(\gamma \gamma')o)^2 + C_2 \\
  & \leq C_1
 \left(d(o,\rho(\gamma)o)+d(\rho(\gamma)o,\rho(\gamma\gamma')o)\right)^2
 + C_2 \\
  & \leq C_1'd(o,\rho(\gamma')o)^2 + C_2'
\end{split}
\end{equation*}
 for some positive constant $C_1', C_2'$ which depend on $\gamma$. 
 Thus, fixing $\gamma \in \Gamma$, 
 we see that $\gamma' \mapsto \varphi(\gamma \gamma')$ is
 $\mu^2$-integrable by our assumption. 
 In the rest of this section, a function $\varphi$ under consideration
 is always assumed to satisfy
 $|\varphi(\gamma)| \leq C_1 d(o,\rho(\gamma)o)^2 + C_2$ for some
 $C_1,C_2>0$.

%
%
 Let us denote by $\Delta_2$ the $\mu^2$-Laplacian, which is defined by 
\begin{equation*}
 \Delta_2 \varphi (\gamma) = 
 \varphi(\gamma)- \int_{\Gamma} \varphi(\gamma\gamma') d\mu^2(\gamma'). 
\end{equation*}
 Then we have
\begin{equation*}
   \begin{split}
    -\Delta_2 \varphi(\gamma) & = 
    \int_{\Gamma} \varphi(\gamma \gamma') - \varphi(\gamma) d\mu^2(\gamma') \\
    & = \int_{\Gamma} \left(\int_{\Gamma} 
      \varphi(\gamma \gamma_1 \gamma_2) - \varphi(\gamma) d\mu(\gamma_2)\right)
      d\mu(\gamma_1) \\
    & = \int_{\Gamma} \left(\int_{\Gamma} 
      \left(\varphi(\gamma \gamma_1 \gamma_2) - \varphi(\gamma \gamma_1)\right)
      + \varphi(\gamma \gamma_1) - \varphi(\gamma)
      d\mu(\gamma_2)\right) d\mu(\gamma_1) \\
    & = \int_{\Gamma}
       \left(
     -\Delta \varphi(\gamma \gamma_1) + \varphi(\gamma \gamma_1) -
      \varphi(\gamma)\right)
      d\mu(\gamma_1) \\
    & =\int_{\Gamma}-\Delta \varphi(\gamma\gamma_1) d\mu(\gamma_1)
      -\Delta \varphi(\gamma), 
  \end{split}
 \end{equation*}
and hence we get the following: 

\begin{Lemma}
 \label{lem:mu^k-subharmonicity}
 Let  $\varphi\colon \Gamma \rightarrow \R$ be a $\mu$-subharmonic
 function. 

 $(1)$ $\varphi$ is $\mu^2$-subharmonic$;$ for any $\gamma \in \Gamma$,
 $-\Delta_2 \varphi(\gamma)\geq 0$ holds. 

 $(2)$ 
 $-\Delta_2 \varphi(\gamma) \geq -\Delta \varphi(\gamma)\geq 0$
 holds. 

 $(3)$ If $-\Delta_2 \varphi(\gamma)=0$ holds for $\gamma \in \Gamma$,
 then $-\Delta \varphi(\gamma)=0$ holds. 
\end{Lemma}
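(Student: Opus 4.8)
The plan is to read all three assertions directly off the identity
\begin{equation*}
 -\Delta_2 \varphi(\gamma) = \int_{\Gamma}-\Delta \varphi(\gamma\gamma_1)\, d\mu(\gamma_1) -\Delta \varphi(\gamma)
\end{equation*}
established just above, using nothing beyond the hypothesis that $\varphi$ is $\mu$-subharmonic, that is, $-\Delta\varphi(\gamma)\geq 0$ for every $\gamma\in\Gamma$.

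First I would observe that, since $-\Delta\varphi$ is nonnegative at every point of $\Gamma$, the integrand $-\Delta\varphi(\gamma\gamma_1)$ is nonnegative for every $\gamma_1$, whence the integral $\int_{\Gamma}-\Delta\varphi(\gamma\gamma_1)\,d\mu(\gamma_1)$ is nonnegative. Adding the nonnegative term $-\Delta\varphi(\gamma)$ then gives
\begin{equation*}
 -\Delta_2\varphi(\gamma) \geq -\Delta\varphi(\gamma) \geq 0,
\end{equation*}
which is exactly assertion $(2)$; assertion $(1)$ is its immediate weakening $-\Delta_2\varphi(\gamma)\geq 0$. For $(3)$, if $-\Delta_2\varphi(\gamma)=0$ for some $\gamma$, then the right-hand side of the identity is a sum of two nonnegative quantities equal to zero, so both summands must vanish; in particular $-\Delta\varphi(\gamma)=0$, as desired.

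The hard part is not in the lemma itself but in the identity that precedes it, whose derivation rests on writing $\mu^2$ as the convolution $\mu\ast\mu$ and interchanging the order of integration. The only point requiring care is integrability: one must verify that the iterated integral defining $\Delta_2\varphi$ and the Fubini-type splitting are legitimate. This is furnished by the standing assumption $|\varphi(\gamma)|\leq C_1 d(o,\rho(\gamma)o)^2+C_2$, which the paragraph before the lemma uses to conclude that $\gamma'\mapsto\varphi(\gamma\gamma')$ is $\mu^2$-integrable. With this in hand, parts $(1)$--$(3)$ follow with no further computation.
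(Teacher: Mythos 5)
Your proof is correct and is essentially identical to the paper's: the lemma is stated there as an immediate consequence of the displayed identity $-\Delta_2\varphi(\gamma)=\int_{\Gamma}-\Delta\varphi(\gamma\gamma_1)\,d\mu(\gamma_1)-\Delta\varphi(\gamma)$, with all three parts read off from the nonnegativity of the two summands exactly as you do. Your remark on integrability matches the paper's preceding paragraph, so nothing is missing.
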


%
%
\begin{Proposition}
 \label{prop:k-step_harmonicity}
 Suppose that $\mu(\gamma)\not= 0$ holds for any 
 $\gamma \in \Gamma \setminus \{e\}$. 
 Let $\varphi \colon \Gamma \longrightarrow \R$ be a $\mu$-subharmonic
 function. 
 If  $-\Delta_2 \varphi(e)=0$ holds, then $\varphi$ is a $\mu$-harmonic
 function, that is, $-\Delta \varphi(\gamma)=0$ holds for any 
 $\gamma \in \Gamma$. 
\end{Proposition}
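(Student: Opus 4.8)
The plan is to read off the conclusion directly from the identity for $-\Delta_2\varphi$ that was derived immediately before Lemma~\ref{lem:mu^k-subharmonicity}, specialized to $\gamma=e$. That computation gives
\begin{equation*}
 -\Delta_2\varphi(e) = \int_{\Gamma}\left(-\Delta\varphi(\gamma_1)\right)d\mu(\gamma_1) + \left(-\Delta\varphi(e)\right).
\end{equation*}
Since $\varphi$ is $\mu$-subharmonic, we have $-\Delta\varphi(\gamma)\geq 0$ for every $\gamma\in\Gamma$; in particular the integrand $\gamma_1\mapsto -\Delta\varphi(\gamma_1)$ is nonnegative, so both summands on the right-hand side are nonnegative.

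Next I would invoke the hypothesis $-\Delta_2\varphi(e)=0$. As a sum of two nonnegative quantities equal to zero, each must vanish separately; thus $-\Delta\varphi(e)=0$ and
\begin{equation*}
 \int_{\Gamma}\left(-\Delta\varphi(\gamma_1)\right)d\mu(\gamma_1)=0.
\end{equation*}
The heart of the argument, and the only place the nondegeneracy assumption enters, is the passage from the vanishing of this integral to pointwise vanishing. Because the integrand is nonnegative and the assumption $\mu(\gamma)\neq 0$ for all $\gamma\in\Gamma\setminus\{e\}$ means that every nonidentity element carries positive mass, the vanishing of the integral forces $-\Delta\varphi(\gamma_1)=0$ for every $\gamma_1\in\Gamma\setminus\{e\}$. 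Combining this with $-\Delta\varphi(e)=0$ obtained above yields $-\Delta\varphi(\gamma)=0$ for all $\gamma\in\Gamma$, which is exactly the $\mu$-harmonicity of $\varphi$.

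The only point requiring care is the validity of the displayed identity itself, namely that $\gamma_1\mapsto-\Delta\varphi(\gamma\gamma_1)$ is $\mu$-integrable and that the iterated integrals defining $\Delta_2\varphi$ may be rearranged; but this is precisely what the standing growth bound $|\varphi(\gamma)|\leq C_1 d(o,\rho(\gamma)o)^2+C_2$ imposed before Lemma~\ref{lem:mu^k-subharmonicity} guarantees, via the finite second moment of the action with respect to $\mu$ and $\mu^2$. I therefore expect no genuine obstacle beyond this bookkeeping: the substance of the proposition is the clean observation that nonnegativity of $-\Delta\varphi$ together with nondegeneracy of $\mu$ upgrades a single vanishing average at $e$ into pointwise harmonicity on all of $\Gamma$.
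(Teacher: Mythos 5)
Your proposal is correct and follows essentially the same route as the paper: both specialize the identity $-\Delta_2\varphi(\gamma)=\int_{\Gamma}-\Delta\varphi(\gamma\gamma_1)\,d\mu(\gamma_1)-\Delta\varphi(\gamma)$ to $\gamma=e$, use $\mu$-subharmonicity to see that both summands are nonnegative and hence vanish, and then use the full-support hypothesis $\mu(\gamma)\neq 0$ for $\gamma\neq e$ to upgrade the vanishing of the integral to pointwise vanishing. Your closing remark on integrability is exactly the bookkeeping the paper disposes of in the discussion preceding Lemma~\ref{lem:mu^k-subharmonicity}.
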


\begin{proof}
 By the computation above and $-\Delta_2\varphi(e)=0$, we get
\begin{equation*}
    0= -\Delta_2 \varphi(e) 
    = -\Delta \varphi(e) + 
    \int_{\Gamma}-\Delta \varphi (\gamma)  d\mu(\gamma). 
\end{equation*}
 On the other hand, since $\varphi$ is $\mu$-subharmonic, we have 
 $-\Delta\varphi(\gamma)\geq 0$. 
 Therefore, by our assumption that $\mu(\gamma)\not= 0$ for any 
 $\gamma \in \Gamma$, we see that $-\Delta \varphi(\gamma)=0$ for any
 $\gamma \in \Gamma$. 
 This completes the proof.
 \end{proof}

 Now take a $\mu$-harmonic map $f$ and let
 $\varphi(\gamma) = d(f(e), f(\gamma))$. 
 Then $\varphi$ is a $\mu$-subharmonic function by
 Proposition~\ref{prop:pullback-is-subharmonic}. 
 Set
\begin{equation*}
  \alpha_{2,n}:= \int_{\Omega} -\Delta_2 \varphi(\gamma_{2n}(\omega))
  \ d\mathbb{P}(\omega) 
  = \int_{\Gamma} -\Delta_2 \varphi(\gamma) d\mu^{2n}(\gamma). 
\end{equation*}
 Then, by the same computation as in
 \S~\ref{sec:rate_of_escape_and_hmap},  we see that
\begin{equation*}
  L^{2(n+1)}(f)= L^{2n}(f) + \alpha_{2,n}
\end{equation*}
 holds. 
 Thus if $l_{\rho}(\Gamma)=0$, then we have
 $\liminf_n\alpha_{2,n}=0$, and by Lebesgue's convergence theorem and 
 $-\Delta_2 \varphi(\gamma)\geq 0$ which follows from
 Lemma~\ref{lem:mu^k-subharmonicity} $(1)$, we get 
\begin{equation}
 \label{eq:0_for_each_k}
 \liminf_n -\Delta_2 \varphi(\gamma_{2n}(\omega)) = 0
  \quad \text{a.e.}
\end{equation}
 Set 
\begin{equation*}
   A = 
   \{ \omega \in \Omega \mid \ 
   \liminf_n -\Delta_2 \varphi(\gamma_{2n}(\omega))=0 \},
\end{equation*}
 then we see that $\mathbb{P}(A)=1$. 
 Take $\omega \in A$.  
 Then, for any $j$, we can find $n_j \in \N$ such that
\begin{equation*}
  -\Delta_2 \varphi(\gamma_{n_j}(\omega)) \leq 1/j. 
\end{equation*}
 By Lemma~\ref{lem:mu^k-subharmonicity} $(2)$, we also have
\begin{equation*}
  -\Delta \varphi(\gamma_{n_j}(\omega)) \leq 1/j. 
\end{equation*}
 Letting $j\to \infty$, we see that
\begin{equation}
 \label{eq:converges_to_harmonic}
   \lim_{j\to \infty} -\Delta_k \varphi(\gamma_{n_j}(\omega))=0
\end{equation}
    for $k=1,2$.  Let us define
\begin{equation*}
   \varphi_{\omega,n_j} (\gamma) 
  = d(f(\gamma_{n_j}(\omega)^{-1}), f(\gamma))
    - d(f(\gamma_{n_j}(\omega)^{-1}), f(e)). 
\end{equation*}
Note that, by the triangle inequality, $\varphi_{\omega,n_j}$ satisfies
$|\varphi_{\omega,n_j}(\gamma)|\leq d(f(e),f(\gamma))$ for each 
$j \in \N$ and $\gamma \in \Gamma$. 
Thus $\{\varphi_{\omega,n_j}\}$ can be viewed as a sequence in the
product space 
$\prod_{\gamma \in \Gamma}[-d(f(e),f(\gamma)),d(f(e),f(\gamma))]$, 
which is sequentially compact. 
Since the topology of this product space agrees with that given
by the pointwise convergence of real valued functions on $\Gamma$,
by taking a subsequence if necessary, we may
assume that $\{\varphi_{\omega,n_j}\}$ converges pointwise to
$\varphi_{\omega}$.
 Since, by fixing $j$, the second term in the right-hand side of the
 definition of $\varphi_{\omega,n_j}$ is independent of 
 $\gamma\in \Gamma$, we have
\begin{equation*}
 \begin{split}
 \Delta_2 \varphi_{\omega,n_j} (\gamma)
 & = d(f(\gamma_{n_j}(\omega)^{-1}), f(\gamma))
 - \int_{\Gamma}
 d(f(\gamma_{n_j}(\omega)^{-1}), f(\gamma \gamma')) d\mu^2(\gamma') \\
 & = d(f(e), f(\gamma_{n_j}(\omega)\gamma)) - 
 \int_{\Gamma}d(f(e), f(\gamma_{n_j}(\omega)\gamma\gamma'))d \mu^2(\gamma') \\
 & = \Delta_2 \varphi(\gamma_{n_j}(\omega)\gamma), 
 \end{split}
\end{equation*}
 in particular, we get
\begin{equation*}
 \Delta_2 \varphi_{\omega, n_j} (e)
 = \Delta_2 \varphi(\gamma_{n_j}(\omega)). 
\end{equation*}
 Since 
\begin{equation*}
 |\varphi_{\omega,n_j}(\gamma)|
 =|d(f(\gamma_j), f(\gamma)) - d(f(\gamma_j), f(e))|
 \leq d(f(e), f(\gamma)), 
\end{equation*}
 where $\gamma_j=\gamma_{n_j}(\omega)^{-1}$, 
 and $\int_{\Gamma} d(f(e),f(\gamma)) d\mu^2(\gamma) < \infty$,
 we see that
\begin{equation*}
 \lim_{j\to \infty} \int_{\Gamma} \varphi_{\omega,n_j}(\gamma)
  d\mu^2(\gamma)
  = \int_{\Gamma} \varphi_{\omega}(\gamma) d\mu^2(\gamma)
\end{equation*}
 by Lebesgue's dominated convergence theorem. 
 Combining with (\ref{eq:converges_to_harmonic}), we get 
\begin{equation*}
 \Delta_2 \varphi_{\omega}(e)
  =\lim_j \Delta_2 \varphi_{\omega,n_j}(e) = 0. 
\end{equation*}
 Then, by Proposition~\ref{prop:k-step_harmonicity}, we see
 that $\Delta \varphi_{\omega}(\gamma)=0$ for any $\gamma \in \Gamma$. 
 Now let us denote $\xi(\omega)=\ulim_j f(\gamma_j)$. 
 If $\xi(\omega) \in Y_{\infty}$, then, since $\varphi_{\omega,n_j}$
 converges pointwise to $\varphi_{\omega}$ while 
 $\ulim_j\varphi_{\omega,n_j}(\gamma)=d(\xi(\omega),f(\gamma))-
 d(\xi(\omega),f(e))$, 
 we see that 
 $\varphi_{\omega}(\gamma)=d(\xi(\omega),f(\gamma))-d(\xi(\omega),f(e))$. 
 On the other hand, if $\xi(\omega)\in \partial Y_{\infty}$, then 
 Lemma~\ref{lem:ultralimit_of_buseman_fct} tells us that 
 $\varphi_{\omega}(\gamma)=b_{\xi(\omega)}(f(\gamma),f(e))$. 
 Thus we get a $\mu$-harmonic function of the form
\begin{equation*}
  \varphi_{\omega}(\gamma)=
 \begin{cases}
   d(f(\gamma),\xi(\omega)) - d(f(e),\xi(\omega)) & \text{if } 
  \xi(\omega) \in Y_{\infty}, \\
   b_{\xi(\omega)}(f(\gamma),f(e)) & \text{if } 
  \xi(\omega) \in \partial Y_{\infty}. 
 \end{cases}
\end{equation*}
What we have shown is the following generalization of
Proposition~\ref{prop:mu-harmonicity}. 

\begin{Proposition}
\label{prop:global-mu-harmonicity}
 Let $Y=(Y,d)$ be a $\cat{0}$ space. 
 Take $o \in Y$ and set $Y_{\infty}=\ulim (Y,o)$. 
 Let $\Gamma$ be a countable group equipped with a symmetric
 probability measure $\mu$ whose support generates $\Gamma$. 
 Suppose that $\Gamma$ acts on $Y$ via a homomorphism 
 $\rho\colon \Gamma \rightarrow \isom{Y}$ and that 
 there exists a $\rho$-equivariant $\mu$-harmonic map
 $f\colon \Gamma \rightarrow Y$. 
 If $l_{\rho}(\Gamma)=0$, then, for almost all $\omega \in \Omega$,
 there exists 
 $\xi(\omega) \in Y_{\infty}\cup \partial Y_{\infty}$ such that a
 function 
 $\varphi_{\omega} \colon \Gamma \rightarrow \R$ defined by 
 \begin{equation*}
  \varphi_{\omega}(\gamma) =
   \begin{cases}
    d(\xi(\omega), f(\gamma))-d(\xi(\omega), f(e)) 
    & \text{if } \xi(\omega) \in    Y_{\infty} \\
    b_{\xi(\omega)}(f(\gamma), f(e)) 
    & \text{if } \xi(\omega) \in \partial Y_{\infty}
   \end{cases}
 \end{equation*}
 is $\mu$-harmonic$;$ the function $\varphi_{\omega}$ satisfies 
 $\Delta \varphi_{\omega}(\gamma)=0$ for any $\gamma\in \Gamma$. 
\end{Proposition}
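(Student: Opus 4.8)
The plan is to package the argument developed in the paragraphs above into a single proof, organized around the $\mu^2$-Laplacian so as to exploit the standing nondegeneracy assumption $\mu(\gamma)\neq 0$ for every $\gamma\in\Gamma\setminus\{e\}$. First I would put $\varphi(\gamma)=d(f(e),f(\gamma))$, which is $\mu$-subharmonic by Proposition~\ref{prop:pullback-is-subharmonic}, and recall the telescoping identity $L^{2(n+1)}(f)=L^{2n}(f)+\alpha_{2,n}$. Feeding the hypothesis $l_{\rho}(\Gamma)=0$ into this relation, exactly as in Lemma~\ref{lem:alpha_n}, yields $\liminf_{n}\alpha_{2,n}=0$; since $-\Delta_2\varphi\geq 0$ by Lemma~\ref{lem:mu^k-subharmonicity} $(1)$, Lebesgue's convergence theorem then gives $\liminf_{n}-\Delta_2\varphi(\gamma_{2n}(\omega))=0$ for $\mathbb{P}$-almost every $\omega$. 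Fixing such an $\omega$ and choosing $n_j$ with $-\Delta_2\varphi(\gamma_{n_j}(\omega))\to 0$, Lemma~\ref{lem:mu^k-subharmonicity} $(2)$ forces $-\Delta\varphi(\gamma_{n_j}(\omega))\to 0$ as well.

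Next I would recenter, introducing $\varphi_{\omega,n_j}(\gamma)=d(f(\gamma_{n_j}(\omega)^{-1}),f(\gamma))-d(f(\gamma_{n_j}(\omega)^{-1}),f(e))$, which satisfies the uniform bound $|\varphi_{\omega,n_j}(\gamma)|\leq d(f(e),f(\gamma))$. This places the sequence inside the sequentially compact product $\prod_{\gamma\in\Gamma}[-d(f(e),f(\gamma)),\,d(f(e),f(\gamma))]$, so after passing to a subsequence it converges pointwise to a function $\varphi_{\omega}$. Using $\rho$-equivariance one computes $\Delta_2\varphi_{\omega,n_j}(e)=\Delta_2\varphi(\gamma_{n_j}(\omega))$, and since the dominating function $\gamma\mapsto d(f(e),f(\gamma))$ is $\mu^2$-integrable, Lebesgue's dominated convergence theorem upgrades the pointwise limit to $\Delta_2\varphi_{\omega}(e)=0$.

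The decisive step, and where I expect the only real subtlety to lie, is to pass from this vanishing of the $\mu^2$-Laplacian at the single point $e$ to full $\mu$-harmonicity of $\varphi_{\omega}$. Here I would first note that $\varphi_{\omega}$ is itself $\mu$-subharmonic: it is the pointwise limit of the $\mu$-subharmonic functions $\varphi_{\omega,n_j}$ under the integrable uniform bound above, so $-\Delta\varphi_{\omega}\geq 0$ everywhere. Then Proposition~\ref{prop:k-step_harmonicity} applies, and the crucial point is that the identity $-\Delta_2\varphi_{\omega}(e)=-\Delta\varphi_{\omega}(e)+\int_{\Gamma}-\Delta\varphi_{\omega}(\gamma)\,d\mu(\gamma)=0$, together with the nonnegativity of each summand and the nondegeneracy $\mu(\gamma)\neq 0$, forces $-\Delta\varphi_{\omega}(\gamma)=0$ for every $\gamma\in\Gamma$. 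Finally I would identify the limit geometrically by setting $\xi(\omega)=\ulim_j f(\gamma_{n_j}(\omega)^{-1})\in Y_{\infty}\cup\partial Y_{\infty}$: when $\xi(\omega)\in Y_{\infty}$ the pointwise limit is visibly $d(\xi(\omega),f(\gamma))-d(\xi(\omega),f(e))$, while when $\xi(\omega)\in\partial Y_{\infty}$ Lemma~\ref{lem:ultralimit_of_buseman_fct} identifies it as the Busemann function $b_{\xi(\omega)}(f(\gamma),f(e))$, which is the desired $\mu$-harmonic function.
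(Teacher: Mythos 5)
Your proposal is correct and follows essentially the same route as the paper: the telescoping identity for $L^{2n}(f)$, the compactness of the product $\prod_{\gamma}[-d(f(e),f(\gamma)),d(f(e),f(\gamma))]$, the identity $\Delta_2\varphi_{\omega,n_j}(e)=\Delta_2\varphi(\gamma_{n_j}(\omega))$, Proposition~\ref{prop:k-step_harmonicity}, and the identification of the limit via $\xi(\omega)=\ulim_j f(\gamma_{n_j}(\omega)^{-1})$ and Lemma~\ref{lem:ultralimit_of_buseman_fct}. Your explicit verification that $\varphi_{\omega}$ is $\mu$-subharmonic (as a dominated pointwise limit of $\mu$-subharmonic functions) is a step the paper leaves implicit but which is indeed needed to invoke Proposition~\ref{prop:k-step_harmonicity}.
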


\subsection{$\cat{0}$ spaces with certain dimension bounds}
\label{sec:finite_dim_properties}

Let us denote the metric ball of radius $r$ centered at $p$ by
$B(p,r)$. 
The {\it radius} $\rad{Y}{B}$ of $B \subset Y$ is defined to be the
infimum of $r$ such that $B\subset B(p,r)$ for some $p\in Y$. 
It is known that, for any bounded subset $B$ of a $\cat{0}$ space $Y$,
there exists a unique point $p_0$ such that 
$B\subset B(p_0, \rad{Y}{B})$ 
(see \cite[p.~179, 2.7 Proposition]{bridson-haefliger}). 
Such a point $p_0$ is called the {\it circumcenter} of $B$. 

The {\it geometric dimension} of a metric space with curvature bounded
from above is defined in an inductive manner in \cite{kleiner} and
studied there in detail.  
For a $\cat{0}$ space $Y$, 
according to \cite[Theorem 1.3]{caprace1}, $Y$
has geometric dimension at most $n$ if and only if any subset $B$ of
finite diameter satisfies 
\begin{equation}
\label{eq:geom_dim}
 \rad{Y}{B} \leq \sqrt{\frac{n}{2(n+1)}} \diam{B},  
\end{equation}
where $\diam{B}$ denotes the diameter of $B$.
Note that the equality is achieved by a regular Euclidean $n$-simplex. 
We say $Y$ is {\it locally finite-dimensional} if, for each $R>0$ and 
$p \in Y$, there exists $n \in \N$ such that $B(p,R) \subset Y$ has
geometric dimension at most $n$.  (Note that a convex subset of a
$\cat{0}$ space is again a $\cat{0}$ space, and that it inherits the
upper bound on the geometric dimension from the ambient
$\cat{0}$space.  Thus the choice of a point $p$ is irrelevant
here.)
A $\cat{0}$ space $Y$ is said to have {\it telescopic dimension} at most
$n$ if every asymptotic cone of $Y$ has geometric dimension at most
$n$. 
For example, a $\cat{0}$ space $Y$ has telescopic 
dimension at most $1$
if and only if $Y$ is hyperbolic in the sense of Gromov
(\cite{caprace1}). 
Again \cite[Theorem 1.3]{caprace1} tells us that $Y$ has telescopic
dimension at most $n$ if and only if, for any $\delta>0$, there exists
$D>0$ such that every bounded subset $B$ with $\diam{B} > D$
satisfies 
\begin{equation}
\label{eq:tele_dim}
 \rad{Y}{B} \leq 
 \left(\delta +\sqrt{\frac{n}{2(n+1)}} \right) \diam{B}. 
\end{equation}

The first part of the following proposition has already been pointed out
in \cite[Lemma A.2]{caprace1-2}; P.-E.~Caprace and A.~Lytchack used the
term {\it ultracompletion} for the ultralmit of a constant sequence of
pointed metric space $(Y,p_0)$ there.  
We include, however,  the proof of the result here for the sake of
completeness.  

\begin{Proposition}
 \label{prop:dimension_of_ulim}
 Let $Y$ be a $\cat{0}$ space and $o \in Y$.  Denote by
 $Y_{\infty}$ an ultralimit $\ulim (Y,o)$ with respect to a
 nonprincipal ultrafilter $\lambda$. 
 
 $(1)$ If $Y$ has telescopic dimension at most $n$, so does
 $Y_{\infty}$. 

 $(2)$ Suppose that, for any $R>0$, there exsits $n \in \N$ such that
 any $B \subset B(o, R)$ has geometric dimensin at most $n$. 
 Then $Y_{\infty}$ has the same property. 
\end{Proposition}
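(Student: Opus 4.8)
The plan is to verify, for both parts, the metric characterizations of dimension recorded in \eqref{eq:geom_dim} and \eqref{eq:tele_dim}, by reducing every bounded subset of $Y_{\infty}$ to its finite subsets, representing those finite subsets by sequences in $Y$, and passing to the ultralimit. First I would isolate a reduction lemma valid in any complete \cat{0} space: for a bounded set $B$ one has $\rad{}{B}=\sup\{\rad{}{F}\mid F\subset B\ \text{finite}\}$. To prove it I would set $r=\sup_F\rad{}{F}$, write $c_F$ for the circumcenter of $F$, and apply the \cat{0} inequality to the midpoint of $c_{F_1}$ and $c_{F_2}$ for $F_1\subset F_2$, obtaining $d(c_{F_1},c_{F_2})^2\le 2(\rad{}{F_2}^2-\rad{}{F_1}^2)$. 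Since $\rad{}{F}$ is monotone in $F$ and bounded above by $\rad{}{B}$, this makes $\{c_F\}$ a Cauchy net whose limit $c$ satisfies $d(c,b)\le r$ for all $b\in B$, giving $\rad{}{B}\le r$.

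Next I would record how finite sets behave under the ultralimit. For a finite $F=\{x^{(1)},\dots,x^{(m)}\}\subset Y_{\infty}$ with $x^{(i)}=\ulim_j x^{(i)}_j$, set $F_j=\{x^{(1)}_j,\dots,x^{(m)}_j\}\subset Y$. Because an ultralimit commutes with a finite maximum, $\diam{F}=\ulim_j\diam{F_j}$; and since the circumcenters $p_{0,j}$ of the $F_j$ stay bounded (their distance to $o$ is controlled by $\diam{F_j}$ and $d(x^{(1)}_j,o)$), the point $p_0=\ulim_j p_{0,j}$ lies in $Y_{\infty}$ and satisfies $d(x^{(i)},p_0)\le\ulim_j\rad{Y}{F_j}$, so that $\rad{Y_{\infty}}{F}\le\ulim_j\rad{Y}{F_j}$.

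For part $(1)$, I would fix $\delta>0$, take the corresponding $D$ from \eqref{eq:tele_dim} for $Y$, and choose $D_{\infty}$ larger than both $D$ and $D/(\delta+\sqrt{\frac{n}{2(n+1)}})$. For a finite $F$ with $\diam{F}>D$ one has $\diam{F_j}>D$ for $\lambda$-almost every $j$, so \eqref{eq:tele_dim} and the previous paragraph give $\rad{Y_{\infty}}{F}\le(\delta+\sqrt{\frac{n}{2(n+1)}})\diam{F}$; finite $F$ with $\diam{F}\le D$ are handled crudely by $\rad{Y_{\infty}}{F}\le\diam{F}\le D\le(\delta+\sqrt{\frac{n}{2(n+1)}})\diam{B}$, using $\diam{B}>D_{\infty}$. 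Taking the supremum over finite $F\subset B$ through the reduction lemma then yields \eqref{eq:tele_dim} for $Y_{\infty}$. For part $(2)$, I would fix $R>0$, apply the hypothesis to $B(o,R+1)$ to obtain $n$, and note that for $B\subset B(o_{\infty},R)$ each $F_j$ lies in $B(o,R+1)$ for $\lambda$-almost every $j$; the circumcenter of $F_j$ then also lies in the closed convex ball $B(o,R+1)$ (apply the distance-nonincreasing projection $\nrprj_{B(o,R+1)}$ and invoke uniqueness of the circumcenter), so $\rad{Y}{F_j}=\rad{B(o,R+1)}{F_j}$ and \eqref{eq:geom_dim} applies inside $B(o,R+1)$; pushing this through $\ulim_j$ and the supremum gives $\rad{Y_{\infty}}{B}\le\sqrt{\frac{n}{2(n+1)}}\diam{B}$, and since the $Y_{\infty}$-circumcenter of $B$ likewise lies in $B(o_{\infty},R)$, this is exactly the bound showing $B(o_{\infty},R)$ has geometric dimension at most $n$.

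The hard part will be the bookkeeping around convexity in part $(2)$: I must make sure the approximating circumcenters genuinely remain inside $B(o,R+1)$ and that the $Y_{\infty}$-circumcenter of $B$ remains inside $B(o_{\infty},R)$, so that the local dimension bound is legitimately applicable. The reduction lemma, which recovers the radius of a possibly infinite $B$ from its finite subsets without degrading the estimate, is the other point requiring care, since the diameter and radius of an infinite subset of $Y_{\infty}$ are not directly ultralimits of the corresponding quantities in $Y$.
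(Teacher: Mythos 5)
Your argument is correct, and the first half of it (representing a finite subset $F\subset Y_{\infty}$ by finite sets $F_j\subset Y$, checking $\diam{F}=\ulim_j\diam{F_j}$, and bounding $\rad{Y_{\infty}}{F}$ by the ultralimit of the circumcenters of the $F_j$) is exactly what the paper does. Where you genuinely diverge is in the passage from finite subsets to arbitrary bounded subsets, which is the step the characterizations (\ref{eq:geom_dim}) and (\ref{eq:tele_dim}) actually require. The paper never proves the radius--diameter inequality for infinite subsets of $Y_{\infty}$ at all: it only has it for finite sets, and then argues by contradiction, invoking Kleiner's structure theorems (\cite[Theorem 7.1]{kleiner} in part $(1)$, \cite[Theorem A]{kleiner} in part $(2)$) to extract from a hypothetical higher-dimensional $Y_{\infty}$ a regular Euclidean $(n+1)$-simplex whose \emph{vertex set} is a finite set violating the finite-set inequality, since such a vertex set realizes the ratio $\sqrt{(n+1)/(2(n+2))}$. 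You instead close the gap with a reduction lemma, $\rad{}{B}=\sup\{\rad{}{F}\mid F\subset B\ \text{finite}\}$, proved via the semi-parallelogram law applied at the midpoint of two circumcenters, which makes $(c_F)$ a Cauchy net converging (by completeness of $Y_{\infty}$, which the paper has already established) to a center witnessing the bound; this lets you verify (\ref{eq:tele_dim}) and (\ref{eq:geom_dim}) for all bounded sets directly and then quote the Caprace--Lytchak characterization once. Your route is more elementary and self-contained -- it needs no input from Kleiner beyond the radius characterization itself -- at the cost of the net argument and the small case split on $\diam{F}\lessgtr D$ in part $(1)$, which you handle correctly; the paper's route trades that lemma for two citations to \cite{kleiner}. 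Your bookkeeping in part $(2)$ (projecting onto the closed convex ball $B(o,R+1)$ to keep the circumcenters inside it, so that the local dimension bound is the one being applied) is sound and is only implicit in the paper's treatment.
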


\begin{proof}
 $(1)$ Assume that $Y$ has telescopic dimension at most $n$. 
 We first show that, for any $\delta>0$, there exists $D>0$ such that any
 finite subset $B \subset Y_{\infty}$ with $\diam{B}>D$ satisfies
 (\ref{eq:tele_dim}). 

 Take any finite subset $B =\{p^{(1)}, \dots, p^{(k)}\}$ in
 $Y_{\infty}$ with $\diam{B}>D$, 
 and fix an expression $p^{(i)}=\ulim_j p^{(i)}_j$ with $p^{(i)}_j\in Y$
 for each $p^{(i)} \in B$. 
 Let $B_j=\{p^{(1)}_j, \dots, p^{(k)}_j\}$ be the set of $j$th elements
 in the expression of each $p^{(i)} \in B$. 
 Since $B_j$ is a finite set, there is $1\leq j(0)< j(1) \leq k$ such
 that $d(p^{(j(0))}_{j}, p^{(j(1))}_{j})=\diam{B_j}$. 
 For $m=0,1$ there exists unique $l_m \in \{1, \dots, k\}$ such that
 $\lambda(\{i \in \N \mid  j(m)=l_m\}) = 1$, 
 since $\lambda$ is an ultrafilter.  
 Then we see that  $\ulim_i p^{(j(m))}_{i}=p^{(l_m)}$ and 
\begin{equation*}
 \ulim \diam{B_j}=\ulim
  d(p^{(j(0))}_i,p^{(j(1))}_i)=d(p^{(l_0)},p^{(l_1)})
  \leq \diam{B}.  
\end{equation*} 
 On the other hand, since, for any 
 $p=\ulim_j p_j,\ p'= \ulim_j p_j' \in B$,
 we have 
\begin{equation*}
 d(p,p') = \ulim_j d(p_j, p_j') \leq \ulim_j \diam{B_j}, 
\end{equation*}
 which implies $\diam{B} \leq \ulim_j \diam{B_j}$.  
 Hence we get $\diam{B}= \ulim_j \diam{B_j}$. 
 In particular, we may assume that $\diam{B_j}> D$ for $\lambda$-almost
 all $j \in \N$. Let $p^{(0)}_j$ be a circumcenter of $B_j$.  
 Take $R>0$ so that $B(o, R)$ contains $B$.  Then we may assume that
 $B_j \subset B(o, R+1)$, and that $p^{(0)}_j$'s lie in $B(o,R+1)$. 
 Thus we obtain a point
 $p^{(0)}= \ulim_j p^{(0)}_j \in Y_{\infty}$. 
 Since, for any $p_j \in B_j$, we have 
 $d(p^{(0)}_j, p_j) \leq \rad{Y}{B_j}$, we see that 
 $d(p^{(0)},p) \leq \ulim_j \rad{Y}{B_j}$ for any 
 $p \in B$, which implies that
 $\rad{Y_{\infty}}{B} \leq  \ulim_j \rad{Y}{B_j}$. 
 Since each $B_j$ is a subset of $Y$ with $\diam{B_j}>D$, we
 have
 \begin{equation*}
  \ulim \rad{Y}{B_j}
   \leq \left( \delta + \sqrt{\frac{n}{2(n+1)}}\right)
  \ulim \diam{B_j}
 \end{equation*}
 by (\ref{eq:tele_dim}), hence we get
 \begin{equation}
 \label{eq:finite_set}
 \rad{Y_{\infty}}{B} 
  \leq \left( \delta + \sqrt{\frac{n}{2(n+1)}}\right) \diam{B}. 
\end{equation}

 Now suppose that $Y_{\infty}$ has telescopic
 dimension greater than $n$.  Then, by \cite[Theorem 7.1]{kleiner},
 there exsits a sequence of pointed subsets $\{(Y_j, p_j)\}$ of
 $Y_{\infty}$ such that $\ulim_j (Y_j,p_j)$ contains $\R^{n+1}$.  
 Therefore we can find 
 $S=\{q^{(0)}, \dots, q^{(n+1)}\}\subset \ulim_j (Y_j,p_j)$ that forms a
 regular Euclidean $(n+1)$-simplex with arbitrarily large diameter. 
 Take $D>0$ for $\delta$  satisfying
\begin{equation}
\label{eq:delta}
 0< \delta < \sqrt{\frac{n+1}{2(n+2)}} - \sqrt{\frac{n}{2(n+1)}}, 
\end{equation}
 and choose $S$ so that $\diam{S}>D$. 
 Expressing $q^{(i)}=\ulim_j q^{(i)}_j$ 
 with $q^{(i)}_j \in Y_j \subset Y_{\infty}$, and setting 
 $S_j=\{q^{(0)}_j, \dots, q^{(n+1)}_j\}$, we see that 
\begin{equation*}
 \rad{\ulim_j(Y_j,p_j)}{S}\leq \ulim \rad{Y_j}{S_j},
  \quad
 \diam{S} = \ulim_j \diam{S_j}, 
\end{equation*}
 and we may assume $\diam{S_j}>D$ as before.  
 Together with (\ref{eq:finite_set}) and (\ref{eq:delta}), we get 
\begin{equation*}
 \rad{\ulim_j (Y_j,p_j)}{S} 
  \leq \left( \delta + \sqrt{\frac{n}{2(n+1)}}\right) \diam{S}
  < \sqrt{\frac{n+1}{2(n+2)}} \diam{S}.   
\end{equation*}
 This contradicts our assumption that $S$ is a regular Euclidean
 $(n+1)$-simplex. 
 Therefore we conclude that $Y_{\infty}$ has telescopic
 dimension at most $n$. 

\medskip

 $(2)$ Now assume that $Y$ is locally finite-dimensional. 
 Fix $R>0$ and take any finite subset 
 $B=\{p^{(1)}, \dots, p^{(k)}\} \subset Y_{\infty}$ so
 that $B \subset B(o,R)$.  
 Fix an expression $p^{(i)}=\ulim_j p^{(i)}_j$ with $p^{(i)}_j \in Y$,  
 and define $B_j$ as before. 
 We may assume $p^{(i)}_j \in B(o, R+1) \subset Y$, and hence
 there exists $n \in \N$ such that $B_j$ satisfies (\ref{eq:geom_dim})
 with $n$. Thus $B$ also satisfies (\ref{eq:geom_dim}) with the same
 $n$ as we have seen above. 

 Suppose that $B(o,R)\subset Y_{\infty}$ has geometric dimension greater
 than $n$. 
 Then, by \cite[Theorem A]{kleiner}, there are sequences 
 $r_j \to \infty$, $S_j \subset B(o,R)$, and $p \in B(o,R)$ such that
 $d(S_j,p)\to 0$ and $r_j S_j$ converges to the unit ball in $\R^{n+1}$
 in the Gromov-Hausdorff topology.  
 Then we can find a sequence of $n+2$ points in $B(o,R)$ which converges
 to the vertices of a regular Euclidean $(n+1)$-simplex by scaling them
 with $r_j$'s.  
 Then this contradicts the fact that any finite set of
 $B(o,R)$ satisfies (\ref{eq:geom_dim}). Thus $B(o,R)$ has
 geometric dimension at most $n$. 
\end{proof}

%
%
 If $Y$ is proper and $\rho(\Gamma)$ does not fix a point in 
 $\partial Y$, then 
 $\rho(\Gamma)$ is reductive in the sense of Jost as we have seen 
 in the end of \S~\ref{sec:harmonic_maps},
 and hence $\{f_j(e)\}$ is bounded for any minimizing sequence $\{f_j\}$.  
 In particular, we can find a harmonic map, by passing to a subsequence,
 as a limit map of $\{f_j\}$ (see also Corollary~\ref{cor:existence}). 
 This is not true in general for nonproper $\cat{0}$ space; we cannot
 conclude that $\rho(\Gamma)$ is 
 reductive in the sense of Jost,  
 assuming that $\rho(\Gamma)$ does not fix a point in $\partial Y$. 
 However we can show the following, which is related to 
 \cite[Lemma A.7]{caprace1-2}.

\begin{Proposition}
 \label{thm:existence_finite_teledim}
 Let $\Gamma$ be a countable group equipped with a symmetric
 and nondegenerate probability measure $\mu$.
 Let $Y$ be a $\cat{0}$ space with finite telescopic dimension
 and $\rho \colon \Gamma \rightarrow \isom{Y}$ a homomorphism. 
 Assume that the action given by $\rho$ has finite second
 moment with respect to $\mu$.
 If $\rho(\Gamma)$ does not fix a point in $\partial Y$, then there
 exists a $\rho$-equivariant $\mu$-harmonic map. 
\end{Proposition}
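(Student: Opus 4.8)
The plan is to realize the $\mu$-harmonic map as a minimizer of the energy functional $\mathcal{E}\colon p\mapsto \ene{\mu}(f_p)$ on $Y$ and to show that, under the hypothesis, this minimizer is attained. Recall from \S\ref{sec:harmonic_maps} that $\mathcal{E}$ is a convex function on $Y$, finite because the action has finite second moment; set $E_0=\inf_{p\in Y}\mathcal{E}(p)$. If $E_0$ is attained at some $p$, then $f_p$ is the desired $\mu$-harmonic map, so I would argue by contradiction and assume it is not attained. First I would dispose of the possibility of a bounded minimizing sequence. Suppose $\{p_j\}\subset Y$ satisfies $\mathcal{E}(p_j)\to E_0$ and is bounded. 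Passing to the ultralimit $Y_\infty=\ulim (Y,o)$ and setting $p=\ulim_j p_j\in Y_\infty$, Proposition~\ref{prop:energy_limit} gives $\ene{\mu}(f_p)=\ulim_j \mathcal{E}(p_j)=E_0$, so $p$ minimizes the energy of the extended action on $Y_\infty$. Since $Y$ is a closed convex subset of $Y_\infty$, the nearest point projection $\nrprj_Y$ is $\rho$-equivariant and distance nonincreasing, whence $\ene{\mu}(f_{\nrprj_Y(p)})\le E_0$; this forces equality and attains $E_0$ in $Y$, a contradiction. Therefore every minimizing sequence is unbounded, and after passing to a subsequence I may take a minimizing sequence $\{p_j\}$ with $d(o,p_j)\to\infty$.

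Next I would extract a boundary fixed point in the ultracompletion. Because $\Gamma$ is countable, $\mu$ is a discrete measure, so for each $s\in\supp\mu$ one has $\tfrac12\mu(s)\,d(p_j,\rho(s)p_j)^2\le \mathcal{E}(p_j)\le E_0+1$ for large $j$; thus each generator displacement stays bounded, and by the triangle inequality along a word $\gamma=s_1\cdots s_k$ the displacement $d(p_j,\rho(\gamma)p_j)$ is bounded independently of $j$ for every $\gamma\in\Gamma$. By Proposition~\ref{prop:dimension_of_ulim}, $Y_\infty$ again has finite telescopic dimension, and since $d(o,p_j)\to\infty$ the sequence determines $\xi=\ulim_j p_j\in\partial Y_\infty$. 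The uniform bound on displacements means that, for each $\gamma$, the rays defining $\xi$ and $\rho(\gamma)\xi$ remain within bounded distance, hence are asymptotic and $\rho(\gamma)\xi=\xi$; that is, $\xi$ is a fixed point of $\rho(\Gamma)$ in $\partial Y_\infty$.

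The hard part will be descending from a fixed point in $\partial Y_\infty$ to one in $\partial Y$, and this is where finite telescopic dimension is essential. The plan is to transfer the problem back into $Y$ by means of the Busemann function of $\xi$. By Lemma~\ref{lem:ultralimit_of_buseman_fct} and the additivity of Busemann functions, the restriction $b:=b_{\xi}(\cdot,o)|_Y$ is a convex function on $Y$ with $b(\rho(\gamma)z)=b(z)+c(\gamma)$, where $c(\gamma)=b_{\xi}(\rho(\gamma)o,o)$ is a homomorphism $\Gamma\to\R$; consequently each $\rho(\gamma)$ carries the horoball $\{z\in Y:b(z)\le r\}$ to $\{z\in Y:b(z)\le r+c(\gamma)\}$, so $\rho(\Gamma)$ permutes the family of these horoballs among themselves. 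Since $b(p_j)\to-\infty$ as $p_j\to\xi$, this is a filtering family of nonempty, unbounded, closed convex subsets of $Y$, the unboundedness being precisely the failure of $E_0$ to be attained.

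I would then invoke the structure theory of filtering families of convex subsets in $\cat{0}$ spaces of finite telescopic dimension (cf.\ \cite{caprace1} and \cite[Lemma~A.7]{caprace1-2}) to attach to this family a canonical $\rho(\Gamma)$-invariant subset of $\partial Y$ of radius less than $\pi/2$ in the angular metric; since $(\partial Y,d_{\angle})$ is $\cat{1}$, such a subset has a circumcenter, giving a single point $\eta\in\partial Y$. Because the family is permuted by $\rho(\Gamma)$ and $\eta$ is determined by the family independently of the labeling of its members, $\eta$ is fixed by $\rho(\Gamma)$, contradicting the hypothesis that $\rho(\Gamma)$ fixes no point of $\partial Y$. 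The crux of the whole argument is this last step: manufacturing from an unbounded filtering family of convex sets a single $\rho$-invariant boundary point of $Y$. This fails for a general nonproper $Y$, since the limiting directions of $\{p_j\}$ may escape into $\partial Y_\infty$ rather than $\partial Y$, and it is exactly the dimension bound that restores the compactness needed to keep the canonical point inside $\partial Y$.
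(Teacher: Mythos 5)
Your overall strategy (minimize the energy $p\mapsto\ene{\mu}(f_p)$, and when the infimum is not attained extract a canonical $\rho(\Gamma)$-fixed point in $\partial Y$ from a filtering family of convex sets, contradicting the hypothesis) is the same as the paper's, and your treatment of the bounded case and of the uniform displacement bounds along a minimizing sequence is fine. But the descent step contains a genuine gap: you pass to the fixed point $\xi\in\partial Y_{\infty}$ and then work with the horoballs $\{z\in Y: b_{\xi}(z,o)\le r\}$, asserting that they are nonempty because ``$b(p_j)\to-\infty$ as $p_j\to\xi$.'' This is unjustified and false in general: for $\xi=\ulim_j p_j\in\partial Y_{\infty}$ one only has $b_{\xi}(p_j,o)=\uulim[]_k\bigl(d(p_k,p_j)-d(p_k,o)\bigr)$, and this need not tend to $-\infty$. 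Exactly the phenomenon of Example~\ref{example:larger_ultralmit} defeats you: if the $p_j$ escape along branches that pairwise separate near the basepoint, the limiting ray to $\xi$ lives only in $Y_{\infty}$, and $b_{\xi}(\cdot,o)$ restricted to $Y$ is bounded below (in that example $b_{\xi}(z,o)=d(o,z)\ge 0$), so every horoball of $\xi$ meets $Y$ in the empty set and your filtering family collapses. Nothing in the minimizing property of $\{p_j\}$ rules this out, and you give no argument for it.

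The paper sidesteps this entirely by choosing a filtering family that is guaranteed to be nonempty inside $Y$: the sublevel sets $E_j=\{p\in Y\mid \ene{\mu}(f_p)\le c_j\}$ of the energy itself, with $c_j\downarrow\inf\ene{\mu}$. These are closed, convex, nonempty by construction, and if $\bigcap_j E_j=\emptyset$ then \cite[Lemma 5.5]{caprace1} (finite telescopic dimension) gives that $\bigcap_j\partial E_j\subset\partial Y$ is nonempty of intrinsic radius at most $\pi/2$, whence \cite[Proposition 1.4]{balser-lytchak} produces a unique circumcenter $\xi_0\in\partial Y$; the displacement bound $d(p,\rho(s)p)\le\sqrt{2c_j/\mu(s)}$ for $p\in E_j$ shows $\rho(\gamma)E_j$ stays at bounded Hausdorff distance from $E_j$, so $\bigcap_j\partial E_j$ is $\rho(\Gamma)$-invariant and $\xi_0$ is fixed. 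To repair your argument you would either need to prove that $b_{\xi}$ is unbounded below on $Y$ (which I do not believe you can in this generality), or replace your family of horoballs by the energy sublevel sets as above.
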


\begin{proof}
 Denote by $f_p \colon \Gamma \rightarrow Y$ a $\rho$-equivariant map
 with $f(e)=p$.
 First note that, since $p\mapsto \ene{\mu}(f_p)$ is a continuous convex
 function on $Y$ as we have seen in \S~\ref{sec:harmonic_maps}, for any
 positive real number $c$, the sublevel set 
 \begin{equation*}
  E_c:= \{p \in Y \mid \ene{\mu}(f_p) \leq c\}
 \end{equation*}
 is a (possibly empty) closed convex subset of $Y$. 
 Take a decreasing sequence $\{c_j\}$ of real numbers so that 
 $\lim_j c_j = \inf_{p \in Y} \ene{\mu}(f_p)$. 
 Then we obtain a descending sequence of closed convex subset
 $\{E_j=E_{c_j}\}_{j \in \N}$. 
 If $\bigcap_j E_j\not= \emptyset$, then, for any  
 $q \in \bigcap_j E_j$, 
 $\ene{\mu}(f_q)=\inf_{p \in Y} \ene{\mu}(f_p)$, and hece $f_q$ is
 $\mu$-harmonic. 

 Suppose that $\bigcap_j E_j=\emptyset$. 
 Then, since $Y$ has finite telescopic dimension, 
 \cite[Lemma 5.5]{caprace1} tells us that 
 $\bigcap_j \partial E_j$ is a nonempty set whose intrinsic 
 radius is at most $\pi/2$.  
 Since $Y$ has finite telescopic dimension, $\partial Y$ is finite
 dimensional $\cat{1}$ space, and hence by 
 \cite[Proposition 1.4]{balser-lytchak}, the set of intrinsic
 circumcenters of 
 $\bigcap_j \partial E_j\subset \partial Y$ has a unique
 circumcenter $\xi_0$. 
 We will show that $\bigcap_j \partial E_j$ is $\rho(\Gamma)$-invariant
 in what follows. 
 Then the unique circumcenter $\xi_0$ of 
 $\bigcap_j \partial E_j$ must be fixed by $\rho(\Gamma)$. 
 And this completes the proof of the proposition. 

 Take any $\gamma \in \Gamma$ and fix it for a while.  
 Then by our assumption on $\mu$, there exists 
 $s_1, \dots, s_k \in \supp \mu$ such that $\gamma = s_1\dots s_k$. 
 Fix $c_j$,  and take a point $p \in E_j$. 
 Then, since $\ene{\mu}(f_p)\leq c_j$, we see that 
 \begin{equation*}
  d(p, \rho(s_i)p) \leq \sqrt{\frac{2\ene{\mu}(f_p)}{\mu(s_i)}}
  \leq \sqrt{\frac{2c_j}{\min_i \mu(s_i)}} =: C
 \end{equation*}
 for any $p \in E_j$. 
 Therefore we obtain
 \begin{equation*}
  d(p,\rho(\gamma)p) \leq d(p, \rho(s_1)p) + \dots + d(p,\rho(s_k)p)
   \leq Ck, 
 \end{equation*}
 and hence we have
 $d(\rho(\gamma)E_j, E_j) \leq Ck$. 
 This shows that 
 $\rho(\gamma)\partial E_j =\partial\rho(\gamma)E_j\subset\partial E_j$,
 and hence 
\begin{equation*}
 \rho(\gamma)\bigcap_j \partial E_j = \bigcap_j \rho(\gamma)\partial E_j 
  \subset \bigcap_j \partial E_j. 
\end{equation*}
 In the same way, we have 
 $\rho(\gamma^{-1})\bigcap_j \partial E_j \subset \bigcap_j\partial E_j$ 
 and applying $\rho(\gamma)$ to both sides of this inclusion gives us 
 $\bigcap_j \partial E_j \subset \rho(\gamma)\bigcap_j\partial E_j$. 
 Thus we obtain 
 $\rho(\gamma) \bigcap_j \partial E_j = \bigcap_j \partial E_j$. 
 Since this is true for any $\gamma \in \Gamma$, we see that 
 $\bigcap_j \partial E_j$ is left invariant under the action of
 $\rho(\Gamma)$. 
 \end{proof}

\subsection{Proof of Theorem \ref{thm:main-1}}
\label{sec:when-y-finite-dim}

 This subsection is devoted to the proof of Theorem~\ref{thm:main-1}.

 Assume that 
 $Y$ has finite telescopic dimension and that
 $\rho \colon \Gamma \rightarrow \isom{Y}$
 is a homomorphism with $l_{\rho}(\Gamma)=0$. 
 We may also assume that $\supp \mu \supset \Gamma \setminus \{e\}$ as
 we have explained in the 
 beginning of
 \S~\ref{sec:convexity_of_distance_and_Busemann_fct}. 
 Since we are assuming that $\rho(\Gamma)$ fixes no point in 
 $\partial Y$, by Proposition~\ref{thm:existence_finite_teledim}, we have a
 $\rho$-equivariant $\mu$-harmonic map $f\colon \Gamma\rightarrow Y$. 
  Then, by Proposition~\ref{prop:global-mu-harmonicity}, for almost
 every $\omega \in \Omega$,  there exists a subsequence $\{n_j\}$
 such that  $\xi(\omega)=\ulim_j f(\gamma_{n_j}(\omega)^{-1})$ gives us
 a $\mu$-harmonic function of the form
\begin{equation*}
  \varphi_{\omega}(\gamma)=
 \begin{cases}
   d(f(\gamma),\xi(\omega)) - d(f(e),\xi(\omega)) & \text{if } 
  \xi(\omega) \in Y_{\infty}, \\
   b_{\xi(\omega)}(f(\gamma),f(e)) & \text{if } 
  \xi(\omega) \in \partial Y_{\infty}. 
 \end{cases}
\end{equation*}
 Let us denote $\gamma_{n_j}(\omega)^{-1}$ by $\gamma_j$. 
 If $d(f(e),f(\gamma_j))$ is bounded, then we have 
 $\xi(\omega)= \ulim f(\gamma_j) \in Y_{\infty}$. 
 While if $d(f(e),f(\gamma_j))\to \infty$, then we have 
 $\ulim f(\gamma_j) = \xi (\omega) \in \partial Y_{\infty}$. 

%
%
 Assume first that  $\xi(\omega) \in Y_{\infty}$. 
 Then, by Lemma~\ref{lem:core} $(1)$,
 we see that extended action of $\rho(\Gamma)$ on $Y_{\infty}$ has
 a fixed point.
 Hence every $\rho(\Gamma)$-orbit in $Y_{\infty}$ (and hence in $Y$) is
 bounded.  
 Take any bounded orbit in $Y$. 
 Then the circumcenter of this orbit must be fixed by $\rho(\Gamma)$.
 Thus $\rho(\Gamma)$ fixes a point in $Y$. 

%
%
 Now assume that $\xi(\omega) \in \partial Y_{\infty}$. 
 Under our assuption that $Y$ 
 is of finite telescopic dimension, we can
 show that sequences $\{f(\gamma_j)\}$, $\{f(\gamma_j^{-1})\}$ and
 $\{\rho(\gamma_j^{-1})\xi(\omega)\}$ actually converges in the cone
 topology by passing to subsequences.  
 Indeed, we will show the following theorem, which claims a certain
 compactness property of $\cat{0}$ spaces of finite telescopic dimension
 or of locally finite dimension. 

%
%
\begin{Theorem}
 \label{thm:compactness}
 Let $Y$ be a $\cat{0}$ space either of finite telescopic dimension
 or locally finite-dimensional, and $\Gamma$ a countable
 group with a symmetric probability measure $\mu$ 
 with $\supp\mu\supset \Gamma \setminus \{e\}$. 
 Let $\rho\colon \Gamma \rightarrow \isom{Y}$ be a homomorphism, and
 suppose that there exists a $\rho$-equivariant $\mu$-harmonic map 
 $f\colon \Gamma \rightarrow Y$. 
 Let $\{\gamma_j\}\subset \Gamma$ satisfy
 $d(f(e),f(\gamma_j))\to \infty$, and $\xi=\ulim_j f(\gamma_j)$. 
 If a function $\gamma \mapsto b_{\xi}(f(\gamma),f(e))$ is
 $\mu$-harmonic,  then $\xi \in \partial Y$, and there exists a
 subsequence $\{\gamma_{j_k}\}$ of $\{\gamma_j\}$ such that
 $\{f(\gamma_{j_k})\}$ converges to $\xi$ in the cone topology. 
\end{Theorem}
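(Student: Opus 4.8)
The plan is to carry out the rigidity argument of Lemma~\ref{lem:core} inside the ultralimit $Y_{\infty}=\ulim(Y,o)$ and to use the dimension hypothesis only to transport the resulting ideal structure back from $Y_{\infty}$ to $Y$. First I would note that, since $Y$ is a closed convex subset of $Y_{\infty}$ and the nearest point projection onto it is $\rho$-equivariant and distance nonincreasing, the given map $f$ is still $\mu$-harmonic when viewed as a map into $Y_{\infty}$; moreover $\xi=\ulim_j f(\gamma_j)$ is by construction the endpoint of the ray $\ray{f(e),\xi}=\ulim_j [f(e),f(\gamma_j)]$, whose point at parameter $t$ I denote by $r(t)$. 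Since $\gamma\mapsto b_{\xi}(f(\gamma),f(e))$ is $\mu$-harmonic, Lemma~\ref{lem:core}~$(2)$ applied in $Y_{\infty}$ shows that for every $\gamma\in\Gamma\setminus\{e\}$ the convex hull of $\ray{f(e),\xi}\cup[f(e),f(\gamma)]\cup\ray{f(\gamma),\xi}$ is an isometrically embedded flat half-infinite parallelogram $P_{\gamma}$. Writing $c_j\colon[0,d_j]\to Y$ for the geodesic $[f(e),f(\gamma_j)]$ and $\alpha_j=\angle_{f(e)}(\xi,f(\gamma_j))$, the flatness of $P_{\gamma_j}$ yields the exact Euclidean identity $d(c_j(t),r(t))=2t\sin(\alpha_j/2)$ for $0\le t\le d_j$.

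The theorem then reduces to producing a subsequence along which $\alpha_j\to 0$. Indeed, suppose $\inf_j\alpha_j=0$ and choose $\{\gamma_{j_k}\}$ with $\alpha_{j_k}\to 0$. The identity above gives $\sup_{0\le t\le T}d(c_{j_k}(t),r(t))\le 2T\sin(\alpha_{j_k}/2)\to 0$ for every $T$, so $r(t)=\lim_k c_{j_k}(t)$ for each $t$. Since $c_{j_k}(t)\in Y$ and $Y$ is closed in $Y_{\infty}$, the whole ray $\ray{f(e),\xi}$ lies in $Y$; hence $\xi\in\partial Y$, and the uniform convergence $c_{j_k}\to r$ on bounded intervals is precisely convergence of $\{f(\gamma_{j_k})\}$ to $\xi$ in the cone topology. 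This settles the ``radial'' case, and it is here that the freedom to pass to a subsequence is used.

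The main obstacle is to exclude the remaining case $\inf_j\alpha_j=\alpha_0>0$, and this is exactly where finite-dimensionality is indispensable; the phenomenon to be ruled out is modelled by an infinite-dimensional Hilbert space, in which the points $f(\gamma_j)$ may escape along pairwise orthogonal directions with $\ulim_j f(\gamma_j)=\xi$ while $\alpha_j=\pi/2$ for every $j$ (so no subsequence converges in the cone topology of $Y$). Assuming $\alpha_j\ge\alpha_0$, the flatness of the parallelograms $P_{\gamma_j}$ lets me compare each base segment with the common ray $\ray{f(e),\xi}$ and extract a subsequence whose initial directions $\tcprj_{f(e)}(f(\gamma_j))$ are uniformly separated in $S_{f(e)}Y_{\infty}$. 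The plan is then to combine these separated directions with the simultaneous equality case of Proposition~\ref{prop:convexity_of_horosphere} to produce, at a suitable scale, finite subsets $\{r(T),c_1(T),\dots,c_N(T)\}$ of $Y_{\infty}$ that approximate the vertices of a regular Euclidean $N$-simplex. By Proposition~\ref{prop:dimension_of_ulim} the space $Y_{\infty}$ inherits the finite telescopic (resp.\ local geometric) dimension of $Y$, so for large $N$ such subsets violate the radius-diameter characterization \eqref{eq:tele_dim} (resp.\ \eqref{eq:geom_dim}); this contradiction rules out the case $\alpha_0>0$, and the radial case then applies. Producing this near-regular-simplex structure from harmonicity --- equivalently, showing that separated escape directions force unbounded-dimensional flats --- is the technical heart of the matter that the full proof must supply. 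The locally finite-dimensional case runs along the same lines, the relevant points remaining in a fixed ball so that \eqref{eq:geom_dim} applies.
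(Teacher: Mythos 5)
Your outline matches the paper's strategy at the top level: reduce to a dichotomy on the angles $\alpha_j=\angle_{f(e)}(f(\gamma_j),\xi)$, note that uniform flatness of the half-infinite parallelograms from Lemma~\ref{lem:core}~$(2)$ gives $d(c_j(t),r(t))=2t\sin(\alpha_j/2)$ and hence cone-topology convergence (and $\xi\in\partial Y$, since $Y$ is closed in $Y_{\infty}$) whenever $\alpha_j\to 0$ along a subsequence, and rule out the remaining case by producing large near-regular Euclidean simplices that violate the radius--diameter characterizations \eqref{eq:geom_dim} and \eqref{eq:tele_dim}, which $Y_{\infty}$ inherits by Proposition~\ref{prop:dimension_of_ulim}. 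The ``radial'' half of your argument is essentially the paper's \S~\ref{sec:the_proof}.

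However, there is a genuine gap, and you have named it yourself: the construction of the simplex is not supplied, and the mechanism you hint at is not the one that works. Extracting ``a subsequence whose initial directions $\tcprj_{f(e)}(f(\gamma_j))$ are uniformly separated'' is both unjustified (nothing prevents all the $x_j$ from escaping in nearly the same direction while keeping $\alpha_j\geq\alpha_0$; the hypothesis $\xi=\ulim_j f(\gamma_j)$ constrains the $\lambda$-limit, not individual terms) and, more importantly, insufficient: uniformly separated, or even pairwise exactly equidistant, points do not violate \eqref{eq:geom_dim} --- a tripod in a tree has three pairwise equidistant points whose circumradius is only $d/2=\sqrt{1/4}\,d$, well below the regular-triangle value $\sqrt{1/3}\,d$. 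One needs the points to span an actual isometrically embedded flat regular simplex (or at least to force the Euclidean circumradius), and this is where the whole of \S~\ref{sec:key-lemma-proof} goes. The paper's device is to \emph{iterate} the ultralimit construction, obtaining ideal points $\xxi{1},\xxi{2},\dots,\xxi{m}$ in $Y^{(1)}\subset Y^{(2)}\subset\cdots$ that are pairwise at Tits angle exactly $\zure$ (Lemma~\ref{lem:if_angle_is_positive}); it then upgrades the harmonicity of $b_{\xxi{i}}\circ f$ to harmonicity of $b_{\eta}\circ f$ for every $\eta$ on Tits geodesics between the $\xxi{i}$, builds the Euclidean cone $\cone[x]{m}$, proves its nondegeneracy (Proposition~\ref{prop:less_than_pi}), constructs the permuting isometries (Proposition~\ref{prop:phi_is_a_isometry}), and only then obtains the flat regular $(m-1)$-simplex $\sigma^{(m-1)}$ of Proposition~\ref{prop:simplex}. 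None of this is recoverable from a single ultralimit together with the equality case of Proposition~\ref{prop:convexity_of_horosphere}, so the proposal as written does not prove the theorem.
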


\begin{Remark}
 The theorem above can be regarded as a sort of compactness theorem. 
 As was pointed out in \cite{bader-duchesne-lecureux}, a $\cat{0}$ space
 with finite telescopic dimension has a certain compactness property; 
 if one considers the weakest topology that makes any convex closed
 subset (w.r.t.~usual topology) is closed, then, for a $\cat{0}$ space
 $Y$ with finite telescopic dimension, $Y \cup \partial Y$ becomes
 compact (see also \cite{monod}). 
 Although we do not use this compactificaiton in the present
 paper, there might be some relation between our compactness theorem
 above. 
\end{Remark}

%
%
Now we proceed to the proof of Theorem~\ref{thm:main-1}, assuming
Theorem~\ref{thm:compactness}.  

\medskip

 By Lemma~\ref{lem:ultralimit_of_buseman_fct} and
 Proposition~\ref{prop:global-mu-harmonicity}, we see that a function defined by 
 $\varphi_{\omega}(\gamma)=b_{\xi(\omega)}(f(\gamma),f(e))$
 is $\mu$-harmonic on whole $\Gamma$. 
 Take $\gamma \in \Gamma$,  and let 
 $\rho_{\gamma}\colon \gamma'\mapsto \rho(\gamma\gamma'\gamma^{-1})$. 
 Then, for any $\rho$-equivariant map $f'$, a new map $f_{\gamma}'$
 defined by $f_{\gamma}'\colon \gamma'\mapsto f(\gamma \gamma')$
 satisfies
\begin{equation*}
 \rho_{\gamma}(\gamma')f_{\gamma}'(\gamma'')
 = \rho(\gamma \gamma' \gamma^{-1})f'(\gamma \gamma'')
 = f'(\gamma \gamma' \gamma'') = f_{\gamma}'(\gamma' \gamma''), 
\end{equation*}
 that is, $f_{\gamma}'$ is $\rho_{\gamma}$-equivariant.  Also we
 have
\begin{equation*}
\begin{split}
  E_{\mu}(f_{\gamma}')
  & =\frac{1}{2}\int_{\Gamma} d(f_{\gamma}'(e),f_{\gamma}'(\gamma'))^2
  d\mu(\gamma')
  = \frac{1}{2}\int_{\Gamma} d(f'(\gamma e),f'(\gamma\gamma'))^2
  d\mu(\gamma') \\
  & = \frac{1}{2 }\int_{\Gamma}d(f'(e),f'(\gamma'))^2 d\mu(\gamma')
  = E_{\mu}(f'). 
\end{split}
\end{equation*}
 Therefore we see that $f_{\gamma}$ defined from a $\rho$-equivariant
 $\mu$-harmonic map $f$ becomes a $\rho_{\gamma}$-equivariant
 $\mu$-harmonic map. 
 Note also that 
\begin{equation*}
 \varphi\colon \gamma' \mapsto b_{\xi(\omega)}(f_{\gamma}(\gamma'),f_{\gamma}(e))
  =b_{\xi(\omega)}(f(\gamma \gamma'), f(e)) + 
   b_{\xi(\omega)}(f(e),f(\gamma))
\end{equation*}
 is $\mu$-harmonic, since
 $\varphi_{\omega}\colon \gamma \mapsto b_{\xi(\omega)}(f(\gamma),f(e))$
 is $\mu$-harmonic on whole $\Gamma$ and
 $b_{\xi(\omega)}(f(e),f(\gamma))$ is a constant independent of
 $\gamma'$. 
 Then we see that, 
 by applying Lemma~\ref{lem:core} $(2)$ to
 $f_{\gamma}$ and  $\varphi$, the convex hull of 
 $\ray{f(\gamma),\xi(\omega)}\cup[f(\gamma),f(\gamma \gamma')] 
 \cup \ray{f(\gamma \gamma'),\xi(\omega)}$
 is a half-infinite parallelogram for any 
 $\gamma, \gamma' \in \Gamma$. 

 Now we show that $\xi(\omega)=\ulim_j f(\gamma_j)$ and 
 $\xi^-=\ulim_j f(\gamma_j^{-1})$
 satisfy the assumption of Theorem~\ref{thm:compactness}, and also that 
 there exists a sequence $\{f(\tilde \gamma_j)\}$ such that 
 $\ulim_j f(\tilde \gamma_j)=\xi^+$ and $\{f(\tilde \gamma_j)\}$
 satisfies the assumption of Theorem~\ref{thm:compactness}. 
 Once we accomplish this, the proof completes as follows:  
 We see that each sequence converges to its limit in the cone topology.
 Hence we can apply Lemma~\ref{lem:core} (3), (4) and 
 obtain a splitting $\ch{f(\Gamma)}=H\times Z$.
 Furthermore we have a homomorphism 
 $\rho_2\colon \Gamma \rightarrow \isom{Z}$ with $l_{\rho_2}(\Gamma)=0$
 and $\rho_2$-equivariant $\mu$-harmonic map 
 $f_2 \colon \Gamma \rightarrow Y$. 
 Since $\ch{f(\Gamma)}$ has finite telescopic dimension, so does $Z$.
 Now we can apply the argument we have applied to $Y$ to $Z$.
 Then we see that either $Z$ splits or $\rho_2(\Gamma)$ in
 Lemma~\ref{lem:core} (4) fixes a point $p$ in $Z$.
 However, by our choice of $Z$, the former cannot occur. 
 Thus we conclude that 
 $\rho(\Gamma)=\rho_1(\Gamma)\times \rho_2(\Gamma)$ leaves 
 a flat subspace $H\times \{p\}$ invariant.

 By our choice of $\{f(\gamma_j)\}$ and
 Lemma~\ref{lem:ultralimit_of_buseman_fct}, it is clear that 
 $\{f(\gamma_j)\}$ satisfies the assumption of
 Theorem~\ref{thm:compactness}, and hence, by passing to a subsequence
 if necessary, we may assume that $\{f(\gamma_j)\}$ converges to
 $\xi(\omega)$ in the cone topology. 
 By Lemma~\ref{lem:core} $(2)$, we know that, for any
 $\gamma \in \Gamma$, the convex hull of 
 $\ray{f(e),\xi(\omega)}\cup [f(e),f(\gamma)]\cup
 \ray{f(\gamma),\xi(\omega)}$ 
 is isometric to a half-infinite parallelogram in $\R^2$.

 Now we show that Lemma~\ref{lem:core} $(3)$ is true even if the
 convergences to $\xi^{\pm}$ is given by ultralimits.
 Since $\{f(\gamma_j)\}$ converges to $\xi(\omega)$ in the cone
  topology, by the same argument as in the proof of Lemma~\ref{lem:core}
 $(3)$,
 we see that, for any $\gamma \in\Gamma$, 
 $\angle_{f(\gamma_{j} \gamma)} (\xi(\omega),f(e))\to \pi$. 
 Consider a geodesic 
 $c_j=\cc{f(\gamma_j\gamma)}{\xi(\omega)}$.
 Fix $\gamma \in \Gamma$ and 
 take a geodesic $\bar c_j$ with $\bar c_j(0)=f(\gamma_j \gamma)$ and 
 $\bar c_j(d(f(e),f(\gamma_j \gamma)))=f(e)$. 
 Since $\angle_{f(\gamma_{j} \gamma)} (\xi(\omega),f(e))\to \pi$, 
 $d(c_j(t), \bar c_j(t)) \to 2t$ for any $t>0$. 
 Consider geodesic rays  $c=\ulim_j \rho(\gamma_j^{-1})c_j$ and 
 $\bar c = \ulim_j \rho(\gamma_j^{-1})\bar c_j$ in $Y_{\infty}$. 
 Then $c(0)=\bar c(0)=f(\gamma)$ and $d(c(t),\bar c(t))=2t$ for any 
 $t>0$, and hence $c$ and $\bar c$ form a straight geodesic line in
 $Y_{\infty}$, which we denote by $c_{\gamma}$. 
 This geodesic line $c_{\gamma}$ can be parametrized so that
 $c_{\gamma}(\infty)=\xi^+ = \ulim_j \rho(\gamma_j^{-1})\xi(\omega)$ and 
 $c_{\gamma}(-\infty)=\xi^- = \ulim_j \rho(\gamma_j^{-1})f(e)$, where 
 $\xi^+$ and $\xi^-$ are points in $\partial Y_{\infty}$.  
 Note that both $\xi^+$ and $\xi^-$ do not depend on $\gamma$ by
 definition, and we get a family of geodesic lines joining $\xi^+$ and
 $\xi^-$ in $Y_{\infty}$ passing through each $f(\gamma)$. 

 Then, by 
 \cite[p.~182, 2.13 The Flat Strip Theorem]{bridson-haefliger}, 
 we see that, for any $\gamma, \gamma' \in \Gamma$,
 the convex hull of $c_{\gamma}(\R) \cup c_{\gamma'}(\R)$ is a flat strip
 $\tilde F_{\gamma \gamma'}$. 
 In particular, the intersection of the horosphere centered at $\xi^+$
 and $\tilde F_{\gamma \gamma'}$ is a geodesic segment perpendicular to
 $c_{\gamma}(\R)$ and $c_e(\R)$, and the same is true for $\xi^-$. 
 Therefore we obtain
\begin{equation*}
 b_{\xi^-}(f(\gamma),f(e))=-b_{\xi^+}(f(\gamma),f(e)). 
\end{equation*}
 Since both $\gamma \mapsto b_{\xi^{\pm}}(f(\gamma),f(e))$ are 
 $\mu$-subharmonic, both of them must be $\mu$-harmonic. 
 By Lemma~\ref{lem:ultralimit_of_buseman_fct}, we see that
 $\{f(\gamma_j^{-1})\}$ satisfies the assumption of
 Theorem~\ref{thm:compactness}.  
 Therefore, by passing to a subsequence if necessary, we see that 
 $\{f(\gamma_j^{-1})\}$ converges to $\xi^-$ in the cone topology. 
 
 Now consider $\xi^+ =\ulim_j \rho(\gamma_j^{-1})\xi(\omega)$. 
 Since we know that $f(\gamma_j)$ converges to $\xi(\omega)$ in the cone
 topology,  for any $j$, we can find $n(j) \in \N$ so that 
\begin{equation*}
 \angle_{f(\gamma_j)}(f(\gamma_{n(j)}),\xi(\omega))\leq 1/j,\ 
 \text{ and }d(f(\gamma_j),f(\gamma_{n(j)}))\geq j, 
\end{equation*} 
which implies
\begin{equation*}
 \angle_{f(e)}(f(\gamma_j^{-1} \gamma_{n(j)}),
  \rho(\gamma_j^{-1})\xi(\omega)) 
 \leq 1/j, \ \text{and }\ 
 d(f(e),f(\gamma_j^{-1}\gamma_{n(j)})) \geq j. 
\end{equation*}
 By applying Lemma~\ref{lem:core} $(2)$ to
 $\rho_{\gamma_j}$-equivariant $\mu$-harmonic map $f_{\gamma_j}$, we see
 that the convex hull of 
  $\ray{f(\gamma_j),\xi(\omega)}\cup [f(\gamma_j),f(\gamma_{n(j)})] 
 \cup \ray{f(\gamma_{n(j)}),\xi(\omega)}$
 is a half-infinite prallelogram, hence
 the same is true for the convex hull of 
 $\ray{f(e),\rho(\gamma_j^{-1})\xi(\omega)} \cup 
 [f(e),f(\gamma_j^{-1}\gamma_{n(j)})] \cup 
 \ray{f(\gamma_j^{-1}\gamma_{n(j)}),\rho(\gamma_j^{-1})\xi(\omega)}$.
 Therefore, setting 
 $\tilde \gamma_j= \gamma_j^{-1}\gamma_{n(j)}$ and
 $\xi_j=\rho(\gamma_j^{-1})\xi(\omega)$, we get 
\begin{equation*}
 d(\cc{f(e)}{f(\tilde \gamma_j)}(t), 
 \cc{f(e)}{\xi_j}(t)) \leq
 2t \sin \frac{1}{2j}
\end{equation*}
 for $t \in [0,j]$, and hence
\begin{equation*}
 \ulim_j \cc{f(e)}{f(\tilde\gamma_j)}(t) = \ulim_j \cc{f(e)}{\xi_j}(t)
 = \cc{f(e)}{\ulim_j \xi_j}(t)
\end{equation*}
 for each $t$.  In particular, we see that
 $\ulim_j f(\tilde \gamma_j)=\ulim_j \xi_j =\xi^+$. 
 Since we already know that $\gamma \mapsto b_{\xi^+}(f(\gamma),f(e))$
 is $\mu$-harmonic,
 we see that 
 $\{f(\tilde \gamma_j)\}$ satisfies the assumption of
 Theorem~\ref{thm:compactness}, and in particular 
 $\xi^+ \in \partial Y$. 
 This completes the proof of Theorem \ref{thm:main-1}.  \qed

\subsection{Proof of Theorem~\ref{thm:main-3}}
\label{sec:proof_of_thm_main-3}
 Let $Y$ be a locally finite-dimensional $\cat{0}$ space, and
 $\rho\colon \Gamma \rightarrow \isom{Y}$ a homomorphism. 
 If $\rho(\Gamma)$ is 
 reductive in the sense of Jost, 
 then we know that there exists a $\rho$-equivariant $\mu$-harmonic map
 by Corollary~\ref{cor:existence}. 
 Once the existence of a $\rho$-equivariant $\mu$-harmonic map is
 achieved, 
 since our compactness theorem (Theorem~\ref{thm:compactness})
 is also valid for locally finite-dimensional $\cat{0}$ spaces,
 the rest of the proof of Theorem~\ref{thm:main-3} is 
 completely the same as the proof of Theorem~\ref{thm:main-1} we have
 seen above.

\section{Proof of a compactness theorem} 
\label{sec:key-lemma-proof}

This section is devoted to the proof of Theorem~\ref{thm:compactness}. 
We start with some lemmas on flat subsets in $\cat{0}$ spaces.

\subsection{Some lemmas on flat subsets}
\label{sec:some-lemmas-flat}

%
%
\begin{Lemma}
\label{lem:parallelogram}
 Suppose that $c_i\colon [0,a]\rightarrow Y$ is a geodesic $(i=1,2)$,
 and that $t\mapsto d(c_1(t),c_2(t))$ is constant.  Then the convex hull
 of $c_1([0,a])\cup c_2([0,a])$ is isometric to a parallelogram in
 $\R^2$. 
\end{Lemma}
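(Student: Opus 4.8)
The plan is to exhibit the convex hull as the union of two flat Euclidean triangles glued along a diagonal and to recognize the result as a planar parallelogram. Write $\ell = d(c_1(0),c_2(0))$, so that by hypothesis $d(c_1(t),c_2(t))=\ell$ for every $t$, and set $A=c_1(0)$, $B=c_1(a)$, $B'=c_2(a)$, $A'=c_2(0)$. First I would compute the four interior angles of the geodesic quadrilateral $A B B' A'$. Since $t\mapsto d(c_1(t),c_2(t))$ is constant, its one-sided derivatives vanish; writing the increment of the distance as a move of $c_1$ followed by a move of $c_2$ and applying the first-variation formulas of Lemma~\ref{lem:angle} and Lemma~\ref{lem:angle2}, the right derivative at $t=0$ equals $-\cos\alpha-\cos\beta$, where $\alpha,\beta$ are the quadrilateral angles at $A$ and $A'$. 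Hence $\cos\alpha+\cos\beta=0$, so $\alpha+\beta=\pi$; the identical computation at $t=a$ (with the backward directions) gives $\alpha'+\beta'=\pi$ for the angles $\alpha',\beta'$ at $B$ and $B'$. Thus the total interior angle sum is exactly $2\pi$.

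Next I would cut the quadrilateral along the diagonal $[A,B']$, of length $\delta:=d(A,B')$, into $T_1=\tri{A}{B}{B'}$ and $T_2=\tri{A}{A'}{B'}$. Both triangles have the same three side lengths $a,\ell,\delta$, so their Euclidean comparison triangles are congruent and assemble into a parallelogram $P\subset\R^2$ with sides $a,\ell$ and diagonal $\delta$. Splitting the angles at $A$ and $B'$ by the diagonal and using subadditivity of Alexandrov angles, the quadrilateral angle sum is bounded by the sum of the angle sums of $T_1$ and $T_2$, giving $2\pi\le(\text{angle sum of }T_1)+(\text{angle sum of }T_2)\le\pi+\pi$. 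Since moreover every vertex angle is at most the corresponding comparison angle (the basic $\cat{0}$ comparison inequality, cf.\ \cite{bridson-haefliger}), all inequalities are forced to be equalities: each $T_i$ has angle sum exactly $\pi$, whence each vertex angle equals its comparison angle and each $T_i$ is flat by \cite[p.~180, 2.9 Proposition]{bridson-haefliger}; in addition the diagonal splits the angles at $A$ and $B'$ additively.

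It then remains to glue the two flat triangles. The comparison isometries $\iota_i\colon\bar T_i\to T_i$ agree on the shared edge (each maps it isometrically onto $[A,B']$ with matching endpoints), so they combine into a map $\iota\colon P\to Y$ onto $T_1\cup T_2$; for $x\in T_1$, $y\in T_2$ the bound $d_Y(x,y)\le d_P(\iota^{-1}x,\iota^{-1}y)$ is immediate by concatenating, across the diagonal, the two subsegments cut out of the planar segment $[\iota^{-1}x,\iota^{-1}y]$. The main obstacle is the reverse inequality, i.e.\ ruling out that the two flat pieces fold along the diagonal. I would resolve this by showing that at each point of $[A,B']$ the Alexandrov angle between the two triangular regions equals $\pi$ — equivalently, that a path crossing the diagonal transversally at the planar angle is a genuine geodesic of $Y$ — which I expect to follow from the flatness of each $T_i$ together with the angle additivity along $[A,B']$ obtained above (and convexity of the distance function). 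Granting this, $\iota$ is an isometric embedding, so its image $T_1\cup T_2$ is isometric to the convex set $P$ and hence convex in $Y$; since it contains $c_1([0,a])$ and $c_2([0,a])$ as opposite sides and lies in every convex set containing them, it equals $\ch{c_1([0,a])\cup c_2([0,a])}$, which is therefore isometric to the parallelogram $P$.
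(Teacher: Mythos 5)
Your first-variation route to the angle identity is sound and genuinely different from the paper's. Differentiating the constant function $t\mapsto d(c_1(t),c_2(t))$ at $t=0$ and $t=a$ via Lemmas~\ref{lem:angle} and \ref{lem:angle2} (this is exactly the computation in the proof of Proposition~\ref{prop:first_variation}) gives $\cos\alpha+\cos\beta=0$, hence $\alpha+\beta=\pi$ since both angles lie in $[0,\pi]$, so the four quadrilateral angles sum to $2\pi$; the paper instead runs the equality case of the $\cat{0}$ comparison inequality on the points $c_i(t)$ and the corresponding point of the diagonal, doing this for \emph{both} diagonals so that each quadrilateral vertex angle is realized as a full angle of one of four flat triangles. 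Your subsequent deduction that $T_1$ and $T_2$ are flat (subadditivity of the angles at $A$ and $B'$, angle sum of each triangle at most $\pi$, each angle at most its comparison angle, then \cite[p.~180, 2.9 Proposition]{bridson-haefliger}) is also correct.

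The gap is the gluing step, and it is a real one. You need, for $x_1\in T_1$ and $x_2\in T_2$, the lower bound $d_Y(x_1,x_2)\ge d_P(\iota^{-1}x_1,\iota^{-1}x_2)$, equivalently $\angle_y(x_1,x_2)=\pi$ at the crossing point $y$ of the diagonal. What you actually established is angle additivity only at the two \emph{endpoints} $A$ and $B'$ of $[A,B']$; flatness of the two pieces together with additivity at the endpoints does not propagate to interior points of the diagonal by the triangle inequality for angles (the available estimates, e.g.\ $\angle_y(x_1,x_2)\ge\angle_y(A,x_2)-\angle_y(A,x_1)$ combined with $\angle_y(A,x_2)\ge\pi-\angle_y(B',x_2)$ and flatness of the pieces, only yield $\angle_y(x_1,x_2)\ge\pi-2\angle_y(A,x_1)$, not $\pi$), and convexity of the distance function produces inequalities in the wrong direction for this purpose. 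In effect you are reproving the Flat Quadrilateral Theorem by hand and stopping at its hardest point. The repair is immediate: the identity $\alpha+\beta+\alpha'+\beta'=2\pi$ from your first step is precisely the hypothesis of \cite[p.~181, 2.11 The Flat Quadrilateral Theorem]{bridson-haefliger}, which at once gives that the convex hull of the four vertices --- and this coincides with $\ch{c_1([0,a])\cup c_2([0,a])}$, since each set contains the other's generators --- is isometric to the convex hull of a convex planar quadrilateral, necessarily the parallelogram $P$ because opposite sides have lengths $a,\ell,a,\ell$. This is exactly how the paper concludes, and it renders your triangle-cutting and hand-gluing discussion unnecessary.
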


\begin{proof}
 Take comparison triangles of 
 $\tri{c_1(0)}{c_1(a)}{c_2(a)}$ and
 $\tri{c_1(0)}{c_2(0)}{c_2(a)}$ in $\R^2$, and 
 identify two edges corresponing to $[c_1(0), c_2(a)]$.  
 Since $d(c_1(0),c_1(a))=d(c_2(0),c_2(a))$ and
 $d(c_1(0),c_2(0))=d(c_1(a),c_2(a))$, 
 we get a parallelogram in $\R^2$. 
 
 Take any $t \in (0,a)$ and fix it for a while. 
 Let $\overline{y}$ be a point  in $\R^2$ given by 
 $\widebar{y}=[\widebar{c_1(t)}, \widebar{c_2(t)}]\cap 
 [\overline{c_1(0)}, \overline{c_2(a)}]$, 
 and $y \in [c_1(0),c_2(a)]$ a corresponding point in $Y$. 
 Then we have $d(c_i(t),y)\leq d(\overline{c_i(t)}, \overline{y})$ for
 $i=1,2$, and hence
\begin{equation*}
 d(c_1(t),c_2(t)) \leq d(c_1(t),y)+d(c_2(t),y) \leq
 d(\widebar{c_1(t)}, \widebar{y}) + d(\widebar{c_2(t)}, \widebar{y})
 =d(\widebar{c_1(t)}, \widebar{c_2(t)}). 
\end{equation*}
 On the other hand, by our assumption, 
\begin{equation*}
  d(c_1(t),c_2(t))=d(c_1(0),c_2(0))=d(\widebar{c_1(0)},\widebar{c_2(0)})
 =d(\widebar{c_1(t)}, \widebar{c_2(t)}). 
\end{equation*} 
 Thus we see that $d(c_i(t),y)=d(\widebar{c_i(t)},\widebar{y})$ for
 $i=1,2$. 
 Therefore, by \cite[p.~181, 2.10 Exercise (1)]{bridson-haefliger}, 
 $\tri{c_1(0)}{c_1(a)}{c_2(a)}$ and
 $\tri{c_1(0)}{c_2(0)}{c_2(a)}$ are both flat, that is, the convex hull
 of these triangles in $Y$ are isometric to the convex hull of their
 comparison triangles in $\R^2$ respectively. 
 In particular, the angles $\angle_{c_1(a)}(c_1(0),c_2(a))$ and 
 $\angle_{c_2(0)}(c_1(0),c_2(a))$ are equal to the corresponding angles
 of the parallelogram in $\R^2$ above.

 Now consider triangles $\tri{c_1(0)}{c_1(a)}{c_2(0)}$ and 
 $\tri{c_2(a)}{c_1(a)}{c_2(0)}$ and follow the argument above. 
 Then we see that both triangles are flat, and hence the angles
 $\angle_{c_1(0)}(c_1(a),c_2(0))$ and 
 $\angle_{c_2(a)}(c_1(a),c_2(0))$ are equal to the corresponding angles
 of the parallelogram in $\R^2$ again. 
 Thus we see that the sum of 
 $\angle_{c_1(a)}(c_1(0),c_2(a))$, $\angle_{c_2(0)}(c_1(0),c_2(a))$, 
 $\angle_{c_1(0)}(c_1(a),c_2(0))$, and $\angle_{c_2(a)}(c_1(a),c_2(0))$
 is equal to $2\pi$. 
 Then, by 
 \cite[p.~181, 2.11 The flat quadrilateral theorem]{bridson-haefliger}, 
 we see that the convex hull of $c_1([0,a]) \cup c_2([0,a])$ is flat and
 isometric to the parallelogram in $\R^2$. 
\end{proof}

\begin{Remark}
 \label{rem:different_from_sandwich_lemma}
 Since $d(c_1(t),c_2(t))$ need not be equal to $d(c_1(t), c_2([0,a]))$,
 the lemma above is slightly different from the Sandwich lemma
 \cite[p.~182, 2.12 Exercise (2)]{bridson-haefliger}, and we do not
 obtain a rectangle as the convex hull in the statement. 
\end{Remark}

%
%
Recall that we have often denoted a geodesic starting from 
$x \in Y$ and terminating at $y \in Y \cup\partial Y$ by $\cc{x}{y}$; we
will use this notation frequently in what follows.
Also if the convex hull of two geodesic rays 
$\ray{p,\eta}\cup \ray{p,\xi}$ is isometric to a flat sector in $\R^2$, 
we denote this convex hull by $\sect[p]{\eta}{\xi}$. 
We also denote by $[\eta_1,\eta_2] \subset \partial Y$ a geodesic
segment joining $\eta_1$ and $\eta_2$ with respect to the angular metric
$d_{\angle}$ on $\partial Y$. 

\begin{Lemma}
\label{lem:parallelogram_and_sliding}
 Let $Y$ be a complete $\cat{0}$ space. 
 Suppose that, for $p_i \in Y$ $(i=1,2)$ and 
 $\eta_j \in \partial Y$ $(j=1,2)$ with 
 $0<d_{\angle}(\eta_1,\eta_2)<\pi$,
 the convex hull of 
 $\ray{p_i,\eta_1}\cup \ray{p_i,\eta_2}$ is isometric to a flat sector
 in $\R^2$. 
 Assume further that,  for each $p \in [p_1,p_2]$ and
 $\xi \in [\eta_1,\eta_2]\subset \partial Y$, a geodesic ray $[p,\xi)$
 exists and that $\bigcup_{p \in [p_1,p_2]} [p,\xi)$ is isometric to a
 half-infinite parallelogram in $\R^2$ 
 $($it may degenerate to a half line$)$, 
 which we denote by $F^{\xi}$. 

 $(1)$ Take $s_1, s_2 >0$, and denote by $R_{s_1s_2}$ the convex hull of 
 $[\cc{p_1}{\eta_1}(s_1), \cc{p_1}{\eta_2}(s_2)] \cup
  [\cc{p_2}{\eta_1}(s_1), \cc{p_2}{\eta_2}(s_2)]$. 
 Then $R_{s_1s_2}$ is isometric to a prallelogram in $\R^2$. 

 $(2)$ For each $p \in [p_1,p_2]$, the convex hull of 
 $\ray{p,\eta_1}\cup\ray{p,\eta_2}$ is isometric to 
 a flat sector in $\R^2$.  

 $(3)$ 
 Let $\cc{p}{\xi}\colon \ray{0,\infty}\rightarrow Y$ be a
 geodesic ray joining $p\in Y$ and $\xi \in [\eta_1,\eta_2]$. 
 Consider a map 
 $\trans{s}{\xi} \colon q \mapsto \cc{q}{\xi}(s)$, $s>0$, 
 defined on 
 $\bigcup_{p \in [p_1,p_2]} \sect[p]{\eta_1}{\eta_2}$. 
 Then $\trans{s}{\xi}$ maps $R_{s_1s_2}$ isometrically onto its image. 

 $(4)$ Take $s_0>0$. 
 For any $p \in [p_1,p_2]$ and 
 $q_j \in F^{\eta_j} \cap R_{s_0s_0}$, $j=1,2$ 
 $\tri{p}{q_1}{q_2}$ is a flat triangle. 
\end{Lemma}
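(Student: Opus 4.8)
The plan is to place $p$, $q_1$, and $q_2$ inside a single subset of $Y$ that is isometric to a convex planar region; flatness of $\tri{p}{q_1}{q_2}$ is then immediate. The pieces are already at hand: the flat half-parallelograms $F^{\eta_1},F^{\eta_2}$ of the hypothesis, the flat parallelogram $R_{s_0s_0}$ from $(1)$, the flat sectors $\sect[p']{\eta_1}{\eta_2}$ from $(2)$, and the isometric sliding maps $\trans{s}{\xi}$ from $(3)$.

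First I would locate the two points on the boundary rays. Since $q_j\in F^{\eta_j}$, write $q_j=\cc{a_j}{\eta_j}(t_j)$ with $a_j\in[p_1,p_2]$ and $t_j\ge 0$; the constraint $q_j\in R_{s_0s_0}$ forces $t_j\le s_0$, so that $q_j$ lies in the portion of $F^{\eta_j}$ between $[p_1,p_2]$ and its $s_0$-slice. Because $p\in[p_1,p_2]\subset F^{\eta_j}$ and $F^{\eta_j}$ is convex and flat, the segment $[p,q_j]$ lies in $F^{\eta_j}$; hence $d(p,q_j)$ and the angle $\angle_p(q_j,\eta_j)$ are read off from the Euclidean model of the parallelogram $F^{\eta_j}$. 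Likewise $[q_1,q_2]\subset R_{s_0s_0}$, so $d(q_1,q_2)$ is the Euclidean distance in the flat parallelogram $R_{s_0s_0}$.

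These three flat models must now be spliced into one. The strips meet only along $F^{\eta_1}\cap F^{\eta_2}=[p_1,p_2]$, and $(2)$ prescribes the opening there through $\angle_p(\eta_1,\eta_2)=d_{\angle}(\eta_1,\eta_2)$; moreover the $s_0$-slice $\{\cc{p'}{\eta_j}(s_0):p'\in[p_1,p_2]\}$ of $F^{\eta_j}$ is exactly the edge $[\cc{p_1}{\eta_j}(s_0),\cc{p_2}{\eta_j}(s_0)]$ of $R_{s_0s_0}$, so each strip is capped by $R_{s_0s_0}$ along a common edge. Using the isometries $\trans{s}{\xi}$ from $(3)$ to transport the flat structure of $R_{s_0s_0}$ across the sectors, I would check that all these flat structures agree on their overlaps and therefore assemble into a single map of $\bigcup_{p'\in[p_1,p_2]}\sect[p']{\eta_1}{\eta_2}$, truncated at parameter $s_0$, onto a convex region of $\R^2$ that is an isometry on each piece. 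Since $p,q_1,q_2$ lie in this region, the Alexandrov angle $\angle_p(q_1,q_2)$ equals the comparison angle $\overline{\angle}_p(q_1,q_2)$, and the flat triangle criterion \cite[p.~180, 2.9 Proposition]{bridson-haefliger} then gives that $\tri{p}{q_1}{q_2}$ bounds a flat region.

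The delicate point is this gluing/consistency step, equivalently the identity $\angle_p(q_1,q_2)=\overline{\angle}_p(q_1,q_2)$. The difficulty is that, according as $a_j$ lies on one side or the other of $p$ on $[p_1,p_2]$, the direction of $\cc{p}{q_j}$ may fall outside the sector $\sect[p]{\eta_1}{\eta_2}$, so the angle at $p$ is not simply $\angle_p(q_1,\eta_1)+\angle_p(\eta_1,\eta_2)+\angle_p(\eta_2,q_2)$ and naive angle addition in the $\cat{1}$ space $S_pY$ is invalid. I expect to avoid angle chasing by working entirely in the assembled planar model: sliding $R_{s_0s_0}$ by $\trans{s}{\xi}$ reduces every configuration to one Euclidean picture, after which the flat quadrilateral theorem \cite[p.~181, 2.11 The flat quadrilateral theorem]{bridson-haefliger} applied to the quadrilateral $a_1q_1q_2a_2$ certifies flatness of a region containing $\tri{p}{q_1}{q_2}$.
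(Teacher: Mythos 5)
Your proposal only addresses part $(4)$: you take the parallelogram of $(1)$, the sectors of $(2)$, and the translations of $(3)$ as ``already at hand,'' but these are part of the statement to be proved, and the bulk of the paper's proof is devoted to establishing them (by sliding slices $[\tilde c_1(t),\tilde c_2(t)]$ inside the half-infinite parallelograms $F^{\xi_t}$ and invoking Lemma~\ref{lem:parallelogram}, by monotonicity of the two bounded convex diagonal functions in $(3)$, etc.). As a proof of the lemma this is already incomplete.

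For $(4)$ itself there is a genuine gap. The configuration $\bigcup_{p'\in[p_1,p_2]}\sect[p']{\eta_1}{\eta_2}$ is intrinsically three-dimensional in the model case ($Y=\R^3$, $[p_1,p_2]$ vertical, the sectors horizontal): the three flat pieces $F^{\eta_1}$, $F^{\eta_2}$, $R_{s_0s_0}$ do not lie in a common plane, so there is no ``single map \dots onto a convex region of $\R^2$'' extending the three Euclidean models, and the triangle $\tri{p}{q_1}{q_2}$ is not contained in any one of the flat pieces. The corrected goal would be to show the union of sectors is isometric to a convex subset of $\R^3$ (a metric product $[p_1,p_2]\times\sect{\eta_1}{\eta_2}$), but that is not a consequence of piecewise flatness together with agreement on overlaps -- a space can be flat on overlapping pieces without being globally flat -- and establishing it requires exactly the control of distances between points of \emph{different} pieces that $(4)$ is supposed to deliver; your appeal to the flat quadrilateral theorem for $a_1q_1q_2a_2$ founders on the same point, since the angles at $q_1,q_2$ involve the geodesic $[q_1,q_2]$, which is the unknown. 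The paper avoids any global flatness claim: it writes $q_i=c_i(s/b_i)$ with $c_i(t)=\cc{c(t_0+a_it)}{\eta_i}(b_it)$, uses the isometric translations of $(3)$ to show the relevant angle $\alpha(s,t)$ is independent of $s$ and $t$, deduces that all three side lengths of $\tri{p}{c_1(s/b_1)}{c_2(s/b_2)}$ are \emph{linear} in $s$, and then concludes flatness from comparison-angle rigidity (\cite[p.~181, 2.10 Exercises (1)]{bridson-haefliger}). Some quantitative argument of this kind, rather than a gluing of planar charts, appears to be unavoidable.
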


\begin{proof}
 $(1)$ Let $c\colon [0,a]\rightarrow Y$ be a geodesic joining $p_1$ and
 $p_2$. 
 First note that, since we have
 $\angle_{p_1}(\eta_1,\eta_2)=\angle_{p_2}(\eta_1,\eta_2)=d_{\angle}(\eta_1,\eta_2)$
 (see Remark~\ref{rem:sector}),
 $\sect[p_1]{\eta_1}{\eta_2}$ and $\sect[p_2]{\eta_1}{\eta_2}$ are
 isometric to each other. 
 Therefore we have
 $d(\cc{p_1}{\eta_1}(s_1),\cc{p_1}{\eta_2}(s_2))=
  d(\cc{p_2}{\eta_1}(s_1),\cc{p_2}{\eta_2}(s_2))$, 
 and we denote this value by $d$. 

\noindent
\begin{minipage}[c]{6.5cm}
 Consider a geodesic 
 $\tilde c_i \colon [0,d]\rightarrow \sect[p_i]{\eta_1}{\eta_2}$ 
 with $\tilde c_i(0)=c_{p_i}^{\eta_1}(s_1)$ and 
 $\tilde c_i(d)=c_{p_i}^{\eta_2}(s_2)$,
 $i=1,2$. Then, for any $t \in [0,d]$, 
 $\angle_{p_1}(\eta_1, \tilde c_1(t))= \angle_{p_2}(\eta_1, \tilde
 c_2(t))$,  
 and we see that there exists $\xi_t\in [\eta_1,\eta_2]$ such that 
 $\tilde c_i(t) \in [p_i,\xi_t)$ for $i=1,2$. 
 Since we are assuming that there exists $F^{\xi}$ for each $\xi$, 
 $[\tilde c_1(t), \tilde c_2(t)]$ is parallel to
 $[p_1,p_2]$ in $F^{\xi_t}$ 
 and given by
 $\tau \mapsto \cc{c(\tau)}{\xi_t}(s)$ for some $s$. 
 In particular, 
 $d(\tilde c_1(t),\tilde c_2(t))=d(p_1,p_2)$
 for any $t \in [0,d]$. 
\end{minipage} 
\begin{minipage}[c]{6cm}
\hspace{1cm}
\vspace{0.5cm} \hspace{4cm}
\includegraphics[scale=.8]{fig.4}
\end{minipage}

\vspace{0.5cm}
\noindent
 Therefore, by Lemma~\ref{lem:parallelogram}, we see that 
 the convex hull of 
\begin{equation*}
  [\tilde c_1(0),\tilde c_1(d)] \cup  [\tilde c_2(0),\tilde c_2(d)] 
 =[\cc{p_1}{\eta_1}(s_1),\cc{p_1}{\eta_2}(s_2)] \cup
  [\cc{p_2}{\eta_1}(s_1),\cc{p_2}{\eta_2}(s_2)]
\end{equation*} 
 is isometric to a flat parallelogram, which we denote by $R_{s_1s_2}$.

 $(2)$  By $(1)$, we see that 
 $d(\cc{p}{\eta_1}(s),\cc{p}{\eta_2}(s))=
 d(\cc{p_1}{\eta_1}(s),\cc{p_1}{\eta_2}(s))$
 for any $p \in [p_1,p_2]$ and $s \in [0,\infty)$. 
 In particular, for any $p \in [p_1,p_2]$, 
 $\angle_p(\eta_1, \eta_2)=\angle_{p_1}(\eta_1,\eta_2)=
 d_{\angle}(\eta_1,\eta_2)$. 
 Therefore, by \cite[p.~283, 9.9 Corollary]{bridson-haefliger}, 
 $\ray{p,\eta_1} \cup \ray{p,\eta_2}$ bounds a flat sector. 

 $(3)$ Let $q_i^j=\cc{p_i}{\eta_j}(s_j)$ for $i,j=1,2$. 
 We know that the convex hull of 
 $[q_1^1, q_1^2] \cup [q_2^1,q_2^2]$ is a flat parallelogram by (1). 
 Since $\sect[p_i]{\eta_1}{\eta_2}$ is a flat sector, 
 $\cc{q_i^1}{\xi}$ and $\cc{q_i^2}{\xi}$ are parallel to each other, and
 hence $t\mapsto d(\cc{q_i^1}{\xi}(t),\cc{q_i^2}{\xi}(t))$ is constant. 

\noindent
\begin{minipage}[c]{6cm}
  Fix $t\geq 0$ for a while, and let $\tilde c_i$ be a geodesic joining
 $\cc{q_i^1}{\xi}(t)$ and $\cc{q_i^2}{\xi}(t)$. 
 Since $\sect[p_i]{\eta_1}{\eta_2}$, $i=1,2$, are isometric to
 each other, for each $\tau$ and $i=1,2$, 
 $\tilde c_i (\tau) \in F^{\xi_{\tau}}$ for the same 
 $\xi_{\tau} \in [\eta_1, \eta_2]$. 
 Then, by the same argument as we have seen in the proof of $(1)$
 above,  we see that 
 $d(\tilde c_1(\tau),\tilde c_2(\tau))=d(p_1,p_2)$ for any $\tau$, 
 and hence the convex hull of 
 $[\cc{q_1^1}{\xi}(t), \cc{q_1^2}{\xi}(t)] \cup
  [\cc{q_2^1}{\xi}(t), \cc{q_2^2}{\xi}(t)]$
 is isometric to a flat parallelogram. 
\end{minipage}
\begin{minipage}[c]{6cm}
 \hspace{5cm}
 \includegraphics[scale=.8]{fig.5}
\end{minipage}

 Now we vary $t$. 
 Note that $d(\cc{q_1^1}{\xi}(t), \cc{q_2^2}{\xi}(t))$ and 
 $d(\cc{q_1^2}{\xi}(t), \cc{q_2^1}{\xi}(t))$ are bounded convex
 function defined on $[0,\infty)$, and hence both are nonincreasing. 
 These functions give the length of two
 diagonal lines of parallelograms with fixed side length.  Therefore, if
 one decreases, the other must increase.  Hence we see that both
 functions are constant. 
 Therefore, for all $t$,  the convex hull of 
 $[\cc{q_1^1}{\xi}(t), \cc{q_1^2}{\xi}(t)] \cup
  [\cc{q_2^1}{\xi}(t), \cc{q_2^2}{\xi}(t)]$
 are isometric to the same flat parallelogram. 
 Since $\trans{s}{\xi}$ maps 
 $[\cc{q_i^1}{\xi}(0),\cc{q_i^2}{\xi}(0)]$ to
 $[\cc{q_i^1}{\xi}(s), \cc{q_i^2}{\xi}(s)]$, this shows $(3)$.

 $(4)$  Replacing $p_1$ and $p_2$ if neccesary, we may assume 
 $p=c(t_0) \in [p_1,p_2)$. 
 Let $q_i\in F^{\eta_i}$, $(i=1,2)$ and consider 
 $\tri{p}{q_1}{q_2}$. 
 Denote by $c_i$ $(i=1,2)$ a geodesic joining $p$ and $q_i$. 
 Since $F^{\eta_i}$ is flat, $c_i$ is lying in $F^{\eta_i}$. 
 Let us identify $F^{\eta_i}$ with a half-infinite parallelogram  
 $\widebar{F}_i \subset \R^2$ of the form 
\begin{equation*}
  \widebar{F}_i=
 \{a \widebar p +b\widebar q_i\mid a\in [0,d(p_1,p_2)],\ 
 b\geq0\}, 
\end{equation*}
 where we identify $\cc{p_1}{p_2}(1)$ with $\bar p$ and 
 $\cc{p_1}{\eta_i}(1)$ with $\bar q_i$.  
 Note that every geodesic segment in $\widebar{F}_i$ starting from 
 $a_0 \bar p$ can be expressed
 as $t\mapsto (a_0+a_it)\widebar p + (b_i t) \widebar q_i$. 
 Therefore we see that there exists $a_i, b_i \in \R$ such that $c_i$
 can be expressed as 
 $c_i(t)=\cc{c(t_0+ a_i t)}{\eta_i}(b_i t)$. 
 Note that $c_i(s/b_i)$  lies in $R_{ss}$. 
 By considering an isometric translation $\trans{s}{\eta_1}$ in $(3)$, 
 we see that, for any $s_1>0$, $s>0$, and $t\in [0,a]$,
 \begin{equation*}
   \angle_{\cc{c(t)}{\eta_1}(s_1)}(\cc{p_2}{\eta_1}(s_1),
   \cc{c(t)}{\eta_2}(s_1))
 =  \angle_{\cc{c(t)}{\eta_1}(s_1+s)}(\cc{p_2}{\eta_1}(s_1+s), 
 \cc{c(t)}{\eta_2}(s_1+s))
\end{equation*}
 and hence 
 $\alpha(s,t)= \angle_{\cc{c(t)}{\eta_1}(s)}(\cc{p_2}{\eta_1}(s), 
  \cc{c(t)}{\eta_2}(s))$
 is independent of $s$. 
 Furthermore, since 
 $\left\{[\cc{c(t)}{\eta_1}(s),\cc{c(t)}{\eta_2}(s)]\mid t \in
 [0,a]\right\}$  
 forms a family of parallel geodesic segments in $R_{ss}$,
 $\alpha(s,t)$ does not depending on $t$ either. 
 Note that $d(c_1(s/b_1),c_2(s/b_2))$ is equal to the length of
 a diagonal line of a parallelogram $R_{ss}'$ in $R_{ss}$ spanned over 
 an interval  
 $[c(t_0+(a_1s/b_1)), c(t_0+(a_2s/b_2))] \subset [p_1,p_2]$,  
 which is bounded by 
 $[\cc{c(\bar t_1)}{\eta_1}(s),\cc{c(\bar t_1)}{\eta_2}(s)]$ and 
 $[\cc{c(\bar t_2)}{\eta_1}(s),\cc{c(\bar t_2)}{\eta_2}(s)]$, where 
 $\bar t_i=t_0+(a_is/b_i)$, $i=1,2$. 

 \hspace{9.2cm}
 \includegraphics[scale=.78]{fig.6}

 \vspace{0.3cm}
 \noindent
 Then we see that the length of two pairs of sides of $R_{ss}'$ are
\begin{equation*}
  d(\cc{c(\bar t_1)}{\eta_i}{s}, \cc{c(\bar t_2)}{\eta_i}(s))=
  d(c(\bar t_1),c(\bar t_2)) = |\bar t_1- \bar t_2| =
 \left|\frac{a_1}{b_1}-\frac{a_2}{b_2} \right|s, \ (i=1,2),  
\end{equation*} 
 and
 \begin{equation*}
  d(\cc{c(\bar t_i)}{\eta_1}(s), \cc{c(\bar t_i)}{\eta_2}(s))
  = 2s \sin \frac{d_{\angle}(\eta_1,\eta_2)}{2}, \ (i=1,2), 
 \end{equation*}
 since 
 $\ch{\ray{c(\bar t_i),\eta_1} \cup \ray{c(\bar t_i),\eta_2}}$
 is a flat sector by $(2)$. 
 Recalling that $\alpha(s,t)$ is constant, we see that
 the length of diagonal line of $R_{ss}'$ is also linear in $s$: 
\begin{equation*}
 d(\cc{c(\bar t_1)}{\eta_1}(s),\cc{c(\bar t_2)}{\eta_2}(s))
 =   d(c_1(s/b_1),c_2(s/b_2)) = Cs 
\end{equation*} 
 for some constant $C$. 
 Since $d(p,c_i(s/b_i))=s/b_i$ is also linear in $s$, 
 by considering a comparison triangle and taking 
 \cite[p.~181, 2.10 Exercises (1)]{bridson-haefliger} into account, 
 we conclude that $\tri{p}{c_1(s/b_1)}{c_2(s/b_2)}$ is flat.  
 This completes the proof. 
\end{proof}

\noindent
\begin{minipage}[c]{9cm}
 \begin{Remark}
 \label{remark:degenerate_to_half_line}
  Let $p_1=(0,0), p_2=(1,0) \in \R^2$ and denote 
  $(\infty,0) \in \partial \R^2$ by $\xi$. 
  Take $\eta_i \in \partial \R^2$ so that 
  $\angle(\xi, \eta_i)=\theta < \pi/2$. 
  Then 
  $\ch{\ray{p_2,\eta_1},\ray{p_2,\eta_2}}\subset
  \ch{\ray{p_1,\eta_1},\ray{p_1,\eta_2}} \subset \R^2$, 
  and $F^{\xi}$ degenerates to a half line $\ray{p_1,\xi}$. 
  Even in this case, the lemma above is valid. 
\end{Remark}
\end{minipage}
\begin{minipage}[b]{6cm}
 \hspace{2cm}
 \includegraphics[scale=.7]{fig.7}
\end{minipage} 

\begin{Remark}
\label{remark:when_angle_is_pi}
 Suppose that $d_{\angle}(\eta_1,\eta_2)=\pi$ in the setting of the
 lemma above.  
 Then we have two half-infinite parallelograms $F^{\eta_i}$, $i=1,2$,
 and $F^{\eta_1}\cup F^{\eta_2}$ is bounded by a two geodesic lines
 joining $\eta_1$ and $\eta_2$ and passing through $p_1$ and $p_2$
 respectively. 
 Then the set $F^{\eta_1}\cup F^{\eta_2}$ consists of parallel geodesic
 lines joining $\eta_1$ and $\eta_2$.
 By \cite[p.~182, The Flat Strip Theorem]{bridson-haefliger}, we see
 that $F^{\eta_1}\cup F^{\eta_2}$ forms a flat strip in $Y$.  
 Therefore $(1)$, $(3)$, and  $(4)$ in the lemma above clearly hold. 
 For $(2)$, $\sect[p]{\eta_1,\eta_2}$ degenerates to a geodesic
 line for each $p \in [p_1,p_2]$, and the assertion is also valid in
 this sense.  
\end{Remark}

\subsection{Construction of a cone $\cone{m}$.}
\label{sec:construction-conem}

Now we return to the setting of Theorem~\ref{thm:compactness}. 
Consider $\{f(\gamma_j)\} \subset f(\Gamma)$ with 
$d(f(e),f(\gamma_j))\to \infty$, and let $\xxi{1}=\ulim f(\gamma_j)$. 
We denote $f(\gamma_j)$ by $x_j$, and set $x_0=f(e)$. 
We also denote $Y^{(1)}=\ulim (Y,x_0)$, and 
$Y^{(i)}=\ulim (Y^{(i-1)},x_0)$ inductively. 
We often use the expression $\uulim[i]_j x_j$ to emphasize that we are
taking $i$th ultralimit; it should be taken on $Y^{(i-1)}$. 
As we will see, if $\{x_j\}$ does not converge to $\xxi{1}$ in the cone
topology, then $\xxi{i}=\uulim[i]x_j \not=\xxi{1}$ for $i>1$, and 
$\{\xxi{1}, \dots \xxi{m}\}$
spans a Euclidean cone over a regular Euclidean $(m-1)$-simplex 
(Proposition~\ref{prop:simplex}), from which we conclude that $\{x_j\}$
actually converges to $\xxi{1}$ and derive
Theorem~\ref{thm:compactness}. 

%
%
\begin{Setting}
 \label{setting}
 A $\cat{0}$ space $Y$ is either of finite telescopic dimension
 or locally finite-dimensional. 
 And a countable group $\Gamma$ with a random walk $\mu$ acts
 isometrically on $Y$ via a homomorphism
 $\rho\colon\Gamma \rightarrow \isom{Y}$.  
 Futhermore, from what we have seen so far, we may assume the following: 
 \begin{itemize}
 \item  A symmetric probability measure $\mu$ on $\Gamma$ satisfies
	$\supp \mu \supset \Gamma \setminus \{e\}$. 
 \item  There exists a $\rho$-equivariant
	$\mu$-harmonic map $f \colon \Gamma \rightarrow Y$. 
 \item  There is a sequence $\{\gamma_j\}\subset \Gamma$ such that
	$d(f(e),f(\gamma_j))\to \infty$, 
	and, as in the proof of
	Lemma~\ref{lem:ultralimit_of_buseman_fct}, we have 
	$\ulim f(\gamma_j)= \xxi{1} \in \partial Y^{(1)}$. 
	We denote $f(\gamma_j)$ by $x_j$, and $\uulim[i]x_j=\xxi{i}$. 
 \item  A function 
        $\varphi_{\xxi{1}}\colon \gamma \mapsto
	b_{\xxi{1}}(f(\gamma),f(e))$ 
	defined on $\Gamma$ is a $\mu$-harmonic function. 
	The same is true for 
        $\varphi_{\xxi{i}}\colon \gamma \mapsto
	b_{\xxi{i}}(f(\gamma),f(e))$ 
	for any $i$ by Lemma~\ref{lem:ultralimit_of_buseman_fct} and
        the proof of Proposition~\ref{prop:global-mu-harmonicity}.
 \item  For any $x=f(\gamma), x'=f(\gamma') \in f(\Gamma)$ and $i$, 
	by applying Lemma~\ref{lem:core} $(2)$ to
	$\rho_{\gamma}$-equivariant $\mu$-harmonic map $f_{\gamma}$ as
	in \S~\ref{sec:when-y-finite-dim}, 
        we see that the convex hull $F_{xx'}^{(i)}$ of 
	$\ray{x,\xxi{i}}\cup [x,x']\cup \ray{x',\xxi{i}}$
	is isometric to a half-infinite parallelogram in $\R^2$.
 \end{itemize}
 Also, in order to prove Theorem~\ref{thm:compactness}, we assume that
 $\zure=\ulim_j \angle_{f(e)}(x_j,\xxi{1})>0$, which will lead us to a
 contradiction in \S~\ref{sec:the_proof}.  
\end{Setting}

%
%
\begin{Lemma}
\label{lem:zure<pi} 
 Under our setting, we have 
 $\ulim_j \angle_{x_0} (x_j,\xxi{i})< \pi$ for any $i$. 
\end{Lemma}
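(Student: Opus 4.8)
I would prove $\ulim_j \angle_{x_0}(x_j,\xxi{1})<\pi$ by contradiction: suppose the limit equals $\pi$ (it cannot exceed $\pi$ since angles are bounded by $\pi$). The geometric idea is that an angle tending to $\pi$ at $x_0=f(e)$ between the directions toward $x_j$ and toward $\xxi{1}$ would force the function $\varphi_{\xxi{1}}(\gamma)=b_{\xxi{1}}(f(\gamma),f(e))$ to behave like a distance function pushing $f$ to escape, which would contradict either the $\mu$-harmonicity of $\varphi_{\xxi{1}}$ or the hypothesis $\zure=\ulim_j\angle_{x_0}(x_j,\xxi{1})>0$ together with the escape rate vanishing. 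I would exploit the half-infinite parallelogram structure: by Setting~\ref{setting}, for each $j$ the convex hull $F^{(1)}_{x_0 x_j}$ of $\ray{x_0,\xxi{1}}\cup[x_0,x_j]\cup\ray{x_j,\xxi{1}}$ is isometric to a flat half-infinite parallelogram in $\R^2$. Inside such a flat picture the angle $\angle_{x_0}(x_j,\xxi{1})$ is literally a Euclidean angle of the parallelogram.

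\textbf{Key steps.}
First I would translate the assertion into the flat model. Since $F^{(1)}_{x_0x_j}$ is a half-infinite parallelogram with one pair of sides being the parallel rays $\ray{x_0,\xxi{1}}$ and $\ray{x_j,\xxi{1}}$ and the transversal side $[x_0,x_j]$, the Euclidean angle at the vertex $x_0$ between $[x_0,x_j]$ and $\ray{x_0,\xxi{1})}$ is exactly $\angle_{x_0}(x_j,\xxi{1})$. Next I would observe that in the parallelogram the Busemann cocycle is computed explicitly: $b_{\xxi{1}}(x_j,x_0)=-d(x_0,x_j)\cos\angle_{x_0}(x_j,\xxi{1})$, because $-b$ measures signed displacement along the $\xxi{1}$-direction, which in the flat model is the projection of the transversal onto the ray direction. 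If $\ulim_j\angle_{x_0}(x_j,\xxi{1})=\pi$, then $\ulim_j b_{\xxi{1}}(x_j,x_0)=\ulim_j d(x_0,x_j)=+\infty$, since $\cos\to -1$ and $d(x_0,x_j)\to\infty$. I would then compare this with the definition $\xxi{1}=\ulim_j f(\gamma_j)=\ulim_j x_j$, which by Lemma~\ref{lem:ultralimit_of_buseman_fct} forces $b_{\xxi{1}}(x_j,x_0)\to -\infty$ along the sequence generating $\xxi{1}$, or more carefully gives the wrong sign: since $\{x_j\}$ is the very sequence converging to $\xxi{1}$, the Busemann function evaluated along it must tend to $-\infty$, not $+\infty$. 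This sign contradiction rules out the angle being $\pi$.

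\textbf{Carrying it out / the obstacle.}
More precisely, I would argue as follows within a single contradiction. By Remark~\ref{rem:busemann_fct}, along any geodesic ray $c$ terminating at $\xxi{1}$ we have $b_{\xxi{1}}(c(t),x_0)\to -\infty$; and since $x_j\to\xxi{1}$, the points $x_j$ approach the ray $\ray{x_0,\xxi{1})}$ in the cone sense inside the flat parallelograms, so $b_{\xxi{1}}(x_j,x_0)$ cannot diverge to $+\infty$. Concretely, $\angle_{x_0}(x_j,\xxi{1})\to\pi$ would mean the transversals $[x_0,x_j]$ point nearly opposite to $\ray{x_0,\xxi{1})}$, i.e.\ the $x_j$ are receding away from $\xxi{1}$, contradicting $\ulim x_j=\xxi{1}$. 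I expect the main obstacle to be the bookkeeping of which ultralimit space the geometry lives in: the parallelograms $F^{(1)}_{x_0x_j}$ sit in $Y$, whereas $\xxi{1}$ is a boundary point of $Y^{(1)}=\ulim(Y,x_0)$, so I must be careful to phrase the angle and Busemann computations using finite approximants and then pass to the ultralimit via Lemma~\ref{lem:ultralimit_of_buseman_fct}, rather than working directly with $\xxi{1}\in\partial Y^{(1)}$. The cleanest route is probably to fix large but finite truncation points $\cc{x_0}{\xxi{1}}(s)$ and $\cc{x_j}{\xxi{1}}(s)$ inside the flat parallelograms, apply the Euclidean angle-sum/cosine identities there, and only then take $\ulim_j$ so that the $\pi$-angle hypothesis yields the divergence of sign that contradicts the convergence $\ulim_j x_j=\xxi{1}$.
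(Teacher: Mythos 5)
Your reduction to the flat model is fine: inside the half-infinite parallelogram $F^{(1)}_{x_0x_j}$ the Alexandrov angle at $x_0$ is the Euclidean angle, and since the parallelogram is a convex isometrically embedded flat set one does get $b_{\xxi{1}}(x_j,x_0)=-d(x_0,x_j)\cos\angle_{x_0}(x_j,\xxi{1})$, hence $b_{\xxi{1}}(x_j,x_0)\to+\infty$ under the angle-$\pi$ hypothesis. The gap is in the final step: the claim that this contradicts $\ulim_j x_j=\xxi{1}$ is false. The equality $\ulim_j x_j=\xxi{1}$ is an ultralimit statement, not cone-topology convergence — cone-topology convergence of $\{x_j\}$ to $\xxi{1}$ is precisely the conclusion of Theorem~\ref{thm:compactness}, so invoking it here is circular. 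Lemma~\ref{lem:ultralimit_of_buseman_fct} computes $b_{\xxi{1}}(z,x_0)$ for a \emph{fixed} $z\in Y$ as $\ulim_j(d(x_j,z)-d(x_j,x_0))$; it says nothing about the behaviour of $b_{\xxi{1}}(x_k,x_0)$ as $k\to\infty$. A concrete counterexample to your purely geometric implication: let $Y$ be infinitely many rays glued at $x_0$ (the mechanism of Example~\ref{example:larger_ultralmit}) and $x_j$ a point on the $j$-th ray with $d(x_0,x_j)\to\infty$. Then $\xi=\ulim_j x_j$ is a genuine point of $\partial Y^{(1)}$, each ``parallelogram'' degenerates to a geodesic segment union a ray with $\angle_{x_0}(x_j,\xi)=\pi$, and $b_{\xi}(x_k,x_0)=\ulim_j\bigl(d(x_j,x_k)-d(x_j,x_0)\bigr)=+d(x_0,x_k)\to+\infty$. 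So angle $=\pi$ together with $\ulim_j x_j=\xxi{1}$ is perfectly consistent in a general $\cat{0}$ space; also note that for obtuse angles the $x_j$ are at distance $d(x_0,x_j)\to\infty$ from the ray $\ray{x_0,\xxi{1}}$, so they do not ``approach the ray'' in any sense.

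What rules this configuration out is not geometry alone but the harmonicity hypotheses of Setting~\ref{setting}, which your key steps never actually use. The paper's proof combines the $\mu$-harmonicity of $\gamma\mapsto b_{\xxi{i}}(f(\gamma),x_0)$ with the first-variation inequality (Proposition~\ref{prop:first_variation}) and Proposition~\ref{prop:convexity_of_horosphere} to obtain $\int_{\Gamma}\inner{V_c}{\tcprj_{x_0}(f(\gamma))}\,d\mu(\gamma)=0$, where $V_c$ is the direction of $\ray{x_0,\xxi{i}}$. Under your hypothesis the terms indexed by the $\gamma_j$ are negative, and these carry positive $\mu$-mass because $\supp\mu\supset\Gamma\setminus\{e\}$; hence some $f(\gamma')$ must satisfy $\inner{V_c}{\tcprj_{x_0}(f(\gamma'))}>0$, i.e.\ make an acute angle with $V_c$. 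The contradiction is then extracted by comparing $d(c_j(t),c'(t))$ (forced to exceed $\sqrt{2+\alpha'}\,t$ by the obtuse angle between $c_j$ and $c'$) with its ultralimit $d(c(t),c'(t))\le\sqrt{2}\,t$. If you want to keep your Busemann bookkeeping, you must at minimum inject this use of harmonicity to produce the ``acute direction'' $f(\gamma')$; without it the lemma is simply not true.
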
 

\begin{proof}
 What we have to show is that there exists $\delta > 0$ such that
\begin{equation*}
  \lambda\left(\{j \mid
 \angle_{x_0}(x_j,\xxi{i})\leq \pi-\delta \right\})=1. 
\end{equation*}
 Suppose not.  Then, since $\lambda$ is an ultrafilter, we get
 $\lambda (A_{\delta})=1$ for any $\delta>0$, where
 \begin{equation*}
 A_{\delta} = 
 \{i \mid \angle_{x_0}(x_j,\xxi{i})>\pi -\delta \}. 
 \end{equation*}
 Take $\delta$ so that $\delta < \pi/2$. 
 Since $f$ is $\mu$-harmonic and 
 $\int_{\Gamma}b_{\xxi{i}}(f(\gamma),x_0) d\mu(\gamma)=0$,  we have 
 \begin{equation*}
 \int_{\Gamma} \inner{\tcprj_{x_0}(f(\gamma))}{V_c} 
 d\mu (\gamma)=0
 \end{equation*}
 as in the proof of Lemma~\ref{lem:core} $(2)$, where 
 $V_c \in S_{x_0}Y^{(i)}$ corresponds to a geodesic ray with
 $c(0)=x_0$ and $c(\infty)=\xxi{i}$. 
 Since $\inner{\tcprj_{x_0} (x_j)}{V_c} < 0$ for  any 
 $j \in A_{\delta}$ by our assumption, 
 there must be $\gamma' \in \Gamma$ satisfying
 $\inner{\tcprj_{x_0} (f(\gamma'))}{V_c} > 0$.  
 Let  $d=d(x_0,f(\gamma'))$, and $c'$ a geodesic joining $x_0$ and
 $f(\gamma')$. 
 Take a geodesic $c_j$ joining $x_0$ and $x_j$.  
 Since we are assuming $d(x_0,x_j) \to \infty$, 
 we may assume that $d(x_0,x_j)> d$
 holds and hence $c_j(d)$ is defined for any $j \in A_{\delta}$. 
 Since $\ulim c_j(t) = c(t)$, we get
 \begin{equation*}
 \ulim d(c'(t),c_j(t)) = d(c'(t),c(t)), 
 \end{equation*}
 which means that, by setting 
 \begin{equation*}
 B_{\delta'}= \{j \in \N \mid  
 |d(c'(t),c_j(t))-d(c'(t),c(t))| < \delta'\}, 
 \end{equation*}
 we have $\lambda (B_{\delta'})=1$ for any $\delta'>0$.  
 Since $\lambda$ is an ultrafilter, we get 
 $\lambda (A_{\delta}\cap B_{\delta'})=1$. 
 By the triangle inequality on $(S_{x_0}Y^{(i)}, \angle_{x_0})$, 
 we get
 \begin{equation*}
 \angle_{x_0}(c_j(t), c'(t)) \geq 
 \angle_{x_0}(c_j(t), c(t)) - \angle_{x_0}(c'(t),c(t))
 \geq \pi -\delta - \angle_{x_0}(c'(t),c(t)), 
 \end{equation*}
 for any $j \in A_{\delta}$. 
 While $\inner{\tcprj_{x_0}(f(\gamma'))}{V_c} >0$ implies
 $\angle_{x_0}(c'(t), c(t))< \pi/2$, we see that 
 for a sufficiently small $\delta>0$, 
 \begin{equation*}
 \angle_{x_0}(c_j(t), c'(t)) \geq \pi/2 + \alpha
 \end{equation*}
 holds for some $\alpha>0$ and any $j \in A_{\delta}$.  
 Since the comparison angle is nonincreasing as $t\to 0$
 (\cite[p.~184, 3.1 Proposition]{bridson-haefliger}), 
 this implies that 
 $d(c_j(t),c'(t))^2 \geq (2+\alpha')t^2$ for some $\alpha'>0$ and 
 any $j \in A_{\delta}$ and $t>0$.  
 Since $\lambda(A_{\delta})=1$, we see that 
 $\ulim d(c_j(t),c'(t))^2 = d(c(t),c'(t))^2> 2t^2$. 
 On the other hand, for sufficiently small $t>0$, we have
 \begin{equation*}
 d(c'(t),c(t))^2 \leq 2t^2
 \end{equation*}
 by $\inner{\tcprj_{x_0}(f(\gamma'))}{V_c} >0$. 
 A contradiction. 
\end{proof}

%
%
\begin{Lemma}
 \label{lem:if_angle_is_positive}
 Under our setting, for any $i,j \in \N$ and  $x \in f(\Gamma)$, 
 the convex hull of $\ray{x,\xxi{i}}\cup \ray{x,\xxi{j}}$ is a flat
 sector $\sect[x]{\xxi{i}}{\xxi{j}}$ with angle $\zure$. 
\end{Lemma}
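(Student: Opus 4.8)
I would prove the statement first at the base point $x=x_0=f(e)$ and then note that the same argument applies at any $x=f(\gamma)$; by symmetry in $i,j$ I may assume $i<j$. The basic objects are the flat half-infinite parallelograms $F_{x_0x_k}^{(i)}$ furnished by the Setting: for fixed $i$ and each $k$, $F_{x_0x_k}^{(i)}$ is isometric to a Euclidean half-parallelogram with edge $\ray{x_0,\xxi{i}}$, base $[x_0,x_k]$, and interior angle $\beta_k^{(i)}:=\angle_{x_0}(x_k,\xxi{i})$ at $x_0$. Put $\theta_0^{(i)}:=\ulim_k\beta_k^{(i)}$, so that $\theta_0^{(1)}=\zure$ by the definition of $\zure$.

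The key point is that $\theta_0^{(i)}=\zure$ for every $i$. Inside the Euclidean parallelogram $F_{x_0x_k}^{(i)}$, the Busemann function $b_{\xxi{i}}$ is the Euclidean Busemann function of the edge direction, so $b_{\xxi{i}}(x_k,x_0)=-d(x_0,x_k)\cos\beta_k^{(i)}$. On the other hand, Lemma~\ref{lem:ultralimit_of_buseman_fct} gives $b_{\xxi{i}}(x_k,x_0)=\uulim[i]_m\bigl(d(x_m,x_k)-d(x_m,x_0)\bigr)$, and the right-hand side is the ultralimit of a fixed sequence of real numbers, hence independent of the level $i$ at which it is taken. Therefore $b_{\xxi{i}}(x_k,x_0)=b_{\xxi{1}}(x_k,x_0)$ for all $i$ and $k$, whence $\cos\beta_k^{(i)}=\cos\beta_k^{(1)}$, so $\beta_k^{(i)}=\beta_k^{(1)}$ and $\theta_0^{(i)}=\theta_0^{(1)}=\zure$.

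To produce the flat sector, I would take the level-$j$ ultralimit of the family $\{F_{x_0x_k}^{(i)}\}_k$. Since $d(x_0,x_k)\to\infty$ and $\uulim[j]_k x_k=\xxi{j}$, the bases $[x_0,x_k]$ converge to the ray $\ray{x_0,\xxi{j}}$ while the edge $\ray{x_0,\xxi{i}}$ is common to all members; as each $F_{x_0x_k}^{(i)}$ is flat and flatness passes to ultralimits, the limit region is a flat region containing $\ray{x_0,\xxi{i}}$ and $\ray{x_0,\xxi{j}}$ and meeting them at interior angle $\uulim[j]_k\beta_k^{(i)}=\theta_0^{(i)}=\zure$, which is $<\pi$ by Lemma~\ref{lem:zure<pi}. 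Identifying this region with the convex hull of the two rays, via the uniqueness of a flat sector of given angle (Remark~\ref{rem:sector} and \cite[p.~283, 9.9 Corollary]{bridson-haefliger}), shows that the convex hull is the flat sector $\sect[x_0]{\xxi{i}}{\xxi{j}}$ of angle $\zure$. For an arbitrary $x=f(\gamma)$ the same two steps go through verbatim with the parallelograms $F_{xx'}^{(i)}$, which the Setting provides for all $x,x'\in f(\Gamma)$; the resulting angle is again $\zure$ by the computation of the previous paragraph applied at $x$ (equivalently, because $\ray{x,\xxi{i}}$ is parallel to $\ray{x_0,\xxi{i}}$ inside the flat parallelogram $F_{x_0x}^{(i)}$).

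The step I expect to be the main obstacle is making the ultralimit of the flat subspaces $\uulim[j]_k F_{x_0x_k}^{(i)}$ precise: one must check that the limit region is exactly the convex hull of $\ray{x_0,\xxi{i}}$ and $\ray{x_0,\xxi{j}}$, that it is genuinely flat, and that its angle is read off correctly as $\uulim[j]_k\beta_k^{(i)}$ even though the angle function is only lower semicontinuous in the original space. Closely related is the bookkeeping across the tower $Y\subset Y^{(1)}\subset Y^{(2)}\subset\cdots$; in particular the level-independence of ultralimits of fixed real sequences, which is exactly what drives the identity $b_{\xxi{i}}(x_k,x_0)=b_{\xxi{1}}(x_k,x_0)$ and hence the regularity $\theta_0^{(i)}=\zure$, must be invoked carefully.
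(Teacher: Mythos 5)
Your proposal follows essentially the same route as the paper's proof: both rest on the flatness of the half-infinite parallelograms $F_{xx_k}^{(i)}$, on the observation that the ultralimit of a fixed bounded real sequence does not depend on the level $i$ at which it is taken, and on assembling the flat sector from the triangles $\tri{x}{\cc{x}{(i)}(t)}{\cc{x}{(j)}(t)}$, whose side $d(\cc{x}{(i)}(t),\cc{x}{(j)}(t))=2t\sin(\zure/2)$ is computed at every scale $t$ and forces flatness via the monotonicity of comparison angles. The one genuine variation is how you obtain the level-independence of the angles $\angle_{x}(x_k,\xxi{i})$: you read it off the Busemann values $b_{\xxi{i}}(x_k,x_0)=-d(x_0,x_k)\cos\beta_k^{(i)}$ together with Lemma~\ref{lem:ultralimit_of_buseman_fct}, whereas the paper reads the same information off the chord distances $d(\cc{x}{x_k}(t),\cc{x}{(i)}(t))=\ulim_m d(\cc{x}{x_k}(t),\cc{x}{x_m}(t))$. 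Both computations are valid inside the flat parallelograms and buy the same thing, so this is a cosmetic difference. The obstacle you flag (making $\uulim[j]_kF_{x_0x_k}^{(i)}$ precise) is exactly what the paper avoids by working with the triangles rather than with an ultralimit of regions.

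One step is under-justified: the lemma asserts the angle at an arbitrary $x=f(\gamma)$ is $\zure$, but $\zure$ is defined as a limit of angles at $x_0=f(e)$. Repeating your computation at $x$ only yields that the sector at $x$ has angle $\theta_x:=\ulim_k\angle_{x}(x_k,\xxi{1})$, and the parenthetical appeal to parallelism of $\ray{x,\xxi{i}}$ and $\ray{x_0,\xxi{i}}$ inside $F_{x_0x}^{(i)}$ does not by itself identify $\theta_x$ with $\theta_{x_0}$. The fix is the one the paper uses: once the convex hull of $\ray{x,\xxi{i}}\cup\ray{x,\xxi{j}}$ is known to be a flat sector of angle $\theta_x$, Remark~\ref{rem:sector} gives $\theta_x=d_{\angle}(\xxi{i},\xxi{j})$, which is independent of the basepoint, whence $\theta_x=\theta_{x_0}=\zure$.
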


\begin{proof}
 By our assumption, for each $x$ and $j$, 
 $\ray{x,\xxi{1}} \cup [x,x_j]\cup \ray{x_j,\xxi{1}}$
 bounds a half inifinite parallelogram, 
 which we denote by $F_{xx_j}^{(1)}$.
 Let $\cc{x}{x_j} \colon [0,d(x,x_j)]\rightarrow Y$ be a geodesic joining
 $x$ and $x_j$, and 
 $\cc{x}{(1)}\colon \ray{0,\infty}\rightarrow Y^{(1)}$ a geodesic ray
 joining $x$ and $\xxi{1}$, which is obtained as 
 $\cc{x}{(1)}(t)=\uulim[1]_j \cc{x}{x_j}(t)$. 
 Set $\theta_x = \ulim \angle_{x}(x_j,\xxi{1})$ and fix $x$ for a while. 
 Then, since $F_{xx_j}^{(1)}$ is flat, 
 $\angle_{x}(x_j, \xxi{1})=
 2\arcsin \left(d(\cc{x}{(1)}(t),\cc{x}{x_j}(t))/2t\right)$
 for any $t>0$, 
 and thus $\ulim_j \angle_{x} (x_j,\xxi{1})=\theta_x$ 
 implies that 
 \begin{equation}
 \label{eq:euclidean_angle}
  \ulim_j d(\cc{x}{(1)}(t),\cc{x}{x_j}(t)) = 2t\sin \frac{\theta_x}{2}
 \end{equation}
 for any $t>0$. 

 Now consider $Y^{(2)}=\ulim (Y^{(1)},p_0)$. 
 Then we have a geodesic ray $\cc{x}{(2)} = \uulim[2]_j \cc{x}{x_j}$ as
 before.  
 Then $\xxi{2}=\cc{x}{(2)}(\infty)$. 
 By recalling (\ref{eq:euclidean_angle}) with $x=f(e)$ and our
 assumption $\zure>0$, $\cc{f(e)}{(2)}$ cannot coincide with
 $\cc{f(e)}{(1)}$, and hence $\xxi{2}\not= \xxi{1}$. 
 Note that, for each $t>0$, we have 
 $d(\cc{x}{(1)}(t),\cc{x}{(2)}(t))=2t \sin (\theta_x/2)$, hence
 $\angle_{x}(\cc{x}{(1)}, \cc{x}{(2)})=\theta_x$ by
 \cite[p.~184, 3.1 Proposition]{bridson-haefliger}. 
 Therefore, each triangle
 $\tri{x}{\cc{x}{(1)}(t)}{\cc{x}{(2)}(t)}$ is isometric to a
 Euclidean triangle by \cite[p.~180 2.9 Proposition]{bridson-haefliger}, 
 and hence $\ray{x,\xxi{1}} \cup \ray{x,\xxi{2}}$ bounds a flat
 sector with angle $\theta_x$ in $Y^{(2)}$. 
 Thus $\theta_x=d_{\angle}(\xxi{1},\xxi{2})$, and
 we see that $\theta_x=\zure$ for all $x \in f(\Gamma)$. 
 Since we know $\zure<\pi$ by Lemma~\ref{lem:zure<pi}, 
 these flat sectors do not degenerate to a geodesic line. 
 By our assumption, 
 $\ray{x,\xxi{2}}\cup [x,x_j]\cup\ray{x_j,\xxi{2}}$ also bounds 
 a half inifite parallelogram. 
 Note that, for fixed $x,x'\in f(\Gamma)$, when we take an ultralimit for
 the first time and obtain $Y^{(1)}$, 
 we have 
 $d(\cc{x}{x'}(t),\cc{x}{(1)}(t))=\ulim_j d(\cc{x}{x'}(t),\cc{x}{x_j}(t))$, 
 while when we take it for the second time and obtain
 $Y^{(2)}$, we must have 
 $d(\cc{x}{x'}(t),\cc{x}{(2)}(t))=\ulim_j d(\cc{x}{x'}(t), \cc{x}{x_j}(t))$. 
 In particular, 
 $\angle_{x}(x_j, \xxi{2})=\angle_{x}(x_j, \xxi{1})$
 for all $j$ and $x\in f(\Gamma)$, and we have 
 $\ulim_j \angle_{x}(x_{j}, \xxi{2})=
 \ulim_j \angle_{x}(x_j, \xxi{1})= \zure$. 
 Thus, in $Y^{(3)}$, we see that
 $\ray{x,\xxi{i}} \cup \ray{x,\xxi{j}}$ bounds 
 a flat sector with angle $\zure$ for each $i,j=1,2,3$ with $i\not=j$. 
 Repeat this $n$ times, and denote by $\xxi{i}=\uulim[i]x_j$, 
 an ultralimit of $\{x_j\}$  taken in $Y^{(i-1)}$.
 Then we get a flat sector $\sect[x]{\xxi{i}}{\xi^{(l)}}$
 bounded by $\ray{x,\xxi{i}} \cup \ray{x,\xi^{(l)}}$ with angle
 $\zure$ as $\uulim[l]F_{xx_j}^{(i)}$ for any $1 \leq i<l \leq n$. 
\end{proof}

%
%
 Take any $x =f(\gamma) \in f(\Gamma)$, 
 $\eta \in [\xxi{1},\xxi{2}] \subset \partial Y^{(2)}$, and 
 $t'$ so that 
 $\cc{x}{\eta}(t') \in [\cc{x}{(1)}(t),\cc{x}{(2)}(t)]$. 
 Since $\tri{x}{\cc{x}{(1)}(t)}{\cc{x}{(2)}(t)}$ lies in a flat
 sector $\sect[x]{\xxi{1}}{\xxi{2}}$ by
 Lemma~\ref{lem:if_angle_is_positive},  we have
\begin{equation*}
 d(x,\cc{x}{\eta}(t')) = (1-s)d(x,\cc{x_k}{(1)}(t))
 + s d(x,\cc{x}{(2)}(t)), 
\end{equation*}
 where $s\in [0,1]$ is determined by 
 $d(\cc{x}{(1)}(t),\cc{x}{\eta}(t'))=s
 d(\cc{x}{(1)}(t),\cc{x}{(2)}(t))$. 
 Now take any $p \in Y$. 
 Then by the conexity of the distance function, we see that 
\begin{equation*}
 d(p,\cc{x}{\eta}(t')) \leq
 (1-s)d(p,\cc{x}{(1)}(t)) + s d(p,\cc{x}{(2)}(t)). 
\end{equation*}
 Thus, by letting $t',t\to \infty$, we get
\begin{equation*}
\begin{split}
  & b_{\eta}(p, x)  = \lim_{t'\to \infty}
  d(p,\cc{x}{\eta}(t'))-d(x,\cc{x}{\eta}(t')) \\
 \leq & \lim_{t\to \infty} \left(\left(
 (1-s)d(p,\cc{x}{(1)}(t)) + sd(p,\cc{x}{(2)}(t))\right) \right.\\
 & \left. \phantom{\lim_{t\to \infty} \ \ } -
  \left((1-s)d(x,\cc{x}{(1)}(t))+sd(x,\cc{x}{(2)}(t))\right) \right)\\
 =& \lim_{t\to \infty}\left(
 (1-s) \left(d(p,\cc{x}{(1)}(t)) -t \right)
  + s\left(d(p,\cc{x}{(2)}(t))-t \right)
 \right)\\
 =& (1-s)b_{\xxi{1}}(p,x)+ sb_{\xxi{2}}(p,x). 
\end{split}
\end{equation*}
 Since $\xxi{i}=\uulim[i]_j x_j$, for any $p \in Y$, we have
 $b_{\xxi{1}}(p,x) = b_{\xxi{2}}(p,x)$ by
 Lemma~\ref{lem:ultralimit_of_buseman_fct}, and hence
\begin{equation*}
  b_{\eta}(p, x) \leq b_{\xxi{i}}(p,x)
\end{equation*}
 for $i=1,2$. 

 Since $p \mapsto b_{\eta}(p,x)$ is a convex function and 
 $f$ is a $\mu$-harmonic map, we see that a function
 $\varphi_{\eta}\colon \gamma' \mapsto b_{\eta}(f(\gamma'),f(\gamma))$
 is $\mu$-subharmonic; for any $\gamma' \in \Gamma$, we have
\begin{equation*}
 \Delta \varphi_{\eta}(\gamma') 
 = b_{\eta}(f(\gamma'),f(\gamma)) - 
  \int_{\Gamma} b_{\eta}(f(\gamma' \gamma''),f(\gamma)) d\mu(\gamma'')
  \leq 0. 
\end{equation*}
 On the other hand, since  we already know that 
 $\gamma' \mapsto b_{\xxi{i}}(f(\gamma'),f(\gamma))$ is $\mu$-harmonic,
 noting $b_{\xi}(x,x)=0$, we see that
\begin{equation}
 \label{eq:buseman_eta}
 \begin{split}
  0 & \geq b_{\eta}(f(\gamma),f(\gamma)) 
     - \int_{\Gamma} b_{\eta}(f(\gamma \gamma'),f(\gamma)) d\mu(\gamma') \\
    & \geq  b_{\xxi{i}}(f(\gamma),f(\gamma)) 
     - \int_{\Gamma} b_{\xxi{i}}(f(\gamma \gamma'),f(\gamma)) d\mu(\gamma') 
     = 0. 
 \end{split}
\end{equation}
 Noting that the computation above is valid for any 
 $x=f(\gamma) \in f(\Gamma)$, and that 
\begin{equation*}
\begin{split}
  &  b_{\eta}(f(\gamma'),f(\gamma)) 
  -\int_{\Gamma} b_{\eta}(f(\gamma' \gamma''),f(\gamma)) d\mu(\gamma'') \\
  =&   b_{\eta}(f(\gamma'),f(\gamma'))+ b_{\eta}(f(\gamma'),f(\gamma)) \\
   & \ -\int_{\Gamma} b_{\eta}(f(\gamma' \gamma''),f(\gamma')) + 
    b_{\eta}(f(\gamma'),f(\gamma)) \ d\mu(\gamma'') \\
 =&   b_{\eta}(f(\gamma'),f(\gamma')) 
  -\int_{\Gamma} b_{\eta}(f(\gamma' \gamma''),f(\gamma')) d\mu(\gamma'')
\end{split}
\end{equation*}
 holds, we see a that $\gamma' \mapsto b_{\eta}(f(\gamma'),f(\gamma))$
 is also $\mu$-harmonic. 
 Note that, for any $\gamma, \gamma'\in \Gamma$, we also obtain 
 $b_{\eta}(f(\gamma'),f(\gamma))=b_{\xxi{i}}(f(\gamma'),f(\gamma))$
 by the equality in (\ref{eq:buseman_eta}).
 In particular, by 
 Lemma~\ref{lem:core} $(2)$, 
 for any $x=f(\gamma)$ and $x'=f(\gamma')$,  the convex hull of 
 $\ray{x,\eta}\cup [x,x']\cup \ray{x',\eta}$ is a half-infinite
 parallelogram, which can be expressed as 
 $F_{xx'}^{\eta}=\bigcup_{p \in [x,x']}c_p^{\eta}([0,\infty))$. 

 Take $p_j \in [x,x_j]$ so that $d(x,p_j)=d$ for fixed $d$. 
 Then $\uulim[3]_j c_{p_j}^{\eta}$ turns out to be a geodesic ray
 $c_{\infty}$ with $c_{\infty}(0)=\uulim[3]_j p_j = q\in \ray{x, \xxi{3}}$. 
 Since $d(c_{\infty}(t), \cc{x}{\eta}(t))=d$ holds for any $t \geq 0$,  
 $c_{\infty}(\infty)=\eta$ and $c_{\infty}$ should be denoted by
 $c_q^{\eta}$. 
 Set 
 $F_{x(3)}^{\eta}=\bigcup_{q \in \ray{x,\xxi{3}}}
 c_q^{\eta}([0,\infty))$. 
 Then $F_{x(3)}^{\eta}$ can be viewed as an ultralimit of
 $F_{xx_j}^{\eta}$.  
 Since each $F_{xx_j}^{\eta}$ is flat, 
 $\tri{x}{c_{x}^{x_j}(t)}{\cc{x}{\eta}(t)} \subset
 F_{xx_j}^{\eta}$ forms a flat triangle, and hence 
 $\uulim[3]_j \tri{x}{c_{x}^{x_j}(t)}{\cc{x}{\eta}(t)}$ is also flat. 
 Letting $t\to \infty$, we see that 
 $F_{x(3)}^{\eta}$ is also flat, and hence we may write
 $F_{x(3)}^{\eta}=\sect[x]{\eta}{\xxi{3}}$. 
 By the same argument as we have just seen, we conclude that, for any 
 $\eta \in [\xxi{1},\xxi{2}]$ and 
 $\eta' \in [\eta,\xxi{3}]\subset \partial Y^{(3)}$, the convex hull of 
 $\ray{x,\eta'}\cup [x,x'] \cup \ray{x',\eta'}$ is a half-infinite
 parallelogram. And we get 
 $\sect[x]{\eta'}{\xxi{4}}=F_{x(4)}^{\eta'} \subset Y^{(4)}$ by taking
 an ultralimit $\uulim[4]_jF_{xx_j}^{\eta'}$. 
 Repeating this leads us to the following definition:

\noindent
\begin{minipage}[c]{7cm}
  Let
 \begin{equation*}
\begin{split}
  & C_{x}(i,j)=\sect[x]{\xxi{i}}{\xxi{j}}, \\
 & C_{x}(i, \dots, j, k)
 = \bigcup_{\eta \in \partial C(i, \dots, j)} 
    \sect[x]{\eta}{\xxi{k}},
\end{split} 
\end{equation*}
 and set 
 \begin{equation*}
  \cone[x]{m}=C_{x}(1, \dots, m).
 \end{equation*}
 Note that, at this moment, $\angle_{x}(\eta,\xxi{m})$ could be
 equal to $\pi$ and $\sect[x]{\eta}{\xxi{m}}$ could degenerate to
 a geodesic line.
\end{minipage}
\begin{minipage}[c]{6cm}
\hspace{3cm}
\includegraphics[scale=.8]{fig.8} 
\end{minipage}

\subsection{Nondegeneracy of a cone $\cone[x]{m}$}
\label{sec:nondegeneration_of_cone}

In this subsection, we prepare for the proof of
Proposition~\ref{prop:simplex}, which asserts that $\cone[x]{m}$ is a
Euclidean cone over a Euclidean regular $(m-1)$-simplex; we show that a
cone $\cone[x]{m}$ does not degenerate under our assumption $\zure>0$.

%
%

\begin{Lemma}
\label{lem:sector_exists}
 Under our setting, for any $x \in f(\Gamma)$, $q \in \ray{x,\xxi{m}}$,
 and $\eta, \eta' \in \partial \cone[x]{m}$, 
 $\ray{q,\eta}\cup \ray{q,\eta'}$ bounds a flat sector or a flat 
 half-plane,  which we denote by $\sect[q]{\eta}{\eta'}$. 
 In particular, for any $\eta, \eta' \in \partial \cone[x]{m}$, a
 geodesic $[\eta,\eta']$ with respect to $d_{\angle}$ is contained in
 $\partial \cone[x]{m}$, namely, $\partial \cone[x]{m}$ is convex with
 respcet to $d_{\angle}$. 
\end{Lemma}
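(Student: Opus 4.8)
The plan is to prove Lemma~\ref{lem:sector_exists} by induction on $m$, using the inductive construction of $\cone[x]{m}$ together with the sliding lemma (Lemma~\ref{lem:parallelogram_and_sliding}) as the engine that propagates flatness from the apex $x$ to an arbitrary point $q$ on the axis $\ray{x,\xxi{m}}$. The base case $m=2$ records that $\cone[x]{2}=\sect[x]{\xxi{1}}{\xxi{2}}$ is a flat sector with angle $\zure$ by Lemma~\ref{lem:if_angle_is_positive}; here $\partial \cone[x]{2}=[\xxi{1},\xxi{2}]$ and there is essentially nothing to slide, so convexity of $\partial \cone[x]{2}$ is immediate.

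For the inductive step, suppose the claim holds for $\cone[x]{m-1}$, so that $\partial \cone[x]{m-1}$ is $d_{\angle}$-convex and every pair of its boundary points spans a flat sector based at any point of the relevant axis. By the construction preceding the lemma, $\cone[x]{m}=\bigcup_{\eta \in \partial \cone[x]{m-1}} \sect[x]{\eta}{\xxi{m}}$, and each $\sect[x]{\eta}{\xxi{m}}$ is a flat sector with apex $x$ obtained as an ultralimit $\uulim[m]_j F_{xx_j}^{\eta}$ of half-infinite parallelograms; moreover $b_{\eta}(\cdot,x)=b_{\xxi{i}}(\cdot,x)$ on $f(\Gamma)$, which is exactly what earlier gave flatness of these sectors. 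First I would fix $\eta,\eta' \in \partial\cone[x]{m}$ and $q \in \ray{x,\xxi{m}}$, and reduce to showing $\ray{q,\eta}\cup\ray{q,\eta'}$ bounds a flat sector. I would take $p_1=x$, $p_2=q$ (both on the geodesic ray $\ray{x,\xxi{m}}$) and apply Lemma~\ref{lem:parallelogram_and_sliding}(2) with the two boundary directions $\eta,\eta'$: its hypotheses are precisely that $\sect[p_i]{\eta}{\eta'}$ are flat sectors and that for each interior point and each $\xi\in[\eta,\eta']$ the relevant half-infinite parallelogram $F^{\xi}$ exists. The existence of these $F^{\xi}$ follows from the inductive construction (the half-infinite parallelograms $F_{xx'}^{\xi}$ built from $\mu$-harmonicity of $b_{\xi}$), and flatness of $\sect[x]{\eta}{\eta'}$ comes from the inductive hypothesis applied inside $\partial\cone[x]{m-1}$ together with the newly adjoined direction $\xxi{m}$. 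Lemma~\ref{lem:parallelogram_and_sliding}(2) then yields that $\sect[q]{\eta}{\eta'}$ is a flat sector (or, when $d_{\angle}(\eta,\eta')=\pi$, a flat half-plane, as recorded in Remark~\ref{remark:when_angle_is_pi}).

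Having produced a flat sector $\sect[q]{\eta}{\eta'}$ for every pair $\eta,\eta'$, the $d_{\angle}$-convexity of $\partial\cone[x]{m}$ drops out: flatness of $\sect[q]{\eta}{\eta'}$ forces $\angle_q(\eta,\eta')=d_{\angle}(\eta,\eta')$ by Remark~\ref{rem:sector}, so the geodesic $[\eta,\eta']$ in $(\partial Y^{(m)},d_{\angle})$ is realized by the boundary of this flat sector and hence lies in $\partial\cone[x]{m}$. I would then note that the endpoints $\eta,\eta'$ range over the full boundary, which is swept out as the ultralimit directions vary, so convexity holds for all of $\partial\cone[x]{m}$.

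The main obstacle I anticipate is the bookkeeping around \emph{degeneration}: the construction explicitly warns that $\angle_x(\eta,\xxi{m})$ could equal $\pi$, so $\sect[x]{\eta}{\xxi{m}}$ may collapse to a geodesic line, and correspondingly $\sect[q]{\eta}{\eta'}$ could be a flat half-plane rather than a genuine sector. I would handle this by treating the angle-$\pi$ case separately via the Flat Strip Theorem (as in Remark~\ref{remark:when_angle_is_pi}), checking that Lemma~\ref{lem:parallelogram_and_sliding} remains applicable in the degenerate regime. The second delicate point is verifying that the sliding map $\trans{s}{\xi}$ is genuinely available along the axis $\ray{x,\xxi{m}}$ rather than only along a compact segment $[p_1,p_2]$; since $q$ can be arbitrarily far along the ray, I would either apply the lemma on each finite segment $[x,q]$ and pass to the limit, or invoke the half-infinite nature of the parallelograms $F^{\xi}$ directly. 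Neither obstruction looks fatal, but the case analysis is where the real care is needed; the flatness propagation itself is a clean application of the already-established sliding lemma.
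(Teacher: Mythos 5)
Your overall strategy (induction on $m$, with Lemma~\ref{lem:parallelogram_and_sliding} propagating flatness along the axis $\ray{x,\xxi{m}}$) is the right one, but the specific application you propose is circular. You want to apply Lemma~\ref{lem:parallelogram_and_sliding}~$(2)$ with $p_1=x$, $p_2=q$ and the directions $\eta,\eta'$ themselves. The hypotheses of that lemma require that $\ch{\ray{p_i,\eta}\cup\ray{p_i,\eta'}}$ be a flat sector for \emph{both} $i=1,2$; for $p_2=q$ this is exactly the conclusion you are trying to reach. Even at $p_1=x$ the required flatness is not supplied by the inductive hypothesis: a general $\eta\in\partial\cone[x]{m}$ has the form $\eta\in[\tilde\eta,\xxi{m}]$ with $\tilde\eta\in\partial\cone[x]{m-1}$, and for two such points with $\tilde\eta\neq\tilde\eta'$ the pair $(\eta,\eta')$ lies neither in $\partial\cone[x]{m-1}\times\partial\cone[x]{m-1}$ (covered by induction) nor is it of the form $(\xi,\xxi{m})$ (covered by the construction of $\cone[x]{m}$); flatness of $\sect[x]{\eta}{\eta'}$ is the $q=x$ case of the lemma, not an input. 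The same problem infects the other hypothesis: you need the half-infinite parallelograms $F^{\xi}$ for every $\xi\in[\eta,\eta']$, but you do not yet know that $[\eta,\eta']$ lies in $\partial\cone[x]{m}$ --- that is the convexity conclusion you are after.

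The missing idea is a reduction to the base directions. The paper first passes to $\tilde\eta,\tilde\eta'\in\partial\cone[x]{m-1}$ with $\eta\in\partial\sect[x]{\xxi{m}}{\tilde\eta}$ and $\eta'\in\partial\sect[x]{\xxi{m}}{\tilde\eta'}$, for which all hypotheses of Lemma~\ref{lem:parallelogram_and_sliding} are genuinely available: the flat sectors $\sect[x]{\tilde\eta}{\tilde\eta'}$ and $\sect[x_j]{\tilde\eta}{\tilde\eta'}$ come from the inductive hypothesis (applied at each point of $f(\Gamma)$), and the half-infinite parallelograms $F_{xx_j}^{\xi}$ for $\xi\in[\tilde\eta,\tilde\eta']$ come from \S\ref{sec:construction-conem}. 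Part~$(2)$ applied with $p_1=x$, $p_2=x_j$, followed by an ultralimit, yields the flat sectors $\sect[q]{\tilde\eta}{\tilde\eta'}$ for $q\in\ray{x,\xxi{m}}$. The flat sector spanned by the original pair $(\eta,\eta')$ at $q$ is then extracted from part~$(4)$ rather than part~$(2)$: since $\ray{q,\eta}\subset\sect[x]{\xxi{m}}{\tilde\eta}$ and $\ray{q,\eta'}\subset\sect[x]{\xxi{m}}{\tilde\eta'}$, the triangles $\tri{q}{\cc{q}{\eta}(t(s))}{\cc{q}{\eta'}(t'(s))}$ with the latter two vertices in $F^{\tilde\eta}\cap R_{ss}$ and $F^{\tilde\eta'}\cap R_{ss}$ are flat, and letting $s\to\infty$ produces $\sect[q]{\eta}{\eta'}$. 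Your handling of the degenerate case $d_{\angle}=\pi$ via Remark~\ref{remark:when_angle_is_pi} and the deduction of $d_{\angle}$-convexity from the flat sectors are fine, but without the detour through $(\tilde\eta,\tilde\eta')$ and part~$(4)$ the core of the argument does not close.
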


\begin{proof}
 If $m=2$, then 
 $C_{x}^{(2)}=\sect[x]{\xxi{1}}{\xxi{2}}$. 
 For any $q \in C^{(2)}$ and $\eta, \eta' \in \partial C_{x}^{(2)}$, 
 we have
 $\ray{q,\eta}\cup \ray{q,\eta'} \subset
 \sect[x]{\xxi{1}}{\xxi{2}}$, 
 and since $\sect[x]{\xxi{1}}{\xxi{2}}$ is a flat sector, the convex
 hull of $\ray{q,\eta}\cup \ray{q,\eta'}$ must be a flat sector 
 $\sect[q]{\eta}{\eta'}$ contained in $\sect[x]{\xxi{1}}{\xxi{2}}$, 
 and the assertion is obvious.

 Suppose that the assertion is true for $\cone[x]{m-1}$.
 Take any $\eta, \eta' \in \partial \cone[x]{m}$, and take 
 $\tilde\eta, \tilde\eta' \in \cone[x]{m-1}$ so that
 $\eta \in \partial \sect[x]{\xxi{m}}{\tilde \eta}$ and 
 $\eta' \in \partial \sect[x]{\xxi{m}}{\tilde \eta'}$. 
 (If $\eta=\xxi{m}$, then any choice of 
 $\tilde\eta \in \partial \cone[x]{m-1}$ makes sense in what follows.)
 Suppose first $d{\angle}(\tilde\eta,\tilde\eta')<\pi$.
 Then, by our assumption, for any $x \in f(\Gamma)$, 
 $\ch{\ray{x,\tilde\eta}\cup\ray{x,\tilde\eta'}}$ is a nondegenerate
 flat sector. 
 Also, as we have seen in the last paragraph of the preceding
 subsection, for any 
 $\xi \in [\tilde\eta,\tilde\eta']\subset \partial \cone[x]{m-1}$, 
 we have a half-infinite parallelogram 
 $F_{xx_j}^{\xi}=\bigcup_{q' \in [x,x_j]}\cc{q'}{\xi}(\ray{0,\infty})$. 
 By applying Lemma~\ref{lem:parallelogram_and_sliding} $(2)$ to
 $\sect[x]{\tilde\eta}{\tilde\eta'}\cup\sect[x_j]{\tilde\eta}{\tilde\eta'}$, 
 namely, by setting $p_1=x$, $p_2=x_j$, $\eta_1=\tilde\eta$, and
 $\eta_2=\tilde\eta'$ in Lemma~\ref{lem:parallelogram_and_sliding}, 
 we see that, for any $q_j \in [x,x_j]$, 
 $\ch{\ray{q_j,\tilde\eta}\cup \ray{q_j,\tilde\eta'}}$ is a flat sector. 
 By taking an ultralimit, we also see that, 
 for any  $q \in \ray{x,\xxi{m}}$, 
 $\ch{\ray{q,\tilde\eta}\cup \ray{q,\tilde\eta'}}$ is a flat sector. 
 Furthermore, 
 $F_{xx_j}^{\xi}=\bigcup_{q' \in [x,x_j]}\cc{q'}{\xi}(\ray{0,\infty})$
 being a half-infinite parallelogram implies that,  
 by taking an ultralimit again, 
 for any $\xi \in [\tilde\eta,\tilde\eta']$, 
 $F_{x(m)}^{\xi}=
 \bigcup_{q \in \ray{x,\xxi{m}}}\cc{q}{\xi}(\ray{0,\infty})$ 
 is a flat sector. 
 Therefore we can apply Lemma~\ref{lem:parallelogram_and_sliding} to 
 $\sect[x]{\tilde\eta}{\tilde\eta'} \cup 
 \sect[\cc{x}{(m)}(T)]{\tilde\eta}{\tilde\eta'}$
 for any $T>0$. 
 Note that 
\begin{equation*}
  F_{x (m)}^{\tilde\eta}=\sect[x]{\xxi{m}}{\tilde\eta}, 
   \quad
 F_{x (m)}^{\tilde\eta'}= \sect[x]{\xxi{m}}{\tilde\eta'}, 
\end{equation*} 
 and that, for any $q \in [x,\xxi{m})$, we have
\begin{equation*}
  c_q^{\eta}(\ray{0,\infty}) \subset F_{x (m)}^{\tilde\eta}, \quad
  c_q^{\eta'}(\ray{0,\infty}) \subset F_{x (m)}^{\tilde\eta'}. 
\end{equation*}
 Therefore, for $s>0$, 
 we can take $t(s)$ and $t'(s)$ so that
\begin{equation*}
  \cc{q}{\eta}(t(s)) \in R_{ss} \cap F_{x (m)}^{\tilde\eta}, 
 \ \text{ and }\ 
  \cc{q}{\eta'}(t'(s)) \in R_{ss} \cap F_{x (m)}^{\tilde\eta'}, 
\end{equation*} 
 where $R_{ss}$ is the convex hull of 
 $[\cc{x}{\tilde\eta}(s),\cc{x}{\tilde\eta'}(s)] \cup 
  [\cc{\cc{x}{(m)}(T)}{\tilde\eta}(s),\cc{\cc{x}{(m)}(T)}{\tilde\eta'}(s)]$ 
 for suitable $T>0$.
 Applying Lemma~\ref{lem:parallelogram_and_sliding} $(4)$ to 
 $\sect[x]{\tilde\eta}{\tilde\eta'} \cup 
 \sect[\cc{x}{(m)}(T)]{\tilde\eta}{\tilde\eta'}$, 
 we see that $\tri{q}{c_q^{\eta}(t(s))}{c_q^{\eta'}(t'(s))}$ is a flat
 triangle. 
 Letting $s \to \infty$ shows that $\ch{\ray{q,\eta}\cup \ray{q,\eta'}}$
 is a flat sector. 
 Also, since 
\begin{equation*}
  \ch{\ray{q,\eta}\cup \ray{q,\eta'}} 
  = \sect[q]{\eta}{\eta'} \subset 
 \bigcup_{q \in \ray{x,\xxi{m}}}\sect[q]{\tilde\eta}{\tilde\eta'}
 = \bigcup_{\xi \in [\tilde\eta,\tilde\eta']} \sect[x]{\xi}{\xxi{m}}, 
\end{equation*} 
 we see that a geodesic 
 $[\eta,\eta']=\partial\sect[q]{\eta}{\eta'}$ lies in
 $\partial\cone[x]{m}$. 

 Now suppose that $\angle_{x}(\tilde\eta,\tilde\eta')=\pi$.  
 Then
  $\sect[x]{\tilde\eta}{\xxi{m}}\cup \sect[x]{\tilde\eta'}{\xxi{m}}$
 is a half-plane that can be expressed as 
 $\bigcup_{q \in \ray{x,\xxi{m}}} 
 (\ray{q,\tilde\eta}\cup \ray{q,\tilde\eta'})$
 as we have seen in Remark~\ref{remark:when_angle_is_pi}. 
 Clearly, this half-plane contains $\ray{x,\eta}, \ray{x,\eta'}$. 
 Thus the assertion is also clear in this case.
\end{proof}

%
%
 The lemma above tells us that $\cone[x_k]{m}$ is not degenerating in
 the following sense. 

\begin{Proposition}
 \label{prop:less_than_pi}
 Take any $x\in f(\Gamma)$. 
 Under our setting, the following are true. 

 $(1)$ For any $\eta \in \partial \cone[x]{m-1}$, 
 $0<\angle_{x}(\eta,\xxi{m}) < \pi$. 

 $(2)$ For any $\eta_1, \eta_2 \in \partial \cone[x]{m-1}$ with
 $\eta_1\not=\eta_2$, 
 $\sect[x]{\eta_1}{\xxi{m}}\cap\sect[x]{\eta_2}{\xxi{m}}
 =\ray{x,\xxi{m}}$.

 $(3)$ For any $\eta_1, \eta_2 \in \partial \cone[x]{m-1}$ and 
 $q,q' \in \ray{x,\xxi{m}}$ with $\eta_1\not= \eta_2$ and 
 $q\not=q'$, 
 $\sect[q]{\eta_1}{\eta_2} \cap \sect[q']{\eta_1}{\eta_2}=\emptyset$. 
\end{Proposition}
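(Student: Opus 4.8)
The plan is to prove the three assertions simultaneously by induction on $m$, reducing every statement to Euclidean or spherical geometry inside genuinely flat pieces. The inductive data I would carry along is that $\partial \cone[x]{m-1}$, equipped with the angular metric $d_{\angle}$, is a nondegenerate spherical simplex whose vertices $\xxi{1},\dots,\xxi{m-1}$ are pairwise at distance $\zure$ (this pairwise equidistance is exactly what Lemma~\ref{lem:if_angle_is_positive} provides), together with the fact that $\xxi{m}$, produced by one further ultralimit, is at angular distance $\zure$ from each of these vertices. The flat-sector machinery of Lemmas~\ref{lem:sector_exists} and \ref{lem:parallelogram_and_sliding} then lets me treat all the relevant triangles and parallelograms as lying in $\R^2$, so that the content of the proposition is the \emph{transversality} of the newly added ray $\ray{x,\xxi{m}}$ to the lower cone.

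For part $(1)$, Lemma~\ref{lem:sector_exists} applied with $q=x$ already shows that $\ray{x,\eta}\cup\ray{x,\xxi{m}}$ bounds a flat sector $\sect[x]{\eta}{\xxi{m}}$, so its opening angle $\angle_x(\eta,\xxi{m})$ lies automatically in $[0,\pi]$; the task is to exclude the two endpoints. To exclude $0$, I would observe that $\angle_x(\eta,\xxi{m})=0$ forces $\xxi{m}=\eta\in\partial\cone[x]{m-1}\subset\partial Y^{(m-1)}$, i.e.\ the freshly created direction already lives one ultralimit level down. Since $\xxi{m}=\uulim[m]_j x_j$ and, by the proof of Lemma~\ref{lem:if_angle_is_positive}, $\ulim_j\angle_x(x_j,\xxi{i})=\zure$ for every $i<m$, the directions of the $x_j$ sit at angle $\zure$ from every vertex of the inductively regular simplex $\partial\cone[x]{m-1}$; as an ``apex over the simplex'' they remain at a fixed positive angular distance from the whole simplex, so $\ulim_j\angle_x(x_j,\eta)$ is bounded below by a positive constant, contradicting its vanishing. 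To exclude $\pi$, I would adapt the harmonicity argument of Lemma~\ref{lem:zure<pi}: the function $\gamma\mapsto b_{\xxi{m}}(f(\gamma),x)$ is $\mu$-harmonic, so the first-variation identity forbids the direction of $\ray{x,\xxi{m}}$ from being opposite to a direction in which $f$ has a net positive displacement, and the flat-sector relation between $\eta$ and the vertices $\xxi{i}$ then bounds $\angle_x(\eta,\xxi{m})$ strictly below $\pi$.

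For parts $(2)$ and $(3)$ I would pass to the space of directions $S_xY^{(m)}$, which is a $\cat{1}$ space, and exploit uniqueness of geodesics of length $<\pi$. In $(2)$ each flat sector $\sect[x]{\eta_i}{\xxi{m}}$ corresponds to a geodesic of length $\angle_x(\eta_i,\xxi{m})\in(0,\pi)$ in $S_xY^{(m)}$ issuing from the common direction of $\ray{x,\xxi{m}}$; a common point of the two sectors off the shared ray would, by $\cat{1}$ geodesic uniqueness, force the directions of $\ray{x,\eta_1}$ and $\ray{x,\eta_2}$ onto a single geodesic ray from that common direction, and the apex-transversality from $(1)$ (the ray from $\xxi{m}$ meets $\partial\cone[x]{m-1}$ in at most one point) then yields $\eta_1=\eta_2$, a contradiction. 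Assertion $(3)$ is the analogous ``parallel translate'' statement: by Lemma~\ref{lem:parallelogram_and_sliding} the sectors $\sect[q]{\eta_1}{\eta_2}$ and $\sect[q']{\eta_1}{\eta_2}$ are isometric parallel copies lying at the constant distance $d(q,q')>0$ along $\ray{x,\xxi{m}}$; since these sectors open sideways toward $\partial\cone[x]{m-1}$, the spine $\ray{x,\xxi{m}}$ meets each of them only at its apex (again by the transversality of $(1)$–$(2)$ applied at $q$ and $q'$), so a common point would force $q=q'$, and convexity of the distance function, which keeps corresponding points at distance $d(q,q')$, rules out any intersection.

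The step I expect to be the main obstacle is the non-collapse in $(1)$: showing that the direction $\xxi{m}$ genuinely escapes $\cone[x]{m-1}$ and stays at a definite positive angle from it. This is where the interplay between the iterated ultralimit construction (which, as in Example~\ref{example:larger_ultralmit}, strictly enlarges the space at each step) and the rigidity of the equidistant-apex configuration must be made quantitative, and it is the same transversality input that drives $(2)$ and $(3)$. Care is needed here not to presuppose the full regular-simplex conclusion of Proposition~\ref{prop:simplex}, but only to extract from the inductive hypothesis just enough nondegeneracy of $\cone[x]{m-1}$ to bound the apex away from the simplex and thereby close the induction.
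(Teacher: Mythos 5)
There is a genuine gap, and it is exactly at the point you flag as the main obstacle. Your exclusion of $\angle_{x}(\eta,\xxi{m})=0$ in $(1)$ is circular: knowing that the directions of the $x_j$ sit at angle $\zure$ from each \emph{vertex} $\xxi{1},\dots,\xxi{m-1}$ does not bound their angular distance from the whole of $\partial \cone[x]{m-1}$ away from zero --- that lower bound is precisely the nondegeneracy being proven (and the quantity whose vanishing you propose to contradict is not the one forced by $\angle_x(\eta,\xxi{m})=0$ in the first place). The paper's mechanism, which is absent from your proposal, is a comparison of \emph{consecutive} ultralimits: if $\ray{x,\eta}=\ray{x,\xxi{m}}$, then $\q{m}=\cc{x}{(m)}(1)=\uulim[m]_j \cc{x}{x_j}(1)$ lies in $Y^{(m-1)}$, so $\ulim_j d(\cc{x}{x_j}(1),\q{m})=0$; the sequence therefore $\lambda$-converges inside $Y^{(m-1)}$, every further ultralimit returns the same point, $\q{m+1}=\q{m}$, and $\angle_x(\xxi{m},\xxi{m+1})=0$ contradicts $\zure>0$. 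The same ``take one more ultralimit'' device (cf.\ Example~\ref{example:larger_ultralmit}) is what excludes $\pi$ --- via a half-degenerate triangle against $d(\q{m},\q{m+1})=2\sin(\zure/2)>0$ --- and what closes part $(2)$ by forcing $\p{m}=\p{m+1}$. Your alternative for the angle $\pi$ (adapting Lemma~\ref{lem:zure<pi}) is only a hope: that lemma controls $\angle_{x_0}(x_j,\xxi{i})$ using an orbit point with positive inner product against $V_c$, and there is no such orbit point attached to an arbitrary $\eta\in\partial\cone[x]{m-1}$.

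Parts $(2)$ and $(3)$ inherit further problems. $\cat{1}$ geodesic uniqueness between points at distance $<\pi$ identifies the two segments from $\xxi{m}|_x$ only up to the common direction $v$; beyond $v$ geodesics in a $\cat{1}$ space of directions may branch, so you do not get $\eta_1|_x$ and $\eta_2|_x$ on a single ray, and the parenthetical claim that $[\xxi{m},\eta]$ meets $\partial\cone[x]{m-1}$ in at most one point is not a consequence of $(1)$ and is left unproved. The genuinely hard case of $(2)$ --- when $\ray{x,\eta_2}\subset\sect[x]{\eta_1}{\xxi{m}}$, so the two sectors are nested rather than transverse --- is not touched by your argument and again requires the ultralimit comparison. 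For $(3)$, the observation that corresponding points of the two translated sectors stay at distance $d(q,q')$ does not preclude a point of one sector coinciding with a \emph{non-corresponding} point of the other; the paper first uses $(2)$ to force the two rays through a putative common point to aim at the same $\eta\in[\eta_1,\eta_2]$ and only then argues inside the single flat sector $\sect[x]{\eta}{\xxi{m}}$. Invoking the full isometric-translation statement here also risks circularity, since Lemma~\ref{lem:tau_extends} is proved after, and using, this proposition.
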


\begin{proof}
$(1)$ Suppose $\angle_{x}(\eta,\xxi{m})=0$. 
 Since $\ch{\ray{x,\eta}\cup \ray{x,\xxi{m}}}$ is a flat sector by
 Lemma~\ref{lem:sector_exists}, this means
 $\ray{x,\eta}=\ray{x,\xxi{m}}$. 
 Thus, if we take $q_j=\cc{x}{x_j}(1)$,  we see that 
 $\uulim[m]_j q_j= \q{m}=\cc{x}{(m)}(1) \in\ray{x,\eta}\subset Y^{(m-1)}$. 
 Since $\q{m}, q_j \in Y^{(m-1)}$, this means that 
 $\ulim_j d(q_j,\q{m})=0$. 
 Hence $\q{m+1}=\uulim[m+1]_jq_j = \q{m} \in \ray{x,\eta}$ also holds,
 and we see that 
 $[x,\q{m}] \subset \ray{x,\xxi{m}}\cap \ray{x,\xxi{m+1}}$. 
 In particular, we get $\angle_{x}(\xxi{m},\xxi{m+1})=0$. 
 This contradicts our assumption $\zure>0$. 
 Therefore we conclude that $\angle_{x}(\eta,\xxi{m})>0$. 

 Now we show that $\angle_{x}(\eta,\xxi{m})<\pi$. 
 Take $\q{m}$, $\q{m+1}$, and $q_j$ as above. 
 We have $\uulim[m]_j q_j=\q{m}$. 
 Suppose that $\angle_{x}(\eta,\xxi{m})=\pi$. 
 Then, for $\tilde q=\cc{x}{\eta}(1)$, we have
 $[\q{m},\tilde q]=[\q{m},x]\cup [x,\tilde q]$ and 
\begin{equation*}
 2=d(\q{m},\tilde q)=d(\uulim[m]_j q_j, \tilde q)=
 \ulim_j d(q_j,\tilde q). 
\end{equation*}
 From this, we see that $\q{m+1}=\uulim[m+1]_j q_j=\cc{x}{(m+1)}(1)$
 also satisfies
\begin{equation*}
 d(\q{m+1},\tilde q)=\ulim_j d(q_j,\tilde q) =2, 
\end{equation*}
 and also we have $d(\q{m+1},x)=d(x,\tilde q)=1$ by our choice of $q_j$
 and $\tilde q$. 
 Therefore we see that 
 $[\q{m+1},\tilde q]=[\q{m+1},x]\cup [x,\tilde q]$. 
 In particular, 
 $[x,\tilde q] \subset [\q{m},\tilde q]\cap [\q{m+1},\tilde q]$
 and $\tri{\q{m}}{\q{m+1}}{\tilde q}$ is a half-degenerate triangle. 
 
 On the other hand, noting Remark~\ref{remark:when_angle_is_pi}, 
 we see that $F_{xx_j}^{\eta}\cup F_{xx_j}^{(m)}$ forms a flat strip in
 this case. 
 Thus we see that 
 $\tri{q_j}{\tilde q}{\q{m}}\subset F_{xx_j}^{\eta}\cup F_{xx_j}^{(m)}$
 is flat for each $j$. 
 Since $d(x,x_j)\to \infty$ and 
 $\ulim_j \angle_{x}(x_j,\xxi{m})=\zure>0$, the width of 
 $F_{xx_j}^{\eta}\cup F_{xx_j}^{(m)}$ tends to $\infty$, and hence, 
 for almost all $j \in \N$ with respect to $\lambda$, we may assume that 
 $\cc{\tilde q}{q_j}$ is defined for $t=2$, and hence
 $\tri{\tilde q}{\cc{\tilde q}{(m)}(2)}{\cc{\tilde q}{q_j}(2)} 
 \subset F_{xx_j}^{\eta}\cup F_{xx_j}^{(m)}$ 
 is a flat triangle. 
 Thus we have
\begin{equation}
\label{eq:half_triangle}
 d(\cc{\tilde q}{\q{m}}(1), \cc{\tilde q}{q_j}(1)) =
 \frac{1}{2}d(\cc{\tilde q}{\q{m}}(2), \cc{\tilde q}{q_j}(2))
 = \frac{1}{2}d(\q{m},\cc{\tilde q}{q_j}(2)). 
\end{equation}
 However, we know that 
\begin{equation*}
 \ulim_j d(\q{m},\cc{\tilde q}{q_j}(2))= 
 d(\q{m},\q{m+1})=2 \sin \frac{\zure}{2} > 0
\end{equation*}
 and 
\begin{equation*}
 \ulim_j d(\cc{\tilde q}{\q{m}}(1), \cc{\tilde q}{q_j}(1))
  = d(\cc{\tilde q}{\q{m}}(1), \cc{\tilde q}{\q{m+1}}(1)) 
  =d (x,x)=0
\end{equation*}
 holds.  This contradicts (\ref{eq:half_triangle}). 
 Therefore $\angle_{x}(\eta,\xxi{m})$ cannot be equal to $\pi$. 

$(2)$ Suppose that there exists a point 
 $q \in \sect[x]{\eta_1}{\xxi{m}}\cap \sect[x]{\eta_2}{\xxi{m}}
 \setminus \ray{x,\xxi{m}}$. 
 Since 
 $F=\sect[x]{\eta_1}{\xxi{m}}\cap \sect[x]{\eta_2}{\xxi{m}}$ is
 convex, $[x,q] \subset F$, and we can find 
 $\eta_i' \in [\eta_i,\xxi{m}]$ such that 
 $[x,q]\subset \ray{x,\eta_i'}$. 
 If $\eta_1'\not=\eta_2'$, then we get
 $\angle_{x}(\eta_1',\eta_2')=0$, and this contradicts
 Lemma~\ref{lem:sector_exists}, which says that 
 $\sect[x]{\eta_1'}{\eta_2'}$ is a flat sector. 
 Therefore we see that $\eta_1'=\eta_2'$ and hence 
 $F$ is a flat sector; we can express
 $F=\sect[x]{\eta}{\xxi{m}}$. 
 
 We may assume 
 $\angle_{x}(\eta_1,\xxi{m})\geq \angle_{x}(\eta_2,\xxi{m})$
 in what follows. 
 Suppose that $\eta\not= \eta_1, \eta_2$. 
 Then $\angle_x(\eta,\xxi{m})<\angle_x(\eta_i,\xxi{m})$ for $i=1,2$. 
 Take $q' \in (x,\xxi{m})$ and consider a geodesic ray 
 $\ray{q',\eta_2} \subset \sect[x]{\eta_2}{\xxi{m}}$. 
 Since $\ray{q',\eta_2}$ is parallel to $\ray{x,\eta_2}$ and 
 $\angle_{x}(\eta,\xxi{m})<\angle_{x}(\eta_2,\xxi{m})$, 
 it must intersect with $(x,\eta)$ at some point in
 $\sect[x]{\eta}{\xxi{m}}\subset \sect[x]{\eta_i}{\xxi{m}}$. 
 Denote this point by $q''$. 
 Since we are assuming
 $\angle_{x}(\eta_1,\xxi{m})\geq \angle_{x}(\eta_2,\xxi{m})$, 
 a geodesic segment $[q',q'']$ can be extended to a geodesic ray in 
 $\sect[x]{\eta_1}{\xxi{m}}$; we denote its endpoint by $\eta_1''$. 
 Since $[q',q''] \subset \ray{q',\eta_2}\cap \ray{q',\eta_1''}$,  
 we see that $\angle_{q'}(\eta_2,\eta_1'')=0$. 
 On the other hand, by our choice of $\eta$ and our assumption,  
 we have $\eta_1''\not=\eta_2$, and hence, 
 by Lemma~\ref{lem:sector_exists},  we see that
 $\ch{\ray{q',\eta_1''}\cup \ray{q',\eta_2}}$ is a flat sector. 
 This contradicts $\angle_{q'}(\eta_2,\eta_1'')=0$. 
 Thus we see that $\eta=\eta_2$ holds, and we may
 assume that
 $\ray{x,\eta_2}\subset \sect[x]{\eta_1}{\xxi{m}}$.

 Let $\p{m}=\cc{x}{(m)}(1)$ and $q_1 = \cc{x}{\eta_1}(1)$.
 Then we see that $[q_1,\p{m}]$ and $\ray{x,\eta_2}$ intersect at a
 point. 
 Denote this ponit by $q_2$. 
 Then $[q_1,q_2] \subset \sect[x]{\eta_1}{\eta_2}\subset Y^{(m-1)}$. 
 Take $p_j = \cc{x}{x_j}(1)$.  
 Then we see that $\uulim[m]_jp_j=\p{m}$ and
 $\uulim[m]_j[q_1,p_j]=[q_1,\p{m}]$. 
 Since $\eta_1\not= \eta_2$, there exists 
 $\bar q \in (q_1,q_2)\subset [q_1,\p{m}]$, and 
 we can take $p_j' \in (q_1,p_j)$ so that $\uulim[m]_j p_j' = \bar q$. 
 Since $p_j', \bar q  \in Y^{(m-1)}$, $\{p_j'\}$ actually converges to
 $\bar q$ in $Y^{(m-1)}$. 
 Therefore we also have $\uulim[m+1]p_j'=\bar q \in Y^{(m-1)}$, in
 particular, $[q_1,\bar q]\subset [q_1,\p{m}]\cap [q_1,\p{m+1}]$, where
 $\p{m+1}=\uulim[m+1]p_j$. 
 By applying  Lemma~\ref{lem:parallelogram_and_sliding} $(4)$ to 
 $\sect[x]{\eta_1}{\xxi{m}}\cup \sect[x_j]{\eta_1}{\xxi{m}}$, we see
 that $\tri{q_1}{\p{m}}{p_j}$ is a flat triangle for any $j$. 
 Thus so is $\tri{q_1}{\p{m}}{\p{m+1}}$.
 Since $\angle_{x}(\xxi{m},\xxi{m+1})=\zure>0$, we know that
 $\p{m+1}\not= \p{m}$. 
 However, $[q_1,\bar q]\subset [q_1,\p{m}]\cap [q_1,\p{m+1}]$ implies
 $\angle_{q_1}(\p{m},\p{m+1})=0$.  
 Since $d(q_1,\p{m})=d(q_1,\p{m+1})=\ulim_j d(q_1,p_j)$ and 
 $\tri{q_1}{\p{m}}{\p{m+1}}$ is flat, this means that 
 $\p{m}=\p{m+1}$. 
 A contradiction. 
 Therefore we conclude that 
 $\sect[x]{\eta_1}{\xxi{m}}\cap\sect[x]{\eta_2}{\xxi{m}}=\ray{x,\xxi{m}}$.

 $(3)$ Suppose that 
 $p \in \sect[q]{\eta_1}{\eta_2}\cap \sect[q']{\eta_1}{\eta_2}$
 and $p\not=q$. 
 Since $\sect[q]{\eta_1}{\eta_2}$ is a flat sector, we see that 
 $\ray{p,\eta_i} \subset \sect[q]{\eta_1}{\eta_2}$ for $i=1,2$, 
 and hence 
 $\sect[p]{\eta_1}{\eta_2} \subset \sect[q]{\eta_1}{\eta_2}$.  
 If $p \in \ray{x,\xxi{m}}$, then $[q,p]\subset \ray{x,\xxi{m}}$. 
 Since $\sect[q]{\eta_1}{\eta_2}$ is a flat sector, there exists 
 $\eta \in [\eta_1,\eta_2]\subset \partial \cone[x]{m-1}$ such that 
 $[q,p] \subset \ray{q,\eta}$. 
 Then we see that $\angle_{q}(\eta,\xxi{m})=0$. 
 However, since
 $\ch{\ray{q,\eta}\cup \ray{q,\xxi{m}}}$ is a flat sector
 by Lemma~\ref{lem:sector_exists}, 
 we get $\angle_{x}(\eta,\xxi{m})=\angle_{q}(\eta,\xxi{m})=0$. 
 This contradicts $(1)$ above. 
 Thus $p \not\in \ray{x,\xxi{m}}$. 

 Then there exists 
 $\eta, \eta' \in [\eta_1,\eta_2] \subset \partial \cone[x]{m-1}$
 such that $p \in \ray{q,\eta}$ and $p \in \ray{q',\eta'}$ hold. 
 Suppose that $\eta\not= \eta'$. 
 Then, since $\ray{q,\eta} \subset \sect[x]{\eta}{\xxi{m}}$ and
 $\ray{q',\eta'} \subset \sect[x]{\eta'}{\xxi{m}}$, this means that 
 $p \in \sect[x]{\eta}{\xxi{m}}\cap \sect[x]{\eta'}{\xxi{m}}
 \setminus \ray{x,\xxi{m}}$, 
 and this contradicts $(2)$ above. 
 Thus we have $\eta=\eta'$ and 
 $p \in \sect[x]{\eta}{\xxi{m}}$. 
 Since $\sect[x]{\eta}{\xxi{m}}$ is a flat sector, 
 $\ray{q,\eta}$ and $\ray{q',\eta}$ cannot intersect unless 
 either $q \in \ray{q',\eta}$ or $q' \in \ray{q,\eta}$. 
 Since $q,q' \in \ray{x,\xxi{m}}$, this again implies
 either $\angle_{q}(\eta,\xxi{m})=0$ or 
 $\angle_{q'}(\eta,\xxi{m})=0$. 
 Since $\ch{\ray{q,\xxi{m}}\cup \ray{q,\eta}}$ and 
 $\ch{\ray{q',\xxi{m}}\cup \ray{q',\eta}}$ are both flat
 sectors by Lemma~\ref{lem:sector_exists}, this means that 
 $\eta=\xxi{m}$ and $p \in \ray{x,\xxi{m}}$, which is impossible. 
 Thus we conclude that 
 $\sect[q]{\eta_1}{\eta_2}\cap \sect[q']{\eta_1}{\eta_2}=\emptyset$. 
\end{proof}

%
%
\begin{Lemma}
 \label{lem:tau_extends}
 For any $x\in f(\Gamma)$, $\xi \in \partial C_{x}^{(m)}$ and $s\geq 0$, 
 $\trans{s}{\xi}\colon q \mapsto \cc{q}{\xi}(s)$ gives an isometric
 translation on $\cone[x]{m}$, that is,  $\trans{s}{\xi}$ satisfies
 $d(\trans{s}{\xi}(q), \trans{s}{\xi}(q'))=d(q,q')$ for any 
 $q,q' \in \cone[x]{m}$ and
 $\trans{s}{\xi}(\ray{q,\eta})=\ray{\trans{s}{\xi}(q),\eta}$ for any 
 $q \in \cone[x]{m}$ and $\eta \in \partial \cone[x]{m}$. 
\end{Lemma}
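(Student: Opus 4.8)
The plan is to deduce both assertions from the flat-sector decomposition of $\cone[x]{m}$ and from the sliding lemma (Lemma~\ref{lem:parallelogram_and_sliding}). First I would record the structure established so far: $\cone[x]{m}$ is a closed convex subset of $Y^{(m-1)}$ swept out by the flat sectors $\sect[x]{\eta}{\xxi{m}}$, $\eta\in\partial\cone[x]{m-1}$, which by Proposition~\ref{prop:less_than_pi} are nondegenerate and meet only along the central ray $\ray{x,\xxi{m}}$. Well-definedness of $\trans{s}{\xi}$---that each ray $\ray{q,\xi}$ exists inside $\cone[x]{m}$---will follow from the extended flat-sector property below (take the two boundary directions to be $\xi$ and any $\eta$); granting it, convexity and closedness of $\cone[x]{m}$ together with $\xi\in\partial\cone[x]{m}$ give $\trans{s}{\xi}(q)=\cc{q}{\xi}(s)\in\cone[x]{m}$.

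The crux is to upgrade Lemma~\ref{lem:sector_exists} from the central ray to every point: for all $q\in\cone[x]{m}$ and all $\eta,\eta'\in\partial\cone[x]{m}$, the rays $\ray{q,\eta}\cup\ray{q,\eta'}$ bound a flat sector $\sect[q]{\eta}{\eta'}$ of angle $d_{\angle}(\eta,\eta')$. I would obtain this by joining $q$ to a point of $\ray{x,\xxi{m}}$ by the geodesic lying in the sector $\sect[x]{\eta_0}{\xxi{m}}$ that contains $q$, and then propagating the flat-sector property furnished by Lemma~\ref{lem:sector_exists} along this segment by means of Lemma~\ref{lem:parallelogram_and_sliding}(2); the disjointness statements in Proposition~\ref{prop:less_than_pi}(2),(3) ensure that the propagated sectors sit inside $\cone[x]{m}$ without folding. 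Once this is in hand the ray-mapping identity is immediate: inside the flat sector $\sect[q]{\xi}{\eta}$, which is isometric to a Euclidean wedge, $\trans{s}{\xi}$ is the ordinary translation by $s$ in the $\xi$-direction, hence it carries $\ray{q,\eta}$ to the parallel ray $\ray{\cc{q}{\xi}(s),\eta}=\ray{\trans{s}{\xi}(q),\eta}$.

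For the isometry $d(\trans{s}{\xi}(q),\trans{s}{\xi}(q'))=d(q,q')$ I would show that the region $\bigcup_{p\in[q,q']}\ray{p,\xi}$ swept by the $\xi$-rays issuing from the geodesic $[q,q']$ is flat---a half-infinite parallelogram, or a flat strip in the degenerate case. Flatness forces $t\mapsto d(\cc{q}{\xi}(t),\cc{q'}{\xi}(t))$ to be constant, which is exactly the claim for every $s$. When $q$ and $q'$ lie in a single sector this is planar geometry together with Lemma~\ref{lem:parallelogram}; in general I would cut $[q,q']$ at its crossing with the central ray $\ray{x,\xxi{m}}$ and treat each piece by Lemma~\ref{lem:parallelogram_and_sliding}(3),(4), gluing the pieces using the constant-angle conclusion of the extended sector property.

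The hard part will be precisely this gluing across sector boundaries: a geodesic joining points in two different flat sectors crosses the central ray, and one must check that the two locally Euclidean translations agree there and that no angle is lost in the identification, all while avoiding a circular appeal to the very isometry under proof. I expect to control it as in the proof of Lemma~\ref{lem:parallelogram_and_sliding}(4), by verifying that the side lengths and the diagonals of the parallelograms $R_{ss}$ cut out at scale $s$ grow linearly in $s$; this linearity, forced by the constancy of the relevant comparison angles, upgrades the piecewise flatness of the swept region to genuine flatness and thereby yields the global isometry.
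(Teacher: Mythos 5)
Your plan inverts the logical order of the paper in a way that creates two genuine gaps. First, your ``crux'' step --- upgrading Lemma~\ref{lem:sector_exists} so that $\ray{q,\eta}\cup\ray{q,\eta'}$ bounds a flat sector for \emph{every} $q\in \cone[x]{m}$, not just $q\in\ray{x,\xxi{m}}$ --- is precisely Proposition~\ref{prop:flat_triangle}~$(1)$, which the paper proves \emph{after} this lemma and \emph{by means of} it (the flat sector at $q=\cc{x}{\xi}(s)$ is obtained as $\trans{s}{\xi}(\sect[x]{\eta}{\eta'})$). The propagation mechanism you invoke, Lemma~\ref{lem:parallelogram_and_sliding}~$(2)$, only transports the flat-sector property to points \emph{between} two points $p_1,p_2$ at which that property is already known; for a general $q$ off the central ray and off $f(\Gamma)$ you have not exhibited such a segment, and the natural candidates (points of $F^{\eta_0}_{xx_j}$ off $[x,x_j]$) are exactly the points where the property is not yet established. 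As you yourself anticipate, this is where circularity threatens, and the proposal does not resolve it. The paper avoids the issue entirely by induction on $m$: it writes $\trans{s}{\xi}=\trans{a_1s}{\eta_1}\circ\trans{a_2s}{\xxi{m}}$ using the commutation identity on flat sectors, proves $\trans{t}{\xxi{m}}$ is an isometry via Lemma~\ref{lem:parallelogram_and_sliding}~$(1)$, and proves $\trans{s}{\eta_1}$ is an isometry using the inductive hypothesis on $\cone[x]{m-1}$ together with the ultralimit description of $\sect[x]{\eta'}{\xxi{m}}$ as $\uulim[m]_j F^{\eta'}_{xx_j}$.

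Second, your gluing step rests on the claim that a geodesic $[q,q']$ joining points of two different sectors $\sect[x]{\eta_1}{\xxi{m}}$ and $\sect[x]{\eta_2}{\xxi{m}}$ crosses the central ray $\ray{x,\xxi{m}}$. This is false in general: in the model that $\cone[x]{m}$ ultimately turns out to be (a Euclidean cone over a spherical simplex), the two sectors are two-dimensional slices through the one-dimensional ray $\ray{x,\xxi{m}}$, and a straight segment between generic points of the two slices misses that ray entirely; Proposition~\ref{prop:less_than_pi}~$(2)$ only says the sectors \emph{intersect} in the ray, not that geodesics between them pass through it. The paper sidesteps this by never using the geodesic $[q',q'']$ directly: it places $q'$ and $q''$ as a diagonal pair of vertices of a flat parallelogram $R_{s_1s_2}$ built from the coordinates along $\xxi{m}$ and along $\eta_1',\eta_2'$, shows the four sides and one diagonal are preserved by the translation, and then uses the convexity argument (two bounded convex functions that are the diagonals of parallelograms with fixed side lengths must both be constant) to conclude that the remaining diagonal $d(q',q'')$ is preserved as well. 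Your linear-growth idea for the diagonals of $R_{ss}$ is in the right spirit, but without the parallelogram construction and the induction on $m$ it does not assemble into a proof.
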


\begin{proof}
 We show that $\trans{s}{\xi}$ defined in
 Lemma~\ref{lem:parallelogram_and_sliding} induces an
 isometric translation from $\cone[x]{m}$ into $\cone[x]{m}$ by
 induction on $m$. 
 If $m=2$, then 
 $\cone[x]{2}=\sect[x]{\xxi{1}}{\xxi{2}}$ and the
 assertion is clear. 

 Suppose that the assertion is true for $\cone[x]{m-1}$.
 First note that, for any flat sector $\sect[q]{\eta}{\eta'}$, by
 identifying it with a flat sector in $\R^2$ with vertex at the origin,
 we can find a point $\bar q' \in \R^2$ corresponding to 
 $q' \in \sect[q]{\eta}{\eta'}$. 
 We also have $\bar p$ and $\bar p'$ in $\R^2$ corresponding to
 $\cc{q}{\eta}(1)$ and $\cc{q}{\eta'}(1)$ respectively. 
 Then through this identification, for any $s,t \geq 0$, we can write 
 \begin{equation}
 \label{eq:tau_commutative}
  \trans{s}{\eta'}\circ \trans{t}{\eta}(q') = s\bar p'+t\bar p+\bar q'
 = t\bar p+s\bar p' + \bar q' = \trans{t}{\eta}\circ \trans{s}{\eta'}(q'). 
 \end{equation}
 Note that this makes sense even if $\angle_q(\eta,\eta')=\pi$
 and $\sect[q]{\eta}{\eta'}$ degenerates to a geodesic line,
 although the expression $s\bar p'+t\bar p$ is not unique. 

 Take $\xi \in \partial \cone[x]{m}$. 
 Then there exists $\eta_1 \in \cone[x]{m-1}$ such that 
 $\xi\in \partial \sect[x]{\eta_1}{\xxi{m}}$. 
 We show that there exists $a_1, a_2\geq 0$ such that we can express  
 \begin{equation*}
 \trans{s}{\xi}=\tau_{a_1 s}^{\eta_1}\circ \trans{a_2 s}{\xxi{m}}, 
 \end{equation*}
 and that two maps on the right-hand side are isometric translations. 

 Since $\ray{x,\xi} \subset \sect[x]{\eta_1}{\xxi{m}}$, 
 we can express as $\cc{x}{\xi}(t)=(a_1t)p_1 + (a_2t)p^{(m)}$, where
 $p_1=\cc{x}{\eta_1}(1)$ and $p^{(m)}=\cc{x}{(m)}(1)$. 
 By the description (\ref{eq:tau_commutative}), 
 we see that 
 $\trans{s}{\xi}(q)=\trans{a_1 s}{\eta_1}\circ \trans{a_2s}{\xxi{m}}(q)$
 holds for $q \in \sect[x]{\eta_1}{\xxi{m}}$. 
 Now take any $q' \in \cone[x]{m}$. Then there exists
 $\eta_2 \in \partial \cone[x]{m-1}$ such that 
 $q' \in \sect[x]{\eta_2}{\xxi{m}}$ holds, and we can express
 $q'=\cc{q}{\eta_2}(t)$ for some $q \in \ray{x,\xxi{m}}$ and 
 $t\geq 0$.
 Since $q, \trans{a_2s}{\xxi{m}}(q)\in \ray{x,\xxi{m}}$, 
 the convex hulls of $\ray{q,\xi}\cup \ray{q,\eta_2}$, 
 $\ray{\trans{a_2s}{\xxi{m}}(q),\eta_1}\cup
 \ray{\trans{a_2s}{\xxi{m}}(q),\eta_2}$, 
 and $\ray{q,\eta_2}\cup \ray{q,\xxi{m}}$
 are flat sectors expressed as $\sect[q]{\xi}{\eta_2}$, 
 $\sect[\trans{a_2s}{\xxi{m}}(q)]{\eta_1}{\eta_2}$, and
 $\sect[q]{\eta_2}{\xxi{m}}$, respectively, by
 Lemma~\ref{lem:sector_exists}.
 Then we have
 \begin{equation*}
 \begin{split}
   \trans{s}{\xi}(\cc{q}{\eta_2}(t)) & 
   = \trans{t}{\xi}\circ \trans{t}{\eta_2}(q)   
   \overset{(1)}{=}\trans{t}{\eta_2} \circ \trans{s}{\xi}(q)
  \overset{(4)}{=} \trans{t}{\eta_2} \circ \trans{a_1s}{\eta_1} \circ
  \trans{a_2s}{\xxi{m}}(q) \\
  & \overset{(2)}{=} \trans{a_1s}{\eta_1}\circ \trans{t}{\eta_2} \circ
      \trans{a_2s}{\xxi{m}}(q) 
   \overset{(3)}{=} \trans{a_1s}{\eta_1}\circ
  \trans{a_2s}{\xxi{m}}\circ \trans{t}{\eta_2}(q)  \\
  & = \trans{a_1s}{\eta_1}\circ \trans{a_2s}{\xxi{m}}
  (\cc{q}{\eta_2}(t)), 
 \end{split}
 \end{equation*}
 where we deduce equalities $(1)$, $(2)$, and $(3)$ 
 by applying (\ref{eq:tau_commutative})
 on $\sect[q]{\xi}{\eta_2}$, 
 $\sect[\trans{a_2s}{\xxi{m}}(q)]{\eta_1}{\eta_2}$, and 
 $\sect[q]{\eta_2}{\xxi{m}}$,  respectively, and $(4)$ follows
 from the fact that 
 $\trans{s}{\xi}(q)=\trans{a_1 s}{\eta_1}\circ 
 \trans{a_2 s}{\xxi{m}}(q)$
 holds for $q \in \ray{x,\xxi{m}}\subset 
 \sect[x]{\eta_1}{\xxi{m}}$, 
 which we have just seen above. 
 Thus we see that, for any $q' \in \cone[x]{m}$, 
 $\trans{t}{\xi}(q')=\trans{a_2t}{\eta_1}\circ\trans{a_1t}{\xxi{m}}(q')$
 holds. 

 Next we show that $\trans{t}{\xxi{m}}$ and $\trans{s}{\eta_1}$ are
 both isometries. 
 First we consider $\trans{t}{\xxi{m}}$.
 Take any $q_i \in \cone[x]{m}$, $i=1,2$. 
 Then we have $\eta_i' \in \cone[x]{m-1}$, $i=1,2$, so that 
 $q_i \in \sect[x]{\eta_i'}{\xxi{m}}$, and there exists
 $\hat \eta_i \in [\eta_i',\xxi{m}]$ such that 
 $q_i \in \ray{x,\hat \eta_i} \subset
 \sect[x]{\eta_i'}{\xxi{m}}$. 
 We can express $q_i=\cc{x}{\hat \eta_i}(t_i)$ for some $t_i\geq 0$, and
 we know that there exists a flat sector
 $\sect[q]{\hat \eta_1}{\hat \eta_2}$ for each 
 $q \in \ray{x, \xxi{m}}$ by Lemma~\ref{lem:sector_exists}. 
 Noting that 
 \begin{equation*}
 \trans{t}{\xxi{m}}(q_i)=
 \trans{t}{\xxi{m}}\circ \trans{t_i}{\hat \eta_i}(x)
 =\trans{t_i}{\hat \eta_i}\circ \trans{t}{\xxi{m}}(x)
 =\trans{t_i}{\hat \eta_i}(\cc{x}{\xxi{m}}(t)), 
 \end{equation*}
 and applying Lemma~\ref{lem:parallelogram_and_sliding} $(1)$
 with  $p_1=x$, $p_2=\cc{x}{\xxi{m}}(t)$, and 
 $\eta_i=\hat \eta_i$, 
 we conclude that the convex hull of 
 $[q_1,q_2]\cup [\trans{t}{\xxi{m}}(q_1), \trans{t}{\xxi{m}}(q_2)]$
 is a flat parallelogram. 
 In particular, we have 
 $d(q_1, q_2)=d(\trans{t}{\xxi{m}}(q_1),\trans{t}{\xxi{m}}(q_2))$. 
 Therefore $\trans{t}{\xxi{m}}$ preserves the distance. 

 Now consider $\trans{s}{\eta_1}$. 
 Note that $\trans{s}{\eta_1}$ is an isometric translation on
 $\cone[x]{m-1}$ by our assumption. 
 Take any $q \in \ray{x,\xxi{m}}$ and choose $q_j \in [x,x_j]$
 satisfying $\uulim[m]_j q_j=q$. 
 Then, for any $\eta' \in \partial \cone[x]{m-1}$, we have
 \begin{equation}
 \begin{split}
   \label{eq:tau_commute_with_ulim}
  \uulim[m]_j \trans{s}{\eta_1}(\cc{q_j}{\eta'}(t))
  & = \uulim[m]_j \cc{\cc{q_j}{\eta'}(t)}{\eta_1}(s)
  = \cc{\cc{q}{\eta'}(t)}{\eta_1}(s)
  =\trans{s}{\eta_1}(\cc{q}{\eta'}(t)) \\
  & =\trans{s}{\eta_1}(\uulim[m]_j \cc{q_j}{\eta'}(t)). 
 \end{split} 
 \end{equation}
 On the other hand, applying
 Lemma~\ref{lem:parallelogram_and_sliding} $(3)$ to 
 $\bigcup_{\tilde q \in [x,x_j]}\sect[\tilde q]{\eta_1}{\eta'}$, 
 we see that
 $\trans{s}{\eta_1}$ maps $R_{0t} \subset F_{xx_j}^{\eta'}$ 
 isometrically onto its image for any $t\geq 0$, 
 where $R_{0t}$ is the convex hull of 
 $[x,x_j]\cup [\cc{x}{\eta'}(t), \cc{x_j}{\eta'}(t)]$ in this case.
 Hence
 $\trans{s}{\eta_1}$ maps whole 
 $F_{xx_j}^{\eta'}=\bigcup_{t\geq 0}R_{0t}$ isometrically onto its
 image. 
 By (\ref{eq:tau_commute_with_ulim}) and 
\begin{equation*}
  \uulim[m]_j F_{xx_j}^{\eta'}=
  \uulim[m]_j \bigcup_{\tilde q \in [x,x_j]}
   \cc{\tilde q}{\eta'}(\ray{0,\infty})= 
  \sect[x]{\eta'}{\xxi{m}}, 
\end{equation*} 
we see that
 \begin{equation*}
 \trans{s}{\eta_1}(\sect[x]{\eta'}{\xxi{m}}) =
  \ulim_j \trans{s}{\eta_1}(F_{xx_j}^{\eta'}). 
 \end{equation*}
 Thus we conclude that 
 $\trans{s}{\eta_1}(\sect[x]{\eta'}{\xxi{m}})$ is a flat sector, 
 and that $\trans{s}{\eta_1}$ maps $\sect[x]{\eta'}{\xxi{m}}$
 isometrically onto a flat sector 
 $\trans{s}{\eta_1}(\sect[x]{\eta'}{\xxi{m}})$ for any 
 $\eta' \in \partial \cone[x]{m-1}$. 
 Take any pair of points $q',q'' \in \cone[x]{m}$. 
 Then there exists $q_i \in \cone[x]{m-1}$, $t_i\geq 0$, and 
 $\eta_i' \in \partial \cone[x]{m-1}$, $i=1,2$,
 such that 
\begin{equation*}
 q'= \cc{q_1}{(m)}(t_1), \quad 
 q'' = \cc{q_2}{(m)}(t_2), \quad q_i \in \ray{x,\eta_i'},\ i=1,2. 
\end{equation*}
 We may assume $t_1 \leq t_2$. 
 Let $\tilde q'=\cc{q_2}{(m)}(t_1)$ and $\tilde q''=\cc{q_1}{(m)}(t_2)$. 
 Take $s_i$, $i=1,2$, so that 
  $q'=\cc{\cc{x}{(m)}(t_1)}{\eta_1'}(s_1)$ and 
 $q''=\cc{\cc{x}{(m)}(t_2)}{\eta_2'}(s_2)$. 
 Then we have $\tilde q' = \cc{\cc{x}{(m)}(t_1)}{\eta_2'}(s_2)$ 
 and $\tilde q'' = \cc{\cc{x}{(m)}(t_2)}{\eta_1'}(s_1)$, and the 
 the convex hull of $[q',\tilde q'] \cup [\tilde q'', q'']$ can be
 expressed as $R_{s_1s_2}$.
 By applying Lemma~\ref{lem:parallelogram_and_sliding} $(1)$ to
 $\bigcup_{q \in [\cc{x}{(m)}(t_1),\cc{x}{(m)}(t_2)]}
 \sect[q]{\eta_1'}{\eta_2'}$, 
 we see that $R_{s_1s_2}$ in this set is a flat
 parallelogram and $q'$ and $q''$ form a pair of diagonal
 vertices. 
 Another pair of diagonal vertices of $R_{s_1s_2}$ is given by 
 $\tilde q'$ and $\tilde q''$.
 The sides of $R_{s_1s_2}$ take the form of
\begin{equation*}
\begin{split}
 & [q',\tilde q']=R_{s_1s_2} \cap
  \sect[\cc{x}{(m)}(t_1)]{\eta_1'}{\eta_2'}, \quad 
   [q'',\tilde q'']=R_{s_1s_2} \cap
  \sect[\cc{x}{(m)}(t_2)]{\eta_1'}{\eta_2'}, \\
 & [q',\tilde q'']=R_{s_1s_2}\cap\sect[x]{\eta_1'}{\xxi{m}},\quad  
   [\tilde q',q'']=R_{s_1s_2}\cap\sect[x]{\eta_2'}{\xxi{m}}. 
\end{split}
\end{equation*}

\vspace{-1.5cm}
\hspace{3.5cm}
\includegraphics[scale=.8]{fig.9}
\vspace{1.5cm}

\noindent
 Since $\trans{s}{\eta_1}$ is an isometric translation on
 $\cone[x]{m-1}$ and $\trans{s}{\xxi{m}}$ is an isometry onto its
 image as we have seen, recalling (\ref{eq:tau_commutative}), we
 see that, for any $s\geq 0$, 
\begin{equation*}
\begin{split}
  d(\trans{s}{\eta_1}(q'),\trans{s}{\eta_1}(\tilde q'))
 & = d(\trans{s}{\eta_1}\circ \trans{t_1}{\xxi{m}}(q_1),
    \trans{s}{\eta_1}\circ \trans{t_1}{\xxi{m}}(q_2)) \\
 & =d(\trans{t_1}{\xxi{m}}\circ \trans{s}{\eta_1}(q_1), 
  \trans{t_1}{\xxi{m}}\circ \trans{s}{\eta_1}(q_2)) \\
 & = d(q_1,q_2). 
\end{split}
\end{equation*}
 Replacing $t_1$ with $t_2$, we see that 
\begin{equation*}
 d(\trans{s}{\eta_1}(q''),\trans{s}{\eta_1}(\tilde q''))= 
 d(q_1,q_2). 
\end{equation*}
 As we have seen above, for any $\eta'\in\partial\cone[x]{m-1}$, 
 $\trans{s}{\eta_1}$ maps a flat sector $\sect[x]{\eta'}{\xxi{m}}$
 isometrically onto its image.
 Hence we also have 
\begin{equation*}
\begin{split}
  d(\trans{s}{\eta_1}(q'),\trans{s}{\eta_1}(\tilde q''))
 & = d(q',\tilde q'')=d(\cc{q_1}{(m)}(t_1),\cc{q_1}{(m)}(t_2))
   =|t_2-t_1|,  \\
  d(\trans{s}{\eta_1}(\tilde q'),\trans{s}{\eta_1}(q''))
 & = d(\tilde q',q'')
   =d(\cc{q_2}{(m)}(t_1),\cc{q_2}{(m)}(t_2))=|t_2-t_1|. 
\end{split}
\end{equation*}
 Thus we see that $\trans{s}{\eta_1}(R_{s_1s_2})$ is a
 parallelogram having the same side length for any $s\geq 0$. 
 Looking at 
\begin{equation*}
 s \mapsto d(\trans{s}{\eta_1}(q'),\trans{s}{\eta_1}(q'')) \
 \text{ and } 
 s \mapsto d(\trans{s}{\eta_1}(\tilde q'),
    \trans{s}{\eta_1}(\tilde q'')), 
\end{equation*}
 we conlude that both are constant functions as in the proof of 
 Lemma~\ref{lem:parallelogram_and_sliding} $(3)$. 
 Therefore 
 $d(\trans{s}{\eta_1}(q'), \trans{s}{\eta_2}(q''))=d(q',q'')$
 for any $q', q'' \in \cone[x]{m}$, and hence 
 $\trans{s}{\eta_1}$ maps $\cone[x]{m}$ isometrically onto its image. 

 Finally, we show that $\trans{s}{\xi}$ is a translation. 
 First note that,  by (\ref{eq:tau_commutative}), we see that, for 
 $q \in \ray{x,\xxi{m}}$ and $\eta \in \partial \cone[x]{m}$, 
 \begin{equation*}
   \trans{s}{\xi}(\cc{q}{\eta}(t)) 
  = \trans{s}{\xi}\circ \trans{t}{\eta}(q)
  =\trans{t}{\eta}\circ \trans{s}{\xi}(q) 
  = c_{\trans{s}{\xi}(q)}^{\eta}(t). 
 \end{equation*}
 Varying $t \in \ray{0,\infty}$, we get 
 \begin{equation*}
  \trans{s}{\xi}(\ray{q,\eta}) = \ray{\trans{s}{\xi}(q),\eta}. 
 \end{equation*}
 Now take any $q'=\cc{x}{\xi'}(s') \in \cone[x]{m}$. 
 We know that $\trans{s'}{\xi'}$ is an isometry, and we have just seen
 that it maps $\ray{x,\eta'}$ to $\ray{q',\eta'}$ for any 
 $\eta' \in \partial \cone[x]{m}$.  
 Therefore, for any $\xi,\eta \in \partial \cone[x]{m}$, 
 $\trans{s'}{\xi'}$ maps a flat sector $\sect[x]{\xi}{\eta}$ to a flat
 sector bounded by $\ray{q',\xi}\cup \ray{q',\eta}$, that is, 
 $\sect[q']{\xi}{\eta}$. 
 Then, by applying (\ref{eq:tau_commutative}) on
 $\sect[q']{\xi}{\eta}$,  we see that
  \begin{equation*}
   \trans{s}{\xi}(\cc{q'}{\eta}(t)) =
    \trans{s}{\xi}\circ\trans{t}{\eta}(q')
    =\trans{t}{\eta}\circ \trans{s}{\xi}(q') 
    = \cc{\trans{s}{\xi}(q')}{\eta}(t), 
 \end{equation*}
 which means that an isometry $\trans{s}{\xi}$ maps a geodesic ray
 starting from $q' \in \cone[x]{m}$ and terminating at 
 $\eta \in \partial \cone[x]{m}$ to a geodesic ray terminating at 
 the same $\eta$, that is,  $\trans{s}{\xi}$ is an isometric
 translation.
 This completes the proof. 
\end{proof}

%
%
\begin{Proposition}
\label{prop:flat_triangle}
$(1)$ For any $x\in f(\Gamma)$, $q \in C_{x}^{(m)}$, and 
 $\eta_1,\eta_2 \in \partial\cone[x]{m}$ with $\eta_1\not=\eta_2$, 
 the convex hull of $[q,\eta_1)\cup [q,\eta_2)$ is a flat sector
 $($possibly degenerates to a geodesic line$)$. 

$(2)$ Let $\xi, \eta_1, \eta_2 \in \partial \cone[x]{m}$. 
 Suppose that $0<\angle_{x}(\eta_i,\xi)<\pi$, $i=1,2$. 
 Then, for any $p \in \ray{x,\xi}$, 
 $q_i \in \ray{x,\eta_i}$, $\tri{p}{q_1}{q_2}$ is a flat
 triangle. 
\end{Proposition}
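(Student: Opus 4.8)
For part $(1)$ the plan is to reduce everything to the flat sectors at $x$ that Lemma~\ref{lem:sector_exists} already supplies, and to transport them by the translations of Lemma~\ref{lem:tau_extends}. Given $q\in\cone[x]{m}$, by the construction of the cone $q$ lies on a ray $\ray{x,\zeta}$ for some $\zeta\in\partial\cone[x]{m}$, so $q=\trans{s}{\zeta}(x)$ for some $s\ge 0$. Since $x\in\ray{x,\xxi{m}}$, Lemma~\ref{lem:sector_exists} gives a flat sector $\sect[x]{\eta_1}{\eta_2}$ (possibly a geodesic line when $\eta_1=\eta_2$). By Lemma~\ref{lem:tau_extends} the map $\trans{s}{\zeta}$ is an isometric translation carrying $\ray{x,\eta_i}$ onto $\ray{q,\eta_i}$; hence it maps $\sect[x]{\eta_1}{\eta_2}$ isometrically onto the convex hull of $\ray{q,\eta_1}\cup\ray{q,\eta_2}$, which is therefore a flat sector. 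This settles $(1)$.

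For part $(2)$ I may assume $\eta_1\neq\eta_2$, since otherwise the triangle degenerates. I first record that by $(1)$, together with Lemma~\ref{lem:sector_exists} and the convexity of $\partial\cone[x]{m}$, the sectors $\sect[x]{\xi}{\eta_i}$, $\sect[x]{\eta_1}{\eta_2}$ and $\sect[p]{\eta_1}{\eta_2}$ are all flat, the first two being nondegenerate by the hypothesis $0<\angle_{x}(\eta_i,\xi)<\pi$. Write $p=\cc{x}{\xi}(s_0)$ and set $q_i'=\trans{s_0}{\xi}(q_i)=\cc{q_i}{\xi}(s_0)=\cc{p}{\eta_i}(t_i)$. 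The plan is to exhibit a flat Euclidean ``prism'' containing $p,q_1,q_2$. First, the isometric translation $\trans{s_0}{\xi}$ carries the flat triangle $\tri{x}{q_1}{q_2}\subset\sect[x]{\eta_1}{\eta_2}$ onto the flat triangle $\tri{p}{q_1'}{q_2'}$. Next, inside each flat sector $\sect[x]{\xi}{\eta_i}$ the four points $x,p,q_i',q_i$ form a flat parallelogram, so $[q_i,q_i']$ is a geodesic of length $s_0$ parallel to $[x,p]$. Finally, since each $\trans{s}{\xi}$ is an isometry, the geodesics $s\mapsto\trans{s}{\xi}(q_1)$ and $s\mapsto\trans{s}{\xi}(q_2)$ stay at constant distance, so by Lemma~\ref{lem:parallelogram} the convex hull of $[q_1,q_1']\cup[q_2,q_2']$ is a flat parallelogram.

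These pieces are the faces of the prism swept out by $\Phi\colon(y,s)\mapsto\trans{s}{\xi}(y)$ on $\tri{x}{q_1}{q_2}\times[0,s_0]$: each slice $\trans{s}{\xi}(\tri{x}{q_1}{q_2})$ is an isometric copy of the base triangle, each fibre $s\mapsto\trans{s}{\xi}(y)=\cc{y}{\xi}(s)$ is a unit-speed geodesic toward $\xi$, and distinct fibres stay at constant distance. I would then invoke the product decomposition and flat strip results of \cite[pp.~182--183]{bridson-haefliger}, exactly as in the proof of Lemma~\ref{lem:core}~$(4)$, to conclude that $\Phi$ is an isometry onto a convex subset of $Y$ isometric to the metric product $\tri{x}{q_1}{q_2}\times[0,s_0]\subset\R^2\times\R$. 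Since $p=\Phi(x,s_0)$, $q_1=\Phi(q_1,0)$ and $q_2=\Phi(q_2,0)$ all lie in this flat convex region, the geodesics joining them do too, and $\tri{p}{q_1}{q_2}$ is flat. The main obstacle is precisely this last upgrade: turning the data ``isometric slices plus mutually parallel fibres'' into a genuine metric product (the Pythagorean relation $d(\Phi(y,s),\Phi(y',s'))^2=d(y,y')^2+(s-s')^2$ together with convexity of the image). I expect to handle it by applying the Flat Strip Theorem fibrewise and then the product decomposition theorem, since the constant-distance estimates that feed these results are already provided by the flat parallelograms constructed above.
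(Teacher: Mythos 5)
Your argument for part $(1)$ is correct and is essentially the paper's: write $q=\trans{s}{\zeta}(x)$, use Lemma~\ref{lem:tau_extends} to see that $\trans{s}{\zeta}$ carries $\ray{x,\eta_i}$ to $\ray{q,\eta_i}$, and identify the convex hull of $\ray{q,\eta_1}\cup\ray{q,\eta_2}$ with the isometric image of $\sect[x]{\eta_1}{\eta_2}$ (the only point to spell out is the double inclusion showing the image actually \emph{is} the convex hull).

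Part $(2)$, however, has a genuine gap: the prism you want does not exist. The Pythagorean relation $d(\Phi(y,s),\Phi(y',s'))^2=d(y,y')^2+(s-s')^2$ is false already when $Y=\R^2$: there $\trans{s}{\xi}$ is translation by $s$ in the direction of $\xi$, the ``prism'' $\bigcup_{s}\trans{s}{\xi}(\tri{x}{q_1}{q_2})$ collapses into the plane, and $d(\Phi(x,s),\Phi(q_1,0))$ is $\bigl(s_1^2+s^2-2s_1s\cos\angle_x(\eta_1,\xi)\bigr)^{1/2}$, not $\bigl(s_1^2+s^2\bigr)^{1/2}$. Your own intermediate step already contradicts the product claim: you correctly observe that $x,p,q_i',q_i$ span a flat \emph{parallelogram} in $\sect[x]{\xi}{\eta_i}$, whereas in a metric product $\tri{x}{q_1}{q_2}\times[0,s_0]$ that face would have to be a rectangle; these agree only if $\angle_x(\eta_i,\xi)=\pi/2$, which is not assumed. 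The Flat Strip and Product Decomposition theorems cannot repair this, because they produce a splitting $\R\times Y_0$ in which $Y_0$ is a cross-section \emph{perpendicular} to the $\xi$-direction, not your triangle $T$; you would then still have to show that the projection of $T$ to $Y_0$ is flat, which is the whole difficulty (it amounts to controlling the angle between the fibre direction and the slice), and ``isometric slices plus equidistant fibres'' is not enough to force flatness of the swept region.

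The paper's route avoids any product structure. It first normalizes the two points: writing $q_i=\cc{x}{\eta_i}(s_i)$ with $s_1\ge s_2$, it slides $[p,q_2]$ by $\trans{t}{\xi}$ inside the flat sector $\sect[x]{\xi}{\eta_2}$ until the translated segment extends to $[\trans{t}{\xi}(p),q_2']$ with $q_2'=\cc{x}{\eta_2}(s_1)$. Then $\trans{t}{\xi}(q_1)$ and $q_2'$ both lie in the parallelogram $R_{s_1s_1}$ of the family $\bigcup_{q\in[x,p']}\sect[q]{\eta_1}{\eta_2}$ (the hypotheses of Lemma~\ref{lem:parallelogram_and_sliding} are checked using that $\sect[x]{\xi}{\eta}$ is flat for every $\eta\in[\eta_1,\eta_2]$), and Lemma~\ref{lem:parallelogram_and_sliding}~$(4)$ — whose proof is exactly the quantitative control of the diagonal of $R_{ss}$ growing linearly in $s$ — yields that $\tri{\trans{t}{\xi}(p)}{\trans{t}{\xi}(q_1)}{q_2'}$ is flat. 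Since $\trans{t}{\xi}(\tri{p}{q_1}{q_2})$ sits inside the convex hull of that flat triangle and $\trans{t}{\xi}$ is an isometry, $\tri{p}{q_1}{q_2}$ is flat; the case $d_{\angle}(\eta_1,\eta_2)=\pi$ is handled separately via the flat strip of Remark~\ref{remark:when_angle_is_pi}, a case your write-up also does not address.
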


\begin{proof}
$(1)$ Take any $q \in \cone[x]{m}$.  
 Expressing $q$ as $q=\cc{x}{\xi}(s)$, we see that 
 $\trans{s}{\xi}\colon \cone[x]{m}\rightarrow \cone[x]{m}$
 is an isometric translation with $\trans{s}{\xi}(x)=q$ 
 by Lemma~\ref{lem:tau_extends} above. 
 Since 
 $\ray{q,\eta_1}\cup \ray{q,\eta_2}
 \subset\trans{s}{\xi}(\sect[x]{\eta_1}{\eta_2})$, 
 and $\trans{s}{\xi}(\sect[x]{\eta_1}{\eta_2})$ is a flat sector, 
 it is clear that  
 $\ch{\ray{q,\eta_1}\cup\ray{q,\eta_2}}\subset 
 \trans{s}{\xi}(\sect[x]{\eta_1}{\eta_2})$. 
 On the other hand, for any $q_1\in \ray{q,\eta_1}$ and 
 $q_2\in \ray{q,\eta_2}$,  $[q_1,q_2]$ must be contained in  
 $\ch{\ray{q,\eta}\cup \ray{q,\eta'}}$. 
 Therefore
 $\trans{s}{\xi}(\sect[x]{\eta_1}{\eta_2})\subset
 \ch{\ray{q,\eta}\cup\ray{q,\eta'}}$. 
 Thus we conclude that $\ch{\ray{q,\eta}\cup\ray{q,\eta'}}$ is a flat
 sector. 

$(2)$ If $\eta_1=\eta_2$, then $\tri{p}{q_1}{q_2}$ lies in
 $\sect[x]{\eta_1}{\xi}$, and hence it is flat. 
 Suppose that $\eta_1\not= \eta_2$. 
 We can write $q_i=\cc{x}{\eta_i}(s_i)$, and we may assume that 
 $s_1\geq s_2$. 
 By our assumption $\angle_{x}(\eta_i,\xi)<\pi$, 
 we have a non-trivial flat sector
 $\sect[x]{\xi}{\eta_2}$. 
 Since $[p,q_2]$ is a geodesic segment in 
 $\sect[x]{\xi}{\eta_2}$ joining points in
 $\ray{x,\xi}$ and $\ray{x,\eta_2}$, by sliding it by
 $\trans{t}{\xi}$ with $t>0$, we see that 
 $\trans{t}{\xi}([p,q_2])$ can be extended to 
 a geodesic segment $[\trans{t}{\xi}(p), q_2']$, where 
 $q_2' \in \ray{x,\eta_2}$. 
 Now we can take $t$ so that $q_2'=\cc{x}{\eta_2}(s_1)$, and 
 take $p' \in \ray{x,\xi}$ so that 
 $p, \trans{t}{\xi}(p) \in [x,p']$. 
 By (1), we have a flat sector
 $\sect[q]{\eta_1}{\eta_2}$ for any $q \in \ray{x,\xi}$, and we can
 consider $\bigcup_{q \in [x,p']} \sect[q]{\eta_1}{\eta_2}$. 

 Suppose that $d_{\angle}(\eta_1,\eta_2)<\pi$. 
 Since $\sect[x]{\xi}{\eta}$ is a flat sector for any
 $\eta \in [\eta_1, \eta_2] \subset \partial \cone[x]{m}$, 
 $\bigcup_{q \in [x,p']}\ray{q,\eta}\subset \sect[x]{\xi}{\eta}$ is a 
 (possibly degenerated) half-infinite parallelogram, and hence 
 we can apply Lemma~\ref{lem:parallelogram_and_sliding} 
 to $\bigcup_{q \in [x,p']} \sect[q]{\eta_1}{\eta_2}$ by setting
 $p_1=x$, $p_2=p'$. 
 By our choice of $t$, we see that 
 $\trans{t}{\xi}(q_1), q_2'\in R_{s_1s_1}$, 
  where $R_{s_1s_1}$ is the convex hull of
 $[\cc{x}{\eta_1}(s_1),\cc{x}{\eta_2}(s_1)] \cup 
 [\cc{p'}{\eta_1}(s_1),\cc{p'}{\eta_2}(s_1)]$,
 and that 
 $\tri{\trans{t}{\xi}(p)}{\trans{t}{\xi}(q_1)}{q_2'}$ is
 flat by Lemma~\ref{lem:parallelogram_and_sliding} $(4)$. 
 Note that $\trans{t}{\xi}(\tri{p}{q_1}{q_2})$ is a subset of the
 convex hull of 
 $\tri{\trans{t}{\xi}(p)}{\trans{t}{\xi}(q_1)}{q_2'}$,
 and hence it is flat. 
 By Lemma~\ref{lem:tau_extends}, $\trans{t}{\xi}$ is an isometric
 translation, 
 we conclude that $\tri{p}{q_1}{q_2}$ is also flat. 

 While if $d_{\angle}(\eta_1,\eta_2)=\pi$, then, according to
 Remark~\ref{remark:when_angle_is_pi}, $\tri{p}{q_1}{q_2}$ is lying in
 a flat strip, and hence it is flat. 
 This completes the proof. 
\end{proof}

\subsection{Existence of a Euclidean simplex}
\label{sec:existence-simplex}

%
%
 Let $l$ be a natural number and consider $\cone[x]{l}$.
 Take any $q \in \cone[x]{l}$. 
 Then there exists $\eta \in \partial \cone[x]{l-1}$ such that
 $q \in \sect[x]{\eta}{\xxi{l}}$. 
 Then $q$ must lie on $\ray{\hat q,\eta}$ for some 
 $\hat q \in \ray{x,\xxi{l}}$; 
 we can write $q = \cc{\hat q}{\eta}(t)$. 
 If $q \in \cone[x]{l-1}$, we can take $\hat q=x$.
 If $q \not\in \cone[x]{l-1}$, then 
 take $p_j \in [x,x_j]$ so that $\uulim[l] p_j=\hat q$. 
 Then $\uulim[l] \cc{p_j}{\eta}(t)= \cc{\hat q}{\eta}(t)=q$. 
 Keep this notation and define a map on $\cone[x]{l}$ by 
\begin{equation*}
 \Phi_1 (q) = 
 \begin{cases}
  q & q \in \cone[x]{l-1}, \\
  \uulim[l+1] c_{p_j}^{\eta}(t) & q= \cone[x]{l}\setminus 
  \cone[x]{l-1}. 
 \end{cases}
\end{equation*}
 Then, since $\uulim[l+1]\cc{p_j}{\eta}(t)=\cc{q'}{\eta}(t)$ for 
 $q' = \uulim[l+1] p_j \in \ray{x,\xxi{l+1}}$, 
 we see that 
 $\Phi_1(\sect[x]{\eta}{\xxi{l}}) = \sect[x]{\eta}{\xxi{l+1}}$, 
 and obtain
 $\Phi_1 \colon \cone[x]{l}\rightarrow C_{x}(1,\dots,l-1,l+1)$. 
 Take $\q{1}, \q{2} \in \cone[x]{l}$ and write them as
 $\q{i} = \cc{\hat q_i}{\eta^i}(t_i)$, where
 $\hat q^{(i)} \in \ray{x,\xxi{l}}$, 
 $\eta^i \in \partial \cone[x]{l-1}$ and 
 $t_i \in \ray{0,\infty}$ as above. 
 Take $\p{i}_j \in [x,x_j]$ so that 
 $\uulim[l] p_j^{(i)}=\hat q_i$. 
 Then we have 
 \begin{equation*}
   d(\q{1},\q{2})=
  \ulim_j d(\cc{\p{1}_j}{\eta^1}(t_1), \cc{\p{2}_j}{\eta^2}(t_2)), 
 \end{equation*}
 and we also have
\begin{equation*}
   d(\Phi_1(\q{1}), \Phi_1(\q{2}))
  =\ulim_j d(\cc{\p{1}_j}{\eta^1}(t_1), \cc{\p{2}_j}{\eta^2}(t_2)), 
\end{equation*}
 since 
 $\Phi_1(\q{i})=\uulim[l+1]_j \cc{\p{i}_j}{\eta^i}(t_i)$. 
 Therefore $\Phi_1$ is an isometry. 
 We extend this map to an isometry defined on whole $\cone[x]{l+1}$ as
 follows. 

 Note that, for any $\eta \in \cone[x]{l}$, 
 $\sect[x]{\eta}{\xxi{l+1}}$ can be identified with a flat
 sector in $\R^2$. 
 Take any point $q \in \sect[x]{\xxi{l+1}}{\eta}$. 
 Through this identification, we can write 
\begin{equation*}
 q=a p + b \p{l+1}= \cc{x}{\eta}(a) + \cc{x}{(l+1)}(b), 
\end{equation*}
 where $p=\cc{x}{\eta}(1)$, $\p{l+1}=\cc{x}{(l+1)}(1)$. 
 Note that this description of $q$ is unique,
 since $0<\angle(\eta,\xxi{l+1})<\pi$ for any 
 $\eta \in \partial \cone[x]{l}$ by
 Proposition~\ref{prop:less_than_pi} $(1)$.
 Note also that, by Lemma~\ref{lem:sector_exists} with $m=l+1$, the
 convex hull of $\ray{x,\xi^{(l)}} \cup \Phi_1(\ray{x,\eta})$ is a
 flat sector, which we denote by $\sect[x]{\eta'}{\xi^{(l)}}$. 
 By Proposition~\ref{prop:less_than_pi} $(2)$,  for each 
 $q \in \cone[x]{l+1}\setminus \left(\cone[x]{l}\cup
 \ray{x,\xxi{l+1}}\right)$, 
 we have unique $\eta \in \cone[x]{l}$ such that 
 $q \in \sect[x]{\eta}{\xxi{l+1}}$. 
 Therefore setting 
\begin{equation*}
 \Phi(q)= \Phi(ap + b \p{l+1})
  = a \Phi_1(p) + b \p{l} 
\end{equation*}
 gives us a well-defined map on $\cone[x]{l+1}$, 
 and we see that $\Phi$ maps 
 $\sect[x]{\eta}{\xxi{l+1}}$ onto $\sect[x]{\eta'}{\xxi{l}}$. 
 In this manner, we get a map defined on whole 
 $\cone[x]{l+1}=C_{x}(1,\dots,l+1)=
 \bigcup_{\eta \in \partial \cone[x]{l}}
 \sect[x]{\eta}{\xxi{l+1}}$. 

 We will show that $\Phi$ is actually an isometry. 
 Since 
 $\angle(\xxi{i},\xxi{j})=\zure$ and 
 $\Phi(\xxi{i})=\xxi{j}$ for any $i$
 ($j=i$ if $i \in \{1,\dots, l-1\}$, and $(i,j)=(l,l+1), (l+1,l)$
 otherwise), 
 $\Phi$ is an isometry on each 
 $\sect[x]{\xxi{i}}{\xxi{l+1}}$. 
 By applying Lemma~\ref{lem:parallelogram_and_sliding} $(4)$ to
 $\bigcup_{q \in [x,\p{l+1}]} \sect[q]{\xxi{i}}{\xxi{j}}$, 
 $i,j \in \{1,\dots, l\}$, 
 we see that 
 $\tri{\p{l+1}}{\p{i}}{\p{j}}$ is a flat triangle, where
 $\p{k}=\cc{x}{(k)}(1)$.  
 The same is true for $\tri{\p{l}}{\Phi_1(\p{i})}{\Phi_1(\p{j})}$, and
 these two triangles are regular triangles whose side length is equal to
 $2\sin (\zure/2)$, hence they are isometric to each other. 
 Let $p_t=(1-t)\p{i}+ t\p{j}\in [\p{i},\p{j}]$ for $t \in [0,1]$. 
 Since $p_t \in \cone[x]{l}$, $\Phi(p_t)=\Phi_1(p_t)$. 
 Note that $\Phi_1$ is an isometry with 
 $\Phi_1(\ray{x,\xxi{i}})=\ray{x,\xxi{\tilde i}}$, where
 $\tilde i=i$ if $1\leq i \leq l-1$, and $\tilde i=l+1$ if $i=l$. 
 Then we see that
 $\Phi(p_t)=(1-t)\p{\tilde i}+t \p{\tilde j}$ holds for any
 $i,j \in \{1,\dots, l\}$. 
 Since $\tri{\p{l+1}}{\p{i}}{\p{j}}$ and 
 $\tri{\p{l}}{\p{\tilde i}}{\p{\tilde j}}$ are isometric to each
 other,  we see that $d(p_t, \p{l+1})=d(\Phi(p_t),\p{l})$.  
 Also since we know that $\tri{x}{\p{i}}{\p{j}}$ and 
 $\tri{x}{\p{\tilde i}}{\p{\tilde j}}$ are flat and isometric
 to each other, $d(x, p_t)=d(x,\Phi(p_t))$. 
 Since
 $\tri{\p{l+1}}{x}{p_t}$ is lying in $\sect[x]{\xi(t)}{\xxi{l+1}}$
 for $\xi(t) \in \partial \cone[x]{l}$ satisfying 
 $p_t \in \ray{x,\xi(t)}$, it is obviously flat. 
 The same is true for $\tri{\p{l}}{x}{\Phi(p_t)}$ lying in 
 $\sect[x]{\tilde\xi(t)}{\xi^{l}}$, where
 $\tilde\xi(t) \in \partial \cone[x]{l+1}$ satisfies
 $\Phi(p_t) \in \ray{x,\tilde\xi(t)}$. 

\noindent
\begin{minipage}[c]{8cm}
 Since $d(p_t, \p{l+1})=d(\Phi(p_t), \p{l})$ and 
 $d(x, p_t)=d(x,\Phi(p_t))$ hold as we have seen above, 
 together with $d(x,\p{l+1})=d(x,\p{l})=1$, we see
 that $\tri{\p{l+1}}{x}{p_t}$ and $\tri{\p{l}}{x}{\Phi(p_t)}$
 are isometric to each other, and in particular, 
 $\angle_{x}(p_t,\p{l+1})=\angle_{x}(\Phi(p_t),\p{l})$.
 Recalling the defintion of $\Phi$, we conclude that 
 $\Phi \colon \sect[x]{\xi(t)}{\xxi{l+1}} \rightarrow
 \sect[x]{\tilde\xi(t)}{\xi^{(l)}}$
is an isometry. 
\end{minipage}
\begin{minipage}[c]{6cm}
 \vspace{-2cm}
 \hspace{1.7cm}
 \includegraphics[scale=.8]{fig.10}
\end{minipage}

 Now fix $t \in (0,1)$ and let $\p{t}=\cc{x}{\xi(t)}(1)$. 
 By applying Lemma~\ref{lem:parallelogram_and_sliding} $(4)$ to
 $\bigcup_{q \in [x,\cc{x}{\xi(t)}(T)]}\sect[q]{\xxi{k}}{\xxi{l+1}}$
 for $k \in \{1,\dots, l\}$ and suitable $T>0$, 
 we see that $\tri{\p{l+1}}{\p{t}}{\p{k}}$ is a flat triangle, and the
 same is true for $\tri{\p{l}}{\Phi(\p{t})}{\Phi(\p{k})}$. 
 Since we already know that
\begin{equation*}
\begin{split}
 & d(\p{t},\p{l+1})=d(\Phi(\p{t}),\p{l}), \quad
 d(\p{l+1},\p{k})=d(\p{l},\Phi(\p{k})), \\
 & d(\p{t},\p{k})=d(\Phi_1(\p{t}),\Phi_1(\p{k}))
 =d(\Phi(\p{t}),\Phi(\p{k}))
\end{split}
\end{equation*} 
hold,  by the same argument as above, we see that $\Phi$ is an isometry on 
 $\sect[x]{\xi'}{\xxi{l+1}}$ for any 
 $\xi' \in [\xi(t),\xxi{k}] \subset \partial\cone[x]{l}$ with 
 $k \not=i,j$ and $t \in [0,1]$. 
 Repeating this shows that
 $\Phi$ is an isometry on $\sect[x]{\eta}{\xxi{l+1}}$ for
 any $\eta \in \cone[x]{l}$.  

 Now take any $q_1,q_2 \in \cone[x]{l+1}$. 
 If there exists $\eta \in \partial \cone[x]{l}$ such that 
 $q_1,q_2 \in \sect[x]{\eta}{\xxi{l+1}}$, then we already know
 $d(q_1,q_2)=d(\Phi(q_1),\Phi(q_2))$. 
 Suppose there is no such $\eta$.
 Then we have $\eta_i \in \partial \cone[x]{l}$ satisfying 
 $q_i \in \sect[x]{\eta_i}{\xxi{l+1}}$.  
  By Proposition~\ref{prop:less_than_pi} $(1)$,
 we can find $p \in \ray{x,\xxi{l+1}}$ and
 $q_i' \in \ray{x,\eta_i}$ so that $q_i \in [p,q_i']$, 
 and we see that 
 Proposition~\ref{prop:flat_triangle} $(2)$ implies that 
 $\tri{p}{q_1'}{q_2'}$ is flat. 
 Note that 
 $[p,q_i'] \subset \sect[x]{\eta_i}{\xxi{l+1}}$ and  
 $[q_1',q_2']\subset \cone[x]{l}$. 
 Therefore we have $d(p,q_i')=d(\Phi(p),\Phi(q_i'))$, $i=1,2$, and 
 $d(q_1',q_2')=d(\Phi(q_1'),\Phi(q_2'))$. 
 Since $\Phi$ maps $\sect[x]{\eta_i}{\xxi{l+1}}$ isometrically onto its
 image,  we know that 
 $0<\angle_{x}(\Phi(q_i'),\Phi(p))=\angle_{x}(\eta_i,\xxi{l+1})<\pi$
 by Proposition~\ref{prop:less_than_pi} $(1)$.
 Thus, by applying Proposition~\ref{prop:flat_triangle} $(2)$ to 
 $\Phi(p) \in \ray{x,\xxi{l}}$ and  
 $\Phi(q_i') \in \Phi(\ray{x,\eta_i})$, $i=1,2$, 
 we see that 
 $\tri{\Phi(p)}{\Phi(q_1')}{\Phi(q_2')}$ is also flat.
 Therefore we conclude that $\Phi$ maps $\tri{p}{q_1'}{q_2'}$
 isometrically onto $\tri{\Phi(p)}{\Phi(q_1')}{\Phi(q_2')}$, and
 in particular, $d(\Phi(q_1),\Phi(q_2))=d(q_1,q_2)$. 

 Repeating this procedure allows us to extend $\Phi$ to an isometry
 defined on $\cone[x]{m}$ for $m>l+1$; we get an isometry on
 $\cone[x]{m}$ permuting $\xi^{(l)}$ and $\xxi{l+1}$.  
 Composing such maps suitably,  we obtain the following proposition.

\begin{Proposition}
\label{prop:phi_is_a_isometry}
 Let $s$ be a permutation of $\{1, \dots, m\}$. 
 Then there exists an isometry
 $\Phi_{s} \colon \cone[x]{m}\rightarrow \cone[x]{m}$ which
 satisfies $\Phi_{s}(\ray{x,\xxi{i}})=\ray{x,\xxi{s(i)}}$. 
\end{Proposition}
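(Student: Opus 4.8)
The plan is to build $\Phi_s$ for an arbitrary permutation $s$ by composing the isometries constructed above, which realize the transpositions $(l\ l{+}1)$ for each $l$ with $1\le l\le m-1$. Concretely, the preceding construction produces, for each such $l$, an isometry $\Phi^{(l)}\colon \cone[x]{m}\rightarrow\cone[x]{m}$ that fixes $\ray{x,\xxi{i}}$ for $i\notin\{l,l+1\}$ and interchanges $\ray{x,\xxi{l}}$ with $\ray{x,\xxi{l+1}}$; this is exactly the map $\Phi$ extended to $\cone[x]{m}$ in the discussion immediately before the proposition. Thus each adjacent transposition $\sigma_l=(l\ l{+}1)\in S_m$ is realized by an isometry $\Phi_{\sigma_l}:=\Phi^{(l)}$ permuting the rays $\ray{x,\xxi{i}}$ according to $\sigma_l$.

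First I would record that the adjacent transpositions $\sigma_1,\dots,\sigma_{m-1}$ generate the symmetric group $S_m$, a standard fact. Hence any permutation $s$ admits a factorization $s=\sigma_{l_1}\sigma_{l_2}\cdots\sigma_{l_k}$ into adjacent transpositions. I would then simply set
\begin{equation*}
 \Phi_s = \Phi^{(l_1)}\circ \Phi^{(l_2)}\circ \cdots \circ \Phi^{(l_k)}.
\end{equation*}
Since a composition of isometries of $\cone[x]{m}$ onto itself is again such an isometry, $\Phi_s$ is an isometry of $\cone[x]{m}$. To verify the action on rays, I would observe that each factor $\Phi^{(l_r)}$ sends $\ray{x,\xxi{i}}$ to $\ray{x,\xxi{\sigma_{l_r}(i)}}$; composing and tracking the images gives $\Phi_s(\ray{x,\xxi{i}})=\ray{x,\xxi{s(i)}}$, where the order of composition matches the order in which the transpositions are applied (one must be careful that $\Phi_s$ realizes the product $s$ with the correct convention, which is a matter of fixing composition order consistently).

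The only genuine content here is the existence of the single building block $\Phi^{(l)}$, and that has already been established in the paragraphs preceding the proposition statement; the remaining argument is purely group-theoretic bookkeeping. I expect the main obstacle to be merely notational: making sure the composition convention is consistent so that the rays are permuted exactly by $s$ rather than by $s^{-1}$, and confirming that each $\Phi^{(l)}$ maps $\cone[x]{m}$ \emph{onto} itself (not merely isometrically into itself) so that the composite is well-defined on all of $\cone[x]{m}$. Surjectivity of $\Phi^{(l)}$ follows because the construction is symmetric in $\xi^{(l)}$ and $\xxi{l+1}$, so $\Phi^{(l)}$ is its own two-sided inverse; I would note this explicitly to justify that the composite is a bijective isometry.
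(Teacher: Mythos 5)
Your proposal is correct and follows essentially the same route as the paper: the text preceding the proposition constructs the isometry of $\cone[x]{m}$ realizing the adjacent transposition of $\xi^{(l)}$ and $\xi^{(l+1)}$, and the paper then obtains $\Phi_s$ for a general permutation $s$ by "composing such maps suitably," which is exactly your generation-by-adjacent-transpositions argument. Your added remarks on surjectivity (each building block being an involution) and on the composition convention are sensible housekeeping that the paper leaves implicit.
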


%
%
\begin{Proposition}
 \label{prop:simplex}
  Fix $r>0$, and 
  define $\sigma^{(j)}$ inductively by
\begin{equation*}
 \sigma(i)=\{\p{i}\}, \quad
 \sigma(i,\dots, j,k)=\bigcup_{p \in \sigma(i,\dots,j)}[p,p^{(k)}], 
 \quad   \sigma^{(j)}= \sigma(1, \dots, j), 
\end{equation*}
 where $\p{j}=\cc{x}{(j)}(r)$.
 Then, under our setting, for any $n \in \N$, $\sigma^{(n-1)}$ is
 isometric to a regular Euclidean $(n-1)$-simplex. 
\end{Proposition}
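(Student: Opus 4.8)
The plan is to prove that $\sigma^{(n-1)}$ is a regular Euclidean $(n-1)$-simplex by induction on the number of vertices, using the accumulated flatness and isometry results about the cone $\cone[x]{m}$ established in Propositions~\ref{prop:less_than_pi}, \ref{prop:flat_triangle}, and \ref{prop:phi_is_a_isometry}. First I would observe that all the vertices $\p{j}=\cc{x}{(j)}(r)$ lie in $\cone[x]{n-1}$, and that by Lemma~\ref{lem:if_angle_is_positive} each pair $\ray{x,\xxi{i}}\cup\ray{x,\xxi{j}}$ bounds a flat sector with angle $\zure$; hence $\angle_x(\xxi{i},\xxi{j})=\zure$ for all $i\neq j$, and each edge $[\p{i},\p{j}]$ has the same length $2r\sin(\zure/2)$ and lies in the flat sector $\sect[x]{\xxi{i}}{\xxi{j}}$. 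This already shows that the ``link'' at $x$ consists of equidistant rays, which is the combinatorial data of a regular simplex.

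The inductive step is where the real work lies. Assuming $\sigma^{(n-2)}=\sigma(1,\dots,n-1)$ is isometric to a regular Euclidean $(n-2)$-simplex, I would show that coning off from the new vertex $\p{n}$ produces a regular $(n-1)$-simplex. The key geometric input is that for any $p\in\sigma^{(n-2)}$ the segment $[p,\p{n}]$ is the unique geodesic, lies in $\cone[x]{n}$, and that for any two points $q_1,q_2\in\sigma^{(n-2)}$ the triangle $\tri{\p{n}}{q_1}{q_2}$ is flat. The flatness of such triangles should follow from Proposition~\ref{prop:flat_triangle}~$(2)$: taking $\xi=\xxi{n}$ and writing $q_i$ as lying on rays $\ray{x,\eta_i}$ with $\eta_i\in\partial\cone[x]{n-1}$, the hypotheses $0<\angle_x(\eta_i,\xxi{n})<\pi$ are guaranteed by Proposition~\ref{prop:less_than_pi}~$(1)$. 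Once every triangle spanned by $\p{n}$ and two points of the lower simplex is flat, the whole convex hull $\sigma^{(n-1)}=\bigcup_{p\in\sigma^{(n-2)}}[p,\p{n}]$ embeds isometrically into $\R^{n-1}$ as the join of a single point with a regular $(n-2)$-simplex.

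To upgrade ``flat'' to ``regular'' I would invoke the symmetry provided by Proposition~\ref{prop:phi_is_a_isometry}: for every permutation $s$ of $\{1,\dots,n\}$ there is an isometry $\Phi_s$ of $\cone[x]{n}$ fixing $x$ and permuting the rays $\ray{x,\xxi{i}}$ accordingly. Since each $\Phi_s$ fixes $x$ and carries $\cc{x}{(i)}(r)$ to $\cc{x}{(s(i))}(r)$, it maps $\sigma^{(n-1)}$ isometrically onto itself while realizing the full symmetric group action on the vertex set $\{\p{1},\dots,\p{n}\}$. This vertex-transitivity, combined with the fact that all edges already have the common length $2r\sin(\zure/2)$, forces the Euclidean simplex into which $\sigma^{(n-1)}$ isometrically embeds to be regular: any Euclidean simplex admitting the full symmetric group as isometries and with all edges equal is regular.

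The main obstacle I anticipate is verifying that $\sigma^{(n-1)}$ genuinely sits inside a single flat Euclidean piece rather than merely having all its two-dimensional faces flat; that is, passing from the pairwise flatness of triangles $\tri{\p{n}}{q_1}{q_2}$ to a global isometric embedding of the full convex hull. The cleanest route is to argue that $\sigma^{(n-1)}$ is contained in the convex hull of the $n$ rays $\{\ray{x,\xxi{i}}\}_{i=1}^{n}$ inside $\cone[x]{n}$, and then to build the comparison map to $\R^{n-1}$ by using the isometric translations $\trans{s}{\xi}$ from Lemma~\ref{lem:tau_extends} together with the flat-triangle statement of Proposition~\ref{prop:flat_triangle}~$(2)$ applied repeatedly along a triangulation of $\sigma^{(n-1)}$; the consistency of these local isometries is exactly what Proposition~\ref{prop:phi_is_a_isometry} and the commutation relation \eqref{eq:tau_commutative} are designed to guarantee. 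Once the global embedding is in place, regularity is immediate from the symmetry and the common edge length, completing the induction.
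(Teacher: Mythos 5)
Your overall architecture coincides with the paper's: induction on the number of vertices, the common edge length $2r\sin(\zure/2)$ coming from Lemma~\ref{lem:if_angle_is_positive}, flatness of the triangles $\tri{\p{n}}{q_1}{q_2}$ via Proposition~\ref{prop:flat_triangle}~$(2)$ together with Proposition~\ref{prop:less_than_pi}~$(1)$, and the permutation isometries of Proposition~\ref{prop:phi_is_a_isometry}. The gap is in the step you yourself flag as the main obstacle, and your proposed resolution does not close it. Knowing that every triangle $\tri{\p{n}}{q_1}{q_2}$ with $q_1,q_2\in\sigma^{(n-2)}$ is flat tells you only that each such triangle is isometric to \emph{some} Euclidean triangle; to conclude that $\sigma^{(n-1)}$ is the regular simplex you must know these triangles are congruent to the corresponding triangles in the model, and for that you need the side lengths $d(\p{n},q)$ for \emph{arbitrary} points $q$ of $\sigma^{(n-2)}$, not just for its vertices. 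Vertex-transitivity of the $\Phi_s$-action together with equal edge lengths controls only the $1$-skeleton; it says nothing about the distance from $\p{n}$ to an interior point of the opposite face, which is precisely the quantity that decides whether your ``join of a point with a regular $(n-2)$-simplex'' is the regular $(n-1)$-simplex or a distorted cone.

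The paper supplies the missing distance computation with a specific use of $\Phi_s$ that your proposal does not contain: it constructs an explicit comparison map $\Psi$ from the model regular simplex $S(1,\dots,n)$ onto $\sigma^{(n-1)}$ and, given two points, extends the segment joining their projections to $S(1,\dots,n-1)$ until its endpoints $\bar q_i''$ lie in facets $S(1,\dots,\hat l(i),\dots,n-1)$; the transposition $\Phi_s$ swapping $\xxi{l(i)}$ and $\xxi{n}$ fixes the corresponding face $\sigma(1,\dots,\hat l(i),\dots,n-1)$, whence $d(\Psi(\bar q_i''),\p{n})=d(\Psi(\bar q_i''),\p{l(i)})$, a distance inside $\sigma^{(n-2)}$ controlled by the inductive hypothesis. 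Only after that does Proposition~\ref{prop:flat_triangle}~$(2)$ produce a flat triangle with the \emph{correct} side lengths, and the desired distance is read off from the containment of the original pair of points in its convex hull. If you wish to avoid the transposition trick, you would instead need a second, nested induction on the join-level of $q$ inside $\sigma^{(n-2)}$, propagating the value of $d(\p{n},q)$ outward from the vertices through successive flat triangles; either way, some argument pinning down $d(\p{n},q)$ for non-vertex $q$ must be inserted before your final assembly step can go through.
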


\begin{proof}
 First note that $\sigma^{(n-1)}$ is a slice of $C_{x}^{(n)}$
 containing $\{p^{(1)},\dots, p^{(n)}\}$ and left invariant by an
 isometry $\Phi_{s}$ in Proposition~\ref{prop:phi_is_a_isometry} for any
 permutation $s$. In particular, $[\p{i},\p{j}]$ has the same length
 $2r\sin (\zure/2)$ for $i,j=1, \dots, n$ with $i\not= j$. 
 Let $S(1, \dots, n)$ be a Euclidean regular
 $(n-1)$-simplex with vertices $\bar p_1, \dots, \bar p_n$ and
 $d(p_i,p_j)=2r\sin (\zure/2)$ for $i\not= j$. 
 We also denote, for $\{i_1, \dots, i_m\}\subset \{1,\dots, n\}$, 
 $(m-1)$-face with vertices $\{\bar p_{i_1}, \dots, \bar p_{i_m}\}$ by
 $S(i_1, \dots, i_m)$. 
 
 We show that there is an isometry from 
 $S(1, \dots, n)$ onto
 $\sigma^{(n-1)}$ by induction on $n$. 
 Note that, for any point $\bar q$ in $S(1,\dots, m-1)$, $m=1,\dots, n$, 
 there exists a unique segment 
 $[\bar q,\bar p_m] \subset S(1,\dots, m)$, 
 and we know that
 \begin{equation*}
  S(1, \dots, m)=\bigcup_{\bar q \in S(1, \dots, m-1)} [\bar q, \bar p_m]. 
 \end{equation*}
 Furthermore,  if 
 $\bar q\not= \bar q' \in S(1, \dots, m-1)$, then 
 $[\bar q,\bar p_m]\cup [\bar q',\bar p_m]=\{\bar p_m\}$. 
 Keeping this in mind,  we can define a surjection 
 $\Psi\colon S(1,\dots,n)\rightarrow \sigma^{(n)}$ inductively by
\begin{equation*}
\begin{cases}
  \Psi(\bar p_1)=p^{(1)}, & \ \\
 \Psi(\bar c_{\bar q}^{\bar p_j}(t))=c_{\Psi(\bar q)}^{\p{j}}(dt), 
  & \bar q \in S(1,\dots, j-1), \ j=2,\dots, n, 
\end{cases}
\end{equation*}
 where $\bar c_{\bar q}^{\bar p_j}$ 
 is a geodesic joining $\bar q$ and $\bar p_j$ in $S(1,\dots,j)$, 
 and $d=d(\Psi(\bar q),\p{j})/d(\bar q, \bar p_j)$.  

 It is obvious that $\Psi$ is an isometry on $S(1)$. 
 Suppose that $\Psi\colon S(1,\dots,n-1) \rightarrow \sigma^{(n-2)}$ is
 an isometry. 
 Take $\bar q_1, \bar q_2 \in S(1,\dots, n)$.  
 We will show that $\Psi$ maps 
 $\tri{\bar q_1}{\bar q_2}{\bar p_n}$ isometrically
 into $\sigma^{(n-1)}$. 
 Note that, for $\bar q_i \in S(1, \dots, n)$, there exists 
 $\bar q_i' \in S(1, \dots, n-1)$ such that 
 $\bar q_i \in [\bar q_i', \bar p_n]$. (If $\bar q_i=\bar p_n$, then
 take any $\bar q_i' \in S(1,\dots, n-1)$.)
 Furthermore, $[\bar q_1', \bar q_2']$ can be extended to 
 $[\bar q_1'', \bar q_2'']$ so that 
 $\bar q_i'' \in S(1, \dots, \hat l(i), \dots, n-1)$ for some 
 $l(i) \in \{1,\dots,n-1\}$. 
 Since $S(1, \dots, n)$ is a Euclidean regular
 $(n-1)$-simplex and $\bar q_i'' \in S(1,\dots, \hat l(i),\dots, n-1)$,
 we know that $d(\bar q_i'', \bar p_{l(i)})=d(\bar q_i'', \bar p_n)$. 
 By definition, we see that 
 $\Psi(\bar q_i'')\in \sigma(1,\dots, \hat l(i), \dots, p^{(n-1)})$. 
 By Proposition~\ref{prop:phi_is_a_isometry}, we have an isometry
 $\Phi_s$ which fixes 
 $\sigma(1,\dots, \hat l(i), \dots, n-1)$ and switches 
 $p^{(l(i))}$ and $\p{n}$. 
 Thus we see that 
 $d(\Psi(\bar q_i''), \p{n})=d(\Psi(\bar q_i''),\p{l(i)})$.  
 Since $\Psi$ restricted to $S(1, \dots, n-1)$ is an
 isometry, noting that $\Psi(\bar p_{l(i)})=p^{(l(i))}$ and 
 $\Psi(\bar p_n)=p^{(n)}$,  we see that 
\begin{equation*}
  d(\bar q_i'', \bar p_n)= d(\bar q_i'',  \bar p_{l(i)})
  = d(\Psi(\bar q_i''),\Psi(\bar p_{l(i)}))=
  d(\Psi(\bar q_i''),\Psi(\bar p_n)), 
\end{equation*}  
 as well as
 $d(\bar q_1'', \bar q_2'')=d(\Psi(\bar q_1''),\Psi(\bar q_2''))$.
 This shows that $\Psi$ maps $[\bar q_1'', \bar q_2'']$ and 
 $[\bar q_i'', \bar p_n]$, $i=1,2$, isometrically into
 $\sigma^{(n-1)}$. 
 Furthermore, by Proposition~\ref{prop:flat_triangle} $(2)$, we know
 that $\tri{\Psi(\bar q_1'')}{\Psi(\bar q_2'')}{p^{(n)}}$ is flat. 
 By the definition of $\Psi$,  we conlude that $\Psi$ maps the convex
 hull of $\tri{\bar q_1''}{\bar q_2''}{\bar p_n}$ isometrically onto
 that of $\tri{\Psi(\bar q_1'')}{\Psi(\bar q_2'')}{p^{(n)}}$. 
 Since $\tri{\bar q_1}{\bar q_2}{\bar p_n}$ is
 contained in the convex hull of 
 $\tri{\bar q_1''}{\bar q_2''}{\bar p_n}$, 
 we see that
 $\tri{\bar q_1}{\bar q_2}{\bar p_n}$ is also isometrically mapped onto
 $\tri{\Psi(\bar q_1)}{\Psi(\bar q_2)}{p^{(n)}}$.
 In particular, 
 $d(\bar q_1, \bar q_2)=d(\Psi(\bar q_1), \Psi(\bar q_2))$, that is, 
 $\Psi$ is an isometry. This completes the proof. 
\end{proof}

\subsection{Proof of Theorem~\ref{thm:compactness}}
\label{sec:the_proof}

Now we give the proof of Theorem~\ref{thm:compactness}. 

\noindent
{\it Proof of Theorem~\ref{thm:compactness}.}
 We first show that $\ulim_j \angle_{x_0}(x_j,\xxi{1})=\zure=0$, where
 $x_0=f(e)$, by deriving a contradiction under the assumption that
 $\zure>0$.  
 
 Suppose first that $Y$ has locally finite dimension. 
 Then, for given $R>0$, there exists $n \in \N$ such that
 any $B \subset B(x_0,R)$ satisfies (\ref{eq:geom_dim}). 
 On the other hand, by taking ultralimits $m$-times, we see that
 $B(x_0, R)\cap \cone[x_0]{m}\subset Y^{(m)}$ contains an arbtrarily small
 Euclidean regular 
 $(m-1)$-simplex $\sigma^{(m-1)}$ by Proposition~\ref{prop:simplex}. 
 Note that $\sigma^{(m-1)}$ (more precisely the set of vertices
 $\{p^{(1)}, \dots, p^{(m)}\}$) satisfies
\begin{equation*}
 \rad{Y^{(m)}}{\sigma^{(m-1)}} = 
 \sqrt{\frac{m-1}{2m}}\diam{\sigma^{(m-1)}}. 
\end{equation*}
 If we take $m>n+1$, then we have
\begin{equation*}
 \sqrt{\frac{n}{2(n+1)}} < \sqrt{\frac{m-1}{2m}}, 
\end{equation*}
 and this contradicts Proposition~\ref{prop:dimension_of_ulim}. 

 Now suppose $Y$ has telescopic dimension at most $n$. 
 Let $m>n+1$ and take $\delta$ so that
\begin{equation*}
 \delta + \sqrt{\frac{n}{2(n+1)}} < \sqrt{\frac{m}{2(m+1)}}.
\end{equation*}
 By definition, there exists $D>0$ such that any $B \subset Y$ with
 $\diam{B}>D$ satisfies (\ref{eq:tele_dim}). 
 However, taking $\sigma^{(m-1)} \subset \cone[x_0]{m}$ so that
 $\diam{\sigma^{(m-1)}}>D$, we see that $\sigma^{(m-1)}$
 does not satisfy (\ref{eq:tele_dim}) as we have seen above. 
 This contradicts Proposition~\ref{prop:dimension_of_ulim}. 
 Therefore, in either case, we conclude that $\zure=0$. 

 Let $\xi=\xxi{1} = \ulim x_j \in \partial Y^{(1)}$. 
 We show that $\{x_j\}$ actually converges to $\xi$ in the cone
 topology. 
 By our assumption, by setting 
\begin{equation*}
 A'_M=\{j \in \N \mid d(x_0,x_j)\geq M\}
\end{equation*}
we have $\lambda(A'_M)=1$ for any $M>0$. 
Also by setting 
\begin{equation*}
 B'_{\varepsilon}=\{j \in \N \mid
\angle_{x_0}(x_j,\xi)<\varepsilon\}, 
\end{equation*}
we see that 
$\lambda(B'_{\varepsilon})=1$ for any $\varepsilon>0$, 
since we know $\ulim_j\angle_{x_0}(x_j,\xi)= 0$. 
Therefore, for any $M>0$ and $\varepsilon >0$, 
we have $\lambda (A'_M \cap B'_{\varepsilon})=1$ and hence 
we can find $j \in \N$ such that 
$d(x_0,x_j)\geq M$ and
$\angle_{x_0}(x_j,\xi)< \varepsilon$. 
Note that, by Lemma~\ref{lem:ultralimit_of_buseman_fct} and our assumption, 
we have $\int_{\Gamma}b_{\xi}(f(\gamma),f(e)) d\mu(\gamma)=0$. 
Then by noting that $\{x_j\}\subset f(\Gamma)$, 
we can apply Lemma~\ref{lem:core} $(2)$,
and see that the convex hull of 
$\ray{x_0,\xi} \cup [x_0,x_j]\cup \ray{x_j,\xi}$
is a half-infinite parallelogram.  
Hence, for any $t \in [0,d(x_0,x_j)]$ and a geodesic $c_j$ with
$c_j(0)=x_0$ and $c_j(d(x_0,x_j))=x_j$, we have
$d(c_j(t), \ray{x_0,\xi}) = t\sin \angle_{x_0}(x_j,\xi)$ 
as long as $\angle_{x_0}(x_j,\xi) \leq \pi/2$. 
Therefore, by letting $M\to \infty$ and $\varepsilon \to 0$ suitably, 
we can choose a sequence $\{j'\}\subset \N$ so that $\{c_{j'}\}$
 converges to a geodesic ray joining $x_0$ and $\xi$ on each
 bounded interval in $\ray{0,\infty}$. 
In particular, $\{x_{j'}\}$ converges to $\xi$ in the 
cone topology of $Y_{\infty} \cup \partial Y_{\infty}$. 
Since $\{x_{j'}\}\subset Y$, we see that $\xi$ must lie in
$\partial Y$. This completes the proof. 
\qed

\section{An upper bound of 
$l_{\rho}(\Gamma, \sum_{n=1}^{\infty}a_n\mu^n)$}
\label{sec:appendix}

Let $\Gamma$ be a countable group equipped with a
symmetric probability measure $\mu$ whose support generates $\Gamma$. 
Suppose that $\Gamma$ acts on a $\cat{0}$ space $Y$ via a homomorphism
$\rho\colon \Gamma \rightarrow Y$. 
Suppose further that there exists a $\rho$-equivariant $\mu$-harmonic
map $f\colon \Gamma \rightarrow Y$. 
Recall that if $f$ is a $\mu$-harmonic map, then 
$\gamma \mapsto d(f(e),f(\gamma))$ is a $\mu$-subharmonic function
on $\Gamma$ (Proposition~\ref{prop:pullback-is-subharmonic}). 

As in \S~\ref{sec:rate_of_escape_and_hmap}, we denote by $\mu^n$ the
$n$-fold convolution of $\mu$; $\mu^n$ describes the distribution of the
position of a random walk generated by $\mu$ at time $n$.  
Consider a convex combination of $\mu^n$, that is, 
$\sum_{n=1}^{\infty} a_n\mu^n$, where $a_n\geq 0$ and
$\sum_{n=1}^{\infty}a_n=1$. In what follows,  we prove that
\begin{equation}
\label{eq:conv_comb}
 l_{\rho}\left(\Gamma, \sum_{n=1}^{\infty}a_n\mu^n \right)
 \leq \left(1+\sum_{n=1}^{\infty} n a_n\right) l_{\rho}(\Gamma,\mu). 
\end{equation}
Although it is weaker than the claim of \cite[Theorem 4.5]{forghani}, 
under the assumption $l_{\rho}(\Gamma,\mu)=0$,
this implies $l_{\rho}(\Gamma,\sum_{n=1}^{\infty}a_n\mu^n)=0$ 
which is sufficient for our purpose. 
The proof follows the line suggested in \cite{forghani}, except 
that we use the existence of a $\mu$-harmonic map $f$ at some point. 

Let $k \in \Z_{>0}$, and note that 
$\left(\sum_{n=1}^{\infty}a_n\mu^n\right)^k$ is again a convex
combination of $\mu^j$'s; we can express as 
$\left(\sum_{i=n}^{\infty}a_n\mu^n\right)^k=\sum_{j=1}^{\infty}a_j'\mu^j$. 
Note that, since $\sum_{n=1}^{\infty} a_n =1$, 
\begin{equation*}
\begin{split}
  \sum_{j=1}^{\infty} ja_j' 
 & =\sum_{n_1, \dots, n_k} (n_1 + \dots + n_k) a_{n_1}\dots a_{n_k} \\
 & = \sum_{n_2,\dots, n_k}\left(
     \sum_{n_1}  (n_1 + \dots + n_k) a_{n_1}\right)a_{n_2} \dots a_{nk} \\
 & = \sum_{n_2,\dots, n_k} \left(\left(\sum_{n_1}n_1 a_{n_1}\right)+ 
     n_2 +  \dots + n_k\right) a_{n_2} \dots  a_{n_k} \\
 & = \sum_{i_1} n_1 a_{n_1} + \dots + \sum_{n_k}n_k a_{n_k} = k \sum_n n a_n
\end{split}
\end{equation*}
holds. 
Now set $b_i = \sum_{j=k(i-1)+1}^{ki} a_j'$ ($i\in \Z_{>0}$) and 
\begin{equation*}
 (\tilde \Omega^k,\mathbb{P}) = 
  (\Gamma \times \Z_{>0}, \mu^k \times \sum b_i
  \delta_i) \times 
 (\Gamma \times \Z_{>0}, \mu^k \times \sum b_i  \delta_i) 
 \times \cdots , 
\end{equation*}
where $\delta_i$ is a Dirac measure supported on $i \in \Z_{>0}$. 
An element $\tilde \omega$ in $\tilde \Omega^k$ can be written as 
$\tilde \omega = \left((\omega_1, i_1), (\omega_2,i_2), \dots,
(\omega_n,i_n), \dots \right)$. 
Let $\mathcal{F}_n$ be a $\sigma$-algebra of subsets of
$\tilde\Omega^k$ generated by subsets of the form
\begin{equation*}
 \{(\omega_1,i_1)\} \times \dots \times \{(\omega_n,i_n)\} \times 
  (\Gamma \times \Z_{>0}) \times \dots \times (\Gamma \times \Z_{>0})
  \times \dots .
\end{equation*}
 Then $\mathcal{F}_n\subset \mathcal{F}_{n+1}$ holds and
 $\{\mathcal{F}_n\}_{n \in \Z_{>0}}$ forms a {\it filtration}. 
 We note that a function 
 $L\colon \tilde \Omega^k\rightarrow \R$ is
 $\mathcal{F}_n$-measurable means that $L$ is determined by the
 information up to the $n$th component of 
 $\tilde\omega=((\omega_1,i_1), (\omega_2,i_2),\dots)$.
 Define a function $\tau \colon \tilde \Omega^k \rightarrow \Z_{>0}$ by
 $\tau(\tilde \omega)=i_1$.  
 Then $\tau$ is a {\it stopping time} (or {\it Markov time}); that is, 
 $\tau$ satisfies, for any $m \in \Z_{>0}$, 
\begin{equation*}
  \{\tilde\omega\in \tilde\Omega^k \mid 
    \tau(\tilde\omega) \leq m\} \in \mathcal{F}_m, 
\end{equation*}
 since the left-hand side belongs to $\mathcal{F}_1$ by the definition
 of $\tau$. 

 Let $L_n \colon \tilde\Omega^k \rightarrow \R$ be a 
 $\mathcal{F}_n$-measurable function and, for $n\geq m$, denote 
 by $\mathbb{E}(L_n \mid \mathcal{F}_{m})$ the conditional
 expectation of $L_n$ with respect to $\mathcal{F}_{m}$, namely 
 $\mathbb{E}(L_n\mid \mathcal{F}_{m})$ is 
 a $\mathcal{F}_{m}$-measurable function written as 
\begin{equation*}
 \mathbb{E}(L_n \mid \mathcal{F}_{m})\colon \tilde \Omega^k  
  \rightarrow \R ;
  \ \mathbb{E}(L_n \mid \mathcal{F}_{m})(\tilde\omega)
   = \int_{A_{m}^n(\tilde\omega)} L_n (\tilde\omega') 
      d\mathbb{P}(\tilde\omega'), 
\end{equation*}
where
\begin{equation*}
 A_{m}^n(\tilde\omega)=\{\tilde\omega' \in \tilde \Omega^k \mid 
 \tilde \pi_i(\tilde\omega')=\tilde \pi_i(\tilde\omega),\ i=1,\dots, m\}, 
\end{equation*}
 and $\tilde\pi_i\colon\tilde\Omega^k\rightarrow\Gamma\times\Z_{>0}$
 is the projection onto the $i$th factor.  
 Note that $A_{m}^n(\tilde\omega)\in \mathcal{F}_n$. 
 We set $L_n$ to be a function defined by
\begin{equation*}
 L_n(\tilde\omega) = n \mathbb{E}(d(f(e),f(\gamma_{1}^k(\tilde\omega))))
  - d(f(e),f(\gamma_n^k(\tilde\omega))), 
\end{equation*}
 where $\gamma_n^k(\tilde\omega)=\omega_1 \dots \omega_n \in \Gamma$. 
 Then we see that, by the triangle inequality and the
 $\rho$-equivariance of $f$, 
\begin{equation*}
\begin{split}
 & \mathbb{E}(L_{n+1}\mid \mathcal{F}_n)(\tilde\omega) \\
 =& (n+1)\mathbb{E}(d(f(e),f(\gamma_1^k(\tilde\omega)))) 
 - \int_{A_n^{n+1}(\tilde\omega)} 
    d(f(e),f(\gamma_{n+1}^k(\tilde\omega')))
   d\mathbb{P}(\tilde\omega') \\
 =& (n+1)\int_{\Gamma} d(f(e),f(\gamma)) d\mu^k(\gamma)
 - \int_{\Gamma} d(f(e),f(\gamma_n^k(\tilde\omega)\gamma))
 d\mu^k(\gamma) \\
 \geq &  (n+1)\int_{\Gamma}d(f(e),f(\gamma)) d\mu^k(\gamma) 
   -  d(f(e),f(\gamma_n^k(\tilde\omega)))  \\
& \phantom{==}
  -  \int_{\Gamma}d(f(\gamma_n^k(\tilde\omega)), 
          f(\gamma_n^k(\tilde\omega) \gamma)) d\mu^k(\gamma) \\
 =& n \mathbb{E}(d(f(e),f(\gamma_1^k(\tilde\omega)))) 
     -d(f(e),f(\gamma_n^k(\tilde\omega))) = L_n(\tilde\omega)
\end{split}
\end{equation*} 
holds; that is, $L_n$ is a {\it submartingale}. 
 Note that, by our definition of $(\tilde \Omega^k,\mathbb{P})$, 
 the expectation of 
 $\tilde\omega\mapsto d(f(e),\gamma_n^k(\tilde\omega))$ can be expressed
 as 
\begin{equation*}
\begin{split}
  \mathbb{E}(d(f(e),f(\gamma_n^k(\omega)))) & = 
 \int_{\tilde \Omega^k} d(f(e),f(\gamma_n^k(\omega))) d\mathbb{P}(\omega)
  = \int_{\Gamma} d(f(e),f(\gamma)) d\mu^{nk}(\gamma) \\
  & = L^{nk}(f,\mu)=L^{nk}(f), 
\end{split}
\end{equation*}
 where $L^{nk}(f)$ is defined in \S~\ref{sec:rate_of_escape_and_hmap}. 

 Noting that $\tau\wedge n(\tilde\omega)=\min \{\tau(\tilde\omega),n\}$
 also gives a stopping time and that $\tau\wedge n$ is bounded, 
 by Doob's optional stopping theorem, we get 
 $\mathbb{E}(L_{\tau\wedge n}\mid \mathcal{F}_1)(\tilde\omega)\geq
 L_1(\tilde\omega)$, 
 where
 \begin{equation*}
  L_{\tau\wedge n}(\tilde\omega)=(\tau\wedge n)(\tilde\omega) 
 \mathbb{E}(d(f(e),f(\gamma_1^k(\tilde\omega))))
  - d(f(e),f(\gamma_{(\tau\wedge n)(\tilde\omega)}^k(\tilde\omega))). 
 \end{equation*}
 Since $\mathbb{E}(L_1)=0$, by integrating the both sides over 
 $\tilde \Omega^k$, we obtain
\begin{equation*}
 \mathbb{E}(\tau\wedge n) 
  \mathbb{E}(d(f(e),f(\gamma_1^k(\cdot)))) 
 \geq \mathbb{E}(d(f(e),
    f(\gamma_{(\tau\wedge n)(\cdot)}^k(\cdot)))).
\end{equation*}
 Since $\tau \geq \tau \wedge n$, we get
\begin{equation*}
 \mathbb{E}(\tau)
  \mathbb{E}(d(f(e),f(\gamma_1^k(\cdot))))  \geq 
 \mathbb{E}(d(f(e),
   f(\gamma_{(\tau\wedge n)(\cdot)}^k(\cdot)))). 
\end{equation*} 
 Since 
 $d(f(e),f(\gamma_{(\tau\wedge n)(\tilde\omega)}(\tilde\omega)))\geq 0$
 and 
 $\{d(f(e),f(\gamma_{(\tau\wedge n)(\tilde\omega)}^k
 (\tilde\omega)))\}_{n\in \Z_{>0}}$
 converges to $d(f(e),f(\gamma_{\tau(\tilde\omega)}^k(\tilde\omega)))$  
 as $n \to \infty$, Fatou's Lemma implies that 
 \begin{equation}
 \label{eq:app-1}
 \begin{split}
   \mathbb{E}(\tau) L^k(f,\mu)
   & \geq \int_{\tilde \Omega^k} 
    d(f(e),f(\gamma_{\tau(\tilde\omega)}^k(\tilde\omega)))
   d\mathbb{P}(\tilde\omega) \\
   & = \sum_{i=1}^{\infty}b_i \left(
     \int_{\Gamma} d(f(e),f(\gamma)) d\mu^{ik}(\gamma)\right)
 \end{split} 
 \end{equation}
 by our definition of $(\tilde \Omega^k, \mathbb{P})$. 
 Since $\gamma \mapsto d(f(e),f(\gamma))$ is $\mu$-subharmonic, 
 we have 
 $\int_{\Gamma} d(f(e),f(\gamma'\gamma)) d\mu(\gamma) 
 \geq d(f(e),f(\gamma'))$, 
 and hence we get, for $m\geq l$, 
\begin{equation}
\label{eq:app-2}
 \int_{\Gamma} d(f(e),f(\gamma)) d\mu^m(\gamma)
  \geq \int_{\Gamma} d(f(e),f(\gamma)) d\mu^l(\gamma). 
\end{equation}
 On the other hand, recalling the definition of $b_i$, we get 
\begin{equation}
\label{eq:app-3}
 \mathbb{E}(\tau) = \sum_{n=1}^{\infty}i b_i 
  = \sum_{j=1}^{\infty} \left\lceil{\frac{j}{k}}\right\rceil a_j'
   \leq \frac{1}{k} \left(k+ \sum_{j=1}^{\infty}j  a_j' \right)
   = 1+  \sum_{n=1}^{\infty} n a_n, 
\end{equation}
 where $\lceil s \rceil$ denotes the smallest integer greater than or
 equal to $s \in \R$. 
 Combining inequalities (\ref{eq:app-1}), (\ref{eq:app-2}) and
 (\ref{eq:app-3}), and recalling that 
 $\sum_{j=1}^{\infty}a_j'\mu^j=(\sum_{n=1}^{\infty}a_n\mu^n)^k$, 
 we conclude that 
\begin{equation*}
\begin{split}
     \left(1+ \sum_{n=1}^{\infty} n a_n\right) L^k(f,\mu) 
  & \geq \sum_{i=1}^{\infty}b_i \left(
     \int_{\Gamma} d(f(e),f(\gamma)) d\mu^{ik}(\gamma)\right) \\
  & \geq \sum_{j=1}^{\infty} a_j' \left(
      \int_{\Gamma} d(f(e),f(\gamma)) d\mu^j(\gamma)\right) \\
  & = \int_{\Gamma} d(f(e),f(\gamma))
     d(\sum_{n=1}^{\infty}a_n\mu^n)^k. 
\end{split}
\end{equation*}
 Therefore we get
\begin{equation*}
 \left(1 + \sum_{n=1}^{\infty} n a_n\right)
 \frac{L^k(f,\mu)}{k} \geq 
 \frac{L^k(f,\sum_{n=1}^{\infty}a_n\mu^n)}{k}. 
\end{equation*}
 Letting $k\to \infty$ completes the proof of (\ref{eq:conv_comb}).


\end{document}